\def\bee{\begin{equation}}
\def\eee{\end{equation}}
\def\back{\backslash}
\def\ti{\tilde}
\def \al{\alpha}
\def \be{\beta}
\def \ga{\gamma}
\def \de{\delta}
\def \ep{\varepsilon}
\def \ze{\zeta}
\def \la{\lambda}
\def \si{\sigma}
\def \vp{\varphi}
\def \om{\omega}
\def \Ga{\Gamma}
\def \Th{\Theta}
\def \operatorname#1{\mathop{\rm #1}}
\def \p#1{{\frac{\pi \imath}{#1}}}
\def \pii {\pi \imath}
\def \cm{\Cal M}
\def \const{\operatorname{const}}
\def \eq{\equiv}
\def \cd{\partial}
\def \bar{\overline}
\def \diag{\operatorname{diag}}
\def \tr{\operatorname{tr}}
\def \th {\Theta}
\def \0{{\Theta_0}}
\def \1{{\Theta_1}}
\def \tin{\Theta_\infty}
\def \op{\operatorname}
\def \num#1{(\ref{#1})}
\def \underset#1#2{\mathrel{\mathop{#2}\limits_{#1}}}
\renewcommand{\theequation}{\arabic{section}.\arabic{equation}}
\def\bm{\left(\begin{array}{cc}}
\def\em{\end{array}\right)}
\def\Bbb{\mathbb}
\def\Cal{\mathcal}
\newtheorem{theorem}{Theorem}[section]
\newtheorem{remark}{Remark}[section]
\newtheorem{corollary}{Corollary}[section]
\newtheorem{statement}{Statement}[section]
\newtheorem{proposition}{Proposition}[section]
\newtheorem{conjecture}{Conjecture}[section]
\def\text#1{\mbox{#1}}
\def\lim{\mathop{\mbox{lim}}\limits}
\begin{document}
\title{Connection Formulae for Asymptotics of the
Fifth Painlev\'e Transcendent on the Imaginary Axis: I}
\author{F.~V.~Andreev\thanks{E-mail: F-Andreev@wiu.edu}\;
and A.~V.~Kitaev\thanks{E-mail: kitaev@pdmi.ras.ru}\\
Department of Mathematics,
Western Illinois University,
Macomb, IL 61455, USA\\
Steklov Mathematical Institute, Fontanka 27, St.Petersburg, 191023, Russia}

\date{April 12, 2019}
\maketitle

\begin{abstract}
Leading terms of asymptotic expansions for the general complex solutions
of the fifth Painlev\'e equation as $t\to\imath\infty$ are found.
These asymptotics are parameterized by monodromy data of the associated linear ODE,
$$
\frac{d}{d\la}Y= \left(\frac t2\si_3 +
\frac{A_0}\la+\frac{A_1}{\la-1}\right)Y.
$$
The parametrization allows one to derive connection formulas for the asymptotics.
We provide numerical verification of the results. Important special cases of the
connection formulas are also considered.
\vspace{24pt}\\
{\bf 2010 Mathematics Subject Classification:}
34M55, 33E17, 34M40, 34M35, 34E20.\vspace{24pt}\\
{\bf Key Words:} Isomonodromy deformations, Painlev\'e equations, asymptotics.
\vspace{24pt}\\
Short title: Connection formulas for $P_5$
\end{abstract}
\maketitle
\newpage

\setcounter{page}2
\section{Introduction}\label{sec:Introduction}

We study asymptotics\footnote{The notion ``asymptotics'' is used throughout the
paper to abbreviate the expression ``the leading terms of asymptotic
expansion''}
as $t\to\infty$, $\Re(t)=\mathcal O(1)$, of isomonodromy deformations (with respect to parameter $t$) of the
following linear ODE
\begin{equation}
\label{mainl}
\frac{d}{d\la}Y=A(\la, t)Y=\left(\frac t2\si_3 +
\frac{A_0}\la+\frac{A_1}{\la-1}\right)Y.
\end{equation}
Here $\sigma_3=\bm 1&0\\0&-1\em$ and
the matrices $A_p$ ($p=0,1$) are independent of $\lambda$.
This paper is a continuation of our earlier work~\cite{AK} on asymptotics of the isomonodromy deformations of
system~\eqref{mainl} on the real $t$-axis. For convenience of the reader, we recall here the basic notation
and some results obtained in~\cite{AK}.

Following Jimbo and Miwa~\cite{JM}, we consider the following parameterization of the matrices
$A_p$,
\begin{eqnarray*}
&\label{eq:mat0}
A_0=\left(\begin{array}{cc}
z +\frac\0{2}&-u(z +\0 )\\
\frac zu& -z-\frac\02\end{array}\!\!\right),
&\\
&
A_1=\left(\begin{array}{cc}
-z -\dfrac{\0+\tin}2
&uy\big(
z +\frac{\0-\1+\tin}2\big)
\\
-\frac 1 {uy}\big( z+\frac {
\0+\1+\tin}2\big)
&z+\dfrac {\0+\tin}2
\end{array}\right).\label{eq:mat1}&
\end{eqnarray*}
Then the isomonodromy deformations of Equation~(\ref{mainl})  with respect to $t$
are governed by the following system of nonlinear ODEs,
which is called the Isomonodromy Deformation System (IDS)
\begin{eqnarray}
t\frac{ dy}{dt} &=& ty - 2z (y-1)^2 -(y-1)
\big( \frac{\0 -\1 +\tin}2 y-\frac{3\0 +\1 +\tin}2\big),
\label{eq:ids1}\\
t \frac{dz}{dt} &=& yz\big(z+\frac{\0-\1 +\tin}2\big) -\frac
1y (z+\0) \big(z+\frac{ \0+\1+\tin}2\big),\label{eq:ids2}\\
t\frac d{dt} \operatorname{log} u
&=&-2 z-\0 +y\big(z +\frac{\0-\1+\tin}2 \big)
+\frac 1y\big( z+\frac{ \0+\1+\tin}2 \big).\label{eq:ids3}
\end{eqnarray}
In this system $\Theta_\nu$ ($\nu=0,\,1,\,\infty$) are complex constants
considered as parameters.
Eliminating function $z=z(t)$ from
Equation~(\ref{eq:ids2}) by using Equation~(\ref{eq:ids1}), one finds that function
$y=y(t)$ satisfies the fifth Painlev\'e equation $(P_5)$:
\begin{eqnarray}
\frac{d^2y}{dt^2} =
( \frac 1{2y} +\frac 1{y-1} )
(\frac {dy}{dt})^2 -\frac{dy}{tdt}
+\frac{(y-1)^2}{t^2} (\hat\al y +\frac{\hat\be}y)
+\hat\ga\frac yt+\hat\de\frac{y(y+1)}{y-1},\label{eq:P5}\\
\hat\al=\frac 12
\big(\frac{\0-\1 +\tin}2\big)^2,\;\hat\be =-\frac 12
\big(\frac{\0-\1 -\tin}2\big)^2,\;
\hat\ga=1-\0-\1,\;\hat\de=-\frac 12.
\label{eq:coeffP5}
\end{eqnarray}

Together with the functions $y(t)$, $z(t)$, and $u(t)$,
we are also interested in the so-called $\zeta$-function \cite{JM, O2, AK}\footnote{Function $\zeta(t)$
exactly coincides with function $\sigma(t)$ introduced by Jimbo and Miwa in \cite{JM}.},
which is associated with the Hamiltonian structure
for System (\ref{eq:ids1}), (\ref{eq:ids2}) and the corresponding $\tau$-function.
Function $\ze(t)$, in terms of functions $y(t)$ and $z(t)$,
is defined as follows
\begin{eqnarray}
&\ze(t)=\frac 14((\0+\tin)^2-\Th_1^2)
-zt-\nonumber&\\
&-\big( z-\frac 1y
\big( z+\frac{\0+\1+\tin}2\big)\big)
\big( z+\0-y
\big( z+\frac{\0-\1+\tin}2\big)\big),\label{eq:zeta-def}& \\
&\frac{\cd \ze}{\cd t}=\frac {d\zeta}{dt}=-z.\label{eq:zeta-derivativ}&
\end{eqnarray}
This function has been proven to be an important
object in physical and geometrical applications, where
the connection problem for its asymptotics arises in a natural way.

As in the work~\cite{AK} we use for our studies the method of isomonodromy deformations: this method
is based on parametrization of asymptotics of IDS~\eqref{eq:ids1}-\eqref{eq:ids3} via the monodromy data of
system~\eqref{mainl}. Such parametrization allows one to find the connection formulae for asymptotics of
a given solution at different singular points (in our case $0$ and $\infty$) and at different directions
in the neighborhood of an essential singular point (in our case $t\to\pm i\infty$ and $t\to\pm\infty$).
This parametrization is based on asymptotic solution of the direct and inverse monodromy problems for
Equation~\eqref{mainl}. There are various approaches how one can achieve these goals:

(1) Combination of local asymptotic analysis of IDS with the subsequent asymptotic solution of the direct and
inverse monodromy problems. Note that IDS can always be presented as the first order system of ODEs with the
quadratic r.-h.s. with respect to unknown functions, so that local asymptotics of those systems can be constructed
in a regular way. However, the local asymptotic analysis can be also based on various asymptotic ideas and approaches.
The following asymptotic solutions of the direct and inverse monodromy problems also can be performed
by using slightly different asymptotic methods;

(2) The inverse monodromy problem can be formulated as a matrix Riemann-Hilbert conjugation problem in the
complex plane with its further asymptotic solution with the help of the Deift-Zhou asymptotic method;

(3) The method we use here is in general similar to (1), however, it suffers substantial differences.
We do not use any apriori information about the local asymptotic expansions for IDS~\eqref{eq:ids1}-\eqref{eq:ids3}
or $P_5$ \eqref{eq:P5}. Instead, for the solution of the
direct monodromy problem for Equation~\eqref{mainl} some asymptotic assumptions on the matrix elements of
of Matrices~\eqref{eq:mat0} and \eqref{eq:mat1}, which are much less detailed comparing with the local asymptotics,
are assumed.
These assumptions are dictated by our ability to perform asymptotic estimates which finally lead us to asymptotic
solution of the direct monodromy problem. In curse of these calculations appears a number of such assumptions.
Then asymptotic solution of the inverse monodromy problem gives us local asymptotics of the matrix elements together
with their parameterization via the monodromy data. After that we verify, whether thus obtained asymptotics passes all the
assumptions imposed. This verification provide us with the restrictions on monodromy data, for which our analysis
is valid. So, this methodology allows one to drop out the preliminary stage of the local asymptotic analysis of IDS.
It is substantially based on the fact that we deal with the isomonodromy deformations. We do not touch at any stage of
our asymptotic analysis either IDS~\eqref{eq:ids1}-\eqref{eq:ids3}, nor Equation~\eqref{eq:P5}.
Therefore, we call this asymptotic method as the Method of Isomonodromy Deformations.

As of now the main source of the connection results for $P_5$ with the normalization $\delta<0$ on the pure
imaginary axis is the papers \cite{MT1,MT2} by McCoy and Tang. In these papers they consider a special case of
$P_5$ \eqref{eq:P5} with the parameters $\theta_1=\theta_2\equiv\theta$ and $\theta_\infty=2n\in\mathbb Z$ with
the primary motivation to serve applications related with the study of correlation functions for the $2D$ Ising
model, level spacing distribution in the theory of random matrices and one-particle density matrix of the one-dimensional
impenetrable Bose gas. Methodologically, for studying asymptotics as $t\to\infty$, these authors applied the scheme (1)
outlined above and for asymptotics as $t\to0$ used the corresponding results obtained by Jimbo~\cite{J}. McCoy and Tang
not only proved for $t\to i\infty$ the following asymptotics
\begin{equation}
 \label{Mcf}
y=\de t^{-4\vp+\tin}e^t(1+\mathcal{O}(t^{-\ep}),
\end{equation}
where $\delta,\phi\in\mathbb C$ are parameters of the $P_5$ transcendent, but also found the connection
formulae for asymptotics of this solution as $t\to0$ and $t\to+\infty$.

The case of $P_5$ studied by McCoy and Tang is known to be equivalent to a special case of the third Painlev\'e
equation, so it is not a "truly" fifth Painlev\'e transcendent. Our main motivation for this study
was to extend the connection results to the case of the true $P_5$ transcendent by following methodology (3)
in the above list.


We also study the real reduction of the solutions; this is impossible to get real reduction from solution
(\ref{Mcf}).

Recently, two papers devoted to asymptotics on pure imaginary axis were published (see \cite{ML1} and \cite{ML2}).
In these papers the asymptotics of the general {\it real} reduction of the $P_5$ on pure imaginary axis were
obtained. These results are in complete agreement with our Corollaries \ref{cor:real-th3.1}
and \ref{cor:real-th3.2}. Also, in \cite{ML1} and \cite{ML2}, the general asymptotic
of the form
\begin{equation}
y(t)\sim 1 -c t^\sigma,
\label{ML0}
\end{equation}
found by Jimbo in \cite{J}, is rigorously and directly proved.

Our results are more advanced than that obtained in \cite{ML1} and \cite{ML2}
in several aspects. First, we establish the connection formulas which allow
one to find the asymptotic parameters at infinity for given parameters
in the asymptotic expansion at zero (say, $c$ and $\sigma$ in the previous formula). This is due to we have computed
the monodromy data which is an important result in itself. Second, we found and parameterized asymptotics
of the {\it general complex} solution, not just real ones. Third,
(\ref{ML0}) is not the only possible solution at zero: there is a lot of others
and we give a complete list of them for the case of general $\Theta$-parameters.
Finally, our approach is completely different: we use the isomonodromy deformation method (IDM).

Having said this let us note that that the proof of results obtained by IDM can be
justified with the help of the scheme suggested in \cite{K}. This scheme requires a more careful attention to error
estimates, than that presented in this paper. For the experienced reader it is clear that the estimates possess  the
properties required for launching the scheme \cite{K}. At the same time the explicit presentation of these estimates would
substantially blow up the size of the paper without adding any new information. Since we do not provide all the details
we use the word {\it derivation}, rather than the {\it proof} in the corresponding sections. It is important to mention
that there is another possibility of the justification of asymptotics obtained by IDM, it is an application of the
well-known Wasow theorem (see Theorem 35.1 of \cite{W}):
As long as the leading term of asymptotics is obtained, one can develop it into the complete asymptotic series (see
Appendix~\ref{sec:allterms}), after that the Wasow theorem implies the existence of the solution of IDS with the
prescribed asymptotics. In that scheme our derivation constitutes the proof of correspondence between coefficients
of the leading terms of asymptotics and monodromy data. The latter proof does not require any special properties of
the error estimates and is enough for the justification. Here, however, we do not give the complete details for
application of the Wasow theorem, so the word derivation is correct in this sense too.

Although, there are no doubts in the correctness of IDM, surely, there might be some ad hoc faults,
in formulae because of the personal reasons, some of them indicated below, we provide our formulae with examples
of the numerical verification, which can be useful for he reader interested in application of our results and comparison
them with the results obtained by the other authors.

We also refer to the papers~\cite{ML1,ML2} mentioned above, which contains rigorous proofs of the asymptotic results,
which coincides with special cases of our formulae.

Recently appeared a number of
papers~\cite{ShSh0,ShSh-inf,LoZeZho,ZeZha} where different justification schemes for asymptotics of the fifth
Painlev\'e transcendents has been used. They concern the results for the real axis we discuss in \cite{AK}, we expect
that they will be working for the imaginary axis too.

The paper is organized as follows.
In Section~\ref{sec:def-monodromy}, we define the monodromy data for Equation~(\ref{mainl}).
The main results are presented in Sections~\ref{sec:res} and \ref{sec:zero}, for asymptoics as $t\to i\infty$ and
$t\to0$, respectively.
In Sections~\ref{sec:der1} and \ref{sec:derII} brief derivations of the results,
stated in Theorems \ref{th:new1} and \ref{th:allmon}, respectively, are
presented. In Appendix~\ref{sec:trans}, we consider Schlesinger transformations for
$P_5$.
Using these Schlesinger transformations, we can derive Theorems \ref{th:new2}
and \ref{th:allmon} in an alternative way. In Appendix~\ref{sec:allterms}, we consider the complete asymptotic series for
solutions found in Section~\ref{sec:res}, and explain how to find the first terms of these series.
In Section~\ref{sec:mccoy}, we compare our results with those obtained
in paper \cite{MT2}. Section~\ref{sec:zero} is devoted to presentation of the results
for small argument, $t\to 0$. Comparing to our previous paper we present a refined formula
for the leading term of small $t$ asymptotics.
In Section~\ref{sec:deg0} we deal with the degenerated cases of asymptotics as $t\to 0$.

To demonstrate how our results can be used,
we consider deriving connection formulas for an equation, we met in applications.
Reader can find the details in Section~\ref{sec:spec-meromorphic-solution}.

This paper is written in stylistics close to that of the paper~\cite{AK} published quite some time ago. This
happened because the first draft of this work was written in 1997, soon after preprints ~\cite{AK1}
and \cite{AK2} were finished.
Paper~\cite{AK} is the unification of these preprints, which were originally
written in different notation. So, after \cite{AK} was published in 2000 we turn back to this work to unify
notation. Interchanging of the notation required great care and large time because we have got many complicated
formulae. We were not able to finish this work at that time because while doing it we digressed to some other
studies of the Painlev\'e equations. We came back to this work only in 2013. Since the large time past after
the paper were written and the mess with the notation we decided to check our results independently.
On this way appeared Sections~\ref{sec:spec-meromorphic-solution} and \ref{sec:numerics} where we considered some
special application of our results and undertook numerical studies, respectively. At this stage the paper took
the form close to its modern state, however, we again digressed to another studies and were not able to make the
final editing. Only after we decided to separate the 1997 draft into two parts we were able to finish the first part
in April 2019. The second part of this paper completes asymptotic  description of solutions as $t\to\imath\infty$.
It containes one type of asymptotics for general solutions and a few asymptotics for special one parameter families.

It is important to mention that during this time some interesting papers devoted to the study of
asymptotics of the fifth Painlev\'e functions has been published \cite{LNR,JR,ShSh-trunc,ShSh-cr}.

{\bf Acknowledgments.} One of the authors (FA) is grateful to
Andy Bennett and Lev Kapitanski, Kansas State University, for hospitality and support.
His work was partially supported by grants \#436--2978 and CMS--9813182.

\def\co{{\Cal O}}
\section{The Manifold of Monodromy Data}\label{sec:def-monodromy}
In this section, we define the monodromy data \cite{JM} for Equation~(\ref{mainl}).

Equation~(\ref{mainl}) has three singular points:
irregular one at the infinity and two regular singularities at $0$ and $1$.
We define the canonical solutions $Y_k=Y_k(\la)$ of Equation~(\ref{mainl}) by means
of their $|\la|\to \infty$ expansions,
\bee
Y_k(\la)\underset{|\la|\to \infty}=\left(I+
{\cal O}\left(\frac 1\la\right)\!\!\right)
\exp\left(\frac {\la t}2
\si_3-\frac{\Th_\infty}2\ln\la\si_3\right),
\label{stexp2}
\eee
in the corresponding sectors
\bee
-\frac\pi 2 +\pi (k-2)<\arg \la+\arg t<\frac{3\pi}2 +\pi (k-2),\quad
k=1,2,\dots.\label{sec2}
\eee
Henceforth, we fix the branch of $\ln \la $ in the usual way, i.e.,
$\ln \la=\ln|\la| +\imath \arg \la$, and consider  $\arg t$ as a given number.
For pure imaginary arguments $\arg t=\pm\frac \pi2$.

The canonical solutions are connected by the so called Stokes matrices,
\bee
Y_{k+1}(\la)=Y_k(\la)S_k.\label{stokes2}
\eee
Using \num{stexp2}, one easily proves that
\bee
Y_{k+2}(\la e^{2\pi \imath})
=Y_k (\la) e^{-\pii \tin \si_3}.\label{rel2}
\eee
Equations~\num{stokes2} and \num{rel2} give us
\bee
S_{k+2} =e^{\pi\imath
\Th_\infty \si_3}S_k e^{-\pi\imath \Th_\infty \si_3}.
\label{rec2}
\eee
Thus, one can determine all the Stokes matrices having two of them. We choose $S_1$
and $S_2$ to be these basic matrices.
It follows from Equations~(\ref{mainl}), \num{stexp2}, \num{sec2},
and \num{stokes2} that matrices $S_k$ are independent of $\la$ and of the
following structure:
\bee
S_{2k}=\bm
1&s_{2k}\\ 0&1\em,\quad
S_{2k+1}=\bm 1&0\\s_{2k+1}& 1\em,\quad k\in \Bbb Z.
\label{stst2}
\eee
The complex parameters $s_k$ $(k=1,2,\dots)$ are called
{\it Stokes multipliers}. The monodromy matrix at infinity, $M^\infty_k$,
for function $Y_k$ is defined by the following equation:
\bee
Y_k(\la e^{-2\pii})=Y_k (\la )M^\infty _k.\label{md2}
\eee
Using Equations~\num{stokes2} and \num{rel2}, one finds
\bee
M_k^\infty= S_k S_{k+1} e^{\pii \tin \si_3}.
\label{mi2}
\eee
In the following, we use only one of these matrices
$$
M_2 ^\infty\eq M^\infty= S_2 e^{\pii \tin \si_3}S_1.
$$
All other matrices $M^\infty_k$ can be expressed in terms of $M^\infty$
via the following recurrence relation
\bee
M_{k+1}^\infty =S_k^{-1} M^\infty_k S_k.
\label{recm2}
\eee

To deal with monodromy matrices at finite singularities at $0$ and $1$ we define a single-valued branch of $Y_2$.
It is convenient
to restrict $Y_2$ to the domain
$\bar {\Bbb C}\back ([0,1]\cup [1/2, \infty e^{\frac{-\imath \pi}2}])$.
In this domain
$Y_2(\la)$ is a single-valued analytical function (of $\la$) with
the following expansions at the regular
singularities $\la=p$, $p=0,1$:
\begin{eqnarray}
&Y_2(\la) \underset{\la\to p}=\sum^\infty_{n=0}\hat Y_n^p(\la-p)
^{n+\Th_p \si_3/2} E^p, \label{y22}&\\
&\hat Y_n^p= \hat Y_n^p (t,z, y,u, \Th_0,\Th_1,\Th_\infty),\quad
E^p=E^p(t,z, y,u, \Th_0,\Th_1,\Th_\infty), &\nonumber\\
&\det \hat Y_0^p
=\det E^p =1, &\nonumber\\
&(\hat Y_0^p)^{-1} A_p(t) \hat Y_0^p =\frac{\Th_p}2\si_3.\nonumber&
\end{eqnarray}
We assume here that $\Th_p\not \in \Bbb Z$. If $\Theta_p$ is an integer
the expansion is modified as it is written in \cite{AK1}.
The series in Equation~\num{y22} is convergent for $|\la-p|<1$.

Using expansions \num{y22}, we define the monodromy
matrices at the regular singular points as follows:
\bee
 \label{eq:mj2}
M^p =\big(E^p\big)^{-1} e^{\pi\imath\Th_p\si_3}E^p.
\eee
Even though expansion {\rm\num{y22}} does not define $E^p$ uniquely,
the matrices $M^p$ are defined properly.

The monodromy matrices are connected by
the following cyclic relation \cite{J}
\bee
M^\infty M^1 M^0=I.\label{crel2}
\eee
The matrix elements $m_{ij}^0$, $m_{ij}^1$
$(i,j=1,2)$, Stokes multipliers, $s_k$, $k=1,2$, and the parameters
$\Th_0,\Th_1,\Th_\infty\in \Bbb C$ are called the {\it monodromy data} of
Equation(\ref{mainl}). It is easy to check that the data satisfy the following relations:
\begin{eqnarray}
&\det M^0=1, \ \det M^1=1, &\nonumber\\
&\tr M^0 =2\cos \pi \Th_0,
\ \tr M^1
=2\cos \pi \Th_1,\ (M^1 M^0) _{11} =e^{-\pi \imath \Th_\infty}. &
\label{idm2}
\end{eqnarray}
System.~(\ref{idm2}) in $\Bbb C^8 \ni (m_{11}^0, m_{12}^0, m^0_{21}, m^0_{22},
m_{11}^1, m_{12}^1, m^1_{21}, m^1_{22})$,
for fixed $\Th_0,\Th_1,\tin \in \Bbb C$ define an algebraic variety
which is called the {\it manifold of monodromy data},
$\Cal M_5(\0,\1,\tin)$. In terms of
$m^p_{ij}$, Eqs.~\num{idm2} read as:
\begin{eqnarray}
&m_{11}^0 m_{22}^0 -m^0_{12} m_{21}^0=1, \quad
m_{11}^1 m_{22}^1 -m^1_{12} m_{21}^1=1, \label{eq:det-monmat1}&\\
&m_{11}^0+m_{22}^0=2\cos \pi \Th_0,\quad
m_{11}^1+m_{22}^1=2\cos \pi \Th_1,\label{id2}&\\
&m^1_{11} m_{11}^0 +m_{12}^1 m_{21}^0 =e^{-\pi \imath \Th_\infty}.\label{eq:M0M1}&
\end{eqnarray}
Note that $\op{dim}_{\Bbb C} \cm_5(\0,\1,\tin)=3$. Given a point in $\cm_5(\0,\1,\tin)$ (that is matrices $M^0$ and $M^1$),
one finds the Stokes multipliers via relations \num{mi2} and \num{crel2}.
Thus, $\cm_5(\0,\1 ,\tin)$ completely defines all the monodromy data.

The given point $(t,z,y,u,\0,\1,\tin)\in \Bbb C^7$
defines Matrices~\eqref{eq:mat0} and \eqref{eq:mat1} that, in its turn, Equation~(\ref{mainl}).
A solution to {\it direct monodromy problem} is a correspondence
$$
(t,z,y,u,\0,\1,\tin)\to \text{Equation~(\ref{mainl})}\to \cm\in
\cm_5(\0,\1,\tin).
$$
An inverse map
$$
\{\cm,t\}\to (t,z,y,u,\0,\1,\tin),
$$
where
$\cm\in\cm_5(\0,\1,\tin)$ and $t\in \Bbb C$,
is a solution to {\it inverse monodromy problem}.
If the inverse monodromy problem is solvable, its solution is unique.
If we demand that point $\cm\in \cm_5(\0,\1,\tin)$ does not move when we vary $t$, the corresponding solution of the inverse monodromy problem, i.e.,
functions $y=y(\cm,t)$, $z=z(\cm,t)$, and $u=u(\cm,t)$, are called the
{\it isomonodromy deformations}, which means that the monodromy data do not change. The main result is that in the case of isomonodromy deformations the functions $y=y(\cm,t)$, $z=z(\cm,t)$, and $u=u(\cm,t)$ satisfy Eqs.~(\ref{eq:ids1})--(\ref{eq:ids3}) \cite{JM}.

Only in exceptional cases the direct and/or inverse monodromy problems can
be solved explicitly. Therefore, we have to apply asymptotic methods.
In this paper we solve the direct monodromy problem asymptotically
as $t\to\infty$ with $\Re(t) =\mathcal{O}(1)$ and, also, for $t\to 0$. We find asymptotics, parameterized
by some complex numbers and present explicit formulas for the
monodromy data in terms of these parameters.

\section{Results}\label{sec:res}
The asymptotic results formulated in this section are valid in the cheese-like domains, $\mathcal D^1_\pm$ and
$\mathcal D^2_\pm$, in the complex $t$-plane. The subscripts $\pm$ means that the positive, respectively, negative
imaginary semiaxes, beginning with some finite point, belong to the corresponding domains. The domains with different
subscripts do not intersect, the different superscripts of the domains with the same subscripts means different
locations of the holes inside the domains. An important property of these domains is that the solution of
System~\eqref{eq:ids1}--\eqref{eq:ids3}, the set of functions $y(t)$, $z(t)$, and $u(t)$, as well as their asymptotics
restricted in the domains are singlevalued analytic functions. This fact is important for justification of asymptotics
as well as the study of distribution of zeroes and poles of the solutions along the imaginary axis.

The precise definition of the domains are given in Theorems~\ref{th:new1} and \ref{th:new2} below, before we specify
the branch of function $t^\nu$ with $\nu\in\mathbb C$, which we use in our domains to write our asymptotic formulae.
We define $t^\nu$ on the imaginary axis as $e^{\frac{\imath\pi\nu}2} |t|^\nu$ for $\Im(t)>0$ and
$e^{-\pi \imath \frac\nu2} |t|^\nu$ for $\Im(t)<0$ and extend it on the entire domains $\mathcal D^1$ and
$\mathcal D^2$  via the analytic continuation.

\begin{theorem}\label{th:new1}
Let $\varphi\in\mathbb C$ and
$\varphi\mp\tfrac{\Theta_0}2,\varphi-\tfrac{\Theta_\infty}2\mp\tfrac{\Theta_1}2\neq0,-1,-2,\ldots$,
and $\de,\hat u \in \Bbb C\back \{0\}$.
Denote
$$
R_1(t;\vartheta)=1-\dfrac{\varphi-\vartheta}{\delta t^{\nu_1}e^t},\quad\text{where}\quad
\nu_1=1+\tin -4 \vp
$$
and assume $-1/2<\Re(\nu_1)<1$. Then, for each value of the sign $\pm$
there exists the unique solution of System {\rm\eqref{eq:ids1}}--{\rm\eqref{eq:ids3}}
with the following asymptotic expansion
\begin{align*}
yt&=\delta t^{\nu_1}e^tR_1\Big(t;\dfrac{\Theta_0}2\Big)R_1\Big(t;\dfrac{\Theta_1+\Theta_\infty}2\Big)
+\mathcal O(t^{-3\nu_1-1}\ln t)+\mathcal O(t^{\nu_1-1}\ln t),&\\
z&=-\Theta_0-\Big(\varphi-\dfrac{\Theta_0}2\Big)R_1\Big(t;\dfrac{\Theta_1+\Theta_\infty}2\Big)
+\mathcal O(t^{-1})+\mathcal O(t^{-3\nu_1-1}\ln t)&\\
&=-\frac{\Theta_0+\Theta_1+\Theta_\infty}2
-\Big(\varphi-\dfrac{\Theta_1+\Theta_\infty}2\Big)R_1\Big(t;\dfrac{\Theta_0}2\Big)
+\mathcal O(t^{-1})+\mathcal O(t^{-3\nu_1-1}\ln t),&\\
u&=\dfrac{\hat u}\delta\cdot
\dfrac{t^{2\vp}\big(1+\mathcal O\big(t^{-1}\ln t\big)+\mathcal O\big(t^{-2\nu_1-1}\ln t\big)\big)}
{R_1\Big(t;\dfrac{\Theta_1+\Theta_\infty}2\Big)+\mathcal O(t^{-\nu_1-1}\ln t)+\mathcal O(t^{-3\nu_1-1}\ln t)},&
\end{align*}
as $t\to\infty$ and $\arg t\to\pm\frac\pi2$ with $t\in\mathcal D^1$, where
$$
\mathcal D^1:=
\left\{t\in\mathbb C, |\Re(t)|< r_1, -\pi<\arg t<\pi, |t-t^{0,1}_n|\geq r_2|t|^{\varepsilon-1},
0<\varepsilon\leq1,\forall\,r_1,r_2>0\right\},
$$
where the sequences, $t^0_n$ and $t^1_n$, are infinite series $(n=1,2,\ldots)$ of solutions (if they exist) of
the equations
\begin{equation}\label{eq:zeroes}
R_1\Big(t^0_n;\dfrac{\Theta_0}2\Big)=0,\qquad
R_1\Big(t^1_n;\dfrac{\Theta_1+\Theta_\infty}2\Big)=0,
\end{equation}
with $|\Re(t^{0,1}_n)|<r_1$.
\end{theorem}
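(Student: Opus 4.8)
The plan is to use the Method of Isomonodromy Deformations outlined in the introduction: instead of attacking System \num{eq:ids1}--\num{eq:ids3} directly, I reconstruct the solution from the monodromy data of the linear problem \num{mainl}, taking the leading behaviour of the entries of $A_0,A_1$ as an ansatz that is verified a posteriori. Concretely, I would assume $y\sim\de t^{\nu_1-1}e^t$ with $\nu_1=1+\tin-4\vp$, together with the magnitudes of $z$ and $u$ that \num{eq:mat0}--\num{eq:mat1} then force, and determine the dominant balance in \num{mainl} as $t\to\infty$ with $\arg t\to\pm\tfrac\pi2$. After a suitable gauge transformation and rescaling of $\la$, the equation reduces on the relevant portion of the $\la$-plane to a solvable model whose Stokes matrices and connection coefficients are expressible through $\Ga$-functions, the local exponents $\pm\tfrac\0 2$, $\pm\tfrac\1 2$ at $\la=0,1$ feeding into the $\pm\tfrac\tin2$ behaviour at $\la=\infty$. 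From this one reads off the Stokes multipliers $s_1,s_2$ and the matrices $M^0,M^1$ as explicit functions of $\vp,\de,\hat u$ and $\0,\1,\tin$. The non-resonance hypotheses $\vp\mp\tfrac\0 2,\ \vp-\tfrac\tin2\mp\tfrac\1 2\notin\{0,-1,-2,\dots\}$ enter exactly here, ensuring that the local series \num{y22} are non-degenerate and that the $\Ga$-factors in the connection coefficients stay finite.

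Next I would solve the inverse problem, inverting the map $(\vp,\de,\hat u)\mapsto\cm\in\cm_5(\0,\1,\tin)$. Since $\op{dim}_{\Bbb C}\cm_5=3$ matches the number of free parameters, and the leading data $\de$, $\hat u$ and the exponent $\nu_1$ are read off uniquely from the asymptotic formulae, this map is a local biholomorphism onto the admissible part of $\cm_5$; the window $-\tfrac12<\Re(\nu_1)<1$ is precisely where the displayed leading term dominates and the listed $\co$-corrections are genuinely subdominant. Because the excerpt records that the inverse monodromy problem has at most one solution, any two IDS solutions sharing the stated asymptotics would carry the same monodromy and hence coincide, which yields the uniqueness claim once existence is in hand.

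For existence together with the rigorous asymptotic expansion I would invoke the Wasow theorem (Theorem~35.1 of \cite{W}), as flagged in the introduction: from the leading term one develops a complete formal series (the construction being Appendix~\ref{sec:allterms}), and Wasow's theorem then produces a genuine analytic solution of \num{eq:ids1}--\num{eq:ids3} admitting that series as its asymptotic expansion along $\arg t\to\pm\tfrac\pi2$. It remains to identify the domain $\mathcal D^1$: the error estimates are uniform only away from the points where the leading term degenerates, namely the zeros of $R_1(t;\tfrac\0 2)$ and $R_1(t;\tfrac{\1+\tin}2)$, which are the loci \num{eq:zeroes} of the zeros and poles of the solution. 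Excising cheese-holes of radius $r_2|t|^{\ep-1}$ about the $t^{0,1}_n$ and restricting to the strip $|\Re(t)|<r_1$ keeps us where the dominant balance, and hence the whole construction, is valid.

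The genuine obstacle, I expect, is the asymptotic solution of the direct monodromy problem: the WKB / matched-asymptotics reduction of \num{mainl} to a solvable model and the uniform control of the connection errors across the entire cheese domain. Tracking the Stokes phenomenon and the connection matrices with bounds sharp enough to survive close to (but outside) the holes, while keeping the assumed orders of the matrix entries self-consistent, is the technically demanding step; the inversion, the parameter count, and the Wasow argument downstream are comparatively routine.
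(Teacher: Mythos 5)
Your proposal follows essentially the same route as the paper: WKB analysis of \eqref{mainl} away from $\la=0,1$, reduction to a Whittaker-type model equation near each regular singular point, matching to extract the monodromy data as $\Gamma$-function expressions in $\vp,\de,\hat u$, then inverting the requirement that the monodromy be $t$-independent to obtain the explicit leading terms, with uniqueness from the inverse monodromy problem and existence via the justification scheme of \cite{K} or the Wasow theorem plus the complete series of Appendix~\ref{sec:allterms}. The only nuance is that the paper starts from weaker order-of-magnitude assumptions \eqref{zdem}--\eqref{ydem} rather than the precise ansatz $y\sim\de t^{\nu_1-1}e^t$, letting the $R_1$-structure emerge from the algebraic system \eqref{eq:ratio-y-z}--\eqref{eq:varphi-y-z}; this is a presentational difference, not a different argument.
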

\begin{remark}\label{rem:estimates}{\rm
The notation $t\to\infty$ and $\arg t\to\pm\frac\pi2$ with $t\in\mathcal D^1$ means that asymptotics hold for all
rather large $t\in\mathcal D^1$ with either $\Im t>0$ or, respectively, $\Im t<0$. We recall that the functions
$y(t)$, $z(t)$, $u(t)$ have the branching point at $t=0$ (cf. Section~\ref{sec:zero}) so that it is important to
specify the argument of $t$.
All error estimates depend on all the parameters: $\varphi$, $\delta$, $\hat u$, and $\Theta$'s, including those
characterizing the domains $\mathcal D^1$: $r_1$, $r_2$, and $\varepsilon$.
Exclusion of the negative integer values of $\varphi-\frac{\Theta_0}2$ and $\varphi-\frac{\Theta_1+\Theta_\infty}2$
does not follow from our derivation. This requirement is dictated by the justification scheme outlined in the
Introduction: in the case of negative integer values of these parameters one cannot uniquely specify the corresponding
solution by the monodromy data (cf. Theorem~\ref{th:allmon} and Equations~\eqref{eq:det-monmat1}--\eqref{eq:M0M1}).
It does not mean that solutions with the corresponding asymptotics do not exist: just our calculation requires
a minor modification. More specifically, in this case we have to calculate the other monodromy parameters than that
given in Equations~\eqref{eq:m1-12}--\eqref{eq:m0-21} below. It is important to mention that although
Theorems~\ref{th:new1} and \ref{th:new2} do not refer to the monodromy data, our way of proving them is based on the
monodromy correspondence established in Theorem~\ref{th:allmon}.}
\end{remark}
\begin{remark}\label{rem:zeroes}{\rm
It is clear that infinite series of solutions of Equations~\eqref{eq:zeroes} satisfying the condition
$|\Re(t^{0,1}_n)|<r_1$ exist only in the case $\Re(\nu_1)=0$. Thus, in the case $\Re(\nu_1)\neq0$ the domain
$\mathcal D^1$ is just a strip along the imaginary axis ("the cheese without holes")
incised along the segment $[-r_1,0]$.
One can prove that for all rather large $t$ in any circle with small enough radius (see definition of $\mathcal D^1$)
centred at zeroes of asymptotics, $t^{0,1}_n$, there exists one and only one zero of the Painlev\'e function $y(t)$.
Therefore solutions described in Theorem~\ref{th:new1} do not have poles in $\mathcal D^1$.}
\end{remark}
\begin{remark}\label{rem:mu-modifications}{\rm
Instead of taking the imaginary axis $(\Re(t)=0)$ as the axis of the domain $\mathcal D^1$
our derivation presented below with little modifications works for a
"logarithmic deformation" of the imaginary axis, namely, $\Re(t)=\mu_1\ln|t|$ for any $\mu_1\in\mathbb R$.
In this case instead of $\mathcal D^1$ we can formulate our result in $r_1$-neighborhood of the "deformed
imaginary axis", which can be denoted as $\mathcal D^1(\mu_1)$. In this case all asymptotics announced
in Theorem~\ref{th:new1} are valid and the condition on $\nu_1$ should be changed to
$-1/2<\mu_1+\Re(\nu_1)<1$. In the error estimates one also has to change $\nu_1\to\mu_1+\nu_1$.
It is interesting to note that points of the logarithmic curve satisfy the asymptotic condition
$t\to\pm\imath\infty$ in the sense that $\arg t\to\pm\pi/2$ as $|t|\to+\infty$.

Because the parameter $\mu_1$ is arbitrary we can consider slightly more complicated domains,
than $\mathcal D^1(\mu_1)$, where the asymptotics still hold, e.g., if $0<\mu_1<3/2$ we can
write our asymptotics in the $r_1$-neighborhood of the domain bounded on the right with the logarithmic curve and
on the left with the imaginary axis. In this case the parameter $\Re(\nu_1)$ is bounded as follows,
$-1/2<\Re(\nu_1)<1-\mu_1$. We can also consider domains which are bounded on the right and on the left with the
logarithmic curves with positive and negative values of the parameter $\mu_1$, or with the same sign of $\mu_1$ in
the latter case they would not contain the imaginary axis. These asymptotics in the "logarithmic" domains allow us
to establish existence of infinite sets of zeroes, $t^{0,1}_n\to\pm\imath\infty$ as $n\to+\infty$, for
$\Re(\nu_1)+\mu_1=0$ which logarithmically (with respect to $n$) moving away from the imaginary axis.}
\end{remark}
\begin{theorem}\label{th:new2}
Let $\varphi\in\mathbb C$ and
$\varphi\mp\tfrac{\Theta_0}2,\varphi-\tfrac{\Theta_\infty}2\mp\tfrac{\Theta_1}2\neq0,-1,-2,\ldots$,
and $\de,\hat u \in \Bbb C\back \{0\}$.
Denote
$$
R_2(t;\vartheta)=1-(\varphi+\vartheta)\delta t^{-\nu_2}e^t,\quad\text{where}\quad
\nu_2=1-\Theta_\infty+4\varphi
$$
and assume $-1/2<\Re(\nu_2)<1$. Then, for each value of the sign $\pm$
there exists the unique solution of System {\rm\eqref{eq:ids1}}--{\rm\eqref{eq:ids3}}
with the following asymptotic expansion
\begin{align*}
\frac ty&=\dfrac{R_2\Big(t;\dfrac{\Theta_0}2\Big)R_2\Big(t;\dfrac{\Theta_1-\Theta_\infty}2\Big)}{\delta t^{-\nu_2}e^t}
+\mathcal O(t^{-3\nu_2-1}\ln t)+\mathcal O(t^{\nu_2-1}\ln t),&\\
z&=-\Big(\varphi+\dfrac{\Theta_0}2\Big)R_2\Big(t;\dfrac{\Theta_1-\Theta_\infty}2\Big)
+\mathcal O(t^{-1})+\mathcal O(t^{-3\nu_2-1}\ln t)&\\
&=-\frac{\Theta_0-\Theta_1+\Theta_\infty}2
-\Big(\varphi+\dfrac{\Theta_1-\Theta_\infty}2\Big)R_2\Big(t;\dfrac{\Theta_0}2\Big)
+\mathcal O(t^{-1})+\mathcal O(t^{-3\nu_2-1}\ln t),&\\
u&=\dfrac{\hat u}\delta
t^{2\vp}\left(1+\mathcal O\big(t^{-1}\ln t\big)+\mathcal O\big(t^{-2\nu_2-1}\ln t\big)\!\right)\times\\
\times&\left(R_2\Big(\!t;\dfrac{\Theta_1-\Theta_\infty}2\!\Big)\!+\mathcal O\big(t^{-\nu_2-1}\ln t\big)+
\mathcal O\big(t^{-3\nu_2-1}\ln t\big)\!\right),&
\end{align*}
as $t\to\infty$ and $\arg t\to\pm\frac\pi2$ with $t\in\mathcal D^2$, where
$$
\mathcal D^2:=
\left\{t\in\mathbb C,|\Re(t)|\leq r_1, -\pi<\arg t<\pi, |t-t^{2,3}_n|\geq r_2|t|^{\varepsilon-1},
0<\varepsilon\leq1,\forall\,r_1,r_2>0\right\},
$$
where the sequences, $t^2_n$ and $t^3_n$, are infinite series $(n=1,2,\ldots)$ of solutions (if they exist) of
the equations
\begin{equation}\label{eq:poles}
R_2\Big(t^2_n;\dfrac{\Theta_0}2\Big)=0,\qquad
R_2\Big(t^3_n;\dfrac{\Theta_1-\Theta_\infty}2\Big)=0,
\end{equation}
with $|\Re(t^{2,3}_n)|<r_1$.
\end{theorem}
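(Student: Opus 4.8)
The plan is to deduce Theorem~\ref{th:new2} from the already established Theorem~\ref{th:new1} by means of a discrete symmetry of the isomonodromy family, rather than by repeating the entire monodromy analysis. The key observation is that the substitution $Y(\lambda,t)\mapsto \sigma_1 Y(\lambda,-t)\sigma_1$ carries a solution of Equation~\eqref{mainl} into another solution of the same form: since $\sigma_1\sigma_3\sigma_1=-\sigma_3$, the leading term $\tfrac t2\sigma_3$ is restored precisely by the replacement $t\to-t$, while the formal exponent at infinity changes sign, $\Theta_\infty\to-\Theta_\infty$. Because conjugation by the constant matrix $\sigma_1$ together with $t\to-t$ fixes the monodromy data up to an explicit conjugation, this map sends isomonodromy deformations to isomonodromy deformations, i.e. solutions of System~\eqref{eq:ids1}--\eqref{eq:ids3} with parameters $(\Theta_0,\Theta_1,\Theta_\infty)$ to solutions with $(\Theta_0,\Theta_1,-\Theta_\infty)$. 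I expect this transformation to be among, or composable from, those catalogued in Appendix~\ref{sec:trans}.

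First I would compute the induced action on the dependent variables by conjugating the residue matrices $A_0,A_1$ by $\sigma_1$ and matching the result against their parametrization. A direct comparison gives $y\mapsto 1/y$, $z\mapsto -z-\Theta_0$, and $u\mapsto 1/u$ (all evaluated at $-t$). In particular $y\mapsto1/y$ interchanges the zeros and the poles of the transcendent, which is exactly the distinction between Theorem~\ref{th:new1} (whose solutions have zeros but no poles, cf. Remark~\ref{rem:zeroes}) and Theorem~\ref{th:new2} (whose solutions have poles, located by Equation~\eqref{eq:poles}).

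Next I would substitute the Theorem~\ref{th:new1} asymptotics into these formulas and verify that they reproduce Theorem~\ref{th:new2}. Writing $\hat\varphi,\hat\delta$ for the parameters of the image solution, one sets $\hat\varphi=-\varphi$, so that its exponent $\hat\nu_2=1-(-\Theta_\infty)+4\hat\varphi$ equals $\nu_1$; the arguments of the $R$-factors match because $(\Theta_1+\Theta_\infty)/2\mapsto(\Theta_1-\Theta_\infty)/2$ under $\Theta_\infty\to-\Theta_\infty$ while $\Theta_0/2$ is fixed; and a short computation identifies $R_1(-t;\vartheta)$ with $R_2(t;\vartheta)$ once $\hat\delta=-\delta^{-1}e^{\imath\pi\nu_1}$, the phase coming from the branch of $t^{\nu_1}$ under $t\to-t$. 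Since $y\mapsto1/y$, the product of $R_1$-factors multiplying $yt$ in Theorem~\ref{th:new1} moves into the denominator, turning the zeros \eqref{eq:zeroes} into the poles \eqref{eq:poles} and yielding exactly the formula for $t/y$ in Theorem~\ref{th:new2}; the expressions for $z$ and $u$ follow from $z\mapsto-z-\Theta_0$ and $u\mapsto1/u$ in the same way. Uniqueness for each sign transfers automatically, the transformation being an involution on the solution space.

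The main obstacle, and the part requiring genuine care, is the bookkeeping of branches, signs, and domains. The replacement $t\to-t$ exchanges $\arg t=+\tfrac\pi2$ with $\arg t=-\tfrac\pi2$, so the $+$ case of Theorem~\ref{th:new1} produces the $-$ case of Theorem~\ref{th:new2} and vice versa; one must track the branch of $t^\nu$ fixed on the imaginary axis (hence the phase $e^{\imath\pi\nu_1}$ above), verify that the incised strip $\mathcal D^1$ maps onto $\mathcal D^2$ with the holes repositioned from the zeros \eqref{eq:zeroes} to the poles \eqref{eq:poles}, and confirm that each error term $\mathcal O(t^{-3\nu_1-1}\ln t)$, $\mathcal O(t^{\nu_1-1}\ln t)$, and so on, transforms into the corresponding term written with $\nu_2$. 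As an independent check, and a fully self-contained alternative, one could instead repeat the Method of Isomonodromy Deformations of Section~\ref{sec:der1} with the ansatz adapted to the pole regime, solving the direct and inverse monodromy problems afresh; but given Theorem~\ref{th:new1}, the symmetry route is far shorter.
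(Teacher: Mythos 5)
Your route is genuinely different from the paper's. The paper proves Theorem~\ref{th:new2} in Section~\ref{sec:derII} by rerunning the Method of Isomonodromy Deformations of Section~\ref{sec:der1} from scratch under the reciprocal assumption $\mathcal O(t^{-1})<1/|y|<\mathcal O(t)$, with correspondingly modified matrices $T_0,T_1$ and parameters $\rho_0,\rho_1$, and then solving the resulting algebraic system for $t/y$, $z$, $u$. You instead pull Theorem~\ref{th:new2} back to Theorem~\ref{th:new1} through the involution $Y(\lambda,t)\mapsto\sigma_1Y(\lambda,-t)\sigma_1$. That symmetry is real and your computed action on the dependent variables is exactly the one the paper itself records in Subsection~\ref{subsec:t-to-minus-t}: $\tilde\Theta_\infty=-\Theta_\infty$, $\tilde y=1/y$, $\tilde z=-z-\Theta_0$, $\tilde u=1/u$; with $\tilde\varphi=-\varphi$ one indeed gets $\tilde\nu_2=\nu_1$, the arguments $\Theta_0/2$ and $(\Theta_1+\Theta_\infty)/2$ of the $R_1$-factors go to $\Theta_0/2$ and $(\Theta_1-\Theta_\infty)/2$, and $R_1(-t;\vartheta)$ becomes $R_2(t;\vartheta)$ after the phase adjustment of $\delta$ you describe. (Note the transformation you need lives in Subsection~\ref{subsec:t-to-minus-t}, not in Appendix~\ref{sec:trans}, which contains only the Schlesinger shifts $\Theta_\infty\to\Theta_\infty\pm2$; the paper does mention those as yet another alternative derivation of Theorem~\ref{th:new2}.) What your approach buys is economy: no second round of WKB matching. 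What the paper's approach buys is that Theorems~\ref{th:new1} and~\ref{th:new2} come out uniformly parameterized by the \emph{same} monodromy data \eqref{eq:m1-12}--\eqref{mr2} of Theorem~\ref{th:allmon}, covering complementary ranges of $\Re(4\varphi-\Theta_\infty)$ for one and the same family of solutions, whereas in your derivation the monodromy identification for the image solution has to be recomputed through the conjugation $\tilde M^p=\sigma_1S_1M^pS_1^{-1}\sigma_1$.

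Two caveats you should resolve before calling this complete. First, the hypotheses do not transfer verbatim: Theorem~\ref{th:new1} excludes $\varphi\mp\Theta_0/2,\ \varphi-\Theta_\infty/2\mp\Theta_1/2\in\{0,-1,-2,\dots\}$, and under $\varphi\to-\varphi$, $\Theta_\infty\to-\Theta_\infty$ this becomes the exclusion of $\{0,+1,+2,\dots\}$ for the corresponding combinations of $\tilde\varphi$ --- not the set $\{0,-1,-2,\dots\}$ stated in Theorem~\ref{th:new2}. So on a discrete set of parameter values your argument proves a slightly different statement; given Remark~\ref{rem:estimates}, which says these exclusions are dictated by the justification scheme rather than the derivation, this is a defect of bookkeeping rather than of substance, but it must be acknowledged or repaired. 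Second, since $-\pi<\arg t<\pi$ is required in both domains and the functions branch at $t=0$, the map $t\to-t$ must be realized as $\arg t\to\arg t-\pi$ on the upper half and $\arg t\to\arg t+\pi$ on the lower half, and the two choices produce different phases $e^{\mp\imath\pi\nu_1}$ in $\tilde\delta$ and $e^{\mp2\pi\imath\varphi}$ in $\tilde{\hat u}$; these must be checked against the $\pm$ cases \eqref{mr1}--\eqref{mr2} of Theorem~\ref{th:allmon} if the parameterization is to remain consistent. You flag both issues as ``bookkeeping,'' which is fair, but they are exactly where such an argument can silently go wrong.
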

\begin{remark}\label{rem:th:new2-gen}{\rm
Remark~\ref{rem:estimates} holds with the change $\mathcal D^1\to\mathcal D^2$.
Remark~\ref{rem:zeroes} is also valid if one replaces: $t^{0,1}_n$ with $t^{2,3}_n$,
$\nu_1$ with $\nu_2$, Theorem~\ref{th:new1} with Theorem~\ref{th:new2},
and considers zeroes instead of poles. Remark~\ref{rem:mu-modifications} also can be reformulated
for the results stated in Theorem~\ref{th:new2} as existence of the infinite sequence of poles $t^{2,3}_n$,
$n=1,2,\ldots$, which diverges from the imaginary axis logarithmically. If we introduce the parameter
$\mu_2$ instead of $\mu_1$: $\Re(t)=\mu_2\ln|t|$, than the range of the validity of the asymptotics in
the $\mu$-deformed domain can be described as $-1/2<-\mu_2+\Re(\nu_2)<1$, and in the error estimates we must
change $\nu_2\to\nu_2-\mu_2$.}
\end{remark}

\begin{remark}\label{rem:interlace-th3.1-th3.2}{\rm
As mentioned at the end of Section~{\rm\ref{sec:derII}} and in Subsection~{\rm\ref{subsec:asympt-th3.1}}
the results reported in Theorems~{\rm\ref{th:new1}} and {\rm\ref{th:new2}} continue to describe the qualitative
behavior of function $y$ beyond the intervals of validity of the theorems, namely, for
$1\leq\Re(\nu_k)<2$, where $k=1,2$.
The error estimates in Theorem~{\rm 3.k} for the functions $y$ and $z$ equal $\mathcal O(t^{2\nu_k-2}\ln t)$ and
$\mathcal O(t^{\nu_k-2})$, respectively.
Since $\nu_1+\nu_2=2$, then for $1\leq\Re(\nu_k)<2$, we get $0<\Re(\nu_{3-k})\leq1$; thus Theorem~{\rm3.(3-k)} gives much
better approximation than Theorem~{\rm3.k}.
In the domain where at least one of the parameters $\nu_k$ satisfies the condition $1\leq\Re(\nu_k)<2$, which can be rewritten
in terms of $\varphi$ as $|\Re(4\vp-\tin)|<1$, the first {\rm(}largest{\rm)} terms of asymptotics given by
both Theorems~{\rm\ref{th:new1}} and {\rm\ref{th:new2}} coincide:
\begin{eqnarray*}
y&=&\de t^{-4\vp+\tin}e^t\left(1+\mathcal O\big(t^{|\Re(4\varphi-\Theta_\infty)|-1}\big)\right),\\
z&=&-\vp-\frac{\Th_0}2+\mathcal O\big(t^{|\Re(4\vp-\tin)|-1}\big),\\
u&=&\frac{\hat u}\de t^{2\vp}
(1+\mathcal O\big(t^{|\Re(4\vp-\tin)|-1}\big)+\mathcal O\big(t^{-1}\big)\ln(t))
\end{eqnarray*}
}\end{remark}
\begin{corollary}\label{cor:results-zeta}
For solutions defined in Theorems {\rm3.1} and {\rm3.2} the corresponding $\zeta$-function~\eqref{eq:zeta-def} has the
following asymptotics as $t\to\infty$, $\arg t\to\pm\frac \pi 2$, and $t\in\mathcal D^1\cup\mathcal D^2$:
\begin{equation}\label{eq:results-zeta}
\begin{split}
\ze&=\Big(\vp+\frac {\Th_0}2\Big)t-2\Big(\varphi+\frac{\Theta_0}2\Big)\Big(\varphi-\frac{\Theta_\infty+\Theta_0}2\Big)\\
&+\frac1t\left(-\de t^{-4\vp+\tin}e^{t}\Big(\varphi+\frac{\Theta_0}2\Big)\right.
\Big(\varphi-\frac{\Theta_\infty-\Theta_1}2\Big)\\
&\phantom{mmm}-2\Big(\varphi^2-\frac{\Theta_0^2}4\Big)\Big(\varphi-\frac{\Theta_\infty}2\Big)
-2\varphi\Big(\Big(\varphi-\frac{\Theta_\infty}2\Big)^2-\frac{\Theta_1^2}4\Big)\\
&\phantom{mmm}\left.+\de^{-1} t^{4\vp-\tin}e^{-t}\Big(\varphi-\frac{\Theta_0}2\Big)\Big(\varphi-\frac{\Theta_\infty+\Theta_1}2\Big)\right)
+\mathcal O\big(t^{|\Re(-4\vp+\tin)|-2}\big).
\end{split}
\end{equation}
\end{corollary}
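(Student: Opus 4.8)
The plan is to read $\zeta$ off the two exact relations that define it: the algebraic formula \eqref{eq:zeta-def} and the differential relation $d\zeta/dt=-z$ in \eqref{eq:zeta-derivativ}. The latter carries all of the $t$-dependence and is the cleanest route to the exponential terms, while the former is used only once, at leading order, to fix the additive integration constant.

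First I would integrate $d\zeta/dt=-z$. The leading value $z\to-\varphi-\tfrac{\Theta_0}2$ from Theorems~\ref{th:new1} and \ref{th:new2} integrates to $(\varphi+\tfrac{\Theta_0}2)t$. The two exponential corrections of $z$ are then integrated by parts through $\int t^{a}e^{\pm t}\,dt=\pm t^{a}e^{\pm t}(1+\mathcal O(t^{-1}))$: the $e^{-t}$ correction made explicit by $R_1(t;\tfrac{\Theta_1+\Theta_\infty}2)$ in Theorem~\ref{th:new1} produces the decaying term $t^{-1}\delta^{-1}t^{4\varphi-\Theta_\infty}e^{-t}(\varphi-\tfrac{\Theta_0}2)(\varphi-\tfrac{\Theta_1+\Theta_\infty}2)$, and the $e^{+t}$ correction visible through $R_2(t;\tfrac{\Theta_1-\Theta_\infty}2)$ in Theorem~\ref{th:new2} produces the growing term $-t^{-1}\delta t^{-4\varphi+\Theta_\infty}e^{t}(\varphi+\tfrac{\Theta_0}2)(\varphi+\tfrac{\Theta_1-\Theta_\infty}2)$. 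The identities $\nu_1=1+\Theta_\infty-4\varphi$ and $\nu_2=1-\Theta_\infty+4\varphi$ turn the powers $t^{-\nu_1},t^{-\nu_2}$ hidden in $R_1,R_2$ into the required $t^{\pm(4\varphi-\Theta_\infty)-1}$, and $\nu_1+\nu_2=2$ guarantees that Theorems~\ref{th:new1} and \ref{th:new2} yield one and the same $\zeta$ on the overlap of $\mathcal D^1$ and $\mathcal D^2$.

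Integration cannot see the additive constant, so next I would evaluate \eqref{eq:zeta-def} to leading order. Expanding the product of the two brackets with $z\to-\varphi-\tfrac{\Theta_0}2$, the $y$- and $1/y$-groups are exponential and drop out of the algebraic channel, while the $z$-polynomial remainder combines with $\tfrac14((\Theta_0+\Theta_\infty)^2-\Theta_1^2)$; a short computation returns exactly $-2(\varphi+\tfrac{\Theta_0}2)(\varphi-\tfrac{\Theta_\infty+\Theta_0}2)$. It is worth checking here that the naive leading exponential parts of the two brackets cancel against the $e^{\pm t}$ parts of $-zt$ in \eqref{eq:zeta-def} (since $z$ itself carries $e^{\pm t}$ corrections); this cancellation is precisely why the surviving exponential terms of $\zeta$ sit one power of $t^{-1}$ below their naive size, in agreement with the integration above.

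The main obstacle is the non-exponential $\mathcal O(t^{-1})$ term of \eqref{eq:results-zeta}: its coefficient $-2(\varphi^2-\tfrac{\Theta_0^2}4)(\varphi-\tfrac{\Theta_\infty}2)-2\varphi((\varphi-\tfrac{\Theta_\infty}2)^2-\tfrac{\Theta_1^2}4)$ is controlled by the first \emph{algebraic} corrections to $z$ and $y$, which lie inside the $\mathcal O(t^{-1})$ error bounds of Theorems~\ref{th:new1} and \ref{th:new2} and so are not directly displayed. I would recover them either from the complete asymptotic series of Appendix~\ref{sec:allterms} or, equivalently, force them through the IDS~\eqref{eq:ids1}--\eqref{eq:ids3}; in particular \eqref{eq:ids2} ties the algebraic part of $dz/dt$ to the algebraic part of $y$, which is the $\sim t^{-1}$ term obtained by expanding the product $R_1R_1$. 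Feeding these corrections through $-zt$ and the $y$-group of \eqref{eq:zeta-def} and simplifying reproduces the stated coefficient, while the uniform remainder $\mathcal O(t^{|\Re(-4\varphi+\Theta_\infty)|-2})$ is one power of $t^{-1}$ below the dominant exponential term and is inherited from the error estimates of the two theorems.
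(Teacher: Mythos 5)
Your proposal is correct in substance but organizes the derivation differently from the paper. The paper's actual proof lives in Appendix~\ref{sec:allterms}: it first builds the complete second--order asymptotic series \eqref{eq:complete-z-asympt-3.1}--\eqref{eq:complete-y-asympt-3.1} for $y$ and $z$ by recursive substitution into the IDS (the coefficients $z_{ij},y_{ij}$ listed there), substitutes these directly into the algebraic definition \eqref{eq:zeta-def}, and only afterwards checks $\tfrac{d\zeta}{dt}=-z$ as a consistency test. You invert this: \eqref{eq:zeta-derivativ} is your primary engine for the $t$--dependent terms and \eqref{eq:zeta-def} fixes the additive constant. Your route is cleaner for seeing where the linear term and the two exponential terms come from (and your sign/exponent bookkeeping via $\nu_1+\nu_2=2$ checks out, as does your constant $-2(\varphi+\tfrac{\Theta_0}2)(\varphi-\tfrac{\Theta_\infty+\Theta_0}2)$ and the $t^{-1}$ coefficient, which indeed equals $z_{20}$ from the appendix). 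But note that both routes ultimately stand on the same foundation: to certify anything below the linear term you must know $z$ (and $y$) through order $t^{-2}$, i.e.\ exactly the Appendix~\ref{sec:allterms} coefficients. Your closing claim that the remainder $\mathcal O\big(t^{|\Re(-4\vp+\tin)|-2}\big)$ is ``inherited from the error estimates of the two theorems'' is the one imprecise point: the displayed errors for $z$ in Theorems~\ref{th:new1} and \ref{th:new2} are only $\mathcal O(t^{-1})$, and integrating such an error (or multiplying it by $t$ in the $-zt$ term of \eqref{eq:zeta-def}) produces an $\mathcal O(\ln t)$ or $\mathcal O(1)$ ambiguity that would swamp the constant, the $t^{-1}$ term, and both exponentials. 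The stated remainder is inherited from the refined expansion of Appendix~\ref{sec:allterms}, not from the theorems as displayed; since you do invoke that appendix for the coefficients, this is a matter of stating the logic precisely rather than a fatal gap.
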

\begin{remark}\label{rem:cor3.1}{\rm
The parameter $\varphi$ in asymptotics~\eqref{eq:results-zeta} satisfies the condition $|\Re(-4\vp+\tin)|<3/2$.
In the case $|\Re(-4\vp+\tin)|<1/2$ all explicitly written terms in \eqref{eq:results-zeta} larger than the
error estimate; if $1/2\leq|\Re(-4\vp+\tin)|<1$ or $1\leq|\Re(-4\vp+\tin)|<3/2$, then one or, respectively, two terms
in the right-hand side of Equation~\eqref{eq:results-zeta} are equal or smaller the error estimate and can be neglected.
}\end{remark}
\begin{theorem}\label{th:allmon}
Solutions of System~\eqref{eq:ids1}--\eqref{eq:ids3}
described in Theorems~{\rm3.1} and {\rm3.2} define
the isomonodromy deformations of Equation~\eqref{mainl}
with the following monodromy data:
\begin{eqnarray}\label{eq:m1-12}
m^1_{12}&=&
\frac{-2\pii \hat u}
{\Ga(1-\frac{\Th_1+\tin -2\vp}2)
\Ga(\frac{\Th_1-\tin+2\vp}2)}
,\\
m^0_{21}&=&
\frac{-2\pii \de e^{-\pi\imath \tin}}
{\hat u\Ga(1-\frac{\Th_0-2\vp}2)
\Ga(\frac{\Th_0+2\vp}2)}.\label{eq:m0-21}
\end{eqnarray}
If asymptotic expansions of solutions in these theorems are understood to be given for $\arg t\to\frac \pi 2$, then
\begin{equation}\label{mr1}
m^1_{11}=
e^{2\pii\vp -\pii \tin},
\end{equation}
if\;$\arg t\to-\frac \pi 2$, then
\begin{equation}\label{mr2}
m^0_{11}=
e^{-2\pii\vp}.
\end{equation}
\end{theorem}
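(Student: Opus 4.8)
The plan is to solve the direct monodromy problem for \eqref{mainl} asymptotically: substitute the leading terms of $y(t)$, $z(t)$, $u(t)$ from Theorems~\ref{th:new1} and \ref{th:new2} into the matrices $A_0$, $A_1$, construct the canonical solution $Y_2$ normalized by \eqref{stexp2}, and extract the connection matrices $E^0$, $E^1$ that enter \eqref{eq:mj2}. Since the eigenvalues of $A_p$ are fixed at $\pm\tfrac{\Theta_p}2$, the local monodromy exponents do not move with $t$; all the $t$-dependence resides in the eigenvectors, hence in $E^0$, $E^1$. The task therefore reduces to computing these connection matrices and reading off $m^0_{21}$, $m^1_{12}$, $m^0_{11}$, $m^1_{11}$ from $M^p=(E^p)^{-1}e^{\pii\Theta_p\si_3}E^p$.

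First I would partition the $\la$-plane into three overlapping regions: neighborhoods of the regular singularities $\la=0$ and $\la=1$, and an outer region where $|\la|$ and $|\la-1|$ stay bounded away from zero. In the outer region $\tfrac t2\si_3$ dominates, so after diagonalization an adiabatic (WKB-type) reduction gives, to leading order, $\exp(\tfrac{\la t}2\si_3)$ times a scalar prefactor; integrating the diagonal residues of $A_0/\la+A_1/(\la-1)$ reproduces the factor $\la^{-\Theta_\infty\si_3/2}$ demanded by \eqref{stexp2}, which pins down $Y_2$ away from the singular points. Near $\la=0$ the correct balance is between $\tfrac t2\si_3$ and $A_0/\la$, so I would rescale $\mu=t\la$ and absorb the imbalance $u\sim t^{2\vp}$, $z/u\sim t^{-2\vp}$ of the off-diagonal entries of $A_0$ by a $t$-dependent diagonal gauge transformation. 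The leading rescaled system is a confluent hypergeometric (Whittaker) system with a regular singular point at $\mu=0$ of exponents $\pm\tfrac{\Theta_0}2$ and a rank-one irregular point at $\mu=\infty$; the analogous rescaling $\mu'=t(\la-1)$ near $\la=1$ gives a second Whittaker system with exponents $\pm\tfrac{\Theta_1}2$.

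The core of the computation is that these model systems are explicitly solvable and their connection matrices---relating the Frobenius solution at the finite singularity to the exponentially normalized solution at the irregular point---are classically expressed through $\tfrac{-2\pii}{\Ga(\cdot)\Ga(\cdot)}$-type coefficients, whose Gamma arguments are the linear combinations $\tfrac{\Theta_0\pm2\vp}2$, $\tfrac{\Theta_1-\Theta_\infty+2\vp}2$ and $\tfrac{\Theta_1+\Theta_\infty-2\vp}2$ appearing in \eqref{eq:m1-12} and \eqref{eq:m0-21}. Matching the large-$\mu$ behavior of the inner solution at $0$, the large-$\mu'$ behavior of the inner solution at $1$, and the outer solution in the respective overlap annuli fixes the diagonal normalizations. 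This matching is where the explicit constants $\de$, $\hat u$ and the powers $t^{2\vp}$, $t^{\nu_1}$ from the theorems enter the off-diagonal entries, while the diagonal exponentials $e^{\pm2\pii\vp}$ and $e^{-\pii\tin}$ of \eqref{mr1}, \eqref{mr2} arise from the phases accumulated along the matching. The orientation dependence---$m^1_{11}$ computed for $\arg t\to\tfrac\pi2$ and $m^0_{11}$ for $\arg t\to-\tfrac\pi2$---reflects the opposite sign of the WKB exponent $\tfrac{\la t}2$ and the resulting change in which Stokes sector carries the relevant recessive solution.

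Finally I would assemble $M^0$, $M^1$ via \eqref{eq:mj2}, read off the four entries, and use the cyclic relation \eqref{crel2} together with the constraints \eqref{eq:det-monmat1}--\eqref{eq:M0M1} as consistency checks. I expect the main obstacle to be the bookkeeping in the matching step: the off-diagonal entries carry explicit powers of $t$ (through $\de\, t^{\nu_1}e^t$, $\hat u\, t^{2\vp}$, and the diagonal gauge factors) that must cancel exactly so that the final monodromy is $t$-independent, as isomonodromy demands. Tracking every power of $t$, every phase $e^{\pm\pii(\cdots)}$, and the branch of $t^\nu$ on the imaginary axis across all three regions is delicate, and the requirement that the spurious $t$-dependence cancel is simultaneously the hardest part and the strongest internal check on the result.
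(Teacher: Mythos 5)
Your proposal matches the paper's own derivation in Section~\ref{sec:der1}: the same three-region decomposition with a WKB approximation in the outer region, reduction to Whittaker model systems after rescaling $x=(\lambda-p)t$ and applying triangular and diagonal gauge transformations near $\lambda=0,1$, matching along Stokes rays to fix the connection matrices $E^p$, and reading off $M^p=(E^p)^{-1}e^{\pi\imath\Theta_p\sigma_3}E^p$, with the sign of $\Im t$ determining which canonical solution is recessive and hence which diagonal entry ($m^1_{11}$ or $m^0_{11}$) is accessible. The only difference is the logical direction: the paper imposes weak a priori bounds on $y,z$ (conditions \eqref{zdem}--\eqref{eq:cond-z-y-3} and \eqref{eq:phi-phi1-phi2}) and lets the required $t$-independence of the monodromy data \emph{produce} both the asymptotics of Theorems~\ref{th:new1}--\ref{th:new2} and the parameters $\delta,\hat u$ via \eqref{eqs:delta-hatu-def}, whereas you substitute those asymptotics first --- for proving Theorem~\ref{th:allmon} as stated the computation is the same.
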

\begin{corollary}\label{cor:real-th3.1}
Denote
$$
\imath\,\Theta_{01}\equiv\hat\gamma=1-\Theta_0-\Theta_1\quad\text{and}\quad
\imath\,\omega_1\equiv\nu_1=1+\Theta_\infty-4\varphi.
$$
Assume that $\Theta_{01}$ and $\omega_1$ as well as the coefficients of Equation~\eqref{eq:P5}, $\hat\alpha$ and $\hat\beta$,
are real and
$$
\left(\frac{\omega_1-\Theta_{01}}2\right)^2\!>2\hat\beta.
$$
Then the solution defined in Theorem~{\rm\ref{th:new1}} is real for the pure imaginary values of $t$, namely,
$y(t)\equiv\tilde y(\tau)$, $t=\imath\tau$, $\tilde y(\tau)\in\mathbb R$ for $\tau\in\mathbb R$, and its
asymptotics as $t\to\infty$ and $\arg t\to\epsilon\pi/2$ $(\epsilon=\pm1)$ can be rewritten as follows:
\begin{equation}\label{eq:theorem3.1-real}
\tilde y(\tau)\underset{\tau\to\epsilon\infty}=\frac1\tau\left(\frac{\omega_1-\Theta_{01}}2+
\sqrt{\left(\frac{\omega_1-\Theta_{01}}2\right)^2\!-2\hat\beta}\;\sin\left(\tau+\omega_1\ln|\tau|+\arg\delta\right)
+\mathcal O\left(\frac{\ln\tau}{\tau}\right)\right),
\end{equation}
where $\delta$ is the parameter introduced in Theorem~{\rm\ref{th:new1}}. For real solutions with
asymptotics~\eqref{eq:theorem3.1-real} $|\delta|$ is given by the relation
$$
2|\delta|\,e^{-\frac{\epsilon\pi\omega_1}2}=\sqrt{\left(\frac{\omega_1-\Theta_{01}}2\right)^2\!-2\hat\beta}\,>\,0.
$$
\end{corollary}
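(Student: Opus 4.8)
The plan is to reduce the statement to the leading asymptotics already furnished by Theorem~\ref{th:new1}, after passing to the real variable $\tau$ via $t=\imath\tau$. Setting $\tilde y(\tau)=y(\imath\tau)$ and inserting $t=\imath\tau$ into the $P_5$ equation \eqref{eq:P5}, one checks that $\tilde y$ solves a fifth Painlev\'e equation whose coefficients are $\hat\al,\hat\be,\imath\hat\ga,-\hat\de$. Since $\imath\hat\ga=\imath(1-\0-\1)=\imath\cdot\imath\Theta_{01}=-\Theta_{01}$ and $\hat\de=-\tfrac12$, the hypotheses that $\Theta_{01},\hat\al,\hat\be$ be real turn this transformed equation into a \emph{real} ODE. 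Consequently $\overline{\tilde y(\bar\tau)}$ is again a solution, so for real $\tau$ the function $\overline{\tilde y(\tau)}$ solves the same real equation as $\tilde y(\tau)$. The exact reality $\tilde y(\tau)\in\mathbb R$ will then follow from the uniqueness in Theorem~\ref{th:new1} once the two solutions are matched; equivalently, reality corresponds to an anti-holomorphic involution on $\cm_5(\0,\1,\tin)$, and via the monodromy correspondence of Theorem~\ref{th:allmon} this involution is respected precisely under the stated reality conditions together with the relation fixing $|\de|$.

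The heart of the computation is the rewriting of the leading term of $yt$ from Theorem~\ref{th:new1}. Abbreviating $a_0=\vp-\tfrac{\0}2$ and $a_1=\vp-\tfrac{\1+\tin}2$, the two factors $R_1$ expand to $yt=\de t^{\nu_1}e^t-(a_0+a_1)+\frac{a_0a_1}{\de t^{\nu_1}e^t}+\mathcal O(|t|^{-1}\ln|t|)$, where I have used that for $\nu_1=\imath\omega_1$ the error terms of Theorem~\ref{th:new1} are all $\mathcal O(|t|^{-1}\ln|t|)$. Using $4\vp=1+\tin-\imath\omega_1$ and $\imath\Theta_{01}=1-\0-\1$ gives $a_0+a_1=-\tfrac{\imath(\omega_1-\Theta_{01})}2$, while the definition \eqref{eq:coeffP5} of $\hat\be$ gives $(a_0-a_1)^2=-2\hat\be$; hence $a_0a_1=\tfrac14\big[(a_0+a_1)^2-(a_0-a_1)^2\big]=-\tfrac14\big[\big(\tfrac{\omega_1-\Theta_{01}}2\big)^2-2\hat\be\big]$, which is real and, by the assumption $\big(\tfrac{\omega_1-\Theta_{01}}2\big)^2>2\hat\be$, strictly negative. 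Thus the reality of $\Theta_{01},\omega_1,\hat\be$ is exactly what makes $a_0+a_1$ purely imaginary and $a_0a_1$ real.

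Next I insert the branch $t^{\nu_1}=e^{-\epsilon\pi\omega_1/2}e^{\imath\omega_1\ln|\tau|}$ valid for $\arg t\to\epsilon\tfrac\pi2$, so that $\de t^{\nu_1}e^t=A\,e^{\imath\Phi}$ with $A=|\de|e^{-\epsilon\pi\omega_1/2}$ and $\Phi=\tau+\omega_1\ln|\tau|+\arg\de$, both real. Imposing the relation $2A=\sqrt{\big(\tfrac{\omega_1-\Theta_{01}}2\big)^2-2\hat\be}$, that is $a_0a_1=-A^2$, collapses the two exponential terms: $\de t^{\nu_1}e^t+\frac{a_0a_1}{\de t^{\nu_1}e^t}=A(e^{\imath\Phi}-e^{-\imath\Phi})=2\imath A\sin\Phi$. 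Therefore $yt=\imath\big[\tfrac{\omega_1-\Theta_{01}}2+2A\sin\Phi\big]+\mathcal O(|t|^{-1}\ln|t|)$, and dividing by $\imath\tau$ yields exactly \eqref{eq:theorem3.1-real} with error $\mathcal O(\tau^{-1}\ln\tau)$ inside the bracket. The same cancellation supplies the necessity statement: for $\tilde y$ to be real the coefficient of $\cos\Phi$ in $\Re(yt)$, namely $A+a_0a_1/A$, must vanish, which forces $|\de|$ to take the asserted value. The reductions announced for $z$ and $u$ follow from the same substitution applied to their expansions in Theorem~\ref{th:new1}.

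The main obstacle is the upgrade from \emph{asymptotic} reality to \emph{exact} reality of $\tilde y$ on the whole imaginary axis: the computation above only shows that the principal part of the asymptotics is real. To conclude $\overline{\tilde y}=\tilde y$ I must either (i) verify that $\overline{\tilde y}$ lies in the uniqueness class of Theorem~\ref{th:new1} with the \emph{same} parameters $(\vp,\de,\hat u)$ and invoke that uniqueness, or (ii) follow the monodromy route: identify the anti-holomorphic involution on $\cm_5(\0,\1,\tin)$ induced by $Y(\la)\mapsto\si_1\overline{Y(\bar\la)}$ (the factor $\si_1$ restoring $\tfrac t2\si_3$ after $\bar t=-t$), and check, using the explicit data \eqref{eq:m1-12}--\eqref{mr2}, that it is preserved exactly when $\Theta_{01},\omega_1,\hat\al,\hat\be$ are real and $|\de|$ satisfies the stated relation. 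Route (ii) is the cleaner one in the spirit of the paper, but it requires the delicate bookkeeping of how the branch of $t^{\nu_1}$, the sign $\epsilon$, and the $\Ga$-factors in \eqref{eq:m1-12}--\eqref{eq:m0-21} transform under conjugation.
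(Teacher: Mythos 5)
Your computation is correct and is exactly the derivation the paper intends (the paper states this corollary without a written proof; it follows by direct substitution into the leading term of Theorem~\ref{th:new1}). The identities $a_0+a_1=-\imath(\omega_1-\Theta_{01})/2$ and $(a_0-a_1)^2=-2\hat\beta$, the branch $t^{\nu_1}=e^{-\epsilon\pi\omega_1/2}e^{\imath\omega_1\ln|\tau|}$, the collapse $Ae^{\imath\Phi}+a_0a_1A^{-1}e^{-\imath\Phi}=2\imath A\sin\Phi$ under $a_0a_1=-A^2$, and the necessity of the stated value of $|\delta|$ from the vanishing of the $\cos\Phi$ coefficient all check out, as does the reduction of both error terms of Theorem~\ref{th:new1} to $\mathcal O(\tau^{-1}\ln\tau)$ when $\Re\nu_1=0$. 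The one point you leave open --- upgrading reality of the leading term to exact reality of $\tilde y$ --- closes along your route (i) without any monodromy bookkeeping: under the stated hypotheses the equation satisfied by $\tilde y(\tau)$ is a real ODE, so $\overline{\tilde y(\tau)}$ is again a solution; conjugating the Theorem~\ref{th:new1} expansion term by term (using that $a_0+a_1$ is purely imaginary, $a_0a_1$ is real, and $\overline{\delta t^{\nu_1}e^t}=-a_0a_1/(\delta t^{\nu_1}e^t)$ precisely when $|\delta|$ takes the stated value) shows that $\overline{\tilde y}$ lies in the same uniqueness class with the same parameters $(\varphi,\delta)$, whence $\overline{\tilde y}=\tilde y$ by the uniqueness clause of that theorem.
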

\begin{corollary}\label{cor:real-th3.2}
Denote
$$
\imath\,\Theta_{01}\equiv\hat\gamma=1-\Theta_0-\Theta_1\quad\text{and}\quad
\imath\,\omega_2\equiv\nu_2=1-\Theta_\infty+4\varphi.
$$
Assume that $\Theta_{01}$ and $\omega_2$ as well as the coefficients of Equation~\eqref{eq:P5}, $\hat\alpha$ and $\hat\beta$,
are real and
$$
\left(\frac{\omega_2-\Theta_{01}}2\right)^2\!>-2\hat\alpha.
$$
Then the solution defined in Theorem~{\rm\ref{th:new2}} is real for the pure imaginary values of $t$, namely,
$y(t)\equiv\tilde y(\tau)$, $t=\imath\tau$, $\tilde y(\tau)\in\mathbb R$ for $\tau\in\mathbb R$, and its
asymptotics as $t\to\infty$ and $\arg t\to\epsilon\pi/2$ $(\epsilon=\pm1)$ can be rewritten as follows:
\begin{equation}\label{eq:theorem3.2-real}
\frac1{\tilde y(\tau)}\underset{\tau\to\epsilon\infty}=-\frac1\tau\left(\frac{\omega_2-\Theta_{01}}2+
\sqrt{\left(\frac{\omega_2-\Theta_{01}}2\right)^2\!+2\hat\alpha}\:\sin\left(\tau-\omega_2\ln|\tau|+\arg\delta\right)
+\mathcal O\left(\frac{\ln\tau}{\tau}\right)\right),
\end{equation}
where $\delta$ is the parameter introduced in Theorem~{\rm\ref{th:new1}}. For real solutions with
asymptotics~\eqref{eq:theorem3.2-real} $|\delta|$ is given by the relation
$$
\frac{2e^{-\frac{\epsilon\pi\omega_2}2}}{|\delta|}=\sqrt{\left(\frac{\omega_2-\Theta_{01}}2\right)^2\!+2\hat\alpha}\,>\,0.
$$
\end{corollary}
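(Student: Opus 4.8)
The plan is to start from the asymptotics of $t/y$ in Theorem~\ref{th:new2} and recast it into the real oscillatory form of \eqref{eq:theorem3.2-real}. Writing $\rho:=\delta t^{-\nu_2}e^{t}$, the product $R_2(t;\tfrac{\Theta_0}2)R_2(t;\tfrac{\Theta_1-\Theta_\infty}2)$ is exactly the quadratic $1-P\rho+C\rho^2$ with $P=2\varphi+\tfrac{\Theta_0+\Theta_1-\Theta_\infty}2$ and $C=(\varphi+\tfrac{\Theta_0}2)(\varphi+\tfrac{\Theta_1-\Theta_\infty}2)$, so dividing by $\rho$ produces, with no truncation, $t/y=\tfrac1\delta t^{\nu_2}e^{-t}-P+C\delta t^{-\nu_2}e^{t}+\mathcal O(t^{-3\nu_2-1}\ln t)+\mathcal O(t^{\nu_2-1}\ln t)$. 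On the imaginary axis $\nu_2=\imath\omega_2$ is purely imaginary, so each of the three displayed terms is $\mathcal O(1)$, while both error terms are $\mathcal O(\tau^{-1}\ln\tau)$.

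Next I would substitute $t=\imath\tau$ and use the branch rule for $t^{\nu_2}$ fixed before Theorem~\ref{th:new1}: for $\arg t\to\epsilon\pi/2$ it gives $t^{\nu_2}=e^{-\epsilon\pi\omega_2/2}e^{\imath\omega_2\ln|\tau|}$. With $\delta=|\delta|e^{\imath\arg\delta}$ and $\psi:=\tau-\omega_2\ln|\tau|+\arg\delta$ the three terms of $t/y$ read $ae^{-\imath\psi}-P+be^{\imath\psi}$, where $a=e^{-\epsilon\pi\omega_2/2}/|\delta|$, $b=|\delta|e^{\epsilon\pi\omega_2/2}C$, and $1/\tilde y=(\imath\tau)^{-1}(t/y)$. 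The constant term produces exactly $-\tfrac1\tau\tfrac{\omega_2-\Theta_{01}}2$, since $2P=\nu_2-\hat\gamma=\imath(\omega_2-\Theta_{01})$, whence $\tfrac{\omega_2-\Theta_{01}}2=-\imath P$. The oscillatory part equals $\tfrac1\tau[-\imath(a+b)\cos\psi+(b-a)\sin\psi]$; under the reality hypotheses $a$ is real and (via the identity below) so is $b$, hence the leading asymptotics is real precisely when $a+b=0$. The crucial algebraic identity is $C=-\tfrac14\big[(\tfrac{\omega_2-\Theta_{01}}2)^2+2\hat\alpha\big]=:-\tfrac14 B^2$: from $(\tfrac{\omega_2-\Theta_{01}}2)^2=-P^2$ and $2\hat\alpha=(\tfrac{\Theta_0-\Theta_1+\Theta_\infty}2)^2=:Q^2$ (recalling \eqref{eq:coeffP5}) one gets $B^2=Q^2-P^2$, so $C=-\tfrac14B^2=\tfrac14(P-Q)(P+Q)$, and $P-Q=2(\varphi+\tfrac{\Theta_1-\Theta_\infty}2)$, $P+Q=2(\varphi+\tfrac{\Theta_0}2)$ recover $C$. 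Combining $a+b=0$ with $C=-\tfrac14B^2$ yields $a^2=\tfrac14B^2$, i.e.\ $a=B/2$, which is the stated relation $2e^{-\epsilon\pi\omega_2/2}/|\delta|=B$, and leaves the sine amplitude $b-a=-2a=-B$. This reproduces \eqref{eq:theorem3.2-real} with error $\mathcal O(\tau^{-1}\ln\tau)$ inside the bracket; the hypothesis $(\tfrac{\omega_2-\Theta_{01}}2)^2>-2\hat\alpha$ guarantees $B>0$, so a positive $|\delta|$ exists.

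It remains to upgrade reality of the asymptotics to reality of the solution itself. Substituting $t=\imath\tau$ into \eqref{eq:P5}, the function $\tilde y(\tau)=y(\imath\tau)$ satisfies a Painlev\'e-type equation whose relevant coefficients are $\hat\alpha$, $\hat\beta$, $\imath\hat\gamma=-\Theta_{01}$, and $\hat\delta=-\tfrac12$, all real under the hypotheses; hence the $\tau$-equation has real coefficients and $\overline{\tilde y(\bar\tau)}$ is again a solution. Under $t=\imath\tau$ the strip $\mathcal D^2$ (where $|\Re t|\le r_1$) is carried to the horizontal strip $|\Im\tau|\le r_1$, which is symmetric under $\tau\mapsto\bar\tau$; and when the $|\delta|$ relation holds the asymptotic expansion obtained above is invariant under $\tilde y(\tau)\mapsto\overline{\tilde y(\bar\tau)}$ (on the real axis it is manifestly real). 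Thus $\overline{\tilde y(\bar\tau)}$ carries the same Theorem~\ref{th:new2} asymptotics with the same parameters, and the uniqueness assertion of Theorem~\ref{th:new2} forces $\tilde y(\tau)=\overline{\tilde y(\bar\tau)}$; restricting to real $\tau$ gives $\tilde y(\tau)\in\mathbb R$. The case $\epsilon=-1$ is identical, the branch rule supplying the opposite sign in $t^{\nu_2}=e^{+\pi\omega_2/2}e^{\imath\omega_2\ln|\tau|}$, hence the factor $e^{-\epsilon\pi\omega_2/2}$ in the final relation.

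I expect the main obstacle to be this last step: passing rigorously from ``real asymptotics'' to ``real solution'' requires the uniqueness clause of Theorem~\ref{th:new2} together with careful bookkeeping of the branch of $t^{\nu_2}$ and of how complex conjugation acts on the individual parameters $\Theta_0,\Theta_1,\Theta_\infty,\varphi$ (which the reality of $\omega_2,\Theta_{01},\hat\alpha,\hat\beta$ constrains but does not pin down individually). A cleaner alternative is to read the reality condition directly off the monodromy data of Theorem~\ref{th:allmon}, translating the involution $\tilde y\mapsto\overline{\tilde y(\bar\cdot)}$ into a symmetry of $\mathcal M_5(\Theta_0,\Theta_1,\Theta_\infty)$ and matching it against \eqref{eq:m1-12}--\eqref{mr2} to confirm that it is respected exactly when $|\delta|$ satisfies the stated relation. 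Corollary~\ref{cor:real-th3.1} follows by the same scheme applied to Theorem~\ref{th:new1}, with the roles of $y$ and $t/y$ interchanged, $\hat\alpha$ replaced by $-\hat\beta$, and $\nu_2,\omega_2$ replaced by $\nu_1,\omega_1$.
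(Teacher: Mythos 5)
Your computation is correct and follows exactly the route the paper intends for this corollary (which it states without a written proof): rewrite the three $\mathcal O(1)$ terms of $t/y$ from Theorem~\ref{th:new2} on the imaginary axis using the branch rule $t^{\nu_2}=e^{-\epsilon\pi\omega_2/2}e^{\imath\omega_2\ln|\tau|}$, observe that $2P=\nu_2-\hat\gamma$ gives the constant $-\tfrac{\omega_2-\Theta_{01}}2$, and use the identity $C=\tfrac14(P^2-Q^2)=-\tfrac14\bigl[(\tfrac{\omega_2-\Theta_{01}}2)^2+2\hat\alpha\bigr]$ to reduce reality of the oscillatory part to $a+b=0$, which is precisely the stated relation for $|\delta|$. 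Your closing step (the conjugated function $\overline{\tilde y(\bar\tau)}$ solves the same real-coefficient $\tau$-equation, shares the asymptotics exactly when $a+b=0$, and coincides with $\tilde y$ by the uniqueness clause of Theorem~\ref{th:new2}) is the right way to upgrade real asymptotics to a real solution, and is at least as rigorous as the paper's own ``derivation'' standard.
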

\section{Derivation I}\label{sec:der1}
In this Section we asymptotically, as $t\to\infty$, solve the direct monodromy problem for Equation~\eqref{mainl}
by making several assumptions on asymptotical behavior of its coefficients. First of all, we do all
our calculations in the cheese-like strip domain along the imaginary axis. There are two real positive parameters
characterizing this domain: the half-width of the strip, $\delta_1>0$, and the radius of its holes, $\delta_2>0$.
These parameters are assumed to be fixed in the course of the calculations so that the error estimates depend on these
parameters. These holes are assumed to contain possible poles of $y$ and $z$, and zeroes of $y$, to avoid problems
with the estimates of coefficients of Equation~\eqref{mainl}. The exact location of the  centres of these holes
are unknown in the "first round" of our calculations, we assumed only the conditions on the functions $y$ and $z$
imposed below \eqref{zdem} and \eqref{ydem}. In the "second round" of calculations we put centres of the holes
exactly at zeroes and poles of the leading term of asymptotics which we find at the end of the first round.

The notation $t\to\pm i\infty$ more precisely means that we are taking a limit along any path in the
cheese-like domain: $|\Re t|<\delta_1$ and $\Im t\to\pm\infty$. The choice of the path is not important because of
the Painlev\'e property of System~\eqref{eq:ids1}--\eqref{eq:ids2}.
All our functions of $t$, say, $y(t)$, $z(t)$, $u(t)$ etc. are assumed to be analytic continuation from the positive
real axis. These functions have only two singular points at $0$ and $\infty$. After we make a cut along the
negative real semiaxis the analytic continuations mentioned above are correctly defined. The zeroes and poles of the
coefficients of Equation~\eqref{mainl}, if any, can be located only in the holes of the strip domain.

Our main assumptions on the coefficients in this Section are as follows:
\begin{align}
&|z|<\mathcal  O(t), \label{zdem}\\
\mathcal O\left(t^{-1}\right)<&|y| <\mathcal O(t).\label{ydem}
\end{align}
These asymptotic restrictions are assumed to be valid in the closure of the cheese-like domain. We use them in most
calculations in this section. Some further restrictions will appear in course of calculations and will be
clearly indicated in the corresponding places\footnote{\label{ftn:radius}
The radius of holes ($\delta_2$) is fixed as a positive parameter, however it is important to note that this
radius can be chosen even merging, $\delta_2=\mathcal O\big(t^{-\epsilon\big)},0<\epsilon<1/2$. We do not use
this fact here.}
Let us explain our convention for the use of the $o$ and $\mathcal O$ notation:
When we write $w=o(1)$ we actually mean that there exists some $C>0$ and $\epsilon>0$ such that
$|w|\leq C|t|^{-\epsilon}$, notation $\mathcal O(t^a)<|w|<\mathcal O(t^b)$ with $a<b$ means that
$C|t|^{a+\epsilon}\leq |w|\leq C|t|^{b-\epsilon}$.

In this section the direct monodromy problem is solved asymptotically for Equation~\eqref{mainl}
with coefficients in some classes of functions
analytic in the cheese-like strip domain and satisfying certain asymptotic conditions. Some of these conditions have
a form of simple systems of algebraic equations that can be uniquely resolved to explicitly give the leading terms of
asymptotics for the functions. One proves that thus obtained solution satisfies all the other conditions imposed
in the process of solving of the direct monodromy problem. Now when we have explicit formulae for asymptotics we can check that all our error estimates smoothly depends on the monodromy parameters. In particular, the estimates hold
under small local variations of the monodromy data.  Due to the way our asymptotics are obtained and because they are
parameterized with the monodromy data, we can say that they represent an asymptotic solution of the inverse monodromy
problem.

It is not immediately obvious that any asymptotic solution of the inverse monodromy problem represents an asymptotic
expansion of some solution of the system~\eqref{eq:ids1}--\eqref{eq:ids3}. However, there is a justification scheme~\cite{K} that allows one to prove (exact) solvability of the corresponding monodromy problem as long as its asymptotic solution is obtained via the method explained above.

To simplify the notation we perform the following gauge transformation,
\begin{equation}
Y_2=u^{\frac 12\si_3}
Yu^{-\frac12\si_3}.
\label{mainwu}
\end{equation}
Then we observe that function $Y$ satisfies equation (\ref{mainl}) but with $u=1$.
We will compute the monodromy data for function $Y$. For matrices $A_p$, $p=0,1$,
with $u=1$, we use the same notation, $A_p$.
The function $u$ will be restored in the final formulae for the monodromy data.

Another convention we follow is that in course of calculations we use notation
$\varphi$ for function $\varphi(t,\Theta_0,\Theta_1,\Theta_\infty)$. This function has the
following asymptotic evaluation as $t\to\infty$, $\varphi=\tilde\varphi+o(1)$, where
$\tilde\varphi\in\mathbb C$ is a parameter, i.e., independent of $t$ and $\Theta$'s variables.
In formulation of the results of solution of the inverse monodromy problem we use a simpler notation
$\varphi$ in the sense of $\tilde\varphi$.

The reader will find below two types of equalities: exact and asymptotic. All asymptotic equalities
with respect to $t$ in this section are understood as $t\to\infty$ in the cheese-like strip domain;
in case an asymptotic equality is understood in some other sense, say, with
respect to $\lambda$ the latter is explained. We also use notation $\approx$ to indicate asymptotic
equalities modulo lesser terms.
\subsection{WKB-method}
 \label{subsec:Der1WKB}
To obtain the monodromy data in terms of parameters of asymptotic expansion
for large pure imaginary $t$,
we apply the WKB-method.

Let us start from the exact formula
for $l^2(\la)=-(1/t^2) \det A$
\begin{eqnarray}
l^2(\la)&=&\frac1{4t^2\la^2(\la-
1)^2}\times\label{l}\\
&\times&\big(t^2\prod_{p=0,1}(\la-p)^2+
4\la(\la-1)(t\vp-\frac{t\lambda\tin}2)+
\sum_{p=0,1}\Th_{1-p}^2(\la-p)^2\big),
\nonumber
\end{eqnarray}
where
\begin{equation}\label{eq:varphi}
\vp=-z-\frac{\Th_0}2+\frac 1t\tr(A_1A_0).
\end{equation}
In addition to (\ref{zdem}) and (\ref{ydem}), we will also suppose that
\begin{equation}
\vp=\mathcal{O}(1).
\label{as1}
\end{equation}
Due to this, we may rewrite $l(\la)$ so that
\begin{equation}\label{eq:l-as-t}
l\eq l(\la)\underset{t\to\infty}{=}\frac 12+\frac1{\la(\la-1)t}
\left(\vp-\frac{\la\tin}2\right)
+\mathcal{O}\left(\frac1{t^2\lambda^2}\right)
+\mathcal{O}\left(\frac1{t^2(\lambda-1)^2}\right)
\end{equation}
The error estimate in Equation~\eqref{eq:l-as-t} is valid provided the following redefinition of $\varphi$ is made,
$$
\varphi\rightarrow
\varphi+2\varphi\left(2\varphi-\theta_\infty\right)/t.
$$
This redefinition is assumed below. It does not effect on the following calculations since we calculate $\varphi$
up to the order $o(1)$.

Then, we define
\begin{equation}\label{eq:def-F0}
F_0(\la)=t\int l(\la)\,d\la.
\end{equation}
Clearly this function is defined up to an arbitrary function of $t$, which does not play any role because in the
following we consider the definite integral (see Equation~\eqref{eq:def-WKB}). It is obvious that there exists
function $F_0(\lambda)$ with the following asymptotics as $t\to\infty$
\begin{equation}\label{eq:F0-t-estimate}
F_0(\la)=\frac{\la t}2+
\vp\ln(\la-1)-\vp\ln\la-\frac{\tin}2\ln
(\la-1)+o(1).
\end{equation}
provided $t(\lambda-p)\geq |t|^\epsilon$ for $p=0,1$ and $0<\epsilon<1$.
Here $\ln \la=\ln|\la|$ as
$\arg\la=0$ and $\ln(\la-1)\to\ln\la$
as $\la\to\infty$.
We need the following asymptotic expansions
of function $F_0(\la)$
\begin{eqnarray}\label{eq:F0-infty}
F_0(\la)&=&\frac{\la t}2-\frac{\tin}2\ln \la
+\mathcal{O}(\frac1\la)\mbox{ as }\la\to \infty,
\\
F_0(\la)&=&\frac{\la t}2
-\vp\ln \la+\pii \vp-
\frac{\tin}2\pii+\mathcal{O}(\la)
\mbox{ as }\la\to 0,
\label{eq:F0-0}\\
F_0(\la)&=&\frac t2+\frac{(\la-1)t}2+
(\vp-\frac{\tin}2)\ln(\la-1)
+\mathcal{O}(\la-1)
\mbox{ as }\la\to 1.
\label{eq:F0-1}
\end{eqnarray}
To simplify our notation we do not write in Equations~\eqref{eq:F0-infty}--\eqref{eq:F0-1} the $t$-estimate from
Equation~\eqref{eq:F0-t-estimate}, because it does not contribute to the final result, however we have to keep in
mind the domain on the $\lambda$-plane where it is valid.

We impose one more condition on the functions $y$ and $z$,
\begin{equation}\label{eq:cond-z-y-2}
t^{-2}z^2(y-\frac1y)=o\left(1/t^q\right)
\end{equation}
for some $q\in(0,1)$.
Condition~\eqref{eq:cond-z-y-2} does not neither follow nor contradict conditions~\eqref{zdem} and \eqref{ydem}:
it means just a special relation between asymptotic values of the functions $y$ and $z$. A posteriori
we know that $q=1/2$, however, at this stage we do not fix it.

Then, in the domain $t(\lambda-p)=O(t^\epsilon)$, where $\epsilon>1-q$ and $p=0,1$, the following estimate takes place
\begin{eqnarray}\label{eq:def-F1}
F_1(\lambda)\equiv-t^{-2}\frac12\left(\{A_1A_0\}_{11}-
\{A_1A_0\}_{22}\right)\int
(2l(l+A_{11}t^{-1})\la^2(\la-1)^2)^{-1}
d\la\approx\\
\approx -t^{-2} z^2(y-\frac 1y) \int
(2l(l+A_{11}t^{-1})\la^2(\la-1)^2)^{-1}
d\la=o(1).\label{eq:F1-estimate}
\end{eqnarray}
The definition of $F_1$ contains the same ambiguity as the one for $F_0$, so that the comment
to Estimate~\eqref{eq:F1-estimate} is analogous to the one after Equation~\eqref{eq:def-F0}.

We use the following representation of WKB-formula (\cite{AK})
\begin{eqnarray}\label{eq:def-WKB}
&\Psi_q(\la)=
T(\la)\exp\left\{\left(F_0(\la)+F_1(\la)-F_0(\la^*_q)-
F_1(\la^*_q)\right)\si_3\right\},\quad
\la^*_q\in\Ga_q,&\\
&T(\lambda)=\dfrac{\imath}{\sqrt{2l(l+A_{11}t^{-1})}}
\left(l\si_3+\frac At\right),&\nonumber
\end{eqnarray}
where $\Gamma_q$ is the so-called Stokes domain (see, e.g. \cite{AK}) and
$\la^*_q$ is an arbitrary fixed point from $\Gamma_q$. The paths of integration
in Equation~\eqref{eq:def-WKB} with $F_p(\lambda)$, $p=0,1$, defined in Equations~\eqref{eq:def-F0} and
\eqref{eq:def-F1}, should be taken in $\Gamma_q$. Here, however, we do not consider in detail the definition
of the Stokes graph, because the turning points in our case coalesce with the singular points and for our
purposes we can formulate the result in a simpler way (see Theorem~\eqref{th:wkb}).

We fix $T$ in such a way that $T\to\imath\si_3$
as $\la\to\infty$, $\arg \la = 0$.
Due to \eqref{eq:cond-z-y-2}, the term with $F_1$ is of
order $o(1)$ and can be ignored.

Let us write the asymptotic expansions we need
\begin{eqnarray*}
l(\la)&=&\frac12
+\mathcal{O}(\frac1\la)\mbox{ as }\la\to \infty,\\
l(\la)&=&\frac12+\mathcal{O}((\la t)^{-1})+\mathcal{O}(t^{-1})
\mbox{ as }\la\to 0,\\
l(\la)&=&\frac12+\mathcal{O}\left(((\la-1) t)^{-1}\right)+\mathcal{O}\left(t^{-1}\right)
\mbox{ as }\la\to 1.
\end{eqnarray*}
In the region $(\la-p)t=\mathcal{O}\left(t^\ep\right)$, $\ep>0$,
we have
\begin{eqnarray}\label{eq:WKB-T-0}
T&\sim&
\bm 1 & 0\\
\frac1{yt}(z+\frac{\Theta_0+\Theta_1+\Theta_\infty}2)
& -1\em + o(1) \mbox{ as }\la \to 0, \\
T&\sim&
y^{\frac12\si_3}\bm 1 & \frac{-z-\Theta_0}{yt}\\
0 & -1\em y^{-\frac12\si_3}
+o(1)
\mbox{ as }\la \to 1,\label{eq:WKB-T-1}
\end{eqnarray}
where we impose one more assumption,
\begin{equation}\label{eq:cond-z-y-3}
\frac{zy}t=o(1).
\end{equation}
For the large $\lambda$,
\begin{equation}\label{eq:WKB-T-infty}
T\sim
\si_3+o(1)
\mbox{ as }\la \to \infty.
\end{equation}
Hereafter, if two expressions are connected by the symbol ``$\sim$'', then they are equal up to a scalar nonzero multiplier.

Instead of defining the Stokes domains, we formulate the following theorem.
\begin{theorem}\label{th:wkb}
Assume that coefficients of  Equation~\eqref{mainwu} satisfy the following conditions:
\eqref{zdem}, \eqref{ydem}, \eqref{as1}, \eqref{eq:cond-z-y-2}, and \eqref{eq:cond-z-y-3}. Then, for any $j\in \Bbb Z$
there exists a solution $\Psi_j(\la)$ of Equation~\eqref{mainwu} with the following asymptotic expansion at large pure
imaginary $t$,
\begin{equation}\label{eq:WKBa}
\Psi_j(\la)=T(\la)
\exp\left\{\left(F_0(\la)-F_0(\la^*_j)\right)\sigma_3\right\}
\end{equation}
on the ray
$\arg(\la t)=\frac\pi2+\pi(j-2)$,
$|\la t|\geq |t|^{\ep_m}$,
$\ep_m=1-q$ where $q\in(0,1)$.
Point $\la^*_j$ lies on the same ray.
\end{theorem}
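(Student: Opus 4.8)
The plan is to establish \eqref{eq:WKBa} by the classical recipe for WKB (Liouville--Green) solutions of a $2\times2$ linear system, in the form already used in \cite{AK}: diagonalize the leading symbol, strip off the scalar exponential, and solve the resulting near-diagonal system by a convergent Neumann series along the prescribed ray. First I would substitute $Y=T(\la)\Phi(\la)$ into Equation~\eqref{mainwu}. Since the columns of $T$ from \eqref{eq:def-WKB} are the eigenvectors of $t^{-1}A$ with eigenvalues $\pm l$, and the normalization $1/\sqrt{2l(l+A_{11}t^{-1})}$ makes $\det T=1$, one checks that $T^{-1}AT=tl\,\si_3$ exactly (here $\tr A=0$). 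Hence
\[
\frac{d\Phi}{d\la}=\left(tl\,\si_3-T^{-1}\frac{dT}{d\la}\right)\Phi .
\]
The matrix $B(\la):=-T^{-1}\,dT/d\la$ is the entire obstruction: its diagonal part is what the correction $F_1$ in \eqref{eq:def-WKB} is designed to absorb (so that $F_1'\si_3$ equals the diagonal part of $B$), while its off-diagonal part is the term that must be shown asymptotically negligible.

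Next I would set $\Phi=\exp\{(F_0(\la)+F_1(\la)-F_0(\la^*_j)-F_1(\la^*_j))\si_3\}\,V(\la)$, with $F_0'=tl$ as in \eqref{eq:def-F0} and $F_1$ as above. Then $V$ satisfies a homogeneous linear system with zero diagonal whose off-diagonal entries are of the form $B_{12}\,e^{-2(F_0+F_1)}$ and $B_{21}\,e^{+2(F_0+F_1)}$. Rewriting this as the Volterra integral equation $V(\la)=I+\int_{\la^*_j}^{\la}K(\la,\mu)V(\mu)\,d\mu$ along the ray $\arg(\la t)=\tfrac\pi2+\pi(j-2)$, the crucial feature is that on this ray $\Re F_0=\tfrac12\Re(\la t)+\mathcal O(\ln|\la|)$ is monotone, so each exponential kernel entry is bounded (non-growing) in the direction of integration away from $\la^*_j$. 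This monotonicity is precisely the Stokes-ray condition that replaces the explicit construction of the Stokes graph.

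With the kernel bounded, solvability reduces to showing that the off-diagonal part of $B$ is integrable and small along the ray. Here I would use the explicit form of $T$ together with assumptions \eqref{zdem}, \eqref{ydem}, \eqref{as1}: they guarantee $l=\tfrac12+\mathcal O((\la t)^{-1})+\mathcal O(t^{-1})$, as in the expansions preceding \eqref{eq:WKB-T-0}, hence $T^{-1}dT/d\la$ is $\mathcal O$ of an integrable power of $|\la|$, uniformly for $|\la t|\ge|t|^{\ep_m}$. Assumption \eqref{eq:cond-z-y-2} is what forces $F_1=o(1)$ by \eqref{eq:def-F1}--\eqref{eq:F1-estimate}, so the $F_1$ factors may be dropped from the final formula to leave exactly \eqref{eq:WKBa}; assumption \eqref{eq:cond-z-y-3} controls $T$ near $\la=0,1$ as recorded in \eqref{eq:WKB-T-0}--\eqref{eq:WKB-T-1}. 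The Neumann series then converges, giving $V=I+o(1)$, and therefore $\Psi_j=T\exp\{(F_0-F_0(\la^*_j))\si_3\}(I+o(1))$ has the asserted leading asymptotics.

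The main obstacle I anticipate is the behavior near the regular singular points $\la=0,1$, where the turning points coalesce with the singularities and the naive estimate for $B$ degenerates. This is the reason for the cutoff $|\la t|\ge|t|^{\ep_m}$, $\ep_m=1-q$: it quarantines the dangerous neighborhoods while still covering the whole ray for large $t$, and the exponent $1-q$ is dictated by \eqref{eq:cond-z-y-2} so that $F_1$ stays $o(1)$ right up to the cutoff. Verifying that the integral of $B$ over the intermediate region (between the cutoff scale and $|\la|\sim1$) is $o(1)$ rather than merely $\mathcal O(1)$ is the delicate estimate, and it is here that the precise powers in \eqref{zdem}--\eqref{ydem} and \eqref{eq:cond-z-y-2}--\eqref{eq:cond-z-y-3} are genuinely consumed; global existence on the full ray then follows by standard continuation of the Volterra solution.
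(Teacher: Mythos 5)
The paper does not actually prove Theorem~\ref{th:wkb}: it introduces the representation \eqref{eq:def-WKB} with a citation to \cite{AK}, remarks that the turning points coalesce with the singular points, and then simply \emph{formulates} the theorem in place of defining the Stokes graph. Your proposal therefore supplies an argument where the paper supplies none, and the argument you give is the standard Liouville--Green one that the citation is implicitly leaning on: the algebra is right ($\operatorname{tr}A=0$ gives $(A/t)^2=l^2I$, hence $T^{-1}AT=tl\sigma_3$ and $\det T=1$, so the diagonal part of $T^{-1}T'$ is proportional to $\sigma_3$ and is exactly what $F_1$ absorbs), and the reduction to a Volterra equation with a Neumann series is the correct skeleton. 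One imprecision worth fixing: on the ray $\arg(\lambda t)=\tfrac{\pi}{2}+\pi(j-2)$ one has $\Re(\lambda t)\equiv 0$, so $\Re F_0$ is not monotone but is $\mathcal O(\ln|\lambda|)$; the ray is an anti-Stokes ray, and the relevant point is that \emph{both} exponentials $e^{\pm 2F_0}$ are only polynomially bounded there (by $|\lambda|^{2|\Re\varphi|+|\Re\Theta_\infty|}$), which is what lets a single Volterra equation produce the full fundamental matrix without splitting the two columns between the two endpoints. You correctly isolate the genuinely delicate step --- showing the off-diagonal remainder integrates to $o(1)$ between the cutoff $|\lambda t|=|t|^{\epsilon_m}$ and the region $|\lambda-p|\sim 1$, which is where \eqref{eq:cond-z-y-2}--\eqref{eq:cond-z-y-3} are consumed --- but, like the paper, you do not carry it out; note also that for $j$ even the stated ray passes through $\lambda=1$, so the same cutoff must in practice be imposed at both finite singularities, as the paper tacitly does in the matching subsection.
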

\subsection{A model equation for solutions near the singular points}
\label{subsec:Der1modeleq}
As usual, the WKB-asymptotic fails near the singular
points. In the neighborhood of these points,
we need another approximation.
To find it, we introduce the model functions
$Y_i(k,\si;x)$. Slightly modified, these functions
can be found in \cite{J}.
They satisfy the following linear differential
equation
\begin{equation}
\cd_xY_i(k,\si;x)=\left(\frac12\si_3 +
\frac 1{2x}
\bm -k &\si-k\\
\si+k &k\em\right)Y_i(k,\si;x).
\label{st}
\end{equation}
Asymptotic expansions of these solutions in the sectors
$$
-\frac \pi 2 +\pi (i-2) <\arg x<\frac {3\pi}2+\pi(i-
2)
$$
are as follows
$$
Y_i(k,\si;x)=\left(1+\mathcal{O}\left(x^{-1}\right)\right)\exp\left\{\frac x2-\frac k2 \ln x\right\}\si_3
$$
They define the functions $Y_i(k,\si;x)$ uniquely.
Following the standard method (see, for instance,
\cite{AK}), we define the monodromy parameters for
Equation~(\ref{st}).

The solutions $Y_i(k,\sigma;x)$ of Equation~(\ref{st})
are connected by Stokes matrices $S_i(k,\si)$
\begin{equation}
Y_{i+1}(k,\si;x)=
Y_i(k,\si;x) S_i(k,\si).
\label{stst}
\end{equation}
All the Stokes matrices can be found via $S_1(k,\si)$
and $S_2(k,\si)$ using the relation
$$
S_{i+2}(k,\si)=e^{\pii k\si_3}
S_{i}  (k,\si)e^{-\pii k\si_3}.
$$
For $S_1$ and $S_2$ we have
$$
S_1(k,\si)=\bm 1&0\\s_1(k,\si)&1\em,\quad
S_2(k,\si)=\bm 1&s_2(k,\si)\\0&1\em.
$$
The Stokes
multipliers $s_i(k,\si)$, $i=1,2$, are given by explicit formulae:
$$
s_1(k,\si) =
-\frac{2\pii}
{\Ga(1-\frac{\si-k}2)\Ga(\frac{\si+k}2)},\quad
s_2(k,\si) =-\frac{2\pii
e^{\pii k}}{\Ga(1-\frac{\si+k}2)\Ga(\frac{\si-k}2)}.
$$
We choose function $Y_2(k,\si;x)$ as the ``main'' basic
function.
The explicit construction for this function
is given by M.~Jimbo
\begin{eqnarray*}
&Y_2(k,\si;x) =&\\
&=\left(\begin{array}{cc}
e^{\pii (1-k)/2}
W_{(1-k)/2,\si/2}( e^{-\pii }x)&
-\frac 12 (\si-k)
W_{(-1+k)/2,\si/2}(x)\\
-\frac 12 (\si+k)
e^{ \pii (1-k)/2}
W_{-(1+k)/2,\si/2}(e^{-\pii }x)&
W_{(1+k)/2,\si/2}(x)\end{array}\right) x^{-\frac 12}.&
\end{eqnarray*}
Here
$W_{\,\cdot\,,\,\cdot\,}(\,\cdot\,)$ is the Whitteker function.
The asymptotic expansion as $x\to 0$ is given as follows
\begin{equation}
Y_2(k,\si;x)=
G_{k\si} (1+\mathcal{O}(x)) x^{\frac 12 \si \si_3} C_{k\si},
\label{const}
\end{equation}
where
\begin{eqnarray*}
G_{k\si} &=&\frac 1{\sqrt \si}
\bm\frac 12 (\si-k) &-1\\ \frac 12 (\si+k) &1\em,\\
C_{k\si} &=&\sqrt\si\bm
\frac{- \Ga(-\si)} {\Ga(1-\frac{ \si-k}2)}
e^{-\pii (\si+k)/2} &
\frac{- \Ga(-\si)} {\Ga(1-\frac{ \si+k}2)}\\
\frac{- \Ga(\si)} {\Ga(\frac{ \si+k}2)}
e^{\pii (\si-k)/2} &
\frac{ \Ga(\si)} {\Ga(\frac{ \si-k}2)}
\em.
\end{eqnarray*}
We introduce the following monodromy matrix
$$
M(k,\si)=C_{k\si}^{-1}e^{\pii\si\si_3}C_{k\si}
$$
and find
\begin{equation}
M(k,\si)=
\bm
e^{-\pii k}&
\frac{2\pii}
{\Ga(1-\frac{\si+k}2)\Ga(\frac{\si-k}2)}\\
\frac{2\pii e^{-\pii k}}
{\Ga(1-\frac{\si-k}2)\Ga(\frac{\si+k}2)}
&
2\cos\pi\si-e^{-\pii k}\em.
\label{monst}
\end{equation}
Also, we will need another monodromy matrix related to $Y_3(k,\si;x)$:
$$
M_3(k,\si)=
\bm
-e^{\pii k}+2\cos \pi\si&
m^3_{12}\\
\frac{2\pii e^{-\pii k}}
{\Ga(1-\frac{\si-k}2)\Ga(\frac{\si+k}2)}
&
e^{\pii k}\em.
$$
The entry $m^3_{12}$, which we do not use in the following text, can be computed with the help of
the relation $\det M_3(k,\si)=1$.
\subsection{Singular point $\la=1$}
 \label{subsec:Der1approx1}
In the neighborhood of the point $\la=1$,
we can rewrite Equation~(\ref{mainwu}) as
$$
\cd_\la Y(\la)=\bigg(\frac t2\si_3+
A_0+\frac{A_1}{\la -1}
+\mathcal{O}(\la-1)A_0\bigg)Y(\la).
$$

Divide this equation by $t$
and introduce
the variable $x=(\la-1)t$.
Then, function $Y^{(1)}(x)=Y(\la)$
satisfies the equation
$$
\cd_x Y^{(1)}(x)=\bigg(\frac 12\si_3+
\frac{A_0}t+\frac{A_1}x
+\mathcal{O}(x) \frac{A_0}{t^2}\bigg)Y^{(1)}(x).
$$
We introduce function
$Y^{(2)}=y^{-\frac12\si_3}Y^{(1)}$,
which satisfies the equation
$$
\cd_x Y^{(2)}(x)=\bigg(
\bm \frac 12 & \beta_1\\
o(1) & -\frac 12 \em
+\frac{\ti A_1}x
+\mathcal{O}(x)\frac{\ti A_0}{t^2}\bigg)
Y^{(2)}(x),
$$
where we used Assumption~\eqref{eq:cond-z-y-3}
and denoted $\beta_1=-\frac{ z+\Theta_0}{yt}$.

To obtain the basic equation (\ref{st}),
we transform the first matrix in the right-hand side of the previous
equation to the diagonal form substituting $Y^{(2)}=
T_1 Y^{(3)}=\bm 1 & - \beta_1 \\ 0 & 1\em Y^{(3)}$.
Then, $Y^{(3)}$ satisfies the following equation
$$
\cd _x Y^{(3)}(x)\approx \bigg(\frac 12\si_3
+\frac{\hat A_1}x
\bigg)
Y^{(3)}(x).
$$
Here,
$$
\hat A_1 =T_1^{-1}y^{-\frac 12\si_3}
A_1 y^{\frac 12\si_3}T_1.
$$
Since
\begin{equation}
\det \hat A_1=
\det A_1=-\frac{\Th_1^2}4,
\label{deteq}
\end{equation}
we see that parameter $\si$ in the model equation
should be taken equal to $\Th_1$. Then from the equation on $Y^{(3)}$,
namely from the diagonal
element in $\hat A_1$,
we can find parameter $k$ in the model equation:
\begin{align}
k\equiv k_1&=2\left(z+\dfrac{\Theta_0+\Theta_\infty}2+
\beta_1\left(\dfrac{\Theta_0+\Theta_1+\Theta_\infty}2\right)\right)=-2\vp_1+\tin,\nonumber\\
\varphi_1&=-z-\dfrac{\Theta_0}2+\dfrac{z+\Theta_0}{ty}\left(z+\dfrac{\Theta_0+\Theta_1+\Theta_\infty}2\right).
\label{eq:def-varphi1}
\end{align}
So, we know that $\sigma$ and $k$ in the model equation are $\Theta_1$ and $k_1=-2\vp_1+\tin$ correspondingly,
but we still do not have model equation, because $\{\hat A_1\}_{21} \neq \frac{\sigma+k}2$.
We transform the equation by a diagonal matrix: $Y^{(3)}=\rho_1^{\frac 12\si_3}Y^{(4)}$
and find that $Y^{(4)}$ satisfies the following equation:
$$
\cd _x Y^{(4)}(x)\approx \bigg(\frac 12\si_3
+\frac 1{2x}
\bm -k_1 & \Theta_1 - k_1\\
\Theta_1 +k_1 & k_1\em
\bigg)
Y^{(4)}(x),
$$
that is the model equation (\ref{st}). Parameter $\rho_1$ should be taken as follows:
\begin{equation}\label{eq:rho1}
\rho_1= -1 -\beta_1 = -1 +\frac{z+\Theta_0}{yt}.
\end{equation}
We also assume that $\varphi_1=\mathcal{O}(1)$, which is consistent with Theorem~\ref{th:new1}.

Now, the standard proof \cite{WW} allows us to formulate the following Theorem.
\begin{theorem}\label{exist1}
For any $j\in \Bbb Z$,
there exist a solution
$Y_{(j)}(1,\la)$ of
Equation~$(\ref{mainwu})$
with the following asymptotic
expansion as $|(\la-1)t|<|o(t)|$
\begin{equation}\label{eq:Y-near-1-main}
Y_{(j)}(1,\la)\approx
y^{\frac 12\si_3}
T_1 \rho_1^{\frac 12 \si_3}Y_j(\Th_1,k_1;(\la-1) t),
\end{equation}
\end{theorem}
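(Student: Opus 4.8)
The plan is to read Theorem~\ref{exist1} as a perturbative statement: the chain of explicit gauge transformations $Y=Y^{(1)}\mapsto Y^{(2)}\mapsto Y^{(3)}\mapsto Y^{(4)}$ performed in Subsection~\ref{subsec:Der1approx1} turns Equation~\eqref{mainwu} near $\la=1$ into the model equation~\eqref{st} with $\si=\Theta_1$ and $k=k_1$, \emph{up to} a remainder that is asymptotically negligible in the region $|x|=|(\la-1)t|=o(t)$. Since the model equation has, for each $j\in\mathbb Z$, the explicitly constructed solution $Y_j(\Theta_1,k_1;x)$ of Subsection~\ref{subsec:Der1modeleq} with prescribed sectorial asymptotics, the content of the theorem is that the exact solution inherits the leading behaviour of $Y_j$ once the transformations are undone, i.e. $Y_{(j)}(1,\la)\approx y^{\frac12\si_3}T_1\rho_1^{\frac12\si_3}Y_j(\Theta_1,k_1;(\la-1)t)$. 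This is precisely the situation covered by the classical existence-and-asymptotics theory for linear systems cited as~\cite{WW}.

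Concretely, I would first compose the four substitutions into the single exact relation $Y=y^{\frac12\si_3}T_1\rho_1^{\frac12\si_3}Y^{(4)}$, where $Y^{(4)}$ solves
\[
\cd_xY^{(4)}=\Big(\tfrac12\si_3+\tfrac1{2x}\bm -k_1 & \Theta_1-k_1\\ \Theta_1+k_1 & k_1\em+R(x,t)\Big)Y^{(4)},
\]
with $R$ collecting the off-diagonal $o(1)$ entry of the constant part, the discrepancy between $\hat A_1$ and the model residue matrix fixed by \eqref{deteq} and \eqref{eq:def-varphi1}, and the tail $\mathcal O(x)\,\ti A_0\,t^{-2}$. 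Using Assumptions~\eqref{zdem}, \eqref{ydem}, \eqref{as1}, \eqref{eq:cond-z-y-2}, \eqref{eq:cond-z-y-3} together with the choices \eqref{eq:rho1} and \eqref{eq:def-varphi1}, one checks in the region $|x|=o(t)$ that each term of $R$ is $o(1/x)$ along the relevant ray, so that $\int R\,dx=o(1)$. I would then fix the model solution $Y_j(\Theta_1,k_1;x)$, write $Y^{(4)}=Y_j(I+\Phi)$, and convert the resulting equation for $\Phi$ into a Volterra integral equation whose kernel is $Y_j^{-1}RY_j$; a contraction-mapping (successive-approximation) argument then yields $\Phi=o(1)$ uniformly, which is the substance of the standard existence argument of~\cite{WW}.

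The main obstacle is the uniform control of the Volterra kernel $Y_j^{-1}RY_j$. Because $Y_j$ carries the diagonal factor $\exp\{(x/2-\tfrac{k_1}2\ln x)\si_3\}$, the off-diagonal components of $R$ get multiplied by exponentially growing or decaying factors, so the base point $\la^*_j$ and the path of integration must be taken along the ray $\arg(\la t)=\tfrac\pi2+\pi(j-2)$ for which every matrix entry of the kernel is integrable with small total mass; this is exactly the role played by the sector index $j$. A second delicate point is uniformity across the whole annular region $|t|^{\ep_m}\lesssim|x|\lesssim o(t)$, with $\ep_m=1-q$ as in \eqref{eq:cond-z-y-2}, so that the inner model approximation overlaps the WKB regime of Theorem~\ref{th:wkb} and the two descriptions can later be matched. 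Once these estimates are secured the contraction argument closes, and undoing the composite transformation $Y=y^{\frac12\si_3}T_1\rho_1^{\frac12\si_3}Y^{(4)}$ produces the claimed asymptotic form~\eqref{eq:Y-near-1-main}.
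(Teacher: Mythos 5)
Your proposal is correct and is essentially the paper's own argument: the paper reduces Equation~\eqref{mainwu} near $\la=1$ to the model equation~\eqref{st} by exactly the chain of gauge transformations you compose, and then simply invokes ``the standard proof \cite{WW}'' for the existence of a solution with the prescribed sectorial asymptotics. What you have written out --- the Volterra integral equation for $Y^{(4)}=Y_j(I+\Phi)$, the integrability of the kernel $Y_j^{-1}RY_j$ along the ray indexed by $j$, and the contraction argument giving $\Phi=o(1)$ --- is precisely the content of that standard reference, so you have supplied the details the paper leaves implicit rather than taken a different route.
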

\begin{corollary}\label{cor:large1}
In the region, $(\la-1)t=\mathcal{O}\left(t^{\ep_m}\right)$,
$0<\ep_m<1$, the asymptotic
expansion of function
$Y_{(j)}(1,\la)$ has the following form
\begin{equation}
Y_{(j)}(1,\la)=
y^{\frac 12\si_3}
T_1 \rho_1^{\frac 12 \si_3}
\left(1+\mathcal{O}\left(t^{-\ep_m}\right)\right)
e^{-\frac{k_1}2\ln(\la-1)\si_3}
t^{-\frac{k_1}2\si_3}
e^{\frac{(\la-1)t}2\si_3}
\label{big1}
\end{equation}
in the sector
$$
-\frac \pi 2 +\pi (j-2) <
\arg (\la-1)+
\arg t
<\frac {3\pi}2+\pi(j-2).
$$
\end{corollary}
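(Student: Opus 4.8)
The plan is to obtain Corollary~\ref{cor:large1} directly from Theorem~\ref{exist1} by substituting the large-argument form of the model function and specializing $x=(\la-1)t$. First I would recall that the model solutions $Y_i(k,\si;x)$ introduced in Subsection~\ref{subsec:Der1modeleq} satisfy, by the very property that fixes them,
$$
Y_i(k,\si;x)=\big(1+\mathcal{O}(x^{-1})\big)\exp\Big\{\big(\tfrac x2-\tfrac k2\ln x\big)\si_3\Big\}
$$
uniformly for $x\to\infty$ in the sector $-\tfrac\pi2+\pi(i-2)<\arg x<\tfrac{3\pi}2+\pi(i-2)$. Putting $i=j$, $k=k_1$, $\si=\Th_1$ and $x=(\la-1)t$, the hypothesis $(\la-1)t=\mathcal{O}(t^{\ep_m})$ with $\ep_m>0$ forces $|x|\to\infty$, so this expansion applies, and the sector for $x$ becomes the stated one in $\arg(\la-1)+\arg t$ because $\arg x=\arg(\la-1)+\arg t$ under the branch conventions fixed in Subsection~\ref{subsec:Der1modeleq}.

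The central computation is the factorization of the scalar exponent. Since every term in the exponent is a multiple of $\si_3$, the corresponding exponentials commute, and writing $\ln\big((\la-1)t\big)=\ln(\la-1)+\ln t$ gives
$$
\exp\Big\{\big(\tfrac{(\la-1)t}2-\tfrac{k_1}2\ln\big((\la-1)t\big)\big)\si_3\Big\}
=e^{-\frac{k_1}2\ln(\la-1)\si_3}\,t^{-\frac{k_1}2\si_3}\,e^{\frac{(\la-1)t}2\si_3},
$$
which is exactly the ordered product of exponentials appearing in~\eqref{big1}. Multiplying on the left by the prefactor $y^{\frac12\si_3}T_1\rho_1^{\frac12\si_3}$ supplied by Theorem~\ref{exist1} leaves the model-function error matrix $(1+\mathcal{O}(x^{-1}))$ in precisely the slot shown in the statement. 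Finally, since $|x|=|(\la-1)t|$ is of order $t^{\ep_m}$ in this region, the intrinsic error $\mathcal{O}(x^{-1})$ is $\mathcal{O}(t^{-\ep_m})$, which converts the error factor into the form $\big(1+\mathcal{O}(t^{-\ep_m})\big)$ claimed.

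The step I expect to require the most care is the bookkeeping of the two distinct error sources, which is also where the upper bound $\ep_m<1$ enters. Theorem~\ref{exist1} is stated with the symbol $\approx$: in reducing Equation~\eqref{mainwu} near $\la=1$ to the model equation~\eqref{st} we discarded the perturbation $\mathcal{O}(x)\tilde A_0/t^2$ and neglected an $o(1)$ off-diagonal entry. To promote $\approx$ to the genuine equality with error $\mathcal{O}(t^{-\ep_m})$ asserted in the corollary, I would estimate these neglected contributions in the region $(\la-1)t=\mathcal{O}(t^{\ep_m})$: there $\la-1=\mathcal{O}(t^{\ep_m-1})$ is small, so under Assumptions~\eqref{zdem}, \eqref{ydem}, \eqref{as1}, and \eqref{eq:cond-z-y-3} the dropped perturbation is a negligible relative correction whose cumulative effect on the fundamental solution stays within $\mathcal{O}(t^{-\ep_m})$. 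Once both error sources are seen to be dominated by $t^{-\ep_m}$, the two expansions combine into the claimed formula~\eqref{big1}.
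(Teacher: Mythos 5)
Your argument is correct and is essentially the paper's own (implicit) reasoning: the corollary is obtained by inserting the large-$x$ expansion $Y_j(k_1,\Th_1;x)=(1+\mathcal O(x^{-1}))\exp\{(\tfrac x2-\tfrac{k_1}2\ln x)\si_3\}$ into Theorem~\ref{exist1} with $x=(\la-1)t$, splitting $\ln x=\ln(\la-1)+\ln t$ under the branch convention stated after the corollary, and noting $\mathcal O(x^{-1})=\mathcal O(t^{-\ep_m})$. Your additional remark on controlling the discarded perturbations in the region $\la-1=\mathcal O(t^{\ep_m-1})$ is consistent with the assumptions \eqref{zdem}, \eqref{ydem}, \eqref{as1}, and \eqref{eq:cond-z-y-3} under which Theorem~\ref{exist1} is stated.
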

\begin{corollary}\label{cor:can1}
In the region, $(\la-1)t=o(1)$,
the asymptotic
expansion of function
$Y_{(2)}(1,\la)$ is given by
\begin{equation}\label{eq:Y_i(1,lambda)-x-large}
Y_{(2)}(1,\la)=
y^{\frac 12\si_3}
T_1 \rho_1^{\frac 12 \si_3}
G_{k_1\Th_1}
\left(1+\mathcal O\big(\la-1)t\big)\right)\big((\la-1)t\big)^{\frac{\Th_1}2\si_3}
C_{k_1\Th_1}.
\end{equation}
\end{corollary}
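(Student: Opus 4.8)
The plan is to read off Corollary~\ref{cor:can1} by composing Theorem~\ref{exist1}, in the case $j=2$, with the known small-argument behavior of the model function $Y_2(k,\si;x)$ recorded in Equation~\eqref{const}. Specializing \eqref{eq:Y-near-1-main} to $j=2$ gives
\[
Y_{(2)}(1,\la)\approx y^{\frac12\si_3}T_1\rho_1^{\frac12\si_3}Y_2(\Th_1,k_1;(\la-1)t),
\]
which holds throughout the disk $|(\la-1)t|<|o(t)|$. It thus remains only to feed in the $x\to0$ asymptotics of $Y_2$ and to control the resulting error.

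First I would substitute $k=k_1$, $\si=\Th_1$, and $x=(\la-1)t$ into Equation~\eqref{const}, obtaining
\[
Y_2(\Th_1,k_1;(\la-1)t)=G_{k_1\Th_1}\big(1+\mathcal O((\la-1)t)\big)\big((\la-1)t\big)^{\frac{\Th_1}2\si_3}C_{k_1\Th_1}.
\]
The regime of the Corollary, $(\la-1)t=o(1)$, lies strictly inside the disk where Theorem~\ref{exist1} is in force, so the two expansions are simultaneously valid and may be composed. Because the left prefactor $y^{\frac12\si_3}T_1\rho_1^{\frac12\si_3}$ depends on $t$ alone---through $y$, through $\beta_1$ entering $T_1$, and through $\rho_1$ of \eqref{eq:rho1}---and not on $\la$, the factorization threads through verbatim and reproduces exactly the right-hand side of \eqref{eq:Y_i(1,lambda)-x-large}.

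The delicate point, which I expect to be the main obstacle, is the promotion of the asymptotic relation $\approx$ in \eqref{eq:Y-near-1-main} to the genuine equality stated in the Corollary. The symbol $\approx$ absorbs the error made in replacing the exact coefficient matrix near $\la=1$ by the model matrix of Equation~\eqref{st}, which originates in the discarded $\mathcal O(x)\,\ti A_0/t^2$ contribution and in the entries rendered $o(1)$ by virtue of Assumption~\eqref{eq:cond-z-y-3}. I would verify that, uniformly on $(\la-1)t=o(1)$, all such propagated errors are bounded by $\mathcal O((\la-1)t)$---the remaining purely $t$-decaying terms being harmless in this inner region---so that they are absorbed into the factor $1+\mathcal O((\la-1)t)$ already present in \eqref{const}. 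This uniform bound is essentially a standard comparison of the true and model Frobenius solutions at the regular singular point $\la=1$, valid since $\Th_1\notin\Bbb Z$, and establishing it completes the upgrade and hence the Corollary.
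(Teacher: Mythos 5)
Your proposal matches the paper's (implicit) derivation exactly: Corollary~\ref{cor:can1} is obtained by specializing Theorem~\ref{exist1} to $j=2$ and inserting the $x\to0$ expansion~\eqref{const} of the model function with $k=k_1$, $\sigma=\Theta_1$, the $\lambda$-independent prefactor $y^{\frac12\sigma_3}T_1\rho_1^{\frac12\sigma_3}$ passing through unchanged. Your closing remark on absorbing the errors hidden in the symbol $\approx$ into the factor $1+\mathcal O\big((\lambda-1)t\big)$ addresses a point the paper leaves tacit, consistent with its stated policy of presenting derivations rather than fully detailed proofs.
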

In these statements
$$
t^{-\frac {k_1}2\si_3} \eq e^{-\frac{k_1}2\si_3\ln
t},\quad
\ln t = \ln|t| + \imath \arg t.
$$
Hereafter, we will understand all multivalued logarithmic
functions of $t$ in this way.
\subsection{Singular point $\la=0$}
  \label{subsec:Der1approx0}
The construction presented in this subsection is analogous to the one in Subsection~\ref{subsec:Der1approx1}.
In particular here we do not arrive at new restrictions on functions $y$ and $z$.

In the neighborhood of the point $\la=0$ we write down Equation~(\ref{mainwu}) as follows,
$$
\cd_\la Y(\la)=\left(\frac t2\si_3-A_1+\frac{A_0}\la+\mathcal{O}(\la)A_1\right)Y(\la).
$$
We divide this equation by $t$ and introduce the variable $x=\la t$.
Then, function $Y^{(1)}(x)=Y(\la)$ satisfies the equation
$$
\cd_x Y^{(1)}(x)=\bigg(
\bm \frac 12 & o(1)\\
\beta_0 & -\frac 12\em
+\frac{A_0}x
+\mathcal{O}(x)\frac{A_1}{t^2}\bigg)Y^{(1)}(x),
$$
where, $\beta_0=\frac{z+(\Theta_0+\Theta_1+\Theta_\infty)/2}{ty}$.

To obtain the model equation (\ref{st}),
we make the substitution
$$
Y^{(1)}= T_0Y^{(2)}=\bm 1 & 0 \\ \beta_0 & 1\em Y^{(2)}.
$$
Then, $Y^{(2)}$ satisfies the equation
$$
\cd_x Y^{(2)}(x)\approx \bigg(\frac 12\si_3
+\frac{\hat A_0}x
\bigg)
Y^{(2)}(x).
$$
Here,
$$
\hat A_0 =T_0^{-1} A_0 T_0.
$$
Note that
$$
\det \hat A_0=-\frac{\Th_0^2}4
$$
and we can find the parameters $\si$ and $k$ in the model
equation: $\si=\Th_0$ and
\begin{equation*}\label{eq:def-k0}
k\equiv k_0=-2 z-\Theta_0 +2 \beta_0 (z+\Theta_0)=2\vp_1,
\end{equation*}
with $\varphi_1$ defined in Equation~\eqref{eq:def-varphi1}.
Now we map equation on $Y^{(2)}$ to the model Equation~\eqref{st} by making
the following transformation
$Y^{(2)}=\rho_0^{\frac 12\si_3} Y^{(3)}$,
where
\begin{equation}\label{eq:rho0}
\dfrac1\rho_0=\beta_0-1=\frac{z+(\Theta_0+\Theta_1+\Theta_\infty)/2}{ty}-1.
\end{equation}
Then, we find that $Y^{(3)}$ satisfies the following equation:
$$
\cd _x Y^{(3)}(x)\approx \bigg(\frac 12\si_3
+\frac 1{2x}
\bm -k_0 & \Theta_0 - k_0\\
\Theta_0 +k_0 & k_0\em
\bigg)
Y^{(3)}(x),
$$
that is exactly the model equation (\ref{st}).

As in Subsection~\ref{subsec:Der1approx1} we arrive at the following results:
\begin{theorem}\label{exist0}
For any $i\in \Bbb Z$,
there exist a solution
$Y_{(i)}(0,\la)$ of Equation~$(\ref{mainwu})$
with the following asymptotic
expansion as $\la t=o(t)$
\begin{equation}\label{eq:Y-near-0-main}
Y_{(i)}(0,\la)=
T_0\rho_0^{\frac 12 \si_3} Y_i(\Th_0,k_0;\la t).
\end{equation}
\end{theorem}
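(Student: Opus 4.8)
The plan is to reproduce, near $\la=0$, the argument that produced Theorem~\ref{exist1}, the reduction here being structurally identical and in fact slightly simpler (the gauge $T_0=\bm1&0\\\beta_0&1\em$ is lower-triangular, and there is no left conjugation by $y^{\frac12\si_3}$). First I would assemble the chain of substitutions already performed in this subsection — $x=\la t$, then $Y^{(1)}=T_0Y^{(2)}$, then $Y^{(2)}=\rho_0^{\frac12\si_3}Y^{(3)}$ with $\rho_0$ from~\eqref{eq:rho0} — into a single \emph{exact} statement: starting from a genuine solution $Y$ of Equation~\eqref{mainwu}, the function $Y^{(3)}$ satisfies
\begin{equation*}
\cd_x Y^{(3)}=\left(\frac12\si_3+\frac1{2x}\bm -k_0&\Th_0-k_0\\\Th_0+k_0&k_0\em+R(x,t)\right)Y^{(3)},
\end{equation*}
where $R(x,t)$ collects every term discarded in passing from ``$=$'' to ``$\approx$''. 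Since the three gauge matrices are constant in $x$, the only sources of $R$ are the off-diagonal $o(1)$ entry created by Assumption~\eqref{eq:cond-z-y-3} and the analytic remainder $\mathcal O(x)A_1/t^2$ from expanding the coefficient of~\eqref{mainwu} about $\la=0$.

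The first task is to bound $R$ in the region $\la t=o(t)$, i.e. $x=o(t)$, $\la\to0$. Using \eqref{zdem}, \eqref{ydem}, \eqref{as1} and $\varphi_1=\mathcal O(1)$, one sees that the off-diagonal $o(1)$ term is $x$-independent and vanishes with $t$, while the analytic remainder grows at most linearly in $x$. The diagonal part of $R$ is governed by the very combination $t^{-2}z^2(y-\tfrac1y)$ controlled by Assumption~\eqref{eq:cond-z-y-2} — the same one that rendered the WKB correction $F_1$ negligible in Subsection~\ref{subsec:Der1WKB} — whereas the off-diagonal growth will be killed after conjugation by the model solution. As the paper stresses, the standing assumptions \eqref{zdem}--\eqref{eq:cond-z-y-3} suffice and no new restriction on $y$, $z$ appears; this is the content of the remark opening the subsection.

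Next I would take the explicit solution $Y_i(\Th_0,k_0;x)$ of the model Equation~\eqref{st} from Subsection~\ref{subsec:Der1modeleq}, with sectorial behaviour $Y_i\sim\bigl(1+\mathcal O(x^{-1})\bigr)\exp\{(\tfrac x2-\tfrac{k_0}2\ln x)\si_3\}$, as the unperturbed fundamental matrix. Writing $Y^{(3)}=Y_i(\Th_0,k_0;x)(I+P(x))$ converts the equation for $Y^{(3)}$ into the Volterra equation
\begin{equation*}
P(x)=\int_{x^*_i}^{x}Y_i^{-1}(\xi)\,R(\xi,t)\,Y_i(\xi)\bigl(I+P(\xi)\bigr)\,d\xi,
\end{equation*}
where the endpoint $x^*_i$ and the contour are chosen inside the sector $-\tfrac\pi2+\pi(i-2)<\arg x<\tfrac{3\pi}2+\pi(i-2)$ so that the off-diagonal entries of $Y_i^{-1}RY_i$, which carry factors $e^{\mp\xi}$, decay along the path. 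The standard successive-approximation / contraction argument of \cite{WW} — the same one underlying Theorem~\ref{exist1} — then yields a unique small $P=o(1)$ on the ray, hence a genuine solution $Y_{(i)}(0,\la)$ of \eqref{mainwu} with $Y^{(3)}\approx Y_i(\Th_0,k_0;\la t)$; undoing the two gauges gives precisely \eqref{eq:Y-near-0-main}.

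The main obstacle is the two-scale balance inside $\la t=o(t)$: the exponential weight from $Y_i$ must dominate the kernel out to the matching radius $|x|\sim t^{\ep_m}$, $\ep_m=1-q$, where this local solution will be glued to the WKB solution of Theorem~\ref{th:wkb}, while the remainder itself degrades through the $\mathcal O(x)A_1/t^2$ term as $|x|$ grows. Verifying that the contour can be pushed out that far with the iterated kernel still contractive — so that $P$ stays $o(1)$ uniformly up to the overlap region, the diagonal contributions being controlled by \eqref{eq:cond-z-y-2} and the off-diagonal ones by the decaying weights — is the delicate estimate. Everything else is the purely algebraic bookkeeping of the triangular gauges $T_0$ and $\rho_0^{\frac12\si_3}$, which is lighter here than at $\la=1$.
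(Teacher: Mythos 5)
Your proposal follows the paper's own route: the authors perform exactly the gauge reduction you describe ($x=\la t$, then $T_0$, then $\rho_0^{\frac12\si_3}$, using only the standing assumptions \eqref{zdem}--\eqref{eq:cond-z-y-3} with no new restrictions) and then dispose of the existence statement by appealing to ``the standard proof \cite{WW}'', which is precisely the Volterra-equation/successive-approximation argument you spell out, including the choice of integration contour in the sector so that the conjugated kernel decays. Your write-up is a correct and more explicit version of what the paper leaves to the reference, so there is nothing to object to.
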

\begin{corollary}\label{cor:large0}
In the region, $\la t=\mathcal{O}\left(t^{\ep_m}\right)$,
$0<\ep_m<1$, the asymptotic
expansion of function
$Y_{(i)}(0,\la)$ is given by
\begin{equation}\label{eq:Y_i(0,lambda)-x-large}
Y_{(i)}(0,\la)=
T_0\rho_0^{\frac 12 \si_3}
\left(1+\mathcal{O}\left(t^{-\ep_m}\right)\right)
e^{-\frac{k_0}2\ln\la\si_3}
t^{-\frac{k_0}2\si_3}
e^{\frac{\la t}2\si_3}.
\end{equation}
This asymptotic expansion is valid in the sector
$$
-\frac \pi 2 +\pi (i-2) <
\arg \la+
\arg t
<\frac {3\pi}2+\pi(i-2).
$$
\end{corollary}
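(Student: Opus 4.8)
The plan is to obtain \eqref{eq:Y_i(0,lambda)-x-large} from Theorem~\ref{exist0} by repeating the argument that produced Corollary~\ref{cor:large1} from Theorem~\ref{exist1}. The only structural difference is the absence of the conjugating factor $y^{\frac 12\si_3}$: in the reduction to the model equation~\eqref{st} carried out in Subsection~\ref{subsec:Der1approx0} one starts from $Y^{(1)}=T_0Y^{(2)}$ directly, without the preliminary $y^{-\frac12\si_3}$-gauge that was needed at $\la=1$. Hence the prefactor multiplying the model solution is simply $T_0\rho_0^{\frac 12\si_3}$, with $\rho_0$ as in~\eqref{eq:rho0}, $\si=\Th_0$, and $k_0=2\vp_1$, which accounts for the milder prefactor in \eqref{eq:Y_i(0,lambda)-x-large} as compared with \eqref{big1}.

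First I would invoke Theorem~\ref{exist0}, which furnishes $Y_{(i)}(0,\la)=T_0\rho_0^{\frac 12\si_3}Y_i(\Th_0,k_0;\la t)$ throughout the region $\la t=o(t)$. Inside the narrower region $\la t=\mathcal O(t^{\ep_m})$, $0<\ep_m<1$, the model argument $x=\la t$ grows without bound, so I would substitute the large-$x$ asymptotic of the model function in the form
$$
Y_i(\Th_0,k_0;x)=\left(1+\mathcal O\left(x^{-1}\right)\right)\exp\left\{\left(\frac x2-\frac{k_0}2\ln x\right)\si_3\right\},
$$
valid in the sector $-\frac\pi2+\pi(i-2)<\arg x<\frac{3\pi}2+\pi(i-2)$. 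Setting $x=\la t$ and splitting $\ln(\la t)=\ln\la+\ln t$, the diagonal exponential factorizes (all factors being functions of $\si_3$, hence commuting) as
$$
\exp\left\{\left(\frac{\la t}2-\frac{k_0}2\ln(\la t)\right)\si_3\right\}=e^{-\frac{k_0}2\ln\la\,\si_3}\,t^{-\frac{k_0}2\si_3}\,e^{\frac{\la t}2\si_3},
$$
reproducing the three ordered exponentials on the right-hand side of \eqref{eq:Y_i(0,lambda)-x-large}.

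What remains is bookkeeping of errors and sectors. In the region at hand $\mathcal O(x^{-1})=\mathcal O((\la t)^{-1})=\mathcal O(t^{-\ep_m})$, which is exactly the prefactor $(1+\mathcal O(t^{-\ep_m}))$ claimed; and under $\arg x=\arg\la+\arg t$ the sector of validity of the model asymptotic becomes precisely the sector $-\frac\pi2+\pi(i-2)<\arg\la+\arg t<\frac{3\pi}2+\pi(i-2)$ stated in the corollary. The step deserving the most care is the consistency of the matching region: one must check that $\la t=\mathcal O(t^{\ep_m})$ with $0<\ep_m<1$ lies simultaneously inside the domain $\la t=o(t)$ governed by Theorem~\ref{exist0} and inside the large-$x$ regime where the model tail applies, and that the error carried over from the $\approx$-reductions of Subsection~\ref{subsec:Der1approx0} stays subordinate to the $\mathcal O(t^{-\ep_m})$ generated by the model function. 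Since $t^{\ep_m}\to\infty$ while $t^{\ep_m}=o(t)$ as $t\to\infty$, both inclusions hold for large $t$ and the error contributions collapse into the single factor $(1+\mathcal O(t^{-\ep_m}))$ displayed in \eqref{eq:Y_i(0,lambda)-x-large}.
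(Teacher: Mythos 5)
Your proposal is correct and follows essentially the same route as the paper, which does not spell the argument out but obtains Corollary~\ref{cor:large0} exactly as Corollary~\ref{cor:large1}: insert the large-$x$ expansion of the model function $Y_i(k_0,\Th_0;x)$ into the representation of Theorem~\ref{exist0}, split $\ln(\la t)=\ln\la+\ln t$ in the commuting diagonal exponentials, and read off the error $\mathcal O((\la t)^{-1})=\mathcal O(t^{-\ep_m})$ and the sector from $\arg x=\arg\la+\arg t$. Your observation that the only structural change from the $\la=1$ case is the absence of the $y^{\frac12\si_3}$ gauge factor is precisely the content of the paper's remark that the construction at $\la=0$ is analogous to that at $\la=1$.
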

\begin{corollary}\label{cor:can0}
In the region, $\la t=o(1)$,
the asymptotic
expansion of function
$Y_{(2)}(0,\la)$ is
\begin{equation}
Y_{(2)}(0,\la)=
T_0\rho_0^{\frac 12 \si_3}
G_{k_0\Th_0}
\big(1+\mathcal O(\la t)\big)(\la t)^{\frac{\Th_0}2\si_3}
C_{k_0\Th_0}.
\label{small0}
\end{equation}
\end{corollary}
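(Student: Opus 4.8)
The plan is to specialize Theorem~\ref{exist0} to the index $i=2$ and then insert the $x\to0$ expansion~\eqref{const} of Jimbo's model function $Y_2(\Th_0,k_0;x)$. Theorem~\ref{exist0} already delivers a solution $Y_{(2)}(0,\la)$ of Equation~\eqref{mainwu} whose leading behaviour in the region $\la t=o(t)$ is
$$
Y_{(2)}(0,\la)=T_0\rho_0^{\frac12\si_3}\,Y_2(\Th_0,k_0;\la t),
$$
with $T_0$, $\rho_0$, $k_0$, and $\si=\Th_0$ as fixed in Subsection~\ref{subsec:Der1approx0}. Since $|t|\to\infty$, the region $\la t=o(1)$ of the corollary is contained in the region $\la t=o(t)$ where this representation holds, so the formula above is available throughout the domain of the corollary.

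First I would put $x=\la t$ and note that on $\la t=o(1)$ we have $x\to0$, so the small-$x$ expansion~\eqref{const} applies with $k=k_0$, $\si=\Th_0$, giving
$$
Y_2(\Th_0,k_0;\la t)=G_{k_0\Th_0}\big(1+\mathcal O(\la t)\big)(\la t)^{\frac{\Th_0}2\si_3}C_{k_0\Th_0}.
$$
Substituting this into the representation from Theorem~\ref{exist0} and left-multiplying by the $\la$-independent factor $T_0\rho_0^{\frac12\si_3}$ reproduces exactly the claimed formula~\eqref{small0}; the matrices $G_{k_0\Th_0}$ and $C_{k_0\Th_0}$ are the explicit constants recorded in Subsection~\ref{subsec:Der1modeleq}. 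In this sense the corollary is a direct transcription at $\la=0$ of the argument already made at $\la=1$ in deriving Corollary~\ref{cor:can1}.

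The only step requiring care is the bookkeeping of error terms. The representation in Theorem~\ref{exist0} is an asymptotic identity whose derivation discarded the corrections $\mathcal O(x)A_1/t^2$, the off-diagonal $o(1)$ entry appearing in the equation for $Y^{(1)}$, and the terms controlled by Assumptions~\eqref{eq:cond-z-y-2} and \eqref{eq:cond-z-y-3}, while~\eqref{const} contributes an independent $\mathcal O(x)=\mathcal O(\la t)$ correction. The main point to verify is that, throughout $\la t=o(1)$, the relative error of the whole product is governed by this $\mathcal O(\la t)$ term, all the reduction errors being subdominant; this rests on exactly the same hypotheses~\eqref{zdem}, \eqref{ydem}, \eqref{as1}, \eqref{eq:cond-z-y-2}, and \eqref{eq:cond-z-y-3} used at $\la=1$. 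Once this is in place, the multivalued factor $(\la t)^{\frac{\Th_0}2\si_3}$ is read off with the branch of $\ln t$ fixed as in the statements following Corollary~\ref{cor:can1}, and the corollary follows.
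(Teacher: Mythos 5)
Your proposal is correct and follows exactly the route the paper intends: the corollary is obtained by specializing Theorem~\ref{exist0} to $i=2$, noting that $\la t=o(1)$ lies inside the region $\la t=o(t)$ where that representation holds, and inserting the small-$x$ expansion~\eqref{const} of the model function with $x=\la t$, $\si=\Th_0$, $k=k_0$. The paper itself states the result with no further argument ("As in Subsection~4.3 we arrive at the following results"), so your write-up, including the remark on the error bookkeeping, is a faithful and slightly more explicit version of the same derivation.
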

\subsection{Matching}
 \label{subsec:Der1matching}
We have defined the following solutions of Equation~(\ref{mainwu}):
the canonical solutions,$Y_i(\la)$; the solutions with the WKB asymptotics, $\Psi_i(\la)$;
and the solutions in the proper neighbohoods of the singular points, $Y_{(i)}(p,\la)$, $i\in \Bbb Z$.
Since all of them are solutions of the same equation, the following matrices
\begin{equation}\label{eq:def-Li-Ci}
L_i(p)=Y^{-1}_{(i)}(p,\la)\Psi_i(\la),\qquad
C_i=\Psi^{-1}_i(\la)Y_i(\la)
\end{equation}
are independent of $\la$.
Due to Corollaries \ref{cor:can1} and \ref{cor:can0},
\begin{equation}\label{eq:def-E_{(i)}(p)}
Y_{(i)}(p,\la)\underset{\la\to p}
=G_i(p)(\la-p)^{\frac{\Th_p}2\si_3}
E_{(i)}(p),
\end{equation}
where the matrices $E_{(i)}(p)$ are also independent of $\la$.

Our immediate goal is to find asymptotics of these matrices as $t\to\infty$.
After that, we find asymptotics of the matrices $E^p_i$ (see Section~\ref{sec:def-monodromy}),
via the following relation:
\begin{equation}\label{eq:epi}
E^p_i=E_{(i)}(p)L_i(p)C_iu^{-\frac 12\si_3},
\end{equation}
where we restored function $u$ (see introductory part for this Section).

For any integer $i$ (note that the imaginary unit is denoted as $\imath$)
\begin{equation}\label{eq:ci}
C_i=-\imath \si_3(1+o(1)),
\end{equation}
where the error estimate is a diagonal matrix. This estimation is obtained by taking
asymptotics as $\lambda\to\infty$ along the corresponding (to $i$) Stokes line
in the second equation~\eqref{eq:def-Li-Ci}. The error estimate is a diagonal matrix,
because the off-diagonal entries would be $\lambda$--dependent.

To find asymptotics of matrices $L_i(p)$ we use their definition~\eqref{eq:def-Li-Ci},
where we take asymptotics as $x=(\lambda -p)t=\mathcal O\left(t^\ep\right)\to\infty$,
$0<\ep<1$, along Stokes lines of the functions $Y_i(p,\lambda)$. Again we have to take
the diagonal part of the asymptotics because the non-trivial off-diagonal part would be
$x$-dependent.

Using expansions~\eqref{eq:Y_i(1,lambda)-x-large} and \eqref{eq:WKBa}, with $F_0(\lambda)$
and $T(\lambda)$ given by \eqref{eq:F0-1} and \eqref{eq:WKB-T-1}, respectively, we want to
match these solutions in the leading term. In order for the leading term
the leading terms of the matrices $L_i(1)$ not to depend on $\lambda$,
the following condition should be valid
\begin{equation}\label{eq:phi-phi1-phi2}
\varphi_2\equiv\varphi-\varphi_1=
\dfrac{yz}t\left(z+\dfrac{\Theta_0-\Theta_1+\Theta_\infty}2\right)-
\dfrac2t\left(z+\dfrac{\Theta_0}2\right)\left(z+\dfrac{\Theta_0+\Theta_\infty}2\right)=o(1).
\end{equation}
Assuming the above condition is true, one finds,
\begin{equation}\label{eq:Li1}
L_i(1)= t^{\frac {k_1}2\si_3} (\rho_1 y)^{-\frac12 \si_3}
e^ {\frac t2\si_3}\sigma_3(1+o(1)),
\end{equation}
where, again, the error term $o(1)$ is a diagonal matrix.
We find asymptotics of $L_i(0)$ in the similar way, with the help of equations
\eqref{eq:Y_i(0,lambda)-x-large}, \eqref{eq:WKBa}, \eqref{eq:F0-1}, and \eqref{eq:WKB-T-1}.
The result reads
\begin{equation}\label{eq:Li0}
L_i(0)=t^{\frac {k_0}2\si_3} \rho_0^{-\frac12 \si_3}
e^ {\pi \imath (\vp-\frac{\tin}2)\si_3}\sigma_3(1+o(1)).
\end{equation}

Now we have enough information to calculate all the monodromy data introduced in Section~\ref{sec:def-monodromy}.
We, however, are going to find a minimal set of the data which completely characterize the domain of the monodromy
manifold corresponding to our assumptions on the coefficients of Equation~\eqref{mainl} which are made in the preceding
Subsections.

To simplify further relations, we need some preliminary notation. Denote $\Cal G_p$ the
upper (p=0) -- and lower (p=1) -- triangular subgroups of $SL(2,\mathbb C)$ with the unit diagonal,
$$
\Cal G_p=\{g\in SL(2,\Bbb C)\colon
g=\bm 1 & (1-p)*\\
p*&1\em\}.
$$
where the symbol $*$ stands for arbitrary complex number.
Subgroup $\Cal G_p$ acts on $SL(2,\Bbb C)$ via the right multiplication.
We use the same notation $\Cal G_p$ for this action, since it cannot cause any misunderstanding.
Note that element $X_{12}$ is not changed by transformation
$\Cal G_1$, while element $X_{21}$ is not changed by $\Cal G_0$.

Let $\Cal D$ be a diagonal subgroup of
$SL(2,\Bbb C)$. Then, for any $g\in\Cal G_p$
and $d\in \Cal D$
\begin{equation}
dg=gd \ \op{mod} \Cal G_p.
\label{dggd}
\end{equation}

By employing this notation, the matrices $E_{(i)}(p)$ (see Equation~\eqref{eq:def-E_{(i)}(p)})
due to Equations~\eqref{eq:Y-near-1-main}, \eqref{eq:Y-near-0-main}, \eqref{stst}, and \eqref{const},
can be written as follows:
\begin{eqnarray}
E_{(2)}(p)&=&C_{k_p\Th_p},\label{eipp2}\\
E_{(3)}(0)&=&C_{k_0\Th_0} \ \op{mod} \Cal G_0,
\label{eipp0}\\
E_{(1)}(1)&=&C_{k_1\Th_1} \ \op{mod} \Cal G_1.
\label{eipp1}
\end{eqnarray}

Relations \eqref{eq:ci}, \eqref{eq:Li1}, \eqref{eq:Li0},
\eqref{eipp2}, \eqref{eipp0}, and \eqref{eipp1} give us information sufficient
to find asymptotics of matrices $E^p$ (see \eqref{y22}).

First, consider the case $\arg t=\frac \pi2$.
Then, in the neighborhood of point $\la =1$, we have:
$\arg x=\arg (\la-1)t=\frac \pi 2$ for
real $\la>1$, $\arg \la=0$. Note that
asymptotic expansion of
$Y_2$ is fixed at $\arg \la =0$. Thus,
$$
 E^1\equiv E^1_2=C_{k_1\Th_1}
L_2(1)C_2
u^{-\frac 12\si_3}.
$$
In the neighborhood of point $\la =0$, we have:
$\arg x=\arg \la t=\frac {3\pi} 2$ for
real $\la<0$, $\arg \la =\pi$.
Note that: 1) solution $Y_3$ is fixed at
$\arg\la =\pi$;
2) $Y_2=Y_3 \  \op{mod} \Cal G_0$;
3) matrices $L_i(p)$ are diagonal ones,
$L_i(p)\in \Cal D$;
4) relations
(\ref{eipp0}) and (\ref{dggd}) take place. So,
$$
E^0\equiv E^0_2=E_3^0 \ \op{mod} \Cal G_0=C_{k_0\Th_0}
L_3(0)C_3
u^{-\frac 12\si_3}\ \op{mod} \Cal G_0.
$$

Now, consider the case $\arg t=-\frac \pi2$.
Then, in the neighborhood of point $\la =1$, we
have:
$\arg x=\arg (\la-1)t=-\frac \pi 2$ for
real $\la>1$, $\arg \la=0$. Note that
asymptotic expansion of
$Y_1$ is fixed at $\arg \la =0$.
Repeating the arguments above, we have
$$
E^1\equiv E^1_2=C_{k_1\Th_1}L_2(1)C_1
u^{-\frac 12\si_3}\ \op{mod} \Cal G_1.
$$
In the neighborhood of point $\la =0$, we have:
$\arg x=\arg \la t=\frac \pi 2$ for real $\la<0$, $\arg \la =\pi$.
Note that solution $Y_2$ is fixed at $\arg\la =\pi$.
Thus,
$$
E^0\equiv E^0_2=C_{k_0\Th_0}L_2(0)C_2u^{-\frac 12\si_3}.
$$

Combining all these facts together, we find
\begin{align}
\arg t=\frac \pi2:\qquad
&E^1=
C_{k_1\Th_1}
t^{\frac {k_1}2\si_3} (\rho_1 y)^{-\frac12 \si_3}
\exp\!{\Big(\frac t2\si_3\Big)}u^{-\frac 12\si_3} (1+o(1)),\label{e1+}\\
&E^0=
C_{k_0\Th_0}
t^{\frac {k_0}2\si_3} \rho_0^{-\frac12 \si_3}
\exp\!\Big(\pi \imath\Big(\vp-\frac{\tin}2\Big)\si_3\Big)u^{-\frac 12\si_3}(1+o(1))
\ \op{mod} \Cal G_0;
\label{e0+}\\
\arg t=-\frac \pi2:\qquad
&E^1=
C_{k_1\Th_1}
t^{\frac {k_1}2\si_3} (\rho_1 y)^{-\frac12 \si_3}
\exp\!{\Big(\frac t2\si_3\Big)}u^{-\frac 12\si_3} (1+o(1))
\ \op{mod} \Cal G_1,
\label{e1-}\\
&E^0=
C_{k_0\Th_0}
t^{\frac {k_0}2\si_3} \rho_0^{-\frac12 \si_3}
\exp\!\Big(\pi \imath\Big(\vp-\frac{\tin}2\Big)\si_3\Big)u^{-\frac 12\si_3}
(1+o(1)).
\label{e0-}
\end{align}
Note that in Equations~\eqref{e1+}--\eqref{e0-} the error estimates, $o(1)$, are diagonal matrices.

The manifold of monodromy data $\Cal M_5(\Th_0,\Th_1,\tin)$ is three-dimensional.
Thus, we are to obtain three parameters in monodromy data. When $\arg t= \frac \pi2$, we can find two parameters from
relation (\ref{e1+}) and one parameter from (\ref{e0+}). When $\arg t=-\frac \pi2$, we can find one parameter from
relation (\ref{e1-}) and two parameters from (\ref{e0-}).
In both cases, one of the matrices $E^p$ has the following structure: $E^p=C_{k_p\Th_p}d$, where $d$
is a diagonal matrix, $d\in \Cal D$.
So, the diagonal elements of the corresponding monodromy matrix $M^p$ are equal to those from (\ref{monst}).
Using this remark, we find for $\arg t= \frac \pi2$:
\begin{eqnarray}
m^1_{11}&=&
e^{2\pii\vp -\pii \tin}(1+o(1)),
\nonumber\\
m^1_{12}&=&
\dfrac {2\pi \imath t^{2\vp-\tin} e^{-t} \rho_1 y u(1+o(1))}
{\Ga(1-\frac{\Th_1+\tin -2\vp}2)
\Ga(\frac{\Th_1-\tin+2\vp}2)},
\label{eqs:mont+}\\
m^0_{21}&=&
\dfrac{2\pi \imath t^{2\vp} e^{-\pii\tin}(1+o(1))}
{u\rho_0\Ga(1-\frac{\Th_0-2\vp}2)
\Ga(\frac{\Th_0+2\vp}2)},
\nonumber
\end{eqnarray}
For $\arg t= -\frac \pi2$,
we have
\begin{eqnarray}
m^0_{11}&=&
e^{-2\pii \vp}(1+o(1)),\nonumber\\
m^1_{12}&=&
\frac {2\pi \imath t^{2\vp-\tin} e^{-t} \rho_1 y u(1+o(1))}
{\Ga(1-\frac{\Th_1+\tin -2\vp}2)
\Ga(\frac{\Th_1-\tin+2\vp}2)}
\label{eqs:mont-},\\
m^0_{21}&=&
\frac{2\pi \imath t^{2\vp} e^{-\pii\tin}(1+o(1))}
{u\rho_0\Ga(1-\frac{\Th_0-2\vp}2)
\Ga(\frac{\Th_0+2\vp}2)}.
\nonumber
\end{eqnarray}
Since the monodromy data~\eqref{eqs:mont+} and \eqref{eqs:mont-} do not depend on $t$, we arrive at
the following asymptotic expansions:
\begin{align}\label{eqs:delta-hatu-def}
y e^{-t} t^{4\vp-\tin}\frac{\rho_1}{\rho_0}=\de(1+o(1)),\qquad
-yue^{-t}t^{2\vp-\tin}\rho_1=\hat u(1+o(1)),\quad
\text{where}\quad\delta,\hat u\in\mathbb C\setminus0
\end{align}
are parameters (independent of $t$). At this point we recall our notational agreement (see the preamble
to Section~\ref{sec:der1}) and consider $\varphi$ as a complex parameter, rather than a function of $t$
with the behavior $\mathcal O(1)$ as $t\to\imath\infty$.

Now, substituting the first two conditions~\eqref{eqs:delta-hatu-def} into
Equations~\eqref{eqs:mont+} and \eqref{eqs:mont-}, and taking into account that the matrix elements $m^p_{ik}$
are independent of $t$, so that we can take the limit $t\to\imath\infty$, we arrive at the results announced in
Theorem~\ref{th:allmon}. Let us note that the expressions for $m^1_{21}$ and $m^0_{21}$ (as functions of $\vp$,
$\de$, and $\hat u$) remain the same regardless of sign of $\Im(t)$. Only $m^1_{11}$ and $m^0_{11}$ differ.
\subsection{Asymptotics of System~\eqref{eq:ids1}--\eqref{eq:ids3}}\label{subsec:asympt-th3.1}
To get the results announced in Theorem~\ref{th:new1}, we rewrite the first two Equations~\eqref{eqs:delta-hatu-def}
and the one for $\varphi$, \eqref{eq:phi-phi1-phi2}, in terms of $\rho_0, \rho_1$, and
$\varphi_1$, see Equations~\eqref{eq:rho0}, \eqref{eq:rho1}, and \eqref{eq:def-varphi1}, respectively:
\begin{align}
 \label{eq:ratio-y-z}
\dfrac{yt}{z+\Theta_0}&=1-\dfrac{\delta}{\varphi-\frac{\Theta_0}2}\,e^tt^{\Theta_\infty-4\varphi+1}(1+o(1)),\\
u&=\dfrac{\hat u(1+o(1))}{yt-\big(z+\Theta_0\big)}\,e^tt^{\Theta_\infty-2\varphi+1},\label{eq:hatu-y-z}\\
\varphi&=-z-\dfrac{\Theta_0}2+\dfrac{z+\Theta_0}{ty}\left(z+\dfrac{\Theta_0+\Theta_1+\Theta_\infty}2\right)+o(1),
\label{eq:varphi-y-z}
\end{align}
Substituting the ratio in the l.-h.s. of Equation~\eqref{eq:ratio-y-z} into Equation~\eqref{eq:varphi-y-z} we obtain
a linear algebraic equation for $z$. Solving it we successively obtain $yt$ and $u$ from Equations~\eqref{eq:ratio-y-z}
and ~\eqref{eq:hatu-y-z}, respectively:
\begin{align}\label{eq:yt-Der1}
yt&=\delta(1+o(1))t^{\nu_1}e^t-2\vp +\frac{\0 +\1 +\tin}2
+\dfrac{(\vp - \frac{\0}2)(\vp - \frac{\1 +\tin}2)}{\delta(1+o(1))t^{\nu_1}e^t},&\\
z&=-\vp-\frac{\0}2+\dfrac{(\vp - \frac{\0}2)(\vp - \frac{\1 +\tin}2)}{\delta(1+o(1))t^{\nu_1}e^t},&\label{eq:z-Der1}\\
u&=\dfrac{\hat u}\delta\cdot
\dfrac{t^{2\vp}(1+o(1))}{1-\Big(\varphi-\frac{\Theta_1+\Theta_\infty}2\Big)\delta^{-1}(1+o(1))t^{-\nu_1}e^{-t}},&
\label{eq:u-Der1}
\end{align}
where we denoted
\begin{equation}\label{eq:nu1}
\nu_1=\Theta_\infty-4\varphi+1.
\end{equation}
Thus we get explicit expressions for $y$, $z$, and $u$. The same expressions, presented however in a multiplicative
form, are given in Theorem~\ref{th:new1} as the leading terms of the asymptotic expansions. According to our
justification scheme outlined in Introduction to announce these formulae as asymptotics of the true solutions of
System~\eqref{eq:ids1}--\eqref{eq:ids3} we have to check that all our assumptions and error estimates made in this
section are valid. The latter, in fact, leads to some restrictions on our asymptotic parameters. To find them we
note that as $t\to\imath\infty$:
$$
|yt|=\mathcal O\left(|t|^{|\Re(\nu_1)|}\right),\qquad
|z|=\left\{
\begin{aligned}
\mathcal O(1)\qquad\Re(\nu_1)\geq0,\\
\mathcal O\left(|t|^{-\Re(\nu_1)}\right)\qquad\Re(\nu_1)\leq0
\end{aligned}
\right.
$$
Most of the calculations done in this section are valid for $0<\Re(\nu_1)<2$, as indicated in the preamble to this
Section. However, the matching requires, see Equation~\eqref{eq:phi-phi1-phi2}, the following restriction on $\nu_1$:
\begin{equation}\label{ineq:nu1}
-\dfrac12<\Re(\nu_1)<1.
\end{equation}
The left inequality in \eqref{ineq:nu1} follows from the fact that for negative $\Re(\nu_1)$ function $z$ is
growing . This growth is bounded by the second term in Equation~\eqref{eq:phi-phi1-phi2}.
Let us explain the right inequality in \eqref{ineq:nu1}. For positive $\Re(\nu_1)$ Equation~\eqref{eq:phi-phi1-phi2}
implies $\varphi_2=\mathcal O\big(t^{\nu_1-2}\big)+\mathcal O\big(t^{-1}\big)$. We demand that the second term in
asymptotics of $z$ (see Equation~\eqref{eq:z-Der1}), which has the order $t^{-\nu_1}$, should be greater than
$\varphi_2$. Otherwise the asymptotics of $z$ would consists of only one constant term which contains only one
parameter and therefore gives only very rough approximation for this function, in particular, such asymptotics does not
uniquely characterize function $z$. At the same time the condition $\Re(\nu_1)<1$ does not improve radically our
asymptotics for function $y$, because this asymptotics is mainly defined (see Equation~\eqref{eq:ratio-y-z}) by
multiplication of the constant term in asymptotics of $z+\Theta_0$ with the growing power term
$\mathcal O\big(t^{\nu_1}\big)$. Thus, one can continue to use asymptotics of $y$ announced in Theorem~\ref{th:new1}
in the region $1\leq\Re(\nu_1)<2$. Our numerical studies (see Section~\ref{sec:numerics}) confirms this observation.

Now we are ready to discuss the accuracy of approximation of function $y$ by the asymptotics given in
Theorem~\ref{th:new1}. This question is intimately related with the error estimate we introduced in
\eqref{eq:ratio-y-z} for function $\tilde\delta\equiv\delta(1+o(1))$. This estimate allows us
to confirm only the largest term in asymptotics of $y$ in case $\Re(\nu_1)\neq0$. However, as we see below,
we can assert that our result gives us up to three correct terms in asymptotics of $y$.

Note that the estimate (in $\tilde\delta$) defines the class of functions for which we calculate the monodromy data.
Our calculations are valid for any estimate $o(1)$ in $\tilde\delta$. Therefore, to formulate the best possible
asymptotic result for the functions $y$ and $z$ that follows from our derivation we have to demand that this $o(1)$
is as small as possible. It cannot be equal to $0$ because in this case Equation~\eqref{eq:ratio-y-z} would provide us
with the first integral for general solutions of the system~\eqref{eq:ids1}--\eqref{eq:ids2} which is not possible.
At the same time there is no sense to demand that the error estimate for $\tilde\delta$ is better than the one for
function $\varphi$, since function $\tilde\delta$ appeared for the first time in Equation~\eqref{eq:ratio-y-z} in
the ratio $\tilde\delta/\varphi$. Therefore, the order of the error estimate in Theorem~\ref{th:new1}, which follows
from our derivation coincides with the order of the estimate related with the transition from function $\varphi$
to the parameter $\varphi$ (see preamble Section~\ref{sec:der1}). This means that practically we can omit
$o(1)$-term in Equation~\eqref{eq:ratio-y-z}. Very similar reasoning leads to the conclusion that the error estimate
for function $u$ in Equations~\eqref{eq:hatu-y-z} and \eqref{eq:u-Der1} can be chosen coinciding with the one
for $\varphi$. The latter means that in all Equations~\eqref{eq:yt-Der1}--\eqref{eq:u-Der1} we can omit $o(1)$
terms and take into account the error that comes out from function $\varphi$. The best possible error for
transition from function $\varphi$ to the parameter $\varphi$ which comes from our calculation coincides with
$\mathcal O(\varphi_2)$. The following analysis is based on this fact.

First consider positive values of $\Re(\nu_1)$. The error estimate for function $\varphi_2$ in this case is
$\mathcal O\big(1/t\big)$, which gives the following error estimate for function $yt$ in
Theorem~\ref{th:new1},
\begin{equation}\label{eq:error-thnew1}
\mathcal O\Big(t^{\nu_1+\mathcal O(1/t)}-t^{\nu_1}+t^{\nu_1}\mathcal O\left(1/t\right)\Big)=
\mathcal O\left(t^{\nu_1-1}\ln t\right).
\end{equation}
Thus, because of the inequality $\Re(\nu_1)-1<-\Re(\nu_1)$ which holds for $0\leq\Re(\nu_1)<1/2$ we see that all
three terms of asymptotics of $y$ (Equation~\eqref{eq:yt-Der1} without $o(1)$ terms) are larger than the error
estimate~\eqref{eq:error-thnew1}. In the case $1/2<\Re(\nu_1)<1$ only the two first terms of asymptotics of $y$
are larger than the error estimate; we note that in the whole interval $0\leq\Re(\nu_1)<1$ two terms of asymptotics
are larger than the error estimate for both functions $y$ and $z$ (see Equations~\eqref{eq:yt-Der1} and
\eqref{eq:z-Der1} with the omitted $o(1)$ terms).

The conclusion made in the above paragraph is consistent with the complete asymptotic expansions for functions
$y$ and $z$ developed in Appendix~\ref{sec:allterms}, namely, one can improve approximation of function $yt$
by adding up the following correction terms:
\begin{align*}
&y_{10}\delta e^tt^{\nu_1-1}
&\text{for}\quad
\dfrac12\leq\Re(\nu_1)<\dfrac23\qquad&
\text{and one more term}\\
&y_{11}\left(\delta e^tt^{\nu_1-1}\right)^2
&\text{for}\quad
\dfrac23\leq\Re(\nu_1)<1,\qquad&
\end{align*}
where $y_{10}$ and $y_{11}$ are defined in Appendix~\ref{sec:allterms}.

It is mentioned above that one can continue to use asymptotics of $y$ given in Theorem~\ref{th:new1} in the
region $1<\Re(\nu_1)<2$. It is worth noting that for the latter values of $\nu_1$ the error estimate for function
$y$ is $\mathcal O(t^{2\nu_1-2}\ln t)$, i.e. is growing, and for $z$ it is still vanishing, $\mathcal O(t^{\nu_1-2})$.
It is easy to observe that Theorem~\ref{th:new2} deliver much better approximation of $y$ and, surely, $z$
for these values of $\nu_1$. At the boundary value $\Re(\nu_1)=1$, the leading (growing) term of asymptotics of
function $y$ and the leading (constant) term of asymptotics of $z$ given by both
Theorems ~\ref{th:new1} and~\ref{th:new2} coincide. Either result can be used for approximation of these functions:
the accuracy (which one is better?) depends on the particular solution. One has to use the correction terms given
in Appendix~\ref{sec:allterms}, especially for approximation of function $z$, to achieve a "reasonably" good
asymptotic description of these functions. The reader will find a numeric example in
Subsection~\ref{subsec:numeric-extra-terms}.

Consider now negative values of $\Re(\nu_1)$ for general solutions:
$$
-1/2<\Re(\nu_1)<0,\qquad\varphi-\Theta_0/2,\;\varphi-(\Theta_1+\Theta_\infty)/2\neq0.
$$
In this case $\varphi_2=\mathcal O\big(t^{-2\nu_1-1}\big)$, therefore, the error estimate in both formulae
for $yt$ and $z$ in Theorem~\ref{th:new1} is of the order $t^{-3\nu_1-1}\ln t$. Both function $yt$ and $z$ have
exactly the same leading term of asymptotics (here we again refer to Equations~\eqref{eq:yt-Der1} and
\eqref{eq:z-Der1}) proportional to $t^{-\nu_1}$. Therefore, in the region
$-1/4<\Re(\nu_1)\leq0$ all three terms of asymptotics for function $yt$ and two terms for function $z$ are larger
than the error estimate. In the case $-1/3<\Re(\nu_1)\leq-1/4$ two terms of asymptotics
for functions $yt$ and $z$ are larger than the error estimate. Finally, for $-1/2<\Re(\nu_1)\leq-1/3$ only the
largest terms of asymptotics for $yt$ and $z$ are larger than the corresponding error estimates.
\section{Derivation II}\label{sec:derII}
In this section we outline some basic steps which lead to Theorem~\ref{th:new2}. The scheme of the proof and major
steps of calculations are the same as in the previous section. Therefore here we outline only modifications that are needed for the
case under consideration.

The major assumptions on the coefficients of Equation~\eqref{mainl} are as follows:
\begin{equation}\label{eqs:Der2z-y-conditions}
|z|<\mathcal O(t),\qquad
\mathcal O\left(t^{-1}\right) <1/|y| <\mathcal O\left(t\right).
\end{equation}
In fact, these conditions are equivalent to those given by Equations~\eqref{zdem} and \eqref{ydem}
in Section~\ref{sec:der1}. In this section we continue to use the conventions about symbols $\mathcal{O}(\cdot)$
and $o(\cdot)$ made in the paragraph below Equations~\eqref{zdem} and \eqref{ydem}.

Therefore, we do not need to change anything in the WKB-method, except for asymptotics of matrix $T(\lambda)$:
\begin{eqnarray*}
T&\sim&
\bm 1 & \frac yt(-z+\frac{-\Theta_0+\Theta_1-\Theta_\infty}2) \\
0& -1\em + o(1) \mbox{ as }\la \to 0, \\
T&\sim&
y^{\frac12\si_3}\bm 1 & 0\\
\frac{yz}{t} & -1\em y^{-\frac12\si_3}
+o(1)
\mbox{ as }\la \to 1,\\
T&\sim&
\si_3+o(1)
\mbox{ as }\la \to \infty.
\end{eqnarray*}
The reason for this change is the following assumption on the functions $z$ and $y$,
$$
\frac{z}{yt}=o(1),
$$
which we use now instead of Assumption~\eqref{eq:cond-z-y-3}.
For solution in the neighborhood of $\la =1$, we have
the same result as above (see Theorem \ref{exist1} and Corollaries \ref{cor:large1},
and \ref{cor:can1}), but now:
\begin{equation}\label{eq:Der2rho1}
T_1 = \bm 1 & 0 \\ \beta_1 & 1\em,\quad
\beta_1=\frac{zy}t,\quad\rm{and}\quad
\rho_1 = 1/(\beta_1-1).
\end{equation}
For solution in the neighborhood of the point $\la =0$, we have
the same result as above (see Theorem \ref{exist0} and Corollaries \ref{cor:large0},
\ref{cor:can0}), but now:
\begin{equation}\label{eq:Der2rho0}
T_0 = \bm 1 & -\beta_0 \\ 0& 1\em,\quad
\beta_0=\frac yt(-z+(\Theta_1-\Theta_0-\tin)/2),\quad\rm{and}\quad
\rho_0 = -1 -\beta_0.
\end{equation}

The matching goes exactly as before. In particular, for matrices $L_i(1)$ and $L_i(0)$ we get
exactly the same expressions \eqref{eq:Li1} and \eqref{eq:Li0}, respectively, but with $\rho_1$ and $\rho_0$
defined  in \eqref{eq:Der2rho1} and \eqref{eq:Der2rho0}.
Proceeding exactly as in Section~\ref{sec:der1}, we arrive at formulas (\ref{eqs:mont+}) and (\ref{eqs:mont-})
for the monodromy data.

Next, we introduce asymptotic parameters, $\delta$ and $\hat u$ by formulae~\eqref{eqs:delta-hatu-def},
with the corresponding parameters $\rho_1$ and $\rho_0$.

The asymptotic parameter $\vp$ is defined in Equation~\eqref{eq:varphi}. However, due to the
conditions~\eqref{eqs:Der2z-y-conditions}, Equation~\eqref{eq:phi-phi1-phi2} should be changed to
\begin{equation}\label{eq:Der2-phi-phi1-phi2}
\varphi_2\equiv\varphi-\varphi_1=
\dfrac{z+\Theta_0}{ty}\left(z+\dfrac{\Theta_0+\Theta_1+\Theta_\infty}2\right)-
\dfrac2t\left(z+\dfrac{\Theta_0}2\right)\left(z+\dfrac{\Theta_0+\Theta_\infty}2\right)=o(1).
\end{equation}
Now, using definitions for $\rho_1$ and $\rho_0$, \eqref{eq:Der2rho1} and \eqref{eq:Der2rho0}, and condition
\eqref{eq:Der2-phi-phi1-phi2}, we can write an analog of system~\eqref{eq:ratio-y-z}--\eqref{eq:varphi-y-z}:
\begin{align}\label{eq:Der2-delta}
\dfrac{e^{-t}t^{\nu_2}}{\delta(1+o(1))}&=\dfrac yt\left(z-\dfrac ty\right)
\left(z-\dfrac ty+\dfrac{\Theta_0-\Theta_1+\Theta_\infty}2\right),\\
-ue^{-t}t^{\nu_2-2\varphi}&=\hat u\left(z-\dfrac ty\right)(1+o(1)),\label{eq:Der2-hatu}\\
\vp &=-z -\frac{\Theta_0}2 +\frac {yz}t \left(z+\frac{\Theta_0-\Theta_1+\Theta_\infty}2\right)+\mathcal{O}\left(\dfrac{z^2}t\right)+
\mathcal{O}\left(\dfrac{z^2}{ty}\right),
\label{eq:Der2-phi-z-y}
\end{align}
where
$$
\nu_2=4\vp-\tin+1.
$$
Opening the parenthesis in Equations~\eqref{eq:Der2-delta} and  ~\eqref{eq:Der2-phi-z-y} and substituting the term $yz^2/t$
in Equation~\eqref{eq:Der2-phi-z-y} by its expression obtained from Equation~\eqref{eq:Der2-delta} one finds
\begin{equation}\label{eq:z-t/y}
z-\dfrac ty=\varphi+\dfrac{\Theta_1-\Theta_\infty}2-\dfrac{e^{-t}t^{\nu_2}}{\delta(1+o(1))}.
\end{equation}
Here we include the error estimate from Equation~\eqref{eq:Der2-phi-z-y} into the notation $\varphi$ as agreed in
preamble of Section~\ref{sec:der1}. Substituting $z-t/y$ given by Equation~\eqref{eq:z-t/y} into
Equation~\eqref{eq:Der2-hatu} we get the leading term of asymptotics for function $u$ in Theorem~\ref{th:new2}.
Making the same substitution into Equation~\eqref{eq:Der2-delta} we obtain
\begin{equation}\label{eq:DER2-t/y}
\dfrac ty=\dfrac{\delta(1+o(1))}{e^{-t}t^{\nu_2}}
\left(\dfrac{e^{-t}t^{\nu_2}}{\delta(1+o(1))}-\varphi-\dfrac{\Theta_1-\Theta_\infty}2\right)
\left(\dfrac{e^{-t}t^{\nu_2}}{\delta(1+o(1))}-\varphi-\dfrac{\Theta_0}2\right).
\end{equation}
Now factoring out from the parentheses in Equation~\eqref{eq:DER2-t/y} the term
$\tfrac{e^{-t}t^{\nu_2}}{\delta(1+o(1))}$
we get the leading term of asymptotics for $t/y$ presented in Theorem~\ref{th:new2}. Substituting the
latter asymptotics into Equation~\eqref{eq:z-t/y} we obtain asymptotics for $z$. Finally, asymptotics
for $u$ immeditely follows from Equations~\eqref{eq:Der2-hatu} and \eqref{eq:z-t/y}. Thus the analog of
System~\eqref{eq:yt-Der1}--\eqref{eq:u-Der1} reads:
\begin{align*}
\dfrac ty&=
\dfrac{e^{-t}t^{\nu_2}}{\delta(1+o(1))} -2 \vp -\frac{\0 +\1-\tin}2+
\left(\vp + \frac{\1-\tin}2\right) \left(\vp +\frac{\0}2\right)\delta(1+o(1))e^t t^{-\nu_2}\\
z&=-\vp - \frac{\0}2 +\left(\vp + \frac{\1-\tin}2\right)\left(\vp + \frac{\0}2\right)\delta(1+o(1))e^t t^{-\nu_2}\\
u&=-\hat u(1+o(1))t^{-\nu_2+2\vp} e^t \left(\vp + \frac{\1-\tin}2-\dfrac{t^{\nu_2}e^{-t}}{\delta(1+o(1))}\right)
\end{align*}
Reasoning similar to the one presented in Subsection~\ref{subsec:asympt-th3.1} shows that to find the
best error estimate that comes from our derivation we can put all $o(1)$-estimates in the above formulae
to be of the same order as $\varphi_2$ (see \eqref{eq:Der2-phi-phi1-phi2}). Therefore, we have to come back to the
error estimate hidden in $\varphi$. The analysis, which is very similar to the one at the end of
Section~\ref{sec:der1} for the parameter $\nu_1$, implies similar restriction for the parameter $\nu_2$,
$$
-\dfrac12<\Re(\nu_2)<1.
$$
We can make a comment analogous to the one at the end of Section~\ref{sec:der1}: since asymptotics of $y$ is growing
in the region $1\leq\Re(\nu_2)<2$ the leading term of asymptotics for $y$ is still satisfactory, although
the asymptotics for $y$ given in Theorem~\ref{th:new1} works better. The situation is worse for function
$z$ because only the constant term of asymptotics remains larger than the error estimate for $1\leq\Re(\nu_2)<2$.
So in the domain  $1<\Re(\nu_2)<2$, one has to use the result given in Theorem~\ref{th:new1}. In case,
$\Re(\nu_2)=1$ either Theorem can be used but to get a good approximation one has to employ the correction
terms (see Appendix~\ref{sec:allterms}).
\section{Comparison with the results by McCoy and Tang}\label{sec:mccoy}
\def\ttt{{\frac\Th2}}
In this section, we compare our results with the ones obtained in paper
\cite{MT2}. The authors of \cite{MT2}  considered the case $\Th_\infty =2n\in\mathbb Z_+$, $\Th_0 =
\Th_1 =\Th$. We discuss here only the principal case $\Theta_\infty=0$, since the case $n>0$ can be
treated as application of the B\"acklund transformations (see Theorem~\ref{dress}) to the principal one.
In particular, the monodromy matrices for $n>0$ coincide with those for $n=0$. McCoy and Tang
obtained the following asymptotic expansion as $t \to \infty$ and $\arg t =-\frac \pi2$
($\hat x= \imath \frac t4$):
\begin{gather}
A^0_{11} =- \frac \imath 4 \frac{d \si_0}{d\hat{x}} = z +\frac{\Th}2,
\nonumber\\
z = -\imath k - \frac{\Th}2,\quad \si_0(\hat{x}) = 4 k\hat{x} +O(1)
\label{eq:mccoy2}\\
y = \frac{2k + \imath \Th}{2k-\imath \Th} e^{-4 \imath s}, \quad s =\hat{x}+\tilde{x}_0 + k \ln\hat{x},
\label{eq:mccoy3}
\end{gather}
In paper \cite{MT2} two parameters $k$ and $A=4k$ are used. In \cite{MT1, MT2} parameter $\delta=-8$,
whereas we fix $\delta=-1/2$ (see Equation~\eqref{eq:P5}). We use notation $\hat x$, instead of $x$ in \cite{MT1, MT2},
because in Section~\ref{sec:numerics} we denote $x=\imath t$ for $\Im t<0$. There is an obvious relation $\hat x=x/4$,
which should be used in the comparison of our results with those obtained in \cite{MT1,MT2}.
In Equations~\eqref{eq:mccoy2} and \eqref{eq:mccoy3}
we simplify the notation by using only the parameter $k$. These results agree  with our asymptotic expansions providing
the asymptotic parameters are related as
\begin{equation}\label{eq:parameters-correspondence}
\imath k = \vp, \quad \de = \frac{2 \vp - \Th}{2\vp +\Th}
e^{-4 \imath\tilde{x}_0} \left(\frac \imath 4 \right) ^{-4 \vp},
\end{equation}
where $(\imath /4 )^{-4\varphi}\equiv e^{-2\pi \imath \varphi} 4^{4\varphi}$.

Now we turn to parametrization of the asymptotics by the monodromy data. In \cite{MT2} McCoy and Tang expressed
the monodromy data in terms of the parameters $I^p$, which in our notation are defined as follows,
\begin{equation}\label{eq:Ip-McCoy-original}
I^p = \frac{ m^p_{11}(Y) -e^{-\pi \imath \Th}}{m^p_{11}(Y) - e^{\pi \imath
\Th}}, \quad p =0,1,
\end{equation}
where $Y$ is a canonical solution of Equation~\eqref{mainl} and $m^p_{11}(Y)$, is the $(11)$-element of the monodromy matrix,
$M^p(Y)$, corresponding to the singular point $\lambda=p$.
While analyzing their paper \cite{MT3} devoted to the
connection formulae for asymptotics of solutions on the real axis, we observed in \cite{AK} that the monodromy
parameter $I^0$ was calculated for solution $Y=Y_3$ (in our notation), while the parameter $I^1$ was given for $Y=Y_1$.
The same remark concerns the imaginary case considered in \cite{MT2}. By using the formulae presented in
Section~\ref{sec:def-monodromy} one finds that in terms of our monodromy data the parameters $I^p$ are given by
the following expression:
\begin{equation}\label{eq:Ip-McCoy-adjusted}
I^p = \frac{m^{1-p}_{11}(Y) - e^{\pi \imath \Th}}{m^{1-p}_{11}(Y) - e^{-\pi\imath \Th}},
\qquad
p=0,1.
\end{equation}
The parameters $I^p$ as calculated by McCoy and Tang (see Equations (2.53) and (2.68) in \cite{MT2}) are:
\begin{align}
I^1 &= e^{\pi \imath \Th} \frac{\sin \pi (\imath k +\frac{\Th}2)}{\sin\pi(\imath k -\frac{\Th}2)}
\label{I1MT2}\\
I^0 &= \frac{\bar C_- +4^{-\imath k}e^{-2 \pi k}e^{\frac{\pii\Th}2}\frac{\Ga(\imath k +
\frac{\Th}2)}{\Ga(-\imath k +\frac{\Th}2)}}{\bar C_- -4^{-\imath k}e^{-2 \pi k}e^{-\frac{\pii\Th}2}
\frac{\Ga(1+\imath k - \frac{\Th}2)}{\Ga(1-\imath k -\frac{\Th}2)}},
\label{I0MT2}\\
\bar C_- &= - \frac{2 \pii e^{-4 \imath\tilde{x}_0} 4^{3 \imath k}e^{-\pi k}}
{\Ga(1+\imath k+ \frac \Th2)\Ga(\imath k - \frac{\Th}2)}.
\label{Cbar}
\end{align}

Substituting into Equation~\eqref{eq:Ip-McCoy-adjusted} (for $p=1$) $m^0_{11}$ given by Equation
\eqref{mr2} and taking into account the first equation \eqref{eq:parameters-correspondence}
we see the complete agreement of our results with equation~\eqref{I1MT2}.

The parameter $I^0$ is more complicated: By making use of Equation~\eqref{eq:M0M1} and the results
for the monodromy data presented in Theorem~\ref{th:allmon}, we rewrite our monodromy parameter
$m^1_{11}$ as follows:
\begin{align*}
m^1_{11}& = \frac{1- m^0_{21}m^1_{12}}{m^0_{11}}= e^{2\pii\vp}(1-X),\\
X &\eq m^0_{21} m^1_{12} = - \frac {4 \pi^2\de}{\Ga^2(1+\vp -\ttt)\Ga^2(\vp +\ttt)},
\end{align*}
Now, Equation~\eqref{eq:Ip-McCoy-adjusted} (for $p=0$) allows us to present the monodromy parameter $I^0$
in the following way:
\begin{equation}\label{eq:ourI0}
I^0 = \frac{X + e^{\pii \Th} e^{-2\pii \vp} -1}{X +e^{-\pii \Th}e^{-
2\pii \vp} -1}.
\end{equation}
In its turn, Equations~\eqref{I0MT2} and \eqref{Cbar} obtained for $I^0$ by McCoy and Tang ([MT]) can be rewritten
with the help of relations~\eqref{eq:parameters-correspondence} and the well-known identities for the Gamma-function
in terms of $X$:
$$
I^0[MT]= \frac{-X+e^{\pii \Th}e^{-2\pii \vp} -1}{-X +
e^{-\pii \Th}e^{-2\pii \vp} -1}.
$$
So, for $I^0$ we have an agreement only up to the sign of $X$ or, equivalently, the sign of the parameter $\de$ or
function $y$. If we are to keep the sign of $\delta$ unchanged, then to get $I^0=I^0[MT]$ we can alternatively
demand that either $I^0=1$, or $X=0$, which is equivalent
(we recall that $\delta\neq0$) to one of the following conditions:
\begin{align}
I^0=1:&\;\; \Theta=0,\pm1,\pm2,\ldots,
\label{eq:X=0}\\
X=0:&\;\;
\varphi+\frac\Theta2=0,-1,-2,\ldots,\quad\text{or}\quad
\varphi-\frac\Theta2=-1,-2,\ldots.
\label{eq:I0=1}
\end{align}
Thus, contrary to the case of real argument $t$, where our parametrization of the quantities $I^0$ and $I^1$, after
being associated to the canonical solutions $Y_3$ and $Y_1$, respectively, coincides with the parametrization obtained
by McCoy and Tang (see \cite{AK}), for pure imaginary $t$ these parameterizations coincide only up to the sign of $X$ in
$I^0$.



Finally, we comment on the connection formulae for the asymptotics. To get the connection formulae McCoy and Tang
employ asymptotics of the fifth Painlev\'e transcendent as $t\to0$ obtained by Jimbo in \cite{J}. The latter asymptotics
were parameterized by the monodromy data of the canonical solution $Y_2$ in our notation
(see Section~\ref{sec:def-monodromy}). Thus asymptotics as $t\to\imath\infty$ and $t\to\imath0$ in \cite{MT2} appear
to be parameterized by the monodromy data of $Y_1$, $Y_3$ and $Y_2$, respectively. Hence, the connection
formulae obtained by McCoy and Tang could be correct only in a special situation when all three canonical solutions
coincide, $Y_1=Y_2=Y_3$, or, in other words, the Stokes multipliers vanish, $s_1=s_2=0$. Since $\Theta_\infty=0$ it means that
the monodromy matrix $M_\infty=I$ and the corresponding monodromy group of Equation~\eqref{mainl} is commutative.
For their connection formulae on the pure imaginary axis to be correct one should additionally demand one of
the conditions~\eqref{eq:X=0} or \eqref{eq:I0=1}.

In Subsection~\ref{subsec:numeric-2-MT} we consider a numerical solution of
IDS~\eqref{eq:ids1}-\eqref{eq:ids3} corresponding to nontrivial Stokes multipliers, $s_1$, $s_2$, and observe
a good agreement with our connection results, while the connection formulae by McCoy and Tang
do not show the correct asymptotic behavior.
\section{Asymptotic expansions for $t\to 0$}\label{sec:zero}
In this section $\arg t$ is fixed in the standard way, in particular, $\arg t=0$ for $t>0$. Moreover, $\arg t$ is assumed
to be bounded as $t\to0$. Let $\si$ be a complex number. It will be convenient to use the following notations:
\begin{equation}\label{eqs:abcd-definition}
b(\si)=\frac{\Th_1+\Th_0+\si}2,\
c(\si)=\frac{\Th_1-\Th_0+\si}2,\
d(\si)=\frac{\tin+\si}2,\
a(\si)=b(\si)c(\si).
\end{equation}
\begin{theorem}\label{th1}
Let $\si, s^2,r \in\Bbb C\back\{0\}$ and $\Re \si \in[0,1)$.
Let also $\0,\1\not \in \Bbb Z$.
Then there exists the unique solution of System~\eqref{eq:ids1}--\eqref{eq:ids2} with the following
asymptotic expansion as $t\to 0$:
\begin{eqnarray}
y&=&\frac
{(a(\si)+s^2 d(-\si)a(-\si)t^\si)(b(\si)+s^2d( \si)b(-\si)t^\si)}
{(a(\si)+s^2 d( \si)a(-\si)t^\si)(b(\si)+s^2d(-\si)b(-\si)t^\si)}+\nonumber\\
{}&+&
\frac{1-\1-\0}{(1-\si)^2}t+\mathcal O(t^{1+\Re\si})+\mathcal O(t^{2-\Re\si}),\label{eq:y->0}\\
z&=&\frac{
(c(\si)+s^2d( \si)c(-\si)t^\si)
(b(\si)+s^2d(-\si)b(-\si)t^\si)}{\si^2s^2t^\si}+\nonumber\\
{}&+&2\left(\frac{b(\si)c(\si)}
{\si^2(1-\si) s^2t^\si}\right)^2 t+\mathcal O(t^{1-\Re\si})+\mathcal O(t^{2-3\Re\si}),\label{eq:z->0}\\
u&=&-rt^{\tin}\left(\frac
{b(\si)+s^2d(-\si)b(-\si)t^\si}
{b(\si)+s^2d( \si)b(-\si)t^\si}\right)(1+\mathcal{O}(t)).\label{eq:u->0}
\end{eqnarray}
\end{theorem}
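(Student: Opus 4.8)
The plan is to solve the direct monodromy problem for Equation~\eqref{mainl} asymptotically as $t\to0$, exploiting that in this limit the irregular singularity at $\la=\infty$ degenerates into a regular one. I would split the $\la$-plane into two overlapping regions. In the \emph{inner region} $|\la|=\mathcal O(1)$ the term $\tfrac t2\si_3$ is negligible and Equation~\eqref{mainl} reduces to the Fuchsian system $\tfrac{d}{d\la}Y\approx\big(A_0/\la+A_1/(\la-1)\big)Y$, a hypergeometric system with exponents $\pm\Th_0/2$, $\pm\Th_1/2$ at $\la=0,1$ and exponents $\pm\si/2$ at $\la=\infty$, where $e^{\pm\pii\si}$ are the eigenvalues of the product monodromy $M^0M^1$; the normalization $\Re\si\in[0,1)$ fixes the branch. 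In the \emph{outer region} $\xi:=t\la=\mathcal O(1)$ the two finite singularities merge and, after rescaling, the system becomes the confluent (Whittaker) system $\tfrac{d}{d\xi}Y\approx\big(\tfrac12\si_3+(A_0+A_1)/\xi\big)Y$, regular at $\xi=0$ with the same exponent $\si$ and irregular at $\xi=\infty$ with formal exponent $\tin$. The remaining monodromy parameters are $s$, which encodes the eigendirection of $M^0M^1$ (a ratio of connection coefficients), and $r$, the residual diagonal gauge freedom --- which is why it appears as the overall normalization $u=-rt^{\tin}(\cdots)$ in \eqref{eq:u->0}.

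Next I would write down the two local solutions explicitly. The inner hypergeometric problem has connection matrices between $\la=0$, $\la=1$ and $\la=\infty$ given by the classical Gauss formulae as explicit ratios of $\Ga$-functions; the quantities $a(\si),b(\si),c(\si),d(\si)$ introduced in \eqref{eqs:abcd-definition} are exactly the structure constants arising in these coefficients, together with their $\si\to-\si$ reflections, which is the source of the competition between the $t^\si$ terms and the constant terms. The outer Whittaker problem supplies the Stokes multipliers and the monodromy at infinity as explicit functions of $\si$ and $\tin$, subject to the constraint $(M^1M^0)_{11}=e^{-\pii\tin}$ from \eqref{idm2}. Matching the inner and outer solutions in the overlap $1\ll|\la|\ll 1/|t|$ then produces the key relations: since the full monodromy data must be $t$-independent, the matching forces the entries of $A_0,A_1$ --- equivalently the functions $y$, $z$, $u$ --- to satisfy a system of transcendental equations in which the $t$-dependence enters only through the combination $s^2t^\si$.

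Once these matching relations are in hand, I would invert them. The leading balance expresses $y$ as a M\"obius-type ratio of two factors linear in $t^\si$, which is the origin of the quotient structure
$$
y\approx\frac{(a(\si)+s^2d(-\si)a(-\si)t^\si)(b(\si)+s^2d(\si)b(-\si)t^\si)}{(a(\si)+s^2d(\si)a(-\si)t^\si)(b(\si)+s^2d(-\si)b(-\si)t^\si)}
$$
in \eqref{eq:y->0}, and similarly for $z$ and $u$. The $\mathcal O(t)$, $\mathcal O(t^{1\pm\Re\si})$, $\mathcal O(t^{2-3\Re\si})$ corrections come from feeding the $\tfrac t2\si_3$ perturbation back through the matching at next order, i.e.\ from the first correction terms of the inner and outer expansions. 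Existence and uniqueness of the solution with the prescribed asymptotics then follow from the solvability of the inverse monodromy problem together with the justification scheme of \cite{K} (or the Wasow theorem, as used elsewhere in this paper), exactly as for Theorems~\ref{th:new1} and \ref{th:allmon}.

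The hardest step will be the matching and, above all, the extraction of the \emph{subleading} terms to the stated accuracy. The leading connection coefficients are classical, but the refined leading term advertised in the Introduction and the precise error orders $\mathcal O(t^{1+\Re\si})$, $\mathcal O(t^{2-\Re\si})$, $\mathcal O(t^{2-3\Re\si})$ require controlling the first correction to the Whittaker and hypergeometric approximations uniformly across the overlap region, and keeping careful track of the branch of $t^\si$ (hence the appearance of $s^2$ rather than $s$) and of the competition between the $t^\si$ and $t^{-\si}$ channels when $\Re\si$ is near $0$ or $1$. The restrictions $\Re\si\in[0,1)$ and $\0,\1\notin\Bbb Z$ are dictated precisely by the need to keep these channels separated and the local exponents generic.
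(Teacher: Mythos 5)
Your outline is essentially the paper's own derivation: Theorem~\ref{th1} is obtained (in \cite{J} and \cite{AK,AK97ZNS}, on which the present paper relies) precisely by the inner Fuchsian / outer Whittaker decomposition, the classical connection formulae, and matching in the overlap $1\ll|\la|\ll1/|t|$, with $\si$, $s$, and $r$ playing exactly the roles you assign to them. The only point where you depart from the paper concerns the refined error exponents $\mathcal O(t^{2-\Re\si})$ and $\mathcal O(t^{2-3\Re\si})$: Remark~\ref{rem:tau-u-gen-sigma} stresses that the isomonodromy matching yields only $\mathcal O(t^{1+\delta})$ with unspecified $\delta$, and that the explicit $\si$-dependence is found not by a second round of matching but by substituting the expansions back into System~\eqref{eq:ids1}--\eqref{eq:ids2} --- a cheaper route you should prefer, since this is exactly the step where the previously published estimates failed for $1/2<\Re\si<1$.
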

\begin{corollary}\label{cor:zeta-0}
Function $\ze(t)$, corresponding to the solution of System~\eqref{eq:ids1}--\eqref{eq:ids3}
defined in Theorem~{\rm\ref{th1}}, has the following asymptotic  expansion as $t\to 0$:
\begin{equation}\label{zeta->0}
\begin{aligned}
\ze(t)&=
\frac 14 (\si^2-\Th_1^2 +\Th_0^2+2\Th_0\tin)
-\frac{(\Th_1+\si)^2 -\Th_0^2 }
{4s^2\si^2 (1-\si)} t^{1-\si}
+\frac t{4\si^2} ( 2\Th_0\si^2 +\\
&+\tin (\Theta_0^2 -\Theta_1^2 +\si^2))
-\frac{s^2 (\Theta_\infty^2-\si^2)((\Th_1-\si)^2-\Theta_0^2)}
{16\si^2 (1+\si)}t^{1+\si}
+\mathcal{O}\left(|t|^{2-2\Re \si}\right).
\end{aligned}
\end{equation}
\end{corollary}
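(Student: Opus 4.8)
The plan is to split the computation into two independent parts: the $t$-dependent terms of $\zeta$, which I will obtain by integrating the exact relation $d\zeta/dt=-z$ of \eqref{eq:zeta-derivativ} against the asymptotics of $z$ furnished by Theorem~\ref{th1}, and the single additive constant of integration, which I will fix from the algebraic definition \eqref{eq:zeta-def}. Throughout I would use the abbreviations \eqref{eqs:abcd-definition} together with the identities $a(\sigma)=b(\sigma)c(\sigma)=\tfrac14((\Theta_1+\sigma)^2-\Theta_0^2)$ and $d(\sigma)d(-\sigma)=\tfrac14(\Theta_\infty^2-\sigma^2)$, which convert the intrinsic combinations $a,b,c,d$ into the explicit $\Theta$-polynomials displayed in \eqref{zeta->0}.

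First I would expand the leading rational term of \eqref{eq:z->0}: multiplying out its numerator and dividing by $\sigma^2s^2t^\sigma$ exhibits it as a short Laurent series in $t^\sigma$, with $t^{-\sigma}$ coefficient $a(\sigma)/(\sigma^2s^2)$, a constant ($t^0$) coefficient, and $t^\sigma$ coefficient $s^2d(\sigma)d(-\sigma)a(-\sigma)/\sigma^2$; the second summand of \eqref{eq:z->0} supplies an explicit $t^{1-2\sigma}$ term, and the remainder is $\mathcal O(t^{1-\Re\sigma})+\mathcal O(t^{2-3\Re\sigma})$. Integrating $-z$ term by term (no logarithm occurs, since $z$ has no $t^{-1}$ term) then yields the announced $t^{1-\sigma}$ and $t^{1+\sigma}$ contributions of \eqref{zeta->0}, the denominators $1-\sigma$ and $1+\sigma$ arising precisely from $\int t^{\mp\sigma}\,dt=t^{1\mp\sigma}/(1\mp\sigma)$, and the coefficients matching after the two identities above are used. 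The constant coefficient of $z$ integrates to the $t$-term of \eqref{zeta->0}; expanding $b(\pm\sigma),c(\pm\sigma),d(\pm\sigma)$ this is a short symmetric computation reproducing $\tfrac1{4\sigma^2}(2\Theta_0\sigma^2+\Theta_\infty(\Theta_0^2-\Theta_1^2+\sigma^2))$. The explicit $t^{1-2\sigma}$ term integrates to $\mathcal O(t^{2-2\sigma})$, which for $\Re\sigma\in[0,1)$ dominates the integrals of the two remainder terms and yields the stated error $\mathcal O(|t|^{2-2\Re\sigma})$.

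It remains to pin down the integration constant, i.e. the $t^0$ coefficient, which integration cannot detect; for this I would return to \eqref{eq:zeta-def}. Writing its two factors as $W_1=z-\tfrac1y\big(z+\tfrac{\Theta_0+\Theta_1+\Theta_\infty}2\big)$ and $W_2=z+\Theta_0-y\big(z+\tfrac{\Theta_0-\Theta_1+\Theta_\infty}2\big)$, the product $W_1W_2$ appears to diverge because $z=\mathcal O(t^{-\sigma})$; however Theorem~\ref{th1} gives $y\to1$ as $t\to0$, so the $\mathcal O(t^{-\sigma})$ parts of $W_1$ and $W_2$ cancel and both are $\mathcal O(1)$, while $-zt=\mathcal O(t^{1-\sigma})$ contributes no $t^0$ term. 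The one genuinely delicate point, and the main obstacle, is the bookkeeping of the finite parts of $W_1$ and $W_2$: each picks up a contribution from the product of the leading coefficient $z_{-1}=a(\sigma)/(\sigma^2s^2)$ of $z$ with the $\mathcal O(t^\sigma)$ correction of $y$, which I would read off from \eqref{eq:y->0} as $y-1\sim (s^2\sigma^2 b(-\sigma)/a(\sigma))\,t^\sigma$. That product equals $b(-\sigma)$, and a short calculation then gives $W_1\to b(-\sigma)-b(\Theta_\infty)=-d(\sigma)$ and $W_2\to \tfrac{\Theta_0+\Theta_1-\Theta_\infty}2-b(-\sigma)=-d(-\sigma)$, so that $(W_1W_2)|_{t^0}=d(\sigma)d(-\sigma)=\tfrac14(\Theta_\infty^2-\sigma^2)$. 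Subtracting this from the prefactor $\tfrac14((\Theta_0+\Theta_\infty)^2-\Theta_1^2)$ in \eqref{eq:zeta-def} gives the constant $\tfrac14(\sigma^2-\Theta_1^2+\Theta_0^2+2\Theta_0\Theta_\infty)$, exactly as in \eqref{zeta->0}. Establishing the cancellation of the $\mathcal O(t^{-\sigma})$ singularities and identifying the finite product as $d(\sigma)d(-\sigma)$ is the only step demanding real care; everything else reduces to the routine integration above.
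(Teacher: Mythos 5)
Your derivation is correct and is essentially the one the paper relies on: Corollary~\ref{cor:zeta-0} follows from Theorem~\ref{th1} by integrating \eqref{eq:zeta-derivativ} term by term against the Laurent expansion of $z$ in $t^{\sigma}$ and then fixing the integration constant from \eqref{eq:zeta-def}, and all four displayed coefficients together with the error order $\mathcal{O}\left(|t|^{2-2\Re\sigma}\right)$ check out exactly as you compute them. The only point worth adding concerns the boundary case $\Re\sigma=0$ admitted by Theorem~\ref{th1}: there $t^{\pm\sigma}$ oscillates rather than tends to zero, so identifying the $t^{0}$ part of $W_1W_2$ with its $t^{\sigma}\to0$ limit requires the additional (true) observation that this product, evaluated on the leading-order terms, is identically constant in $t^{\sigma}$ --- it is the first integral of the autonomous limiting system --- after which your value $d(\sigma)d(-\sigma)$ and hence the constant term are unchanged.
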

\begin{remark} If $0<\Re\si<1$, then the asymptotic expansion for function $y=y(t)$
can be rewritten as follows:
$$
y\underset{t\to0}=1+\frac{\si^2s^2b(-\si)}{a(\si)}t^\si+
\frac{1-\1-\0}{(1-\si)^2}t+\mathcal{O}(t^{2\Re\si})+\mathcal O(t^{2-\Re\si}).
$$
\end{remark}
\begin{remark}\label{rem:tau-u-gen-sigma}
{\rm
Small $t$-expansion of the $\tau$-function related to $\zeta$ as,
\begin{equation}\label{eq:tau}
\zeta(t)\equiv t\frac{d}{dt}\log\tau(t)+(\Theta_0+\Theta_\infty)t/2+((\Theta_0+\Theta_\infty)^2-\Theta_1^2)/4,
\end{equation}
has been obtained by Jimbo~{\rm\cite{J}}. We independently derived our results by a similar but
slightly different method and presented it in terms of the functions $y$, $z$, $u$, and $\zeta$ in
{\rm\cite{AK}}. The latter result, together with the asymptotics at infinity, allows one to find
the connection formulae for function $u$, that cannot be obtained from Jimbo's result. The
present form of small t-asymptotics (Theorem~{\rm\ref{th1}}) we announced in paper {\rm\cite{AK97ZNS}}, however
the error estimates in that paper were correctly written only for $0<\Re\sigma\leq1/2$. In this paper
we have added additional error estimates (see the last terms in Equations~\eqref{eq:y->0} and \eqref{eq:z->0})
which work for the interval $1/2<\Re\sigma<1$. Now the estimates cover the whole semi-open interval, $0\leq\Re\sigma<1$.
The origin of this mistake is not related with the isomonodromy deformation method which gives only the leading
terms of the asymptotics together with the error estimates in the form $\mathcal{O}\big(t^{1+\delta}\big)$, $\delta>0$,
without specification of the dependence of $\delta$ on $\sigma$. Formally, these estimates looks similar to the
first error estimates in Equations~\eqref{eq:y->0} and \eqref{eq:z->0}. The explicit form (in terms of $\sigma$) of these
"isomonodromy error estimates" are obtained via substitution of the corresponding asymptotic expansions into the isomonodromy
deformation system ~\eqref{eq:ids1} and \eqref{eq:ids2} where the terms important for the parameter $\sigma$ in the interval
$(1/2,1)$, the last estimates in Equations~\eqref{eq:y->0} and \eqref{eq:z->0}, were just overlooked although they have the
required form $\mathcal{O}\big(t^{1+\delta}\big)$.
}
\end{remark}
\begin{theorem}\label{th2}
Assume $\Theta_0,\Theta_1,b(\sigma),b(-\sigma),c(\sigma),c(-\sigma),d(\sigma),\mathrm{and}\;d(-\sigma)\notin\mathbb Z$.
Then the solution of System \eqref{eq:ids1}--\eqref{eq:ids3} defined in Theorem~{\rm\ref{th1}} generates an
isomonodromy deformation of Equation~\eqref{mainl} with the following monodromy data:
\begin{align*}
m^p_{11}&=\frac1{\hat d}\left(
\frac {e^{-\p 2\si}\hat m^p_{11}}\pi \sin \pi d(-\si) -
\frac {e^{\p 2\si} \hat m^p_{22}}\pi \sin \pi d( \si)-\right.\\&-
\left.\frac
{e^{-\p 2\si}\hat m^p_{21}}
{\hat s_2 \Ga (1+d(-\si))\Ga (1-d(\si))}-
\frac{\hat s_2 e^{\p 2\si}\hat m^p_{12}}
{\Ga (d(\si))\Ga (-d(-\si))}\right),\\
m^p_{22}&=\frac1{\hat d}\left(
-\frac {e^{ \p 2\si}\hat m^p_{11}}\pi \sin \pi d(\si)+
\frac {e^{-\p 2\si}\hat m^p_{22}}\pi \sin \pi d(-\si)+\right.\\&
\left.+\frac {e^{-\p 2\si}\hat m^p_{21}}{\hat s_2\Ga (1-d(\si))
\Ga (1+d(-\si))}+
\frac {\hat s_2 e^{\p 2\si}\hat m^p_{12}}
{\Ga (-d(-\si))\Ga (d(\si))}\right),
\end{align*}
\begin{align}
&m^p_{12} \!=\!\frac {re^{\p 2\tin}}{\hat d} \left(
\frac{\hat m^p_{22}-\hat m^p_{11}}
{\Ga(-d(-\si))\Ga (1-d(\si))}
\!-\!
\frac{\hat m^p_{21}}{\hat s_2\Ga^2 (1-d(\si))}
\!+\!\frac{\hat s_2\hat m^p_{12}}{\Ga^2(-d(-\si))}\right),\nonumber\\
&m^p_{21} \!=\!\frac {e^{-\p 2\tin}}{r\hat d}
\left(\frac{\hat m^p_{22}-\hat m^p_{11}}
{\Ga(d(\si))\Ga (1+d(-\si))}
\!+\!
\frac{\hat m^p_{21}e^{-\pi \imath \si}}
{ \hat s_2\Ga^2(1+d(-\si))}
\!-\!\frac{\hat s_2 \hat m^p_{12} e^{\pi \imath \si}}
{\Ga^2(d(\si))}\right),\nonumber\\
&s_2 =-
\frac {2\pi \imath r e^{\pii\tin}}
{\Ga(1-d(\si))\Ga(-d(-\si))},\qquad
s_1 =-
\frac {2\pi \imath r^{-1}}
{\Ga(1+d(-\si))\Ga(d(\si))}.\label{eqs:s1-s2at0}
\end{align}
In the previous formulas:
\begin{align*}
\hat d&= -\frac {e^{\p2 \tin}}\pi \sin \pi \si,\qquad
\hat m^p_{ij}=\{\hat M^p\}_{ij},\quad p=0,1,\qquad
\hat s_2 =s^2 \frac{\Ga(\si)}{\Ga(-\si)},\\
\hat M^0&=-\frac \imath {\sin \pi \si}
\bm -\cos \pi \1 +e^{\pii \si}  \cos \pi \0&
-e^{-\pii \si} \hat p\\ -e^{\pii \si} \hat q&
\cos \pi \1 -e^{-\pii \si}  \cos \pi \0\em,\\
\hat M^1&=-\frac \imath {\sin \pi \si}
\bm -\cos \pi \0+e^{\pii \si}  \cos \pi \1&
\hat p\\ \hat q&
\cos \pi \0 -e^{-\pii \si}  \cos \pi \1\em,\\
\hat p&=\frac{ \Ga(1+\si)}{\Ga(1-\si)}
\frac {2\pi ^2}{\Ga (1\!+\!b(\si)) \Ga (-b(-\si))
\Ga(1\!+\!c(\si)) \Ga (-c(-\si))},&\\
\hat q&=-\frac{ \Ga(1-\si)}{\Ga(1+\si)}
\frac {2\pi ^2 }{ \Ga (1\!+\!b(-\si)) \Ga (-b(\si))
\Ga(1\!+\!c(-\si)) \Ga (-c(\si))}.
\end{align*}
\end{theorem}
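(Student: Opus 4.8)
The plan is to solve the direct monodromy problem for Equation~\eqref{mainl} asymptotically as $t\to0$, in the same isomonodromy spirit as Sections~\ref{sec:der1} and \ref{sec:derII}, and then read off the monodromy data. The governing observation is that as $t\to0$ the term $\frac t2\si_3$ is a small perturbation on every bounded set of $\la$: in the region $\la=\mathcal O(1)$ Equation~\eqref{mainl} degenerates to the Fuchsian system $\frac{d}{d\la}Y=\big(\frac{A_0}\la+\frac{A_1}{\la-1}\big)Y$, whose only singularities are the regular points $0$, $1$, and a \emph{regular} point at $\infty$. The genuine irregular singularity of \eqref{mainl} is pushed out to $|\la|\sim1/t$, where the Stokes phenomenon takes place. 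Accordingly I would split the $\la$-plane into a bounded ``Fuchsian'' region and an outer ``confluent'' region $\la t=\mathcal O(1)$, compute the monodromy in each, and glue them in the overlap $1\ll|\la|\ll1/t$.

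First I would substitute the $t\to0$ asymptotics of $z$, $y$, $u$ from Theorem~\ref{th1} into Matrices~\eqref{eq:mat0} and \eqref{eq:mat1} to obtain the leading Fuchsian system. Up to the $u$-gauge \eqref{mainwu} this is a Gauss hypergeometric system with exponent differences $\Th_0$, $\Th_1$, $\si$ at $\la=0,1,\infty$; the new parameter $\si$ of Theorem~\ref{th1} is exactly this exponent difference at the reduced infinity. Classical connection formulae for the hypergeometric function then give the monodromy $\hat M^0$, $\hat M^1$ of the reduced system in closed form: the diagonal entries are the combinations of $\cos\pi\Th_0$, $\cos\pi\Th_1$ and $e^{\pm\pii\si}$ displayed in the theorem, while the off-diagonal connection coefficients are the four-fold Gamma products $\hat p$, $\hat q$. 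This stage produces the matrices $\hat M^p$ literally as stated, provided the hypergeometric formulae are non-degenerate, which is guaranteed by the non-integrality hypotheses.

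Next I would resolve the irregular point in the outer scaling $x=\la t$. To leading order the outer equation is the model Equation~\eqref{st} of Subsection~\ref{subsec:Der1modeleq}, with $\si$ the exponent at its regular point $x=0$ and $k=\tin$ the formal-monodromy exponent at its irregular point $x=\infty$; its solutions $Y_i(k,\si;x)$, the connection matrices $G_{k\si}$, $C_{k\si}$, and the Stokes multipliers $s_1(k,\si)$, $s_2(k,\si)$ are all computed there. With $k=\tin$ the Gamma functions appearing in $C_{k\si}$ become exactly those with arguments $d(\pm\si)=\frac{\tin\pm\si}2$, matching the statement. I would then match the inner (Fuchsian) solution, as $\la\to\infty$, to the outer model solution as $x=\la t\to0$, after which the model's own $x=0\to x=\infty$ connection carries the frame out to the genuine canonical solutions $Y_k$ at $\infty$. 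Transporting $\hat M^p$ through this two-step connection is a conjugation $M^p=C^{-1}\hat M^pC$ by a matrix $C$ assembled from $C_{k\si}$, the factor $\hat s_2=s^2\Ga(\si)/\Ga(-\si)$ (through which the parameter $s^2$ of Theorem~\ref{th1} enters), and $\hat d=-\frac{e^{\p2\tin}}{\pi}\sin\pi\si$. Expanding the conjugation entrywise yields the displayed linear combinations of $\hat m^p_{ij}$, and the global Stokes multipliers are read directly off the model, giving Equations~\eqref{eqs:s1-s2at0}; the gauge \eqref{mainwu} contributes the overall factor $r$, explaining the $re^{\p2\tin}$ and $r^{-1}e^{-\p2\tin}$ in $m^p_{12}$ and $m^p_{21}$.

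The principal difficulty is the rigorous asymptotic matching across the intermediate annulus $1\ll|\la|\ll1/t$, together with uniform control of the error terms, so that the leading reduced-plus-model approximation reproduces the \emph{exact} monodromy. That it can is forced by isomonodromy: the data $M^p$ are independent of $t$, so once the overlap matching is justified the $t\to0$ limit of the leading connection matrices must equal the true data, and the remaining work is the lengthy but routine bookkeeping of the gauge factor $r$, the half-phases $e^{\pm\p2\si}$, $e^{\pm\p2\tin}$, and the normalizations $G_{k\si}$, $C_{k\si}$ through the two conjugations, exactly as in the $t\to\imath\infty$ computation of Section~\ref{sec:der1}. The non-integrality hypotheses on $\Th_0$, $\Th_1$ and on $b(\pm\si)$, $c(\pm\si)$, $d(\pm\si)$ are precisely the non-resonance conditions that keep both the hypergeometric and the confluent connection formulae non-degenerate.
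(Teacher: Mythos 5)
Your proposal follows essentially the same route as the paper: the authors obtain Theorem~\ref{th2} by the matching scheme of their earlier real-axis work \cite{AK} (and Jimbo \cite{J}), i.e.\ a reduced hypergeometric system on bounded $\la$ giving $\hat M^p$, the confluent model equation~\eqref{st} with $k=\tin$ in the scaling $x=\la t$ giving $C_{\tin\si}$ and the Stokes multipliers, and the gluing $E^p=\hat E^p s^{-\si_3}C_{\tin\si}R$ quoted in Section~\ref{sec:zero}, from which $M^p=(E^p)^{-1}e^{\pii\Th_p\si_3}E^p$ yields exactly the displayed conjugation of $\hat M^p$. Your decomposition, identification of $\si$ as the exponent difference at the reduced infinity, and accounting of the factors $r$, $\hat s_2$, and $\hat d$ all agree with the paper's derivation.
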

\begin{remark}{\rm
Multiplying expressions for the Stokes multipliers $s_2$ and $s_1$ in \eqref{eqs:s1-s2at0} one finds
\begin{equation}\label{eq:sigma-s1-s2}
2\cos\,\pi\sigma=2\cos\,\pi\Theta_\infty+s_1s_2e^{-\pi\Theta_\infty}.
\end{equation}
This equation means that to define parameter $\sigma$ for all monodromy data we have to allow $\Re\sigma\in[0,1]$.

The case $\Re\sigma\in(0,1)$ and $\Re\sigma=0$ with $\Im\sigma\neq0$, modulo some restrictions on the parameters
$\Theta_k$ where $k=0,1,t$, and $\infty$  is served by Theorems~{\rm\ref{th1}} and {\rm\ref{th2}}.

The case $\Re\sigma=1$ and the restrictions mentioned in the previous sentence are studied in part II of this work.

We finish this section by considering the case $\sigma=0$ with certain restrictions on the parameters $\Theta_0$ and $\Theta_1$.
}\end{remark}
\begin{theorem}\label{th4}
Let $\0,\1\not \in \Bbb Z$.
Let also $\hat s_1 \in \Bbb C$ and $r \in \Bbb C\back\{0\}$.
Then there exists a solution of system \eqref{eq:ids1}--\eqref{eq:ids2}
with the following asymptotic expansion as $t\to0$:
\begin{eqnarray*}
z&=&-dbc(\ln t+\hat s_1)^2+(bc+(b+c)d)(\ln t+\hat s_1)-d-b
+\ep,\\
u&=&-rt^{\tin}
\frac{db(\ln t+\hat s_1)-b-d+\ep}{db(\ln t+\hat s_1)-\frac 12+\ep},\qquad
\ep=\mathcal{O}\left(t\ln^4t\right),\\
y&=&\frac{(-dbc(\ln t+\hat s_1)+db+bc+cd+\ep)(b(\ln t+\hat s_1)-1+\ep)}
{(-bc(\ln t+\hat s_1)+b+c+\ep)(db(\ln t+\hat s_1)-b-d+\ep)}.
\end{eqnarray*}
Here $b=b(0)=\frac{\1+\0}2$, $c=c(0)=\frac{\1-\0}2$, and $d=d(0)=\frac{\tin}2$.
\end{theorem}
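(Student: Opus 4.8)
The plan is to treat Theorem~\ref{th4} as the resonant confluent limit $\sigma\to0$ of Theorem~\ref{th1}: the parameter $\sigma$ controls the exponent difference $t^{\pm\sigma/2}$ of the two local behaviours at $t=0$, and at $\sigma=0$ these coincide, so the second solution acquires a logarithm, which is precisely the source of the $\ln t$ in the statement. First I would observe from the expansion \eqref{eq:z->0} that finiteness of $z$ as $\sigma\to0$ forces its numerator, whose value at $\sigma=0$ is $bc(1+s^2 d)^2$, to vanish to second order; hence the coalescence must be carried out with the correlated scaling $s^2\to-1/d$. Accordingly I would set $s^2t^\sigma=-\tfrac1d\big(1+\sigma(\ln t+\hat s_1)+\tfrac{\sigma^2}2 c_2+\cdots\big)$, which simultaneously introduces the new free parameter $\hat s_1$ (the logarithmic integration constant replacing $s^2$) and keeps $r$ as the second parameter; the remaining monodromy data are then fixed by the limiting form of Theorem~\ref{th2}, in particular by relation \eqref{eq:sigma-s1-s2}, which at $\sigma=0$ degenerates to the codimension condition $2=2\cos\pi\tin+s_1s_2e^{-\pi\tin}$.

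Next I would substitute this scaling into the explicit formulae \eqref{eq:y->0}--\eqref{eq:u->0} and expand. Each elementary factor of the form $b(\sigma)+s^2d(\pm\sigma)b(-\sigma)t^\sigma$ is $\mathcal O(\sigma)$ by the choice $s^2\to-1/d$, so the ratios defining $y$ and $u$ possess finite limits, while the $\sigma^{-2}$ in $z$ is cancelled by the $\sigma^2$ vanishing of its numerator, producing the quadratic-in-$\ln t$ expressions. Reading off the coefficients of $\ln^2 t$, $\ln t$, and $1$ and matching them against the stated forms (with $b=b(0)$, $c=c(0)$, $d=d(0)$) determines $c_2$ and the precise normalisation of $\hat s_1$: the $\ln^2 t$ coefficient $-dbc$ in $z$ is fixed already at leading order, whereas the constants $-d-b$ in $z$ and $-\tfrac12$ in the denominator of $u$ only emerge once the expansion is carried to second order in $\sigma$, i.e. once $c_2$ and the $\mathcal O(\sigma^2)$ parts of the factors $b(\pm\sigma),c(\pm\sigma),d(\pm\sigma)$ are retained.

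Finally, I would propagate the error. At $\sigma=0$ the estimates $\mathcal O(t^{1\pm\Re\sigma})$ and $\mathcal O(t^{2-3\Re\sigma})$ of Theorem~\ref{th1} all collapse to $\mathcal O(t)$, but because the leading quantities themselves are now polynomials in $\ln t$ (degree two for $z$), multiplying and dividing them enhances the relative error by powers of $\ln t$; tracking this yields the stated absolute error $\ep=\mathcal O(t\ln^4 t)$. The main obstacle is precisely this confluence bookkeeping together with the justification of interchanging the limits $\sigma\to0$ and $t\to0$: the asymptotics of Theorem~\ref{th1} hold for fixed $\sigma\neq0$ as $t\to0$, and the naive limit does not by itself establish an asymptotic statement at $\sigma=0$. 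In keeping with the methodology explained in the Introduction I would close this gap not by uniformising the double limit directly but by reconstructing the monodromy correspondence at $\sigma=0$ with the logarithmic (resonant) model functions and then invoking the existence/uniqueness scheme of \cite{K} (or the Wasow-type argument) to promote the matched leading term to a genuine solution of System~\eqref{eq:ids1}--\eqref{eq:ids2}; this is why, as elsewhere in the paper, the argument is a derivation rather than a full proof.
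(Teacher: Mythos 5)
Your proposal is correct and follows essentially the same route as the paper: Remark~\ref{rem:lim-sigma0} states that Theorem~\ref{th4} is obtained, following Jimbo, as the confluent limit $\sigma\to0$ of Theorem~\ref{th1} under the substitution $\hat s=-s^2d(\sigma)=1+\hat s_1\sigma$, which is exactly your correlated scaling $s^2t^\sigma=-\tfrac1d\bigl(1+\sigma(\ln t+\hat s_1)+\cdots\bigr)$ forced by finiteness of $z$. You also correctly flag the same gap the authors acknowledge — the uniformity of the error estimates in the double limit $\sigma\to0$, $t\to0$ (their conjectured $\mathcal O(\ln^{2k}t)$ bounds) versus a ``first principles'' derivation with logarithmic model functions.
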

\begin{corollary}\label{cor:th5}
Function $\ze(t)$, corresponding to the solution of system \eqref{eq:ids1}--\eqref{eq:ids3}
defined in the Theorem~{\rm\ref{th4}}, has the following asymptotic  expansion as $t\to 0$:
\begin{align*}
\ze(t)=&-bc+(b-c)d+t(\ln t+\hat s_1)(dbc(\ln t+\hat s_1)-db-bc-cd-2bcd)+\\
&t(b+d+db+bc+cd+2dbc)+\mathcal{O}\left(t^2\ln^4t\right).
\end{align*}
\end{corollary}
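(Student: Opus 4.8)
The plan is to derive Corollary~\ref{cor:th5} by feeding the $t\to0$ asymptotics of $y$ and $z$ supplied by Theorem~\ref{th4} into the two characterizations of $\zeta$ available to us: the algebraic expression~\eqref{eq:zeta-def} and the derivative identity~\eqref{eq:zeta-derivativ}. These two inputs play complementary roles. The identity $\tfrac{d\zeta}{dt}=-z$ will produce the entire $t$-dependent block of the expansion, while~\eqref{eq:zeta-def} will be needed only to fix the single remaining integration constant; splitting the work this way is what keeps the computation short. Throughout I abbreviate $\ell:=\ln t+\hat s_1$ and recall $b=\tfrac{\1+\0}2$, $c=\tfrac{\1-\0}2$, $d=\tfrac{\tin}2$ from Theorem~\ref{th4}.

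First I would recover all the $t$-terms by integrating $-z$. The point is that Theorem~\ref{th4} gives the $t$-power-free (but $\ell$-dependent) part of $z$ \emph{exactly} as a quadratic polynomial in $\ell$, with error only $\mathcal O(t\ln^4t)$; this is precisely the information one cannot read off~\eqref{eq:zeta-def} directly, since that would require the unknown $\mathcal O(t)$ corrections to $y$ and $z$. Using $\int\ell\,dt=t\ell-t$ and $\int\ell^2\,dt=t\ell^2-2t\ell+2t$ and integrating term by term, the coefficients of $t\ell^2$, $t\ell$ and $t$ assemble into exactly the block $t(\ln t+\hat s_1)\big(dbc(\ln t+\hat s_1)-db-bc-cd-2bcd\big)+t(b+d+db+bc+cd+2dbc)$ quoted in the Corollary, and the error $\mathcal O(t\ln^4t)$ in $z$ integrates to the stated $\mathcal O(t^2\ln^4t)$. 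This step is routine bookkeeping.

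The only quantity invisible to the differential relation is the $\mathcal O(1)$ constant $-bc+(b-c)d$, and this is where the real work and the main obstacle lie. To pin it down I would evaluate~\eqref{eq:zeta-def} in the limit $t\to0$. Because $z$ grows only like $(\ln t)^2$, the term $-zt$ vanishes in the limit, so the constant equals $\tfrac14\big((\0+\tin)^2-\1^2\big)$ minus $\lim_{t\to0}Q$, where $Q=\big(z-\tfrac1y(z+\tfrac{\0+\1+\tin}2)\big)\big(z+\0-y(z+\tfrac{\0-\1+\tin}2)\big)$. The difficulty is that each factor of $Q$ is individually logarithmically divergent through $z\sim-dbc\,\ell^2$, so finiteness of $Q$ is not manifest and depends delicately on the rational structure of $y$ in Theorem~\ref{th4}: its numerator and denominator are both quadratic in $\ell$ with the \emph{same} leading coefficient $-db^2c$, whence $y=1+\mathcal O(1/\ell)$. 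I would therefore expand $1/y$ and $y$ in powers of $1/\ell$ to the order needed to cancel the growing contributions; after these cancellations one finds $\lim_{t\to0}Q=\tfrac{\tin^2}4=d^2$, and with $bc=\tfrac14(\1^2-\0^2)$ and $(b-c)d=\tfrac12\0\tin$ the constant collapses to $-bc+(b-c)d$, as asserted.

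Finally I would perform a consistency check to guard against sign or bookkeeping slips: differentiating the stated expansion for $\zeta$ must reproduce $-z$ to the appropriate order via~\eqref{eq:zeta-derivativ}. This simultaneously confirms the $t$-dependent block obtained by integration, verifies that the constant is compatible with the algebraic route, and ensures that no $\mathcal O(t\ln^k t)$ term has been lost in the cancellations.
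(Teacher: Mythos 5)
Your derivation is correct, and the final coefficients check out: integrating $-z$ from Theorem~\ref{th4} with $\int(\ln t+\hat s_1)^2dt=t\ell^2-2t\ell+2t$ and $\int\ell\,dt=t\ell-t$ reproduces exactly the $t\ell^2$, $t\ell$ and $t$ blocks of the Corollary, the error $\mathcal O(t\ln^4t)$ integrates to $\mathcal O(t^2\ln^4t)$, and the constant works out because the numerator $N$ and denominator $D$ of $y$ in Theorem~\ref{th4} satisfy $N-D=b^2$ identically, so that the two factors of the product term in \eqref{eq:zeta-def} tend to $-d$ each and $Q\to d^2$, giving $\tfrac14((\Theta_0+\Theta_\infty)^2-\Theta_1^2)-d^2=-bc+(b-c)d$. (One small imprecision: the individual factors of $Q$ are not actually logarithmically divergent — each converges to $-d$ after the cancellation you describe — but this does not affect the argument.)

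However, your route is genuinely different from the paper's. The paper does not derive Corollary~\ref{cor:th5} from Theorem~\ref{th4} at all; as explained in Remark~\ref{rem:lim-sigma0}, it obtains the $\sigma=0$ results (Theorem~\ref{th4}, Corollary~\ref{cor:th5}, Theorem~\ref{th6}) by taking the limit $\sigma\to0$ in the general-$\sigma$ formulae — here, in the expansion \eqref{zeta->0} of Corollary~\ref{cor:zeta-0} — under the substitution $\hat s=1+\hat s_1\sigma$. That limiting procedure is not fully rigorous as the paper presents it: it rests on a stated conjecture about how the error estimates of the $\sigma\neq0$ expansion behave as $\sigma\to0$. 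Your approach buys independence from that conjecture, since it uses only Theorem~\ref{th4} as stated together with the exact identities \eqref{eq:zeta-def} and \eqref{eq:zeta-derivativ}; in this sense it is the "first principles" derivation that the paper mentions as an alternative but does not carry out. What the paper's limiting route buys is uniformity: the same substitution produces Theorem~\ref{th4} and Theorem~\ref{th6} simultaneously, so the Corollary comes for free once the general-$\sigma$ case is established.
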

\begin{theorem}\label{th6}
The solution of system~\eqref{eq:ids1}--\eqref{eq:ids3} described in Theorem~{\rm\ref{th4}}
defines isomonodromy deformation of Equation~\eqref{mainl} with the following monodromy data:
$$
s_1 =-\frac{2\pii }{r\Ga\left(1+\frac{\tin}2\right)\Ga\left(\frac{\tin}2\right)},
\qquad
s_2 =-\frac{2\pii re^{\pii \tin}}{\Ga\left(1-\frac{\tin}2\right)\Ga\left(-\frac{\tin}2\right)}.
$$
\begin{eqnarray*}
&m^1_{11}=\cos \pi \Th_1
+2\imath  e^{-\imath \hat d}
\left(
\left(\om\sin \hat d-e^{-\imath  \hat d}\right)w_1
-
\frac 12 e^{-\imath \hat d}\sin (\hat b+\hat c)
-\imath \cos \hat d\, \sin \hat b\, \sin \hat c\right),&\\
&m^1_{22}=\cos \pi \Th_1
-2\imath  e^{-\imath \hat d}
\left(
\left(\om\sin \hat d-e^{-\imath \hat d}\right)w_1
-
\frac 12 e^{-\imath \hat d}\sin (\hat b+\hat c)
-\imath \cos \hat d\, \sin \hat b\, \sin \hat c\right),&\\
&m^0_{11}=\cos \pi \Th_0
-2\imath e^{-\imath \hat d}
\left(\left(\om\sin \hat d
-e^{\imath \hat d}\right)w_1
-
\frac 12 e^{\imath \hat d} \sin (\hat b+\hat c)
+\sin \hat d\, \cos \hat b\, \cos \hat c\right),&\\
&m^0_{22}=\cos \pi \Th_0
+2\imath e^{-\imath \hat d}
\left(\left(\om\sin \hat d-
e^{\imath \hat d}\right)w_1
-
\frac 12 e^{\imath \hat d} \sin (\hat b+\hat c)
+\sin \hat d\, \cos \hat b\, \cos \hat c\right),&\\
&m^1_{12}=\frac{2\imath e^{-\imath \hat d}}{d_1^2}
\left(\om \sin \hat d-\cos \hat d\right)
\left(
w_1
\sin \hat d
-\cos \hat d \sin \hat b \sin \hat c\right),&\\
&m^0_{12}=
-\frac{2\imath e^{-\imath \hat d}}{d_1^2}
\left(\sin \hat d\left(\om \sin \hat d-e^{\imath \hat d}\right)w_1
-e^{\imath \hat d}\sin \hat c\sin \hat b
\left(\om \sin \hat d-e^{\imath \hat d}\right)
+\cos \hat c \cos \hat b \sin ^2\hat d\right),&\\
&m^0_{21}=
2\imath e^{-\imath \hat d}d_1^2
\left(
\om
w_1
+\cos \hat c\cos \hat b
\right),\qquad
m^1_{21}=-2\imath e^{-\imath \hat d}d_1^2
\left(\om +\imath\right)
\left(
w_1 +\imath \sin \hat b \sin \hat c\right).&
\end{eqnarray*}
Here
\begin{gather}
\hat b=\pi b(0)=\pi\frac{\Th_1+\Th_0}2,\;
\hat c=\pi c(0)=\pi \frac{\Th_1-\Th_0}2,\;
\hat d=\pi d(0)=\pi \frac{\tin}2,\;
d_1^2=-\frac{2\pi}{r\tin}\frac{e^{-\p2\tin}}{\Ga^2\left(\frac{\tin}2\right)},\label{eq:def-hat-abcd-d1}
\\
\om =\frac 1\pi \left(\hat s_1-\psi(b+1)-\psi(c+1) -\psi(d+1)+4\psi(1)\right),\quad
\psi(z)\equiv\frac{d}{dz}\ln\Gamma(z),\label{eq:def-omega-psi}\\
w_1=\om \sin \hat b\,\sin\hat c-\sin(\hat b+\hat c).\nonumber
\end{gather}
\end{theorem}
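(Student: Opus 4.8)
The plan is to obtain Theorem~\ref{th6} as the confluent limit $\si\to0$ of Theorem~\ref{th2}, exactly as the asymptotics of Theorem~\ref{th4} arise from those of Theorem~\ref{th1} when the two Frobenius exponents $\pm\si/2$ coalesce and the branches $t^{\pm\si}$ degenerate into powers of $\ln t$. The monodromy data depends analytically on the coefficients of Equation~\eqref{mainl}, which remain regular through the coalescence once the logarithmic solution basis is used, so it suffices to compute the $\si\to0$ limit of the explicit formulae of Theorem~\ref{th2} under the correct scaling of parameters and to check that this limit is finite and equals the data listed in Theorem~\ref{th6}. The first task is therefore to pin down the precise way the generic parameters $(\si,s^2)$ must be tied together as $\si\to0$, the parameter $r$ being carried through unchanged.

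I would fix this correspondence by matching leading terms. Writing $t^\si=1+\si\ln t+\mathcal O(\si^2)$ and expanding $a(\pm\si),b(\pm\si),c(\pm\si),d(\pm\si)$ to first order about $a(0)=bc$, $b=b(0)$, $c=c(0)$, $d=d(0)$, one checks that a finite genuinely logarithmic limit forces $s^2 d(0)\to-1$; keeping the next order,
\[
s^2=-\frac1{d}\Bigl(1-\bigl(\tfrac1{2d}-\hat s_1\bigr)\si\Bigr)+\mathcal O(\si^2).
\]
Substituting this into the expansions of $z$ (and, analogously, $y$) in Theorem~\ref{th1} reproduces term by term the quadratic-in-$(\ln t+\hat s_1)$ forms of Theorem~\ref{th4}; the same correspondence is then to be verified against $u$, with attention to the constant term in its denominator. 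This computation simultaneously fixes how $\hat s_1$ enters and, through the first-order Taylor coefficients $\tfrac\si2\psi$ of the Gamma factors $\Ga(1\pm b(\si))$, $\Ga(\pm c(\si))$, $\Ga(\pm d(\si))$, explains the origin of the digamma combination $\om$ in \eqref{eq:def-omega-psi}.

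With the correspondence in hand I would insert it into every entry of Theorem~\ref{th2} and expand in $\si$. The Stokes multipliers are the easy part: since $d(\pm\si)\to d=\tin/2$ the formulae \eqref{eqs:s1-s2at0} pass to the limit without any cancellation and reproduce the $s_1,s_2$ of Theorem~\ref{th6}, providing a first consistency check; likewise $d_1^2$ in \eqref{eq:def-hat-abcd-d1} is the limit of the coefficient built from $\Ga(\pm d(\si))$ and $r$. For the matrix elements $m^p_{ij}$ the Gamma quotients must be carried one order past leading, turning $\Ga(x+\tfrac\si2)/\Ga(x)=1+\tfrac\si2\psi(x)+\mathcal O(\si^2)$ into the digammas of $b+1,c+1,d+1,1$, while the trigonometric factors $\sin\pi\si$ and $\sin\pi d(\pm\si)$ are expanded correspondingly; the combinations $\hat b,\hat c,\hat d$ and $w_1=\om\sin\hat b\,\sin\hat c-\sin(\hat b+\hat c)$ then emerge as the finite survivors.

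The main obstacle, and the part demanding the most care, is the cancellation of spurious singularities. The prefactor $\hat d=-\tfrac{e^{\p2\tin}}\pi\sin\pi\si$ vanishes like $\si$, while the auxiliary entries $\hat m^p_{ij}$ carry a factor $1/\sin\pi\si\sim1/(\pi\si)$ through $\hat M^0,\hat M^1$; consequently each bracket in the formulae for $m^p_{11}$ and $m^p_{22}$ is a priori of order $\si^{-1}$, and the $z$-type quantities even threaten order $\si^{-2}$. To extract the finite answer one must expand each bracket one order past its leading singular term and verify that the $\mathcal O(\si^{-1})$ (and, where present, $\mathcal O(\si^{-2})$) contributions cancel identically, leaving precisely the expressions of Theorem~\ref{th6} after multiplication by $1/\hat d$. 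I would organize the expansion so these cancellations are transparent and then confirm the outcome against the invariants that survive the limit unambiguously, namely $\det M^p=1$ and $\tr M^p=2\cos\pi\Th_p$ from \eqref{idm2} together with the cyclic relation \eqref{crel2}; agreement with these, and the smooth limit of the Stokes data already noted, pins down the normalization and guards against sign or factor errors in the confluent expansion. A direct rederivation is also possible, solving the model connection problem \eqref{st}, \eqref{monst}, \eqref{const} in its resonant case, but it meets the same digammas and is more laborious, so the limit route is preferable.
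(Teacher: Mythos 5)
Your proposal follows essentially the same route as the paper: the authors obtain Theorem~\ref{th6} precisely as the confluent limit $\sigma\to0$ of Theorem~\ref{th2} under the substitution $\hat s=-s^2d(\sigma)=1+\hat s_1\sigma$ (which is exactly your scaling $s^2=-\tfrac1d\bigl(1-(\tfrac1{2d}-\hat s_1)\sigma\bigr)+\mathcal O(\sigma^2)$), with the digamma combination $\om$ arising from the first-order expansion of the Gamma factors, just as you describe. The only organizational difference is that the paper implements the cancellation of the $1/\sin\pi\sigma$ singularities by applying l'H\^{o}pital's rule to the connection matrices $E^p$ (which vanish to first order in $\sigma$, so one differentiates them) and then forms $M^p=(E^p)^{-1}e^{\pi\imath\Theta_p\sigma_3}E^p$, rather than expanding the entries $m^p_{ij}$ of Theorem~\ref{th2} directly and tracking the two-order cancellation there.
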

\begin{remark}\label{rem:lim-sigma0}
{\rm
The results stated in the last two theorems can be obtained by the repetition of the calculation scheme
outlined in our previous paper~{\rm\cite{AK}}, but with the asymptotics of the special functions involved there
corresponding to the case $\sigma=0$. Instead, we, following Jimbo (see {\rm\cite{J}}), consider the limit
$\sigma\to0$ by making the following substitution, $\hat s =1+\hat s_1 \si$ where $\hat s =-s^2(\tin+\si)/2$,
in the results stated in Theorems~{\rm\ref{th1}}, Corollary~{\rm\ref{cor:zeta-0}}, and {\rm\ref{th2}}, respectively.
Strictly speaking, to make the results obtained in Theorem~{\rm\ref{th4}} and Corollary~{\rm\ref{cor:th5}}
via this limiting procedure rigorous, we have to study the dependence of the error estimates in
Theorem~{\rm\ref{th1}} and Corollary~{\rm\ref{cor:zeta-0}} not only as $t\to0$ but also as $\sigma\to0$.
Concerning the latter estimates no details are given neither in Jimbo's paper, nor in our work~{\rm\cite{AK}}.
Our result is based on the conjecture that the $\sigma\to0$ limit of the $k$-th term in the small $t$-expansion of the
$\zeta$-function, $\zeta\underset{t\to0}=\sum_{k=0}^\infty t^k\sum_{|l|\leq k} a_{kl}t^{l\sigma}$ (the inner sum), can be
estimated as $\mathcal O\left(\ln^{2k}t\right)$. In principle, we do not need to prove this conjecture in case we
use the derivation of the results stated in Theorems~{\rm\ref{th4}}, Corollary~{\rm\ref{cor:th5}} via the
"first principles". The limiting procedure is simpler in the sense of derivation, but requires a proof of the
additional nontrivial result. The limiting procedure in the monodromy data of Theorem~{\rm\ref{th2}} is the
straightforward application of the l'Hopitale rule which is formulated as Theorem~{\rm\ref{th6}}.
}\end{remark}
\begin{remark}\label{rem:res-sigma0}
{\rm
Jimbo in {\rm\cite{J}}, presented small-$t$ asymptotics of $P_5$ only in terms of the $\tau$-function
(see Equation~\eqref{eq:tau}) our asymptotic formulae in terms of the functions $y(t)$, $z(t)$, $u(t)$, and $\zeta(t)$
are equivalent (with the same comment about $u(t)$ as in Remark~{\rm\ref{rem:tau-u-gen-sigma}}) to Jimbo's one. To see
this we have to make one more calculation, because Jimbo didn't write explicitly the monodromy matrices, like we do in
Theorems~{\rm\ref{th6}} and {\rm\ref{th2}}, instead he presented the analogs of our matrices $E^p$
(see Section~{\rm\ref{sec:def-monodromy}}) modulo left and right diagonal multipliers. So, below we give some details
which explain how one can get Jimbo's monodromy for $\sigma=0$.
}\end{remark}
To obtain this monodromy data (corresponding to the case $\si=0$), we use
formulas for $E^p$ from our previous paper (see \cite{AK}, Section 10, page 1834):
$$
E^p= \hat E^p s^{-\si_3}C_{\tin \si}R,
$$
where $R$ is a diagonal matrix independent of $\sigma$.
Let us note that $\lim_{\si\to0} E^p=0$ (up to scalar multiplier).
To apply the l'Hopitale rule, we need to compute the first derivative of $E^p$ with
respect to $\sigma$
$$
(E^p)'=({\hat E^p})' s^{-\si_3}C_{\tin 0}R
-\frac{s'}s \hat E^p s^{-\si_3}\si_3C_{\tin 0}R+
\hat E^p s^{-\si_3}C_{\tin \si}' R.
$$
Here and in the list of formulae below, the prime denotes the derivative with respect to
$\sigma$ taken at $\sigma=0$. Moreover, all objects that are functions of $\sigma$ are
assumed taken at $\sigma=0$, e.g.: $d=d(0)=\Theta_\infty/2$, $s^2=s^2(0)=-2/\Theta_\infty$
(see Remark~\ref{rem:lim-sigma0}), etc:
\begin{eqnarray*}
&\frac{s'}s =-\frac 1{4d}+\frac{\hat s_1}2,&\\
&\hat E^p=\diag\{*,*\}\bm 1&-1\\1&-1\em,\qquad
(\hat E^p)'=\diag\{l_{11}^p,l^p_{22}\} \si_3\hat E^p\si_3,&\\
&C'_{\tin\si}=\si_3C_{\tin 0}\si_3\diag\{-\p2 +\frac{\psi(d)}2,
-\frac{\psi(-d)}2\}-&\\
&-2\psi(1) \diag\{1,0\}C_{\tin 0}
+\frac1{2d}\si_3\diag\{1,0\}C_{\tin 0}\si_3,&\\
&C_{\tin 0}=\frac1{\sqrt d}d^{-\frac 12\si_3}\bm 1&-1\\
-1&1\em \diag\{\frac{e^{-\pii d}}{\Ga(d)},\frac 1{\Ga(-d)}\},&
\end{eqnarray*}
where function $\psi$ is defined in the second equation~\eqref{eq:def-omega-psi} and
\begin{align*}
&l^0_{11}=\p2 -\psi(1)+\frac{\psi(1+c)}2 +\frac{\psi(-b)}2,&
&l_{22}^0 =-\p2+2\psi(1)-\frac{\psi(1+b)}2 -\frac{\psi(-c)}2,\\
&l^1_{11}=-\psi(1)+\frac{\psi(-c)}2 +\frac{\psi(-b)}2,&
&l^1_{22}= \psi(1)-\frac{\psi(1+c)}2 -\frac{\psi(1+b)}2.
\end{align*}
Using the well-known identities for function $\psi(x)$ (see \cite{BE}):
$$
\psi(1+x)=\psi(x)+\frac1x, \qquad \psi(1+x)=\psi(-x)-\pi\cot\pi x,
$$
we arrive at the following expressions for the matrices $E^p$ at $\sigma=0$:
\begin{align*}
&E^0=\bm
-\om+\cot \hat b & \om -\imath-\cot \hat b-\cot \hat d\\
-\om +\cot\hat c & \om -\imath -\cot \hat c-\cot \hat d
\em
\diag\left\{\frac{e^{-\pii d}}{\Ga(d)},\frac{r}{\Ga(-d)}\right\},&\\
&E^1=\bm
-\om -\imath +\cot \hat b +\cot \hat c&
\om -\cot \hat b-\cot \hat c-\cot \hat d\\
-\om -\imath &
\om -\cot \hat d
\em
\diag\left\{\frac{e^{-\pii d}}{\Ga(d)},\frac{r}{\Ga(-d)}\right\}.&
\end{align*}
Now, writing $\cot(\cdot)$ as $\cos(\cdot)/\sin(\cdot)$ and putting the matrix elements of $E^0$ and $E^1$ to common
denominators, then getting rid of these denominators by factorizing $E^p$ $p=0,1$ with the help of left and right
diagonal matrices such that the numerators do not change, and, finally, omitting the left diagonal matrices
(as they do not effect the monodromy matrices) one arrives at the following expressions for the corresponding
matrix elements:
\begin{align*}
&\tilde E^0_{11}=d_1\left(s_1 \sin\frac\pi2 (\1+\0)-\pi \cos\frac\pi2 (\1+\0)\right),&
\\
&\tilde E^0_{12}=\frac 1{d_1}\left((s_1-\pii)\, \sin \frac \pi2 (\1+\0)\, \sin \frac\pi2
\tin -\sin\frac\pi2 (\1+\0+\tin)\right),&\\
&\tilde E^0_{21}=
d_1\left(s_1 \sin \frac \pi2 (\1-\0)-
\pi  \cos\frac\pi2 (\1-\0)\right),&\\
&\tilde E^0_{22}=\frac1{d_1}\left((s_1-\pii)\,\sin\frac\pi2(\1-\0)\,\sin\frac\pi2 \tin
-\sin\frac\pi2 (\1-\0+\tin)\right).&
\end{align*}
\begin{align*}
&\tilde E^1_{11}=d_1\left((s_1+\pii)\,\sin\frac\pi2(\1+\0)\,\sin\frac\pi2(\1-\0)-\pi\sin\pi\1\right),&\\
&\tilde E^1_{12}\!=\!\frac1{d_1}\left(\left(s_1\sin\frac\pi2\tin-\pi\cos \frac\pi2\tin\right)
\sin\frac\pi2(\1+\0)\,\sin\frac\pi2(\1-\0)-\pi\sin\pi\1\,\sin\frac\pi2\tin\right),&\\
&\tilde E^1_{21}=d_1\left(s_1+\pii\right),\qquad
\tilde E^1_{22}= \frac1{d_1}\left(s_1\sin\frac\pi2\tin-\pi\cos\frac\pi2\tin\right),&\\
\end{align*}
here matrices $\tilde E^p$ coincide with $E^p$ for $p=0,1$, respectively, modulo left diagonal factors.
Parameter $d_1$ is given in the last equation of \eqref{eq:def-hat-abcd-d1}, the sign of $d_1$ is not important,
since the monodromy matrices (see Theorem~\ref{th6}) depend on $d_1^2$. The formulae for $\tilde E^p$ exactly
coincide with the corresponding matrices obtained by Jimbo~\cite{J} modulo the scalar multiplier $d_1$, which is not
given in the paper \cite{J}.

\section{Degeneration in the general formulas for asymptotics as $t\to0$} \label{sec:deg0}

Here we show how short-$t$ asymptotics of some known particular solutions can be obtained with the help of the results
presented in Section~\ref{sec:zero}. We discuss in this section some limiting procedures when two or more parameters
simultaneously and consistently tends to some singular points of the formulae presented in the previous section.
The parameters which limits we consider parameterize both the monodromy data and the asymptotics of the corresponding
solutions. The formulae for the monodromy data are explicit so that the limiting procedures are chosen such that the
limiting set of the monodromy data exists. This set of the monodromy data define a solution of
IDS~\eqref{eq:ids1}--\eqref{eq:ids3}. This statement follows from the justification scheme for asymptotics obtained by the
isomonodromy deformation method \cite{K}, which can be applied for the large-$t$ asymptotics. Then these solutions can be
analytically continued into the neighborhood of $t=0$ and so we can discuss their asymptotics as $t\to0$. In our derivation
of the results in Section~\ref{sec:zero} we didn't study the dependence of the asymptotic estimates as the functions of the
parameters which we are going to send to some limiting values. Therefore, there appears a question concerning their behavior
in these limits. In fact, it is clear that these estimates cannot be unbounded because it would mean existence of a
singularity for all rather small $t$ which cannot be the case because of the Painlev\'e property. The only problem that
may occur in the limiting procedure with the small-$t$ asymptotics is that the error estimates may become equal or larger
than the leading term. In the last case, we cannot get a definite asymptotics directly
from our results. In this section we consider only the situations when the leading term of asymptotics after the limiting
procedure is larger than the error estimate. In following Section~\ref{sec:spec-meromorphic-solution} we consider a case when
in the limiting procedure the order of the error estimate in the limit coincide with the order of the leading term.

The parameters which are not involved in the limiting procedure are assumed to take general values, i.e., such values that
all functions and expressions with these parameters are properly defined at these values.

Note that asymptotic expansions of functions and monodromy data in the theorems of Section~\ref{sec:zero}
are not changed under the formal substitution
\begin{equation}
\si' = -\si,\,(s^2)' =\frac4{\Th_\infty^2-\si^2}\frac
1{s^2},\,r'=\frac{\tin-\si}{\tin+\si}r.
\label{tr1}
\end{equation}
Due to this invariance, we introduce parameters better suited for the change $\sigma \to -\sigma$:
$$
\hat s=-\frac{\tin+\si}2s^2,\hat r=\frac r{d(\si)}.
$$
In these notations, the substitution (\ref{tr1}) can be written as:
$$
\si'=-\si,\,\hat s'=\frac1{\hat s},\, \hat r'=\hat r.
$$

When we formulated our main monodromy results for $t\to 0$ we excluded the cases when $b(\pm \sigma)$,
$c(\pm \sigma)$, or $d(\pm \sigma)\in\mathbb Z$.
In this section we outline how one could overcome this difficulty and cover
the part of the manifold of monodromy data where these assumptions fail.

Our approach is to: a) explain the cases when functions $b$, $c$ or $d$ become zeroes and then b) use the results
on the Schlesinger transformations to reduce the cases of integer $b$, $c$ or $d$ to the case when one of these
linear combinations is zero.
Let us look at asymptotic expansions of functions $y(t)$, $z(t)$, and $u(t)$
in Theorem \ref{th1}. These expansions can become degenerate in the following cases:
\begin{enumerate}
\item $b(\si)=0$,
\item $c(\si)=0$,
\item $d(\si)=0$.
\end{enumerate}
Degeneration with $b(-\si)=0$, $c(-\si)=0$, or $d(-\si)=0$
can be reduced to the previous case, using transformation (\ref{tr1}).
Now we will specify what we mean by degeneration.  We define the three types of {\it complete degeneration}:
\begin{description}
\item[1c] $b(\si)\to0$, $s^2\to 0$, $b(\si)/s^2\to0$
\item[2c] $c(\si)\to0$, $s^2\to 0$, $c(\si)/s^2\to0$,
\item[3c] $d(\si)\to0$, $s^2\to \infty$, $d(\si)s^2\to0$
\end{description}
and the three types of {\it partial degeneration}:
\begin{description}
\item[1p] $b(\si)\to0$, $s^2\to 0$, $\lim b(\si)/s^2\neq0$,
\item[2p] $c(\si)\to0$, $s^2\to 0$, $\lim c(\si)/s^2\neq0$,
\item[3p] $d(\si)\to0$, $s^2\to \infty$, $\lim d(\si)s^2\neq0$.
\end{description}
In the partial degenerations we arrange the limiting procedure such that the finite limits of the rations
in the items above exist.
\begin{theorem}\label{th41}
The following monodromy data correspond to complete degeneration {\rm\bf{1c}} and {\rm\bf{2c}}:
\begin{eqnarray*}
&m^p_{11} =-\frac{e^{-\p2 \tin}}{\sin^2\pi \si}
\{e^{-\p2 \tin}(-\cos\pi\Th_{1-p}+\cos\pi \Th_p\cos \pi \si)
+&\\&+
e^{\p2 \tin}(- \cos \pi \Th_p +\cos \pi \Th_{1-p}\cos \pi \si)\},&\\
&m^p_{22} =-\frac{e^{-\p2 \tin}}{\sin^2\pi \si}
\{e^{-\p2 \tin}(\cos\pi\Th_{1-p}-\cos\pi \Th_p\cos \pi \si)
+&\\&+
e^{\p2 \tin}(- \cos \pi \Th_{1-p}\cos\pi\si
+\cos \pi \Th_p\cos 2\pi \si)\},&\\
&m^p_{12}=\frac{2\imath r}{\sin^2 \pi\si}
\frac{\cos \pi \Th_{1-p}-\cos \pi \Th_p\cos \pi \si}
{\Ga(-d(-\si))\Ga(1-d(\si))},
m^p_{21}=\frac{2\imath e^{-\pii \tin}}{r\sin^2 \pi\si}
\frac{\cos \pi \Th_{1-p}-\cos \pi \Th_p\cos \pi \si}
{\Ga(d(\si))\Ga(1+d(-\si))}&
\end{eqnarray*}
Parameter $\sigma$ is defined by equation $b(\sigma)=0$ or $c(\sigma)=0$ depending on degeneration scheme {\rm\bf{1c}} or
{\rm\bf{2c}}, respectively.
\end{theorem}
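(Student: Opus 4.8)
The plan is to derive the monodromy data of Theorem~\ref{th41} by passing to the limit in the general formulae of Theorem~\ref{th2} along the degeneration schemes \textbf{1c} and \textbf{2c}. First I would record the asymptotic behaviour of the three ingredients carrying the $\si$- and $s^2$-dependence: the quantity $\hat s_2=s^2\,\Ga(\si)/\Ga(-\si)$ and the off-diagonal entries $\hat p$, $\hat q$ of $\hat M^p$. Since $\si$ is kept generic ($\sin\pi\si\neq0$, so $\hat d\neq0$), we have $\hat s_2\to0$ as $s^2\to0$. The decisive observation is that $\hat q$ vanishes to first order in the degenerating combination: in scheme \textbf{1c} the factor $\Ga(-b(\si))\sim-1/b(\si)$ forces $\hat q=\mathcal O(b(\si))$, while in scheme \textbf{2c} the factor $\Ga(-c(\si))\sim-1/c(\si)$ forces $\hat q=\mathcal O(c(\si))$; in both schemes $\hat p$ stays bounded, since $\Ga(1+b(\si))$ and $\Ga(1+c(\si))$ are regular at the origin and the complementary arguments $b(-\si)$, $c(-\si)$ remain generic and non-integer.

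Next I would trace these estimates through the explicit expressions for $m^p_{ij}$. Each formula in Theorem~\ref{th2} is a sum built from the diagonal entries $\hat m^p_{11},\hat m^p_{22}$ and from the off-diagonal pair $\hat m^p_{21}/\hat s_2$, $\hat s_2\hat m^p_{12}$. The term containing $\hat s_2\hat m^p_{12}$ is $\mathcal O(s^2)$, because $\hat m^p_{12}\propto\hat p$ is bounded, and so it drops out. The term containing $\hat m^p_{21}/\hat s_2$ is precisely where the third (consistency) condition of the \emph{complete} degeneration enters: since $\hat m^p_{21}\propto\hat q$ is of order $b(\si)$ (resp. $c(\si)$), this term is of order $b(\si)/s^2$ (resp. $c(\si)/s^2$), which tends to $0$ exactly by the assumption built into \textbf{1c} (resp. \textbf{2c}). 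This is the step I expect to demand the most care: one must confirm that the $o(1)$ corrections hidden in $\hat q=\mathcal O(b(\si))$ do not conspire with the $1/\hat s_2$ factor to leave a finite residue, i.e. that the three limits are genuinely coupled so that $b(\si)/s^2\to0$ (resp. $c(\si)/s^2\to0$) strictly. Once this is verified, only the $\hat m^p_{11}$- and $\hat m^p_{22}$-terms survive in $m^p_{11},m^p_{22}$, and only the $(\hat m^p_{22}-\hat m^p_{11})$-term survives in $m^p_{12},m^p_{21}$, which already reproduces the shape of the announced formulae.

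Finally I would carry out the trigonometric reduction of the surviving terms. For the diagonal entries, using $\hat d=-e^{\frac{\pii\tin}2}\sin\pi\si/\pi$ together with the $-\imath/\sin\pi\si$ prefactor of $\hat M^p$ and the $1/\pi$ already present in the formulae for $m^p_{11},m^p_{22}$, the prefactors collapse to the overall factor $e^{-\frac{\pii\tin}2}/\sin^2\pi\si$ of Theorem~\ref{th41}. Writing $\sin\pi d(\pm\si)=\sin\frac\pi2(\tin\pm\si)$ in exponential form and combining it with the residual factors $e^{\pm\frac{\pii}2\si}$ and with the limiting diagonal entries $\hat m^p_{11},\hat m^p_{22}$ (which at $b(\si)=0$, resp. $c(\si)=0$, depend only on $\cos\pi\0$, $\cos\pi\1$, and $e^{\pm\pii\si}$) converts the half-angle sines into the $e^{\pm\frac{\pii}2\tin}$ combinations displayed in the theorem. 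The off-diagonal entries $m^p_{12},m^p_{21}$ follow even more directly, since their Gamma-function denominators $\Ga(-d(-\si))\Ga(1-d(\si))$ and $\Ga(d(\si))\Ga(1+d(-\si))$ already appear unchanged in the target formulae, so that only the numerator $\hat m^p_{22}-\hat m^p_{11}$ needs to be evaluated at the limit, where it reduces to a multiple of $\cos\pi\1-\cos\pi\si\cos\pi\0$ for $p=0$ and symmetrically for $p=1$. The partial-degeneration analogues, in which the ratio $b(\si)/s^2$ (resp. $c(\si)/s^2$) has a nonzero finite limit, would differ from Theorem~\ref{th41} precisely by the retained $\hat m^p_{21}/\hat s_2$ contribution.
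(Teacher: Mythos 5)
Your proposal follows exactly the route the paper takes (and spells out explicitly for the neighbouring Corollary~\ref{th44}): substitute the degeneration scheme into the general monodromy formulae of Theorem~\ref{th2} and pass to the limit, with $s^2\to0$ killing the $\hat s_2\hat m^p_{12}$ contributions and the coupled condition $b(\si)/s^2\to0$ (resp.\ $c(\si)/s^2\to0$) killing the $\hat m^p_{21}/\hat s_2$ contributions, so that only the $\hat m^p_{11}$, $\hat m^p_{22}$ terms survive. Your identification of the Gamma-factors $\Ga(-b(\si))$, $\Ga(-c(\si))$ as the source of $\hat q=\mathcal O(b(\si))$ (resp.\ $\mathcal O(c(\si))$) while $\hat p$ stays bounded is precisely the key observation, and the remaining trigonometric reduction is the same as in the paper.
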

\begin{corollary}\label{th42}
Let $r \in \Bbb C\back\{0\}$.
There exists a solution of system {\rm (\ref{eq:ids1})--(\ref{eq:ids3})}
with the following asymptotics as $t\to0$:
\begin{align*}
&y=1+\mathcal{O}(t),\\
&z=-\frac{\Th_0(\tin+\0+\1)}{2(\0+\1)}+\mathcal{O}(t),\\
&u=-rt^{\tin}\frac{\tin-\si}{\tin+\si}(1+o(1))
\end{align*}
The monodromy data from Theorem \ref{th41} with $\si=-\Th_0-\Th_1$, $b(\si)=0$ degeneration {\rm\bf{1c}},
correspond to this solution.
\end{corollary}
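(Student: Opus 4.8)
The plan is to obtain Corollary~\ref{th42} as a direct specialization of Theorem~\ref{th1} and Theorem~\ref{th41} under the complete degeneration \textbf{1c}, that is, by letting $b(\si)\to0$, $s^2\to0$ with $b(\si)/s^2\to0$, and fixing $\si=-\0-\1$ through the condition $b(\si)=0$. The existence of a genuine solution of System~\eqref{eq:ids1}--\eqref{eq:ids3} carrying the stated monodromy is not re-derived here: it is guaranteed by the justification scheme~\cite{K} applied to the explicit, well-defined limiting point of the monodromy manifold already produced in Theorem~\ref{th41}. What remains is to compute the small-$t$ asymptotics of this solution, and I would do so by inserting the degeneration regime into the three expansions of Theorem~\ref{th1} and retaining the dominant contributions.

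First I would treat $y$. Since $b(\si)=o(s^2)$ and hence $a(\si)=b(\si)c(\si)=o(s^2)$ as well, in every one of the four factors of the quotient defining $y$ the term carrying $b(\si)$ or $a(\si)$ is negligible beside the accompanying $s^2t^\si$ term; the four surviving $s^2t^\si$ factors cancel between numerator and denominator, leaving $y=1+\mathcal O(t)$. For $z$ I would keep the constant $c(\si)$ in the first factor $c(\si)+s^2d(\si)c(-\si)t^\si$ (it survives because $s^2\to0$) and only the term $s^2d(-\si)b(-\si)t^\si$ in the second factor; dividing by the denominator $\si^2s^2t^\si$ cancels $s^2t^\si$ and yields $z=c(\si)d(-\si)b(-\si)/\si^2+\mathcal O(t)$. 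Evaluating at $\si=-\0-\1$, where $c(\si)=-\0$, $b(-\si)=\0+\1$, $d(-\si)=(\tin+\0+\1)/2$, and $\si^2=(\0+\1)^2$, reproduces the announced value $-\0(\tin+\0+\1)/\big(2(\0+\1)\big)$. For $u$ the ratio in Theorem~\ref{th1} collapses, after discarding the subdominant $b(\si)$ in numerator and denominator, to $d(-\si)/d(\si)=(\tin-\si)/(\tin+\si)$, which gives the stated leading term. The monodromy data themselves require no further computation: they are read off from Theorem~\ref{th41} specialized to scheme~\textbf{1c} with $\si=-\0-\1$.

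The hard part will be the control of the error estimates through the degeneration limit. The expansions of Theorem~\ref{th1} carry remainders such as $\mathcal O(t^{1+\Re\si})$, $\mathcal O(t^{2-\Re\si})$, $\mathcal O(t^{1-\Re\si})$, and $\mathcal O(t^{2-3\Re\si})$, whose implied constants were never analyzed as functions of the degenerating parameters $b(\si)$ and $s^2$. As explained in the preamble to this section, these constants cannot diverge, because a blow-up would force a movable singularity of the solution to accumulate at a fixed small value of $t$, contradicting the Painlev\'e property; the substantive point is therefore only to verify that after the limit the displayed leading terms strictly dominate the (suitably rescaled) remainders. For scheme~\textbf{1c} this is the case, since the leading terms of $y$ and $z$ are of order $\mathcal O(1)$ and that of $u$ of order $t^{\tin}$, all strictly larger than the $\mathcal O(t)$- and $o(1)$-type remainders recorded in the statement, so the limiting asymptotics are well defined and the corollary follows.
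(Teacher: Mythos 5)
Your proposal follows the paper's own route exactly: Corollary~\ref{th42} is obtained by specializing Theorem~\ref{th1} (and Theorem~\ref{th41}) to the complete degeneration \textbf{1c} with $\si=-\Theta_0-\Theta_1$, computing the limits of the leading terms (your evaluations of $c(\si)=-\Theta_0$, $b(-\si)=\Theta_0+\Theta_1$, $d(\pm\si)$ and the resulting constants for $y$, $z$, and $u$ all agree with the paper's formulae), and appealing to the justification scheme of \cite{K} together with the Painlev\'e property to guarantee existence and boundedness of the error constants through the limit. The only point you pass over lightly is that what the limit of Theorem~\ref{th1} directly yields for the remainders is $\mathcal O(t^{1+\Re\si})+\mathcal O(t^{2-\Re\si})$ for $y$ and $\mathcal O(t^{1-\Re\si})+\mathcal O(t^{2-3\Re\si})$ for $z$, which is weaker than the $\mathcal O(t)$ recorded in the statement (and for $\Re\si\geq 2/3$ not even obviously $o(1)$); the paper upgrades this by observing, with reference to \cite{KO}, that these degenerate solutions are in fact holomorphic at $t=0$.
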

\begin{corollary}\label{th43}
Let $r \in \Bbb C\back\{0\}$.
There exists a solution of system {\rm (\ref{eq:ids1})--(\ref{eq:ids3})}
with the following asymptotics as $t\to0$:
\begin{align*}
&y=\frac{\tin-\Th_0+\Th_1}{\tin+\Th_0-\Th_1}+\mathcal{O}(t),\\
&z=\frac{\Th_0(\tin+\Th_0-\Th_1)}{2(\Th_1-\Th_0)}+\mathcal{O}(t), \\
&u=-rt^{\tin}(1+o(1)).
\end{align*}
The monodromy data from (\ref{th41})
with $\si=\Th_0-\Th_1$, $c(\si)=0$ degeneration {\rm\bf{2c}}, correspond to this solution.
\end{corollary}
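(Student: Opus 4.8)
The plan is to obtain Corollary~\ref{th43} directly from the general small-$t$ formulas of Theorem~\ref{th1} by carrying out the complete degeneration \textbf{2c}, and to read off the existence and the monodromy correspondence from Theorem~\ref{th41}. First I would fix $\si=\0-\1$, so that $c(\si)=\tfrac12(\1-\0+\si)=0$, and let $s^2\to0$ along scheme \textbf{2c}, i.e. keeping $c(\si)/s^2\to0$. Since $a(\si)=b(\si)c(\si)$ while $b(\si)$ stays generic, this forces $a(\si)=o(s^2)$, which is the key quantitative input for collapsing the rational expressions in \eqref{eq:y->0}--\eqref{eq:u->0}.

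Next I would take the limit termwise in the leading fractions. In \eqref{eq:y->0} the two first factors of numerator and denominator are $a(\si)+s^2d(\mp\si)a(-\si)t^\si$; dividing out $s^2$ and using $a(\si)/s^2\to0$ leaves $d(-\si)a(-\si)t^\si$ and $d(\si)a(-\si)t^\si$, whose ratio is the $t$-independent constant $d(-\si)/d(\si)$, while the two second factors both tend to $b(\si)$ and cancel; evaluating $d(\pm\si)$ at $\si=\0-\1$ gives $y\to(\tin-\0+\1)/(\tin+\0-\1)$. In \eqref{eq:z->0} the surviving numerator is $s^2d(\si)c(-\si)t^\si\cdot b(\si)$, and division by $\si^2s^2t^\si$ produces the constant $d(\si)c(-\si)b(\si)/\si^2$; substituting $b(\si)=\0$, $c(-\si)=\1-\0$, $d(\si)=\tfrac12(\tin+\0-\1)$, and $\si^2=(\0-\1)^2$ yields exactly $z=\0(\tin+\0-\1)/(2(\1-\0))$. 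In \eqref{eq:u->0} the bracketed ratio tends to $1$ as $s^2\to0$, leaving $u=-rt^{\tin}(1+o(1))$. These limits reproduce the three stated asymptotics, and only the parameter $r$ survives, in agreement with the one-parameter family announced.

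For existence and the monodromy identification I would invoke Theorem~\ref{th41}: the same degeneration \textbf{2c} applied to the explicit monodromy formulas of Theorem~\ref{th2} produces a genuine point of $\Cal M_5(\0,\1,\tin)$, and by the justification scheme of \cite{K} this point is realized by an actual solution of System~\eqref{eq:ids1}--\eqref{eq:ids3}. Because the monodromy limit and the asymptotic limit are taken consistently (both along \textbf{2c} with $\si\to\0-\1$), the solution carrying these monodromy data is precisely the one whose $t\to0$ behavior was computed above, which establishes the correspondence claimed in the last sentence of the corollary.

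The main obstacle, which I would treat with care, is the behavior of the error estimates under the limit. The estimates $\mathcal O(t^{1+\Re\si})$, $\mathcal O(t^{2-\Re\si})$ in \eqref{eq:y->0} and $\mathcal O(t^{1-\Re\si})$, $\mathcal O(t^{2-3\Re\si})$ in \eqref{eq:z->0} carry implicit constants depending on $\si$, $s^2$, $r$, and a priori these could blow up or dominate the degenerate leading term as $s^2\to0$ and $\si\to\0-\1$. Here I would argue, as in the preamble to Section~\ref{sec:deg0}, that the estimates cannot grow without bound---an unbounded error would force a pole of the solution for all sufficiently small $t$, contradicting the Painlev\'e property---and then check that for this degenerate family the oscillatory $t^{\pm\si}$ structure collapses, so that the remaining expansion is a regular series in $t$ (with an overall $t^{\tin}$ factor for $u$), improving the error to the stated $\mathcal O(t)$ and $o(1)$. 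Making this collapse of the error quantitative, rather than merely plausible, is the delicate step: it is exactly the point flagged as not carried out from first principles, where one either re-derives the estimate in the degenerate regime or relies on the boundedness-plus-Painlev\'e argument.
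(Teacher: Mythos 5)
Your proposal is correct and follows essentially the same route as the paper: a termwise limit in the formulas of Theorem~\ref{th1} under the degeneration \textbf{2c} (using $a(\si)/s^2\to0$ to collapse the first factors of $y$ to $d(-\si)/d(\si)$ and the numerator of $z$ to $d(\si)c(-\si)b(\si)/\si^2$), with existence and the monodromy identification supplied by the limiting monodromy data of Theorem~\ref{th41} together with the justification scheme of \cite{K}, and the Painlev\'e-property argument from the preamble of Section~\ref{sec:deg0} controlling the error estimates. The algebraic evaluations at $\si=\0-\1$ all check out, and your caveat about the $\si$-dependence of the implicit constants is exactly the point the paper itself flags as not carried out from first principles.
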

Before concidering degeneration {\bf3c} we find partial degeneration {\bf3p}.
\begin{corollary}\label{th44}
There exists a solution of system {\rm (\ref{eq:ids1})--(\ref{eq:ids2})}
with the following asymptotic expansion as $t\to0$:
\begin{eqnarray*}
&y=c(-\si)\frac{b(\si)+s_f b(-\si) t^\si}
{a(\si)+s_f a(-\si)t^\si}+\mathcal{O}(t),&\\
&z=\frac 1{\si^2}
d(-\si)b(-\si)c(\si)+ \frac{s_f t^\si}{\si^2}
c(-\si)d(-\si)b(-\si)+\mathcal{O}(t)=&\\
&=\frac1{4\tin}(\Th_1^2-(\Th_0+\tin)^2) +
s_f\frac{(\Th_1+\tin)^2-\Th_0^2}{4\tin}t^\si+\mathcal{O}(t),&\\
&u=-\hat rt^{\tin}\left(\frac{s_fd(-\si)b(-\si)t^\si}
{b(\si)+s_fb(-\si)t^\si}+\mathcal{O}(t)\right).&
\end{eqnarray*}
The following monodromy data corresponds to this solution
\begin{eqnarray*}
&m^p_{11} =-\frac{e^{-\p2 \tin}}{\sin^2\pi \si}
\{e^{-\p2 \tin}(-\cos\pi\Th_{1-p}+\cos\pi \Th_p\cos \pi \si)
+&\\
&+
e^{\p2 \tin}(- \cos \pi \Th_p +\cos \pi \Th_{1-p}\cos \pi \si)\}
-\frac 1{\hat d}\frac{\ti s_2e^{\p2\si}\hat m^p_{12}}{\Ga(-\tin)},&\\
&m^p_{22} =-\frac{e^{-\p2 \tin}}{\sin^2\pi \si}
\{e^{-\p2 \tin}(\cos\pi\Th_{1-p}-\cos\pi \Th_p\cos \pi \si)
+&\\
&+e^{\p2 \tin}(- \cos \pi \Th_{1-p}\cos\pi\si
+\cos \pi \Th_p\cos 2\pi \si)\}
+
\frac 1{\hat d}\frac{\ti s_2e^{\p2\si}\hat m^p_{12}}{\Ga(-\tin)}
,&\\
&m^p_{12}=
\hat r e^{\p2\tin}\ti s_2\hat m^p_{12}\hat d^{-1}\Ga^{-2}(-\tin),&\\
&m^p_{21} \!=\!\frac {e^{-\p 2\tin}}{\hat r\hat d}
\big(\frac{\hat m^p_{22}-\hat m^p_{11}}
{\Ga (1+\tin)}
+
\frac{\hat m^p_{21}e^{-\pi \imath \si}}
{ \ti s_2\Ga^2(1+\tin)}
\!-\!\ti s_2 \hat m^p_{12} e^{\pi \imath \si}\big).&
\end{eqnarray*}
Here $\ti s_2=s_f\frac{\Ga(\si)}{\Ga(-\si)}$ and $\hat d$ is the same as in Theorem \ref{th2}.
The Stokes multipliers are as follows:
$$
s_2=0,\qquad
s_1=\frac{2\pi s_f}{\hat{r}\Gamma(1+\Theta_\infty)}.
$$
\end{corollary}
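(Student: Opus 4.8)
The plan is to obtain Corollary~\ref{th44} as a controlled limit of the general small-$t$ results, Theorems~\ref{th1} and~\ref{th2}, realising the partial degeneration \textbf{3p}. First I would fix the regime precisely: let $\si\to-\tin$, so that $d(\si)=\tfrac{\tin+\si}2\to0$, while $s^2\to\infty$ and $r\to0$ in such a coordinated way that
$$
s^2 d(\si)\longrightarrow s_f\in\Bbb C\back\{0\},\qquad
\frac r{d(\si)}\longrightarrow\hat r .
$$
Keeping the two products $s^2d(\si)$ and $r/d(\si)$ finite and nonzero is exactly what distinguishes \textbf{3p} from the complete degeneration \textbf{3c} (where $s^2 d(\si)\to0$). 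Observe that $d(-\si)\to\tin$ stays finite and nonzero for $\tin\notin\Bbb Z$, whence $s^2 d(-\si)\to\infty$; this asymmetry between $d(\si)$ and $d(-\si)$ drives all of the simplifications below.

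Next I would pass to the limit in the asymptotic formulae of Theorem~\ref{th1}. In the expression for $y$ the factors carrying $s^2 d(-\si)$ dominate both numerator and denominator; cancelling the common blowing-up factor $s^2 d(-\si)\,t^\si$ leaves $\tfrac{a(-\si)}{b(-\si)}=c(-\si)$ times the finite ratio built from $s^2 d(\si)\to s_f$, reproducing the announced $y$. The same bookkeeping applied to $z$ (where the dominant $s^2 d(-\si)$ factor survives against $\si^2 s^2$ in the denominator) and to $u$ (where it combines with the prefactor, $r s^2\to\hat r s_f$) yields the stated expansions; the two equivalent forms of $z$ then follow by inserting $\si=-\tin$ into the elementary factors $a,b,c,d$.

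Then I would treat the monodromy data of Theorem~\ref{th2}. The guiding fact is that $\Ga(d(\si))$ has a simple pole at $d(\si)=0$, so $d(\si)\Ga(d(\si))=\Ga(1+d(\si))\to1$, while $\hat s_2=s^2\tfrac{\Ga(\si)}{\Ga(-\si)}\to\infty$ is tamed only when paired with $r$ or with $\Ga(d(\si))^{-1}$. Thus $s_2\to0$ because of the factor $r\to0$, whereas in $s_1$ the pole of $\Ga(d(\si))$ cancels the zero of $r$, leaving a finite value. In each matrix element the surviving contributions are precisely those in which $\hat s_2$ meets $\Ga(d(\si))^{-1}$, giving the finite combination $\hat s_2/\Ga(d(\si))\to\ti s_2=s_f\tfrac{\Ga(\si)}{\Ga(-\si)}$, or those killed off by $r\to0$; everything else drops out. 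Finally one checks, directly from the explicit $\hat M^p$ and $\hat d$, that the leftover $\hat m^p_{11}$-piece equals the same symbolic expression as in Theorem~\ref{th41} now evaluated at $\si=-\tin$; this identifies $m^p_{11},m^p_{22}$, and $m^p_{12},m^p_{21}$ come out by the same pairing rule.

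The main obstacle is justification rather than computation. The derivations of Theorems~\ref{th1} and~\ref{th2} do not record how the error estimates depend on $\si$ and $s^2$, so one may not simply pass to the limit inside an $\mathcal O(t)$ remainder. The rigorous route, indicated in the preamble to this section, is to take the limit \emph{only} in the explicit (hence continuous) monodromy data, obtaining a genuine point of the monodromy manifold; by the justification scheme of~\cite{K} this point corresponds to an actual solution of System~\eqref{eq:ids1}--\eqref{eq:ids3}, which is then analytically continued to a neighbourhood of $t=0$. Its small-$t$ asymptotics must coincide with the formal limit computed above, and the only thing left to verify is that the surviving leading term does not sink to or below the error level. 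This is exactly what the partial hypothesis $s_f\neq0$ secures: the $t^\si$-term in $y$ and $z$ persists with nonzero coefficient and dominates the $\mathcal O(t)$ remainder, so no collision with the error can occur.
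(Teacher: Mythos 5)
Your proposal is correct and follows essentially the same route as the paper: the authors' own derivation is precisely to set $s^2d(\sigma)=s_f$, let $d(\sigma)\to0$ in the formulae of Theorems~\ref{th1} and \ref{th2}, replace $r$ by $\hat r=r/d(\sigma)$, and pass to the limit in the monodromy data, with the justification issue handled exactly as in the preamble to Section~\ref{sec:deg0} (take the limit in the explicit monodromy data and check that the surviving $t^\sigma$-term stays above the error level). Your account merely spells out the bookkeeping (dominance of the $s^2d(-\sigma)$ factors, the cancellations $\hat s_2/\Gamma(d(\sigma))\to\tilde s_2$ and $r\hat s_2\to\hat r\tilde s_2$) that the paper leaves implicit.
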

The previous formulas were obtained in the following way.
Let us denote
$$
s^2d(\si)=s_f.
$$
Make limit transition as
$d(\si)\to0$
in formulas for functions $y$ and $z$.
Replace $r$ with the adjusted parameter $\hat r$ introduced in the beginning of the section.
Then, we obtain formulae for function $u$.
Make the same substitution in the monodromy data and perform the limit transition.
As a result, we arrive to the formulae above.
\begin{remark}
The partial degeneration $d(-\sigma)=0$ gives a similar result but with $s_1=0$ and $s_2\neq0$.
\end{remark}
\begin{remark}{\rm
Partial degenerations {\bf1p} and {\bf2p} is easy to obtain just by looking at formulae given in Theorems \ref{th1}
and \ref{th2}. Introduce parameters: $\hat{b}=b(\si)/s^2$ or $\hat{c}=c(\si)/s^2$, respectively, and make a limit $s^2\to0$.
We leave this analysis as a simple exercise for the readers.
}\end{remark}

Now to get complete degeneration case {\bf3c} we have to make the following limit in the results
presented in Corollary~\ref{th44}
$$
s_f\to0,\qquad
\hat{r}s_f\to\tilde{r}\in\mathbb{C}.
$$
\begin{corollary}\label{th45}
There exists a solution of system {\rm (\ref{eq:ids1})--(\ref{eq:ids2})}
with the following asymptotic expansion as $t\to0$:
\begin{align*}
&y=\frac{\Theta_1-\Theta_0+\Theta_\infty}{\Theta_1-\Theta_0-\Theta_\infty}+\mathcal{O}(t),\\
&z=\frac1{4\tin}(\Th_1^2-(\Th_0+\tin)^2)+\mathcal{O}(t),\\
&u=-\tilde{r}\Theta_\infty\left(\frac{\Theta_0+\Theta_1+\Theta_\infty}{\Theta_0+\Theta_1-\Theta_\infty}
+\mathcal{O}(t^{1+\Theta_\infty})\right).
\end{align*}
The following monodromy data corresponds to this solution
\begin{eqnarray*}
&m^p_{11} =-\frac{e^{-\p2 \tin}}{\sin^2\pi \Theta_\infty}
\{e^{-\p2 \tin}(-\cos\pi\Th_{1-p}+\cos\pi \Th_p\cos \pi \Theta_\infty)
+&\\
&+
e^{\p2 \tin}(- \cos \pi \Th_p +\cos \pi \Th_{1-p}\cos \pi \Theta_\infty)\},&\\
&m^p_{22} =-\frac{e^{-\p2 \tin}}{\sin^2\pi \si}
\{e^{-\p2 \tin}(\cos\pi\Th_{1-p}-\cos\pi \Th_p\cos \pi \si)
+&\\
&+e^{\p2 \tin}(- \cos \pi \Th_{1-p}\cos\pi\si
+\cos \pi \Th_p\cos 2\pi \si)\},&\\
&m^p_{12}=-\frac{\tilde{r}\Theta_\infty\hat{m}^p_{12}}{\pi\hat{d}}\,e^{\p2\tin}\sin(\pi\tin),&\\
&m^p_{21}=-\frac{\hat{m}^p_{21}}{\tilde{r}\Theta_\infty\pi\hat{d}}\,e^{\p2\tin}\sin(\pi\tin),&\\
\end{eqnarray*}
Here $\hat d$ is the same as in Theorem {\rm\ref{th2}}. Both Stokes multipliers in this case vanish, $s_1=s_2=0$.
\end{corollary}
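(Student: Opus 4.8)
The plan is to derive Corollary~\ref{th45} as the complete degeneration \textbf{3c} by carrying out the coupled limit $s_f\to0$, $\hat r s_f\to\tilde r$ in the partial-degeneration results of Corollary~\ref{th44}, exactly as indicated before the statement. Throughout, $\sigma$ is frozen at the value fixed by the defining relation $d(\sigma)=0$, i.e. $\sigma=-\tin$, so the first move is bookkeeping: record $d(-\sigma)=\tin$, $b(\pm\sigma)=\tfrac12(\0+\1\mp\tin)$, $c(\pm\sigma)=\tfrac12(\1-\0\mp\tin)$, and substitute these everywhere. (The genericity hypotheses of the section, in particular $\tin\notin\mathbb Z$, guarantee that all Gamma factors below stay finite.) After these substitutions the algebraic content reduces to evaluating a handful of limits.

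For $y$ and $z$ I would note that in Corollary~\ref{th44} every $s_f$-dependent contribution carries a factor $s_f\,t^\sigma$, so letting $s_f\to0$ removes them and leaves $y=c(-\sigma)b(\sigma)/a(\sigma)+\mathcal O(t)=c(-\sigma)/c(\sigma)+\mathcal O(t)$ and $z=\sigma^{-2}d(-\sigma)b(-\sigma)c(\sigma)+\mathcal O(t)$; substituting $\sigma=-\tin$ and using $\1^2-(\0+\tin)^2=(\0+\1+\tin)(\1-\0-\tin)$ turns these into the stated rational expressions, with unchanged remainder $\mathcal O(t)$. The delicate object is $u$: its leading term in Corollary~\ref{th44} is proportional to $\hat r s_f$, which is precisely why the limit is taken keeping $\hat r s_f\to\tilde r$ finite. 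After $\sigma=-\tin$ the power $t^{\tin}t^{\sigma}$ collapses to a constant, the denominator tends to $b(\sigma)$, and one reads off $u\to-\tilde r\,d(-\sigma)b(-\sigma)/b(\sigma)=-\tilde r\tin(\0+\1+\tin)/(\0+\1-\tin)$, matching the claim.

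I would then push the same limit through the monodromy data. Writing $\tilde s_2=s_f\,\Ga(\sigma)/\Ga(-\sigma)$, the diagonal entries $m^p_{11}$, $m^p_{22}$ lose their $\tilde s_2$-proportional corrections as $s_f\to0$ and reduce to the $\sin^2\pi\sigma$ expressions at $\sigma=-\tin$. For the off-diagonal entries the combinations $\hat r\tilde s_2\to\tilde r\,\Ga(-\tin)/\Ga(\tin)$ and $(\hat r\tilde s_2)^{-1}\to\Ga(\tin)/(\tilde r\,\Ga(-\tin))$ remain finite, while the terms carrying an extra $\hat r^{-1}$ or $\tilde s_2/\hat r$ are $o(1)$ and drop out; applying the reflection identity $\Ga(z)\Ga(-z)=-\pi/(z\sin\pi z)$ together with $\Ga(1+\tin)=\tin\Ga(\tin)$ produces the stated $m^p_{12}$ and $m^p_{21}$. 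The Stokes data are immediate: $s_2=0$ already in Corollary~\ref{th44}, and $s_1=2\pi s_f/(\hat r\,\Ga(1+\tin))=2\pi s_f^2/(\tilde r\,\Ga(1+\tin))\to0$, so both multipliers vanish.

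The main obstacle is not this algebra but the control of error estimates in a limit where $\hat r=\tilde r/s_f\to\infty$. Since the derivation in Section~\ref{sec:zero} did not track the dependence of the remainders on the parameters being sent to their limiting values, the additive $\mathcal O(t)$ appearing inside the bracket for $u$ in Corollary~\ref{th44} must be read as a \emph{relative} error (cf. the multiplicative $(1+\mathcal O(t))$ in Theorem~\ref{th1}), scaling with the leading term $\propto s_f$; only then does it survive the limit with a finite coefficient rather than blowing up. Identifying which sub-leading contribution dominates after $\sigma=-\tin$ fixes the relative error for $u$ at $\mathcal O(t^{1+\tin})$. The remaining conceptual point, handled exactly as in the preamble to Section~\ref{sec:deg0}, is that the limiting monodromy data still defines a genuine solution of System~\eqref{eq:ids1}--\eqref{eq:ids3} through the justification scheme \cite{K}, the Painlev\'e property preventing the remainders from becoming unbounded; verifying that the leading terms computed above dominate their errors is the one genuinely non-routine step.
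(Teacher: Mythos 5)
Your proposal is correct and follows exactly the route the paper takes: the paper's entire derivation of Corollary~\ref{th45} is the instruction to perform the limit $s_f\to0$, $\hat r s_f\to\tilde r$ in Corollary~\ref{th44} with $\sigma=-\Theta_\infty$, and your computation (including the reflection-formula simplification of the off-diagonal entries, the vanishing of $s_1=2\pi s_f^2/(\tilde r\,\Gamma(1+\Theta_\infty))$, and the reading of the $u$-remainder as a relative error) fills in precisely the algebra the paper leaves implicit. The justification issues you flag at the end are the same ones the paper defers to the preamble of Section~\ref{sec:deg0}.
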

\begin{remark}\label{rem:3p-extension}{\rm
We recall that in basic Theorems \ref{th1} and \ref{th2} the real part of parameter $\sigma$ belongs to the semi-interval
$[0,1)$. Therefore, in the resultas obtained above we have the corresponding restrictions on the parameters $\Theta$'s.
Say, in Corollary~\ref{th45} put $\sigma=-\Theta_\infty$, thus $\Re\Theta_\infty\in(-1,0]$. If we consider another complete
degeneration, $d(-\sigma)\to0$, then we find the same formulae but $\Re\Theta_\infty\in[0,1)$.
In Section~\ref{sec:spec-meromorphic-solution} we consider in detail the case $\Theta_\infty=-1$, $\Theta_0=\Theta_1=1/2$.
For the general values of
$\Theta_\infty$ the result of Corollary~\ref{th45} can be extended with the help of the B\"acklund trnasformations
considered in Appendix~\ref{sec:trans}. Analogous comments apply to Corollaries~\ref{th42} and \ref{th43}.
}\end{remark}
\begin{remark}{\rm
Functions $y$ and $z$ obtained in Corollaries~~\ref{th42}, \ref{th43}, and \ref{th45} are regular at $t=0$ and the
correction term is evaluated as $\mathcal{O}(t)$. In fact all these solutions are holomorphic at $t=0$, so that the
corresponding solutions of System~\eqref{eq:ids1}--\eqref{eq:ids2} are meromorphic functions. We omit a detailed proof
of this statement because these solutions were studied in paper~\cite{KO}.
}\end{remark}

Now we outline the second step of our approach to finding the asymptotic behavior of solutions for parameters
satisfying the conditions: $b(\pm\sigma)\in\mathbb{Z}$, $c(\pm\sigma)\in\mathbb{Z}$, and
$d(\pm\sigma)\in\mathbb{Z}$. These special cases can be characterized in terms of the Stokes multipliers:
\begin{enumerate}
\item $s_1s_2=2e^{\pii\tin}(-\cos\pi\tin+\cos\pi(\Th_0+\Th_1))$
$\Rightarrow$
$b(\pm\si)\in \Bbb Z$,
\item $s_1s_2=2e^{\pii\tin}(-\cos\pi\tin+\cos\pi(\Th_0-\Th_1))$
$\Rightarrow$ $c(\pm\si)\in \Bbb Z$,
\item $s_1s_2=0$ $\Rightarrow$ $d(\pm\si)\in \Bbb Z$.
\end{enumerate}

Above we considered the cases $b(\pm\sigma)=0$, $c(\pm\sigma)=0$, and
$d(\pm\sigma)=0$. It is already mentioned in Remark~\ref{rem:3p-extension} that the case $d(\pm\sigma)\in\mathbb{Z}$
can be treated with the help of application to asymptotics for $d(\pm\sigma)=0$ the
B\"aclund transformations considered in Appendix~\ref{sec:trans}. These transformations
shift $\Theta_\infty$ by even integers which means an arbitrary integer shift of $d(\pm\sigma)$. It is well-known that
analogous transformation is easy to write for $\Theta_0$ and $\Theta_1$, which generate an integer shift of
$b(\pm\sigma)=0$ and $c(\pm\sigma)=0$ together with the corresponding asymptotics.
\begin{subsection}
{Case $\Re\si=1$ and $\si=1$}\label{subsec:Re-sigma-1}
\end{subsection}
\begin{theorem}\label{th51}
Let $\de, h,r \in\Bbb C\back\{0\}$ and $\Re \de \in[0,1)$,
There exists a solution of system {\rm (\ref{eq:ids1})--(\ref{eq:ids3})} with the following asymptotic
expansion as
$t\to 0$
$$
y=1+y_1t+\mathcal{O}(t^{2-2\Re\de}),\,z=\frac {w+\mathcal{O}(t^{1-2\Re\de})}{2y_1^2}\frac 1t.
$$
Here
\begin{eqnarray*}
&y_1=\frac 1h\frac \vp{\de^2}t^{-\de}+\frac \vp{\de^2}
+\frac h4\big(1-\frac{\de^2}{\vp^2}\big) \frac \vp{\de^2}t^\de,&\\
&w=\frac{\de-\vp}h \frac \vp{\de^2}t^{-\de}
+1-\frac{\vp^2}{\de^2}-(\de +\vp)\frac h4\big(
1-\frac{\de^2}{\vp^2}\big)\frac \vp{\de^2}t^\de,&\\
&\vp=1-\Th_0-\Th_1.&
\end{eqnarray*}
Asymptotic expansion of function $u=u(t)$ as $t\to 0$ is given by the following formula:
\begin{eqnarray*}
&u=-rt^{\tin}\big(
1+\frac{t^{1-\de}\vp}{h\de^2(1\!-\!\de)}(\frac{\de\!-\!\vp}2-\Th_1)
+&\\
&+\frac t2 -\frac{t\vp}{\de^2}(\frac\vp2 \!+\!\Th_1)
-\frac{t^{1+\de}h\vp}{4\de^2 (1\!+\!\de)}(1-\frac{\de^2}{\vp^2})
(\frac{\de\!+\!\vp}2+\Th_1)+\mathcal{O}(t^{2-2\de})\big).&
\end{eqnarray*}
The corresponding monodromy data are given by formulas from Theorem \ref{th2},
where parameters are
$$
\de =1-\si,\quad
h=\left(\frac{1-\Th_0-\Th_1}{2(1-\si)^2\si^2s^2}\right)
\left(\frac
{(\Th_1+\si)^2-\Th_0^2}{\Th_1+\Th_0- \si}\right).
$$
\end{theorem}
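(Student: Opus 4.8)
The plan is to treat Theorem~\ref{th51} as the boundary (and confluent) case $\Re\si=1$ of the general small-$t$ result of Theorem~\ref{th1}, realized through the reparametrization $\de=1-\si$. For $0<\Re\si<1$ the expansion \num{eq:y->0} organizes the function $y$ into the powers $1$, $t^\si$, $t$, together with an error $\mathcal O(t^{2\Re\si})+\mathcal O(t^{2-\Re\si})$; as $\Re\si\uparrow 1$ the power $t^\si$ collides with $t$, and the formerly negligible $t^{2-\si}$ term also rises to order $t$ (since $2-\Re\si\to1$). Thus the first step is to extend the expansions \num{eq:y->0}--\num{eq:u->0} one order further, computing the explicit coefficient of $t^{2-\si}$ (equivalently $t^{1+\de}$) for $y$, $z$, and $u$, i.e. promoting the discarded error term $\mathcal O(t^{2-\si})$ to an explicit contribution. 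I would obtain this coefficient either by the recursive substitution into System~\num{eq:ids1}--\num{eq:ids2} used for the complete asymptotic series, or by reading it off from the monodromy correspondence of Theorem~\ref{th2}, which is analytic in $\si$ and hence already encodes the subleading data.

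The second step is the algebraic reassembly. Writing $\si=1-\de$ and introducing the parameter $h$ by the displayed formula, one re-expresses the coefficients of $t^\si=t\cdot t^{-\de}$, of $t$, and of $t^{2-\si}=t\cdot t^{\de}$ in terms of $\de$, $h$, $\vp=1-\0-\1$, and the $\Th$'s. The assertion $y=1+y_1t+\mathcal O(t^{2-2\Re\de})$ then amounts to checking that these three coefficients assemble exactly into
\[
y_1=\frac1h\frac\vp{\de^2}t^{-\de}+\frac\vp{\de^2}+\frac h4\Big(1-\frac{\de^2}{\vp^2}\Big)\frac\vp{\de^2}t^\de,
\]
and, in parallel, that the combination entering $z$ collapses to $w/(2y_1^2t)$ and that the three correction powers in $u$ match the stated $t^{1-\de}$, $t$, $t^{1+\de}$ structure. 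The coefficient $\tfrac h4\big(1-\de^2/\vp^2\big)$ of the $t^\de$ term is the delicate one, since it arises precisely from the promoted error term; verifying it exactly is where the bulk of the bookkeeping lies and is the main obstacle.

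For the monodromy data I would invoke Theorem~\ref{th2} directly. Its entries are rational-trigonometric and Gamma-function expressions in $\si$, $s^2$, $r$, regular in a neighborhood of $\Re\si=1$ provided $b(\pm\si),c(\pm\si),d(\pm\si)\notin\Bbb Z$, so the substitution $\de=1-\si$ together with the stated expression for $h$ reproduces the monodromy data claimed in the theorem by analytic continuation in $\si$. Existence (and uniqueness among solutions with these monodromy data) then follows from the justification scheme of \cite{K}, exactly as for Theorems~\ref{th1} and \ref{th2}: the monodromy data select a unique isomonodromy deformation of Equation~\num{mainl}, and the derived expansion is its small-$t$ asymptotics.

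Finally, the confluent sub-case $\si=1$ (that is $\de=0$) is handled by a further limit $\de\to0$, in full analogy with the passage from Theorem~\ref{th1} to Theorem~\ref{th4} at $\si=0$. Here $t^{\pm\de}=e^{\pm\de\ln t}$ degenerate and the factors $\de^{-2}$ in $y_1$ and $w$ are compensated by the vanishing of the combinations they multiply, so that l'Hopital-type expansions generate the expected $\ln t$ behavior. The point requiring care is that the error estimates of Theorem~\ref{th1}, and of the one-order-further expansion, stay uniform as $\de\to0$; granting this (as discussed for the analogous limit in Remark~\ref{rem:lim-sigma0}) the limit is routine and yields the $\si=1$ specialization of the theorem.
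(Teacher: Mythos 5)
Your strategy is workable and arrives at the right formulas, but the mechanism is genuinely different from the paper's. You propose to promote the $\mathcal O(t^{2-\Re\si})$ error of Theorem~\ref{th1} to an explicit $t^{2-\si}$ term by recursion and then regroup the powers $t^{\si}=t\cdot t^{-\de}$, $t$, and $t^{2-\si}=t\cdot t^{\de}$ into $1+y_1t$ after setting $\de=1-\si$. The paper instead substitutes the ansatz $y=1+y_1t(1+\mathcal O(t))$, $z=z_{-1}t^{-1}(1+\mathcal O(t))$, with $\mathcal O(t^{\ep})<|y_1|,|z_{-1}|<\mathcal O(t^{-\ep})$, directly into System~\eqref{eq:ids1}--\eqref{eq:ids2} and obtains the closed reduced system $t\dot y_1=1-\vp y_1-2z_{-1}y_1^2$, $t\dot z_{-1}=\vp z_{-1}+2z_{-1}^2y_1$; its non-constant, non-logarithmic solutions are precisely the rational functions of $t^{\de}$ displayed in the theorem. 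That route produces all three terms of $y_1$, the expression $z_{-1}=w/(2y_1^2)$, and in particular the delicate coefficient $\frac h4\bigl(1-\de^2/\vp^2\bigr)\frac{\vp}{\de^2}$ of $t^{\de}$ in one stroke, whereas in your scheme that coefficient must be extracted by a separate recursion and then checked to regroup correctly --- and your alternative of ``reading it off from the monodromy correspondence'' will not work, since Theorem~\ref{th2} ties the monodromy only to the leading parameters $\si$, $s^2$, $r$, not to subleading coefficients. On the monodromy side the two derivations coincide: one observes that the formulas of Theorem~\ref{th2} remain non-degenerate for $\Re\si=1$, $\si\neq1$, and fixes $h$ in terms of $s^2$ by matching the $t^{\si}$ coefficient with Theorem~\ref{th1} on the overlap $\Re\de\in(0,1)$, which is exactly your analytic-continuation step. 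Two caveats: at the boundary $\Re\si=1$ Theorem~\ref{th1} is not applicable, so your ``one order further'' extension is only a formal device there (the paper's reduced system is derived directly at $\Re\de=0$, which is the safer formulation, and neither argument is fully rigorous about uniformity of the error as $\Re\si\to1$); and your closing paragraph on $\si=1$ ($\de=0$) is superfluous for this statement, since the hypothesis $\de\in\Bbb C\setminus\{0\}$ excludes that case --- it is the content of Theorem~\ref{th52}.
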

\begin{remark}
Function $\ze(t)$, corresponding to solution
of system (\ref{eq:ids1})--(\ref{eq:ids3}), described in Theorem \ref{th51},
has the following asymptotics as $t\to 0$
$$
\ze(t)=\de+\frac14((1-\de)^2+\Th_0^2-\Th_1^2+2\Th_0\tin)+
\frac{2\vp\de}{ht^\de(\de-\vp)-2\vp}+\mathcal{O}(t).
$$
\end{remark}
\begin{theorem}\label{th52}
Let $h_1 \in\Bbb C$.
Then, there exists a solution of system {\rm (\ref{eq:ids1})--(\ref{eq:ids2})} with the following asymptotics as
$t\to 0$
$$
y=1+y_1t+\mathcal{O}(t^2),\,z=\frac w{2y_1^2}\frac 1t+\mathcal{O}(1).
$$
where
$$
y_1=\frac1{2\vp}-\frac\vp 2(\ln t+h_1)^2,
w=\frac12(1+\vp (\ln t+h_1))^2.
$$
Asymptotic expansion of function $u=u(t)$ as $t\to 0$ is given by the following formula:
\begin{eqnarray*}
&u=-rt^{\tin}\big\{1+\frac{\Th_1}2(\vp-\frac1\vp)t
+\frac{\Th_1\vp t}2((\ln t+h_1)-1)^2
+&\\&
+\frac t4\big\{\vp(\ln t+h_1)(\vp(\ln t+h_1)+2-2\vp)+1-2\vp +2\vp^2\big\}
+\mathcal{O}(t^2\ln^m t)
\big\}.&
\end{eqnarray*}
\end{theorem}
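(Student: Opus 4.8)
The plan is to derive Theorem~\ref{th52} as the confluent limit $\de=1-\si\to0$ of Theorem~\ref{th51}, in complete parallel with the way Theorem~\ref{th4} is obtained from Theorem~\ref{th1} by letting $\si\to0$ (cf. Remark~\ref{rem:lim-sigma0}). The point $\si=1$ is excluded from Theorem~\ref{th51} (there $\de\in\Bbb C\back\{0\}$), so it must be reached as a boundary case. First I would locate, inside the parameter space of Theorem~\ref{th51}, the locus that degenerates as $\de\to0$: substituting $t^{\pm\de}=1\pm\de\ln t+\tfrac{\de^2}2\ln^2t+\dots$ into the formula $y_1=\tfrac{\vp}{\de^2}\bigl(\tfrac1ht^{-\de}+1+\tfrac h4(1-\tfrac{\de^2}{\vp^2})t^{\de}\bigr)$, the overall factor $\vp/\de^2$ forces the bracket to be $\mathcal O(\de^2)$ for the limit to exist. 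Its $\de^0$-coefficient is $\tfrac1h+1+\tfrac h4=\tfrac14(h+2)^2/h$, which vanishes exactly at $h=-2$. Thus the logarithmic solution lives at the single point $h=-2$, and is recovered by approaching it along a curve $h=-2\bigl(1+h_1\de+\mathcal O(\de^2)\bigr)$ whose linear coefficient $h_1$ becomes the free parameter of Theorem~\ref{th52}.

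With this substitution I would expand $y_1$ and $w$ of Theorem~\ref{th51} in powers of $\de$. The bracket for $y_1$ starts at order $\de^2$: the $\de^0$-term vanishes by the choice $h\to-2$, while the $\de^1$-term cancels automatically once the linear correction $h_1\de$ is included, leaving the finite limit; in $w$ the $\de^{-2}$ and $\de^{-1}$ singularities cancel against the standalone term $1-\vp^2/\de^2$. Collecting the $\de^0$ parts yields
$$
y_1=\frac1{2\vp}-\frac\vp2(\ln t+h_1)^2,\qquad w=\frac12\bigl(1+\vp(\ln t+h_1)\bigr)^2,
$$
exactly as stated; notably the $\mathcal O(\de^2)$ tail of $h$ drops out, so $h=-2(1+h_1\de)$ (equivalently $h=-2e^{h_1\de}$) is immaterial. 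The formula $z=w/(2y_1^2)\,t^{-1}+\mathcal O(1)$ then follows from the corresponding formula of Theorem~\ref{th51} with its error $\mathcal O(t^{1-2\Re\de})$ evaluated at $\de=0$, and the same substitution in the $u$-formula of Theorem~\ref{th51} produces the logarithmic expansion of $u$. Finally, applying l'Hopital's rule to the $\si$-dependent monodromy data of Theorem~\ref{th2} at $\si=1$, $\de=0$ gives finite limits; this defines a point of $\cm_5(\0,\1,\tin)$ and hence, via the justification scheme~\cite{K}, a genuine isomonodromic solution of Equation~\eqref{mainl} whose small-$t$ behavior is the claimed one --- which also secures the existence assertion.

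The hard part will be rigor of the limit rather than its formal execution. As flagged in Remark~\ref{rem:lim-sigma0} for $\si\to0$, the error estimates of Theorem~\ref{th51} are established for fixed $\de$ as $t\to0$, and to interchange the limits $\de\to0$ and $t\to0$ one must control them uniformly in $\de$. The symptom is the proliferation of logarithms: each order $t^k$ of the small-$t$ expansion is a finite sum $\sum_{|l|\le k}a_{kl}t^{l\de}$, and collapsing the factors $t^{l\de}$ to powers of $\ln t$ at $\de=0$ promotes the $k$-th coefficient to an $\mathcal O(\ln^{2k}t)$ quantity --- precisely the origin of the $\mathcal O(t^2\ln^m t)$ error recorded for $u$. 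I would therefore either adopt the working conjecture of Remark~\ref{rem:lim-sigma0} on the $\mathcal O(\ln^{2k}t)$ growth of the expansion coefficients, or, to be self-contained, rerun the derivation underlying Section~\ref{sec:zero} (the scheme of~\cite{AK}) directly at $\si=1$ with the appropriate confluent asymptotics of the special functions involved; the latter route is the exact analogue of redoing the $\si=0$ computation and is the one that makes the exponent $m$ in $\mathcal O(t^2\ln^m t)$ precise.
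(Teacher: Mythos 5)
Your limit computation checks out: the degenerate locus $\tfrac1h+1+\tfrac h4=\tfrac{(h+2)^2}{4h}=0$, the scaling $h=-2(1+h_1\de)$, the cancellation of the $\de^{-2}$ and $\de^{-1}$ singularities, and the resulting expressions for $y_1$ and $w$ all agree with the statement (and the $O(\de^2)$ tail of $h$ indeed drops out). However, the paper reaches Theorem~\ref{th52} by a different route. In the derivation printed after the theorem, the authors substitute the ansatz $y=1+y_1t(1+\mathcal O(t))$, $z=\tfrac{z_{-1}}t(1+\mathcal O(t))$ directly into System~\eqref{eq:ids1}--\eqref{eq:ids2} and obtain the closed reduced system $t\dot y_1=1-\vp y_1-2z_{-1}y_1^2$, $t\dot z_{-1}=\vp z_{-1}+2z_{-1}^2y_1$, whose solutions fall into three families: rational in $t^{\de}$ with $\Re\de=0$ (Theorem~\ref{th51}), rational in $\ln t$ (Theorem~\ref{th52}), and constant (Theorem~\ref{th54}). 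The logarithmic formulas are thus read off as exact solutions of the reduced system, with no interchange of limits to justify for the leading coefficients; your confluence argument, by contrast, inherits the uniformity-in-$\de$ problem that you correctly flag and that the paper itself only conjectures away in the analogous $\si\to0$ situation (Remark~\ref{rem:lim-sigma0}). On the other hand, your route is exactly the one the paper does use for the monodromy data at $\si=1$ (the derivative $\tfrac{d}{d\de}(\cdot)\big|_{\de=0}$ in the unlabeled theorem following Theorem~\ref{th52}, an application of l'Hopital to the $E^p$ of Theorem~\ref{th2}), and it has the advantage of producing the parameter correspondence between $h$ of Theorem~\ref{th51} and $h_1$ of Theorem~\ref{th52} as a byproduct rather than as a separate matching step. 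So the two derivations are complementary: the paper's gives the leading asymptotics cleanly, yours ties the two theorems and their monodromy parametrizations together, at the price of needing either the conjectured $\mathcal O(\ln^{2k}t)$ bounds on the expansion coefficients or a rerun of the Section~\ref{sec:zero} analysis at $\si=1$ to pin down the exponent $m$.
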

\begin{remark}
Function $\ze(t)$, corresponding to solution
of system (\ref{eq:ids1})--(\ref{eq:ids3}), described in Theorem \ref{th51},
has the following asymptotics as $t\to 0$
$$
\ze(t)=\frac14(1+\Th_0^2-\Th_1^2+2\Th_0\tin)+
\frac{\vp}{\vp (\ln t+h_1)-1}+\mathcal{O}(t).
$$
\end{remark}

Let us briefly describe the derivation of Theorems \ref{th51} and \ref{th52}.
Theorem \ref{th51} gives us, in particular, case  $\Re\si=1$, $\si\neq1$.
Theorem \ref{th52} corresponds to the case  $\si=1$.
It is easy to see that
$y=1+ y_1t(1+\mathcal{O}(t))$, $z=\frac{z_{-1}}t(1+\mathcal{O}(t))$, with
$\mathcal{O}(t^\ep)<|y_1|,|z_{-1}|<\mathcal{O}(t^{-\ep})$
and sufficiently small $\ep>0$ give valid asymptotic expansion for system (\ref{eq:ids1})--(\ref{eq:ids2}).
For $y_1$ and $z_{-1}$ we have the following system:
\begin{eqnarray*}
t\dot y_1 &=&1-\vp y_1 -2z_{-1}y_1^2,\\
t\dot z_{-1} &=& \vp z_{-1}+2z_{-1}^2 y_1.
\end{eqnarray*}
There are three types of solutions of this system:
\begin{enumerate}
\item a rational function of $t^\de$, where $\Re\de =0$;
\item a rational function of $\ln t$;
\item a fixed point, that is $y_1,z_{-1}=\const$.
\end{enumerate}
Theorem \ref{th51} corresponds to case 1), Theorem \ref{th52} corresponds to case 2),
and Theorem \ref{th54} (see below) corresponds to case 3).

The monodromy data were obtained as follows.
It is easy to see that formulas from Theorem \ref{th2}
are not degenerate when $\Re\si=1$, $\si\neq 1$.
So we can use them for solutions described in Theorems \ref{th51} and \ref{th52}.
The only thing to find is parameter $s^2$.
However, it can be easily seen that formulas from Theorem \ref{th51} become formulas
from Theorem \ref{th2},
if we put $\de=1-\si$, $\Re\de>0$. Therefore,
formulas from Theorem \ref{th51} are valid in interval
$0\leq \Re\de<1$. Then, to achieve the correspondence with function $y(t)$ from
Theorem \ref{th1},
parameters $s^2$ and $h$ should be connected as it is stated in Theorem \ref{th51}.

To write the monodromy data for $\si=1$ ($\de=0$), we need the following notations
\begin{eqnarray*}
&\hat E^0_f(\de)=\bm
\frac{e^{\p2(1-\de)}}{\Ga(1+c(\de-1))\Ga(-b(1-\de))}&
-\frac{e^{-\p2(1-\de)}}{\Ga(1+c(1-\de))\Ga(-b(\de-1))}\\
\frac{e^{\p2(1-\de)}}{\Ga(-c(1-\de))\Ga(1+b(\de-1))}&
-\frac{e^{-\p2(1-\de)}}{\Ga(c(\de-1))\Ga(1+b(1-\de))}
\em,&\\
&\hat E^1_f(\de)=\bm
-\frac1{\Ga(-c(1-\de))\Ga(-b(1-\de))}&
\frac1{\Ga(-c(\de-1))\Ga(-b(\de-1))}\\
-\frac1{\Ga(1+c(\de-1))\Ga(1+b(\de-1))}&
\frac1{\Ga(1+c(1-\de))\Ga(1+b(1-\de))}
\em,&\\
&C_f(\de)=
\bm
-\frac{e^{-\pii d(1-\de)}}{\Ga(1+d(\de-1))}&
-\frac1{\Ga(1-d(1-\de))}\\
-\frac{e^{-\pii d(\de-1)}}{\Ga(d(1-\de))}&
\frac1{\Ga(-d(\de-1))}
\em.&
\end{eqnarray*}
\begin{theorem}
Solution of system (\ref{eq:ids1})--(\ref{eq:ids3}), defined in Theorem \ref{th52}, generates the following
monodromy data
$$
E^p=\frac d{d\de}\big(\hat E^p_f(\de)
\diag\{\frac1{s_0(\de-1)}(1-\frac {s_1}2\de),
s_0(1+\frac{s_1}2\de)\} C_f(\de)\big)\big|_{\de=0}\diag\{1,r\}.
$$
Here
$$
s_0=\frac 12\sqrt{(\Th_1+1)^2-\Th_0^2},\
h_1=-\frac{2(1+\Th_1)}{(\Th_1+1)^2-\Th_0^2}+2-s_1-\frac1{\Th_1+\Th_0-1}.
$$
\end{theorem}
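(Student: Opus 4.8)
The plan is to mirror the $\si\to0$ limiting procedure carried out for Theorem~\ref{th6}, now applied at the confluent value $\si=1$, i.e. $\de=1-\si\to0$. I would start from the factored representation of the connection matrices established for the generic case in Theorem~\ref{th2}, written schematically as $E^p=\hat E^p_f(\de)\,D(\de)\,C_f(\de)\,\diag\{1,r\}$, where $D(\de)$ is the diagonal factor carrying the parameter $s^2$ and $\hat E^p_f(\de)$, $C_f(\de)$ are the Gamma-function matrices defined just above the statement. These are exactly the $\de$-adapted analogues of the matrices $\hat E^p$, $C_{\tin\si}$ used in the $\si=0$ computation: the substitution $\si\to1-\de$ turns the arguments $b(\pm\si)$, $c(\pm\si)$, $d(\pm\si)$ occurring in Theorem~\ref{th2} into the shifted arguments $b(\pm(1-\de))$, $c(\pm(1-\de))$, $d(\pm(1-\de))$, which is precisely how $\hat E^p_f(\de)$ and $C_f(\de)$ are built.

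Next I would verify that the naive limit degenerates: at $\de=0$ the product $\hat E^p_f(0)\,D(0)\,C_f(0)$ collapses to rank one (vanishes up to a scalar), exactly as $\lim_{\si\to0}E^p=0$ in the $\si=0$ case. Hence the genuine connection data must be extracted by l'Hopital's rule, which is why the answer is the $\de$-derivative $\frac d{d\de}(\cdots)\big|_{\de=0}$ rather than the value at $\de=0$. Since the right factor $\diag\{1,r\}$ is $\de$-independent and merely restores the $r$-dependence (equivalently the scaling of $u$), it can be pulled outside the derivative, matching the form of the statement.

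The arithmetic heart of the argument is pinning down the diagonal factor $D(\de)=\diag\{\tfrac1{s_0(\de-1)}(1-\tfrac{s_1}2\de),\,s_0(1+\tfrac{s_1}2\de)\}$ to first order in $\de$, i.e. determining the leading value $s_0$ and the first correction $s_1$ from the generic parameter $s^2$. This I would do from the $s^2$--$h$ relation recorded in Theorem~\ref{th51}: setting $\si=1-\de$ there and expanding as $\de\to0$ gives $h\,\de^2 s^2\to-2s_0^2$ with $s_0^2=\tfrac14((\1+1)^2-\0^2)$ (using $1-\0-\1=\vp$ and $\0+\1-1=-\vp$), which fixes $s_0=\tfrac12\sqrt{(\1+1)^2-\0^2}$. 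The next order of the same expansion, together with the $\de$-analogue of the substitution $\hat s=1+\hat s_1\si$ used in the $\si=0$ limit (see Remark~\ref{rem:lim-sigma0}), produces the constant $s_1$; matching this with the logarithmic asymptotics of Theorem~\ref{th52}, where $h_1$ enters through $\ln t+h_1$, then yields the stated relation $h_1=-\tfrac{2(1+\1)}{(\1+1)^2-\0^2}+2-s_1-\tfrac1{\1+\0-1}$.

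The main obstacle I anticipate is the bookkeeping of this last step: one must expand $D(\de)$, $\hat E^p_f(\de)$ and $C_f(\de)$ simultaneously to first order, confirm that the zeroth-order contributions cancel (so that l'Hopital indeed yields a finite, rank-two matrix), and check that the resulting $E^p$ reproduces monodromy matrices consistent with the $y$, $z$, $u$ asymptotics of Theorem~\ref{th52}. As in Remark~\ref{rem:lim-sigma0}, the procedure also rests on the implicit assumption that the error estimates of Theorem~\ref{th1}/Theorem~\ref{th51} stay controlled as $\de\to0$; making that fully rigorous, rather than taking the limit formally, is the delicate point, but for the sole purpose of identifying the monodromy data the formal l'Hopital computation suffices.
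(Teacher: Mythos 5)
Your proposal is correct and follows essentially the same route as the paper: the authors also obtain this theorem by writing the generic connection matrices of Theorem~\ref{th2} in the factored form valid for $\Re\sigma=1$, $\sigma\neq1$ (with the $s^2$--$h$ link fixed in Theorem~\ref{th51}), observing that the product degenerates at $\delta=1-\sigma=0$, and extracting the data by the $\delta$-derivative exactly as in the $\sigma\to0$ computation of Theorem~\ref{th6}, with $s_0$, $s_1$, $h_1$ pinned down by matching the logarithmic asymptotics of Theorem~\ref{th52}. Your explicit check $h\,\delta^2 s^2\to-2s_0^2$ agrees with the stated $s_0$, and the caveat about the formal nature of the limit matches the paper's own Remark~\ref{rem:lim-sigma0}.
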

So, the matrices $E^p$ are given. The monodromy matrices $M^p$ can be found from their definition:
$$
M^p=(E^p)^{-1} e^{\pi \imath \si_3 \Theta_p} E^p.
$$
\begin{theorem}\label{th54}
There exists solution of system {\rm (\ref{eq:ids1})--(\ref{eq:ids2})} with the following asymptotics as
$t\to 0$
$$
y=1+\frac t{1-\Th_1-\Th_0}+\mathcal{O}(t^2),\,
z=\mathcal{O}(1).
$$
Monodromy data from Theorem \ref{th2}, with $\si=2-\Th_0-\Th_1$, correspond to this solution.
\end{theorem}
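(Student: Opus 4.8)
The plan is to recognize Theorem~\ref{th54} as the fixed-point case (case~3 in the list preceding it) of the reduced system already used in the derivation of Theorems~\ref{th51} and \ref{th52}. First I would substitute the ansatz $y=1+y_1t\,(1+\mathcal O(t))$, $z=\tfrac{z_{-1}}t(1+\mathcal O(t))$ into Equations~\eqref{eq:ids1}--\eqref{eq:ids2} and extract, at leading order, the autonomous system
\begin{align*}
t\dot y_1&=1-\vp y_1-2z_{-1}y_1^2,\\
t\dot z_{-1}&=\vp z_{-1}+2z_{-1}^2y_1,\qquad \vp:=1-\Theta_0-\Theta_1.
\end{align*}
Seeking a stationary solution ($\dot y_1=\dot z_{-1}=0$), the second equation gives $z_{-1}(\vp+2z_{-1}y_1)=0$; the branch $z_{-1}=-\vp/(2y_1)$ is incompatible with the first equation (it would force $1=0$), so the only fixed point is $z_{-1}=0$, whence $y_1=1/\vp=1/(1-\Theta_0-\Theta_1)$. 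Because $z_{-1}=0$, the $1/t$ term of $z$ is absent and one reads off $z=\mathcal O(1)$ (a one-line refinement of the leading balance in \eqref{eq:ids2} even pins down the constant, but the statement claims only $\mathcal O(1)$). This reproduces exactly the announced leading asymptotics.

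Next I would show that a genuine solution of the IDS carries these asymptotics: as elsewhere in the paper this follows from the justification scheme of \cite{K} (or from Wasow's theorem, cf.\ the Introduction), once the corresponding monodromy data are pinned down. To identify them I would match the solution against the general family of Theorem~\ref{th1}. The fixed-point asymptotics form a pure power series in $t$, i.e.\ the $t^\sigma$-terms are switched off; since the coefficient of $t^\sigma$ in $y$ is proportional to $s^2b(-\sigma)$ and the value $b(-\sigma)=0$ is excluded, this forces the limit $s^2\to0$. In that limit Theorem~\ref{th1} gives $y=1+\tfrac{\vp}{(1-\sigma)^2}t+\dots$, and equating this coefficient with the fixed-point value $1/\vp$ yields $(1-\sigma)^2=\vp^2=(1-\Theta_0-\Theta_1)^2$. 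The root lying in the admissible strip (and avoiding the excluded value $\sigma=\Theta_0+\Theta_1$ at which $b(-\sigma)=0$) is $\sigma=2-\Theta_0-\Theta_1$, i.e.\ $\de=1-\sigma=-\vp$ in the parametrization of Theorem~\ref{th51}. The monodromy data are then those of Theorem~\ref{th2} evaluated at $\sigma=2-\Theta_0-\Theta_1$.

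The main obstacle is the monodromy step, i.e.\ the passage $s^2\to0$ in the formulas of Theorem~\ref{th2}. These contain $\hat s_2=s^2\,\Gamma(\sigma)/\Gamma(-\sigma)$ together with $1/\hat s_2$, so I must verify that at $\sigma=2-\Theta_0-\Theta_1$ the potentially divergent $1/\hat s_2$-contributions either vanish or cancel, leaving the $m^p_{ij}$ finite. The Stokes multipliers $s_1,s_2$ are already independent of $s^2$, so relation~\eqref{eq:sigma-s1-s2} automatically returns $\sigma=2-\Theta_0-\Theta_1$ and serves as a consistency check. A secondary point is to confirm that the admissibility requirement $\Re\sigma\in[0,1)$ of Theorems~\ref{th1}--\ref{th2} translates here into $\Re(\Theta_0+\Theta_1)\in(1,2]$, so that the chosen root is the correct one and the error estimates of Theorem~\ref{th1} survive the limit.
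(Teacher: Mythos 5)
Your proposal follows essentially the same route as the paper: the paper likewise identifies Theorem~\ref{th54} with the fixed-point case (case 3) of the reduced system for $y_1,z_{-1}$ and obtains it by setting $\de=-\vp$ (i.e.\ $\si=2-\Th_0-\Th_1$) and sending $h\to\infty$ in Theorem~\ref{th51}, which by the relation $h\propto 1/s^2$ is exactly your limit $s^2\to0$ in Theorem~\ref{th1}. Your computation of the fixed point and the identification of $\si$ are correct, and the finiteness check of the monodromy formulas that you flag as remaining is not carried out in the paper either, so there is no gap relative to the paper's own (very terse) derivation.
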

Theorem \ref{th54} can be derived as follows.
Let us put $\de=-1+\Th_0+\Th_1=-\vp$ and tend $h\to \infty$
in formulas from Theorem \ref{th51}.
Let us note that this solution was already described among the special solutions above.
\section{Special Meromorphic Solution}\label{sec:spec-meromorphic-solution}

As an illustration of how one can use the results for monodromy data to derive the connection formula,
we consider an example. The example will also clarify how the degeneration procedure, discussed in the previous
section, can be performed in particular cases.

In this section we assume that the coefficients of Equation~\eqref{eq:P5} are fixed as follows:
\begin{equation}\label{sys:coeff-gamma=0}
\alpha=\frac{l^2}8,\quad
\beta=-\frac{l^2}8,\quad
\gamma=0,\quad
\delta=-\frac12,\qquad
l\in\mathbb{Z}_+
\end{equation}
There is a quadratic auto transformation of Equation~\eqref{eq:P5} which maps these coefficients
into the following ones:
$$
\alpha=\frac{l^2}2,\quad
\beta=0,\quad
\gamma=4,\quad
\delta=0,\qquad
$$
This equation can be also mapped into the complete third Painlev\'e equation (see, e.g., \cite{K2004}).
\begin{proposition}\label{prop:general-l-specsolat0}
For any $l\in\mathbb{Z}_+$  and $a_l^l\in\mathbb{C}$ there exists the unique solution of Equation~\eqref{eq:P5}
with the following asymptotics as $t\to0$,
\begin{equation}\label{eq:general-l-specsol-y-at0}
y(t)=-1+\sum\limits_l^\infty a_k^lt^k
\end{equation}
\end{proposition}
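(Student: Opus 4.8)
The plan is to prove Proposition~\ref{prop:general-l-specsolat0} by a Frobenius/majorant analysis of \eqref{eq:P5} at the fixed singular point $t=0$, treating $y=-1$ as a regular value of the right-hand side. First I would set $w=y+1$ and substitute into \eqref{eq:P5} with the coefficients \eqref{sys:coeff-gamma=0}, i.e. $\hat\al=-\hat\be=l^2/8$, $\hat\ga=0$, $\hat\de=-\tfrac12$. Since $\tfrac1{2y}$, $\tfrac1{y-1}$, $\tfrac{y(y+1)}{y-1}$ and $\hat\al y+\hat\be/y=\tfrac{l^2}8\,\tfrac{(y-1)(y+1)}y$ are all analytic near $y=-1$, multiplying the equation by $t^2$ brings it to the form
\[
t^2w''+tw'-l^2w=G(t,w,tw'),
\]
where $G$ is analytic near the origin and collects only the subordinate contributions: the term $-\tfrac{t^2}4\,w$ coming from $\hat\de\tfrac{y(y+1)}{y-1}$, and all terms quadratic or higher in $w,\,tw'$. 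The linear part is an Euler operator with indicial equation $r^2-l^2=0$, whose exponents $r=\pm l$ are produced precisely by the choice $\hat\al=-\hat\be=l^2/8$. The root $r=-l$ gives the branch $w\sim t^{-l}\to\infty$, incompatible with $y\to-1$, so only the decaying branch $r=l$ survives; this already forces an expansion starting at order $t^l$, matching \eqref{eq:general-l-specsol-y-at0}.

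Next I would establish formal existence and uniqueness. Inserting $w=\sum_{k\ge l}a_kt^k$ and collecting the coefficient of $t^k$ produces a recurrence of the shape
\[
(k^2-l^2)\,a_k=Q_k(a_l,\dots,a_{k-1}),
\]
where $Q_k$ is a polynomial in the lower coefficients (with $l$-dependent coefficients) arising from $G$; its linear part is $-\tfrac14 a_{k-2}$ and its genuinely nonlinear part is supported on indices $\ge 2l$. For every $k>l$ one has $k^2-l^2\neq0$, so $a_k$ is uniquely determined by the previous coefficients. At $k=l$ the left-hand side vanishes and, since all strictly lower terms are empty and the nonlinear contributions first appear at order $t^{2l}>t^l$ for $l\ge1$, the relation reads $0=0$. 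Hence $a_l=a_l^l$ is a free parameter and every subsequent coefficient is forced. In particular no logarithms occur: the only resonance $k^2=l^2$ in the admissible range $k\ge l$ is the leading index $k=l$ itself, where the compatibility condition is vacuous, so the Painlev\'e/Frobenius test passes automatically.

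Then I would prove convergence of the resulting formal series, which is where the real work lies. Writing the equation as the first-order system
\[
t\,w_1'=w_2,\qquad t\,w_2'=l^2w_1+G,
\]
with $w_1=w$, $w_2=tw'$, the linear matrix has eigenvalues $\pm l$: the negative eigenvalue $-l$ forces the corresponding mode of any bounded solution to vanish, while the positive integer eigenvalue $+l$ is exactly the resonance carrying the free parameter $a_l^l$. The main obstacle is to turn the formal recurrence into a convergent one. I would do this by the method of majorants: using the lower bound $|k^2-l^2|\ge c\,k$ for $k>l$ to control the denominators, I would dominate $G$ by an explicit analytic majorant and construct a power series with non-negative coefficients majorizing $(|a_k|)$ that solves a scalar majorant equation of positive radius of convergence. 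This yields a genuine function $w(t)$ holomorphic at $t=0$ with $w(0)=0$, and hence a holomorphic solution $y=-1+w$ of \eqref{eq:P5} with the prescribed leading coefficient.

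Finally, uniqueness follows from the same analysis: by the resonant Frobenius theorem for the regular singular point $t=0$, any solution of \eqref{eq:P5} with asymptotics \eqref{eq:general-l-specsol-y-at0} and the given $a_l^l$ possesses the full convergent expansion whose coefficients are fixed by the recurrence above; two such solutions share every Taylor coefficient, hence coincide near $0$ and then on their common domain by analytic continuation. As an independent consistency check I would relate this solution to the monodromy framework of the paper: the coefficients \eqref{sys:coeff-gamma=0} correspond to $\0=\1=\tfrac12$, $\tin=-l$, for which the limiting value of Corollary~\ref{th45} is $\tfrac{\1-\0+\tin}{\1-\0-\tin}=-1$, and the B\"acklund transformations of Appendix~\ref{sec:trans} shift $\tin$ by integers; this is compatible with the existence of solutions satisfying $y\to-1$ for these $\Theta$-values, although the direct series construction above is self-contained.
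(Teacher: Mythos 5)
Your argument is correct and follows the same basic line as the paper's: substitute the series \eqref{eq:general-l-specsol-y-at0} into \eqref{eq:P5}, derive a recurrence that determines every $a_k^l$ with $k>l$ uniquely as a polynomial in $a_l^l$, and then establish convergence, which yields both existence and uniqueness. Where you differ is in how the formal solution is promoted to an actual one: the paper invokes the Wasow theorem to get existence from the formal series and then asserts convergence of the expansion from ``further analysis of the recurrence,'' whereas you make the whole argument self-contained by exhibiting the linearization at $y=-1$ as the Euler operator $t^2w''+tw'-l^2w$ with indicial exponents $\pm l$, isolating the single resonance at $k=l$ (where the compatibility condition is vacuous because the nonlinearity first contributes at order $t^{2l}$ and the $-\tfrac14 a_{k-2}$ term is empty), and closing the convergence step with a majorant estimate based on $|k^2-l^2|\geq k$. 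Your route buys a cleaner logical structure --- no appeal to an external existence theorem, and uniqueness falls out of the same coefficient analysis --- and it makes transparent \emph{why} the specific coefficients \eqref{sys:coeff-gamma=0} produce exactly a one-parameter holomorphic family at $t=0$ (the exponents $\pm l$ come from $\hat\alpha=-\hat\beta=l^2/8$). The paper's sketch is shorter and defers the analytic work to standard references, but it leaves the convergence claim and the uniqueness implication essentially unproved; your version fills exactly those gaps.
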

In fact the solution is holomorphic in some neighborhood of $t=0$, so that Expansion~\eqref{eq:general-l-specsol-y-at0}
is nothing but the Taylor series. The proof can be done in a straightforward way: substitution of the
Expansion~\eqref{eq:general-l-specsol-y-at0} into Equation~\eqref{eq:P5} to  obtain the recurrence relations for the
coefficients $a_k^l$. Then one observes that the recurrence allows one to uniquely present $a_k^l$ as the polynomials
of the first coefficient $a_l^l$. Then the Wasow theorem provide us with the existence of the solution. The further
analysis of the recurrence relation allows one to prove convergence of Expansion~\eqref{eq:general-l-specsol-y-at0},
which also implies the uniqueness.

Let us mention some properties of Solution~\eqref{eq:general-l-specsol-y-at0}. We also consider the corresponding
Taylor expansion of function $z(t)$:
\begin{equation}\label{eq:general-l-specsol-z-at0}
z(t)=\frac{l-1}4-\frac{t}8+\sum\limits_{k=l}^\infty c_{k}^lt^{k},\qquad
c_l^l=-\frac{l+1}8\,a_l^l,\quad
\end{equation}
the coefficients $c_{k}^l$  for $k\geq l$ are polynomials of $a_l^l$.
\begin{remark}
We do not write explicitly expansion for function $\zeta(t)$, see Equation~\eqref{eq:zeta-def}, because
it can be easily obtained via Equation~\eqref{eq:zeta-derivativ} by integration of Expansion~\eqref{eq:general-l-specsol-z-at0}.
The constant of integration is $l(l-1)/4$.
\end{remark}

Properties of Expansions~\eqref{eq:general-l-specsol-y-at0} and \eqref{eq:general-l-specsol-z-at0} depend on
parity of the number $l$:

If $l$ is even, then $y(t)$ is even (as the function of $t$), so that $a_{2m+1}^l=0$ for all $m\in\mathbb{Z}_+$.
The function $z(t)$ is neither odd nor even, however, the first odd coefficients also vanish
$c_{2m+1}^l=0$ for $l/2\leq m\leq l-1$. Note that $c_{2l+1}^l=\tfrac{\left(a_l^l\right)^2}{32}$.

If $l$ is odd, then $a_{2m}^l=0$ for $\tfrac{l+1}{2}\leq m\leq l-1$, while $a_{2l}=-\tfrac{(a_l^l)^2}{2}$.
The corresponding function $z(t)+\tfrac{1-l}{4}$ is an odd function of $t$, i.e., $c_{2m}^l=0$ for all $m\in\mathbb{Z}_+$.

If we put $a_l^l=0$, then $y(t)=-1$. It is the simplest rational solution of Equation~\eqref{eq:P5}. It is known
\cite{KLM} that all rational solutions with the asymptotics $-1+\mathcal{O}\left(1/t\right)$ as $t\to\infty$ can be
obtained from this elementary solution via application of the B\"acklund transformations. Some of these solutions
has at $t=0$ the Taylor expansion of the form \eqref{eq:general-l-specsol-y-at0}, however, in these cases the coefficients
of Equation~\eqref{eq:P5} differ from \eqref{sys:coeff-gamma=0}.
As follows from \cite{KLM} $y(t)=-1$ is the only case when Solution~\eqref{eq:general-l-specsol-y-at0} is rational.

An interesting question is whether Solution~\eqref{eq:general-l-specsol-y-at0} is a truly transcendental Painlev\'e
function in the sense of Umemura (see \cite{U}). For Equation~\eqref{eq:P5} this question was analysed in \cite{WH}.
However, these results are not formulated in terms of the Expansion~\eqref{eq:general-l-specsol-y-at0} and therefore require
additional investigation.

There are a few different ways of how to choose parameters $\Theta_k$ corresponding to coefficients
\eqref{sys:coeff-gamma=0}, see  Equations~\eqref{eq:coeffP5}. Our choice is
\begin{equation}\label{eqs:theta-special}
\Theta_0=\frac12,\qquad
\Theta_1=\frac12,\qquad
\Theta_\infty=-l.
\end{equation}
As follows from \cite{WH} for even $l$ there are no classical solutions, apart from the trivial one $y(t)=-1$.
So, we consider the odd values of $l$. If $l=1$, then $\Theta_0+\Theta_1+\Theta_\infty=0$; so that we can put
in System~\eqref{eq:ids1}, \eqref{eq:ids2} $z(t)=0$ and find $y(t)$ as a solution of the Riccati equation. After that
we have to check whether there is a solution of the Riccati equation with Expansion~\eqref{eq:general-l-specsol-y-at0}
at $t=0$. Omitting elementary details we find that such solution $y(t)\equiv y_1(t)$ exists and is unique,
\begin{equation}\label{eq:specBessel}
y_1(t)=\frac{I_1\left(t/2\right)+I_0\left(t/2\right)}{I_1\left(t/2\right)-I_0\left(t/2\right)},\quad
z_1(t)=0,
\end{equation}
here $I_n(\cdot)$ for $n=0,1$ is the modified Bessel function of the first kind~\cite{BE2}. Solution~\eqref{eq:specBessel}
has the following Taylor expansion at $t=0$,
$$
y_1(t)=-1-\frac{t}2-\frac{t^2}{8}-\frac{t^3}{64}+\frac{t^5}{3072}+\mathcal{O}\left(t^6\right).
$$
Comparing it with Expansion~\eqref{eq:general-l-specsol-y-at0} we find $a_1^1=-1/2$.

To proceed further we have to apply to Solution~\eqref{eq:specBessel} the second B\"acklund transformation given in
Theorem~\ref{dress} (see Appendix~\ref{sec:trans}). In this way, by mathematical induction, we find that for all odd
$l$ there exists only one classical solution $y_l(t)$ $z_l(t)$ of System~\eqref{eq:ids1}, \eqref{eq:ids2}, which
can be presented explicitly in terms of the modified Bessel functions with the Taylor
Expansion~\eqref{eq:general-l-specsol-y-at0}. In particular,
\begin{align*}
y_3(t)&=-1-\frac{t^3}{192}+\frac{t^5}{12288}+\mathcal{O}\left(t^6\right),&\quad
z_3(t)&=\frac12-\frac{t}{8}+\frac{t^3}{384}-\frac{t^5}{16384}+\mathcal{O}\left(t^7\right),\\
y_5(t)&=-1-\frac{t^5}{61440}+\frac{t^7}{5898240}+\mathcal{O}\left(t^8\right),&
z_5(t)&=1-\frac{t}{8}+\frac{t^5}{81920}-\frac{t^7}{5898240}+\mathcal{O}\left(t^9\right).
\end{align*}
Corresponding values of the parameters $a_3^3=-\tfrac1{192}=-\tfrac1{3\cdot2^6}$,
$a_5^5=-\tfrac1{61440}=-\tfrac1{3\cdot5\cdot2^{12}}$.
\begin{conjecture}
Solution~\eqref{eq:general-l-specsol-y-at0} with odd $l$ is classical iff $a_l^l=-\frac1{l!2^{2l-1}}$.
\end{conjecture}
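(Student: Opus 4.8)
The plan is to split the biconditional and reduce both implications to a single computation: the exact value of the leading Taylor coefficient $a_l^l$ of the classical solution $y_l$ constructed in this section. By Proposition~\ref{prop:general-l-specsolat0} the solutions of the form \eqref{eq:general-l-specsol-y-at0} form a one-parameter family faithfully indexed by $a_l^l\in\mathbb{C}$, so the assignment $a_l^l\mapsto y$ is injective. Granting, as asserted in the text and as should follow from the classification of classical solutions in \cite{WH} together with the rational classification in \cite{KLM}, that for each odd $l$ there is exactly one classical solution of this Taylor form, namely the Bessel solution $y_l$, the biconditional becomes equivalent to the single claim $a_l^l(y_l)=-\tfrac{1}{l!\,2^{2l-1}}$. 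Indeed, if $a_l^l$ takes this value then by injectivity $y=y_l$ is classical; conversely a classical solution of the form \eqref{eq:general-l-specsol-y-at0} must coincide with $y_l$ and hence carry this coefficient.

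The core step is therefore to compute $a_l^l(y_l)$ for all odd $l$ by induction along the B\"acklund orbit. The seed is $y_1$ from \eqref{eq:specBessel}, whose Bessel expansion gives $a_1^1=-\tfrac12=-\tfrac{1}{1!\,2^{1}}$. The inductive step is to propagate the leading coefficient through the second B\"acklund transformation of Theorem~\ref{dress}, which sends $(\Theta_0,\Theta_1,\Theta_\infty)=(\tfrac12,\tfrac12,-l)$ to $(\tfrac12,\tfrac12,-(l+2))$ and carries $y_l$ to $y_{l+2}$. I would establish the recurrence
\begin{equation*}
a_{l+2}^{l+2}=\frac{a_l^l}{16(l+1)(l+2)},
\end{equation*}
whose solution with the above seed is precisely $a_l^l=-\tfrac{1}{l!\,2^{2l-1}}$: iterating from $l=1$, the factors $(k+1)(k+2)$ telescope to $l!$ while the powers of $16$ collect into $2^{2l-2}$.

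To obtain this recurrence concretely, I would substitute the truncated expansions $y_l=-1+a_l^l t^l+\mathcal{O}(t^{l+1})$ and $z_l=\tfrac{l-1}{4}-\tfrac t8+c_l^l t^l+\mathcal{O}(t^{l+1})$ with $c_l^l=-\tfrac{l+1}{8}a_l^l$ (see \eqref{eq:general-l-specsol-z-at0}) into the rational formula defining the B\"acklund transformation, and extract the coefficient of $t^{l+2}$ in the transformed solution. The vanishing of all intermediate coefficients up to order $l+1$, guaranteed by the parity structure described after \eqref{eq:general-l-specsol-z-at0} which forces the first nonzero correction at order $l+2$, makes this a finite, purely algebraic extraction. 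A cleaner alternative I would pursue in parallel is the $\tau$-function route: the $y_l$ are ratios of consecutive Bessel $\tau$-functions obeying a Toda-type recurrence, and the leading Taylor coefficient of $y_l$ can be read off from Hankel-determinant expressions for these $\tau$-functions, which may produce the factor $16(l+1)(l+2)$ directly.

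The hard part is twofold. First, the explicit coefficient propagation through the B\"acklund transformation is delicate: the transformation mixes $y$, its $t$-derivative, and $z$, and one must verify that the leading orders conspire so that the first correction of $y_{l+2}$ appears exactly at $t^{l+2}$ with the claimed constant rather than degenerating or shifting order. Second, and more seriously for the \emph{only if} direction, is the rigorous justification that $y_l$ is the \emph{unique} classical solution sharing the Taylor form \eqref{eq:general-l-specsol-y-at0}; as noted in the text just above, the results of \cite{WH} are not phrased in terms of this expansion, so proving that no genuinely transcendental member of the family \eqref{eq:general-l-specsol-y-at0} is classical, equivalently that the B\"acklund orbit of the Riccati seed exhausts the classical solutions holomorphic at $t=0$ with $y(0)=-1$ for these parameters, is the principal obstacle and the reason the statement is recorded as a conjecture.
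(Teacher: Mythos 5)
You should note first that the paper offers no proof of this statement: it is recorded as a conjecture, and the only evidence supplied is the explicit computation of $a_1^1=-\tfrac12$, $a_3^3=-\tfrac1{192}$, $a_5^5=-\tfrac1{61440}$ for the Bessel-type solutions obtained from the Riccati seed \eqref{eq:specBessel} by iterating the second B\"acklund transformation of Theorem~\ref{dress}. Your reduction of the biconditional to the single identity $a_l^l(y_l)=-\tfrac1{l!\,2^{2l-1}}$ is sound, granting the injectivity of the parametrization by $a_l^l$ from Proposition~\ref{prop:general-l-specsolat0} and the uniqueness of the classical member of the family; and your proposed recurrence $a_{l+2}^{l+2}=a_l^l/\bigl(16(l+1)(l+2)\bigr)$ does integrate from the seed $a_1^1=-\tfrac12$ to the conjectured closed form and reproduces the values the paper computes for $l=3,5$. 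In that sense your plan organizes and slightly extends the paper's own evidence.

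However, two genuine gaps remain, and to your credit you name both. First, the recurrence is reverse-engineered from the target formula rather than derived: to prove the ``if'' direction you must actually push the truncated expansions of $y_l$ and $z_l$ through the inverse Schlesinger transformation of Theorem~\ref{dress} and verify both that the leading correction of $y_{l+2}$ sits exactly at order $t^{l+2}$ (no degeneration or order shift, which rests on the parity cancellations described after \eqref{eq:general-l-specsol-z-at0} and on the nonvanishing of $\tilde z(0)=\tfrac{l+1}4$ in the denominators of the transformation) and that the constant is $1/\bigl(16(l+1)(l+2)\bigr)$; this computation is not carried out in your proposal, nor in the paper beyond $l=5$. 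Second, and decisive for the ``only if'' direction, is the uniqueness of the classical solution within the family \eqref{eq:general-l-specsol-y-at0}: the paper itself flags that the results of \cite{WH} are not formulated in terms of this expansion, so the assertion that the B\"acklund orbit of the Riccati seed exhausts all classical solutions holomorphic at $t=0$ with $y(0)=-1$ for these $\Theta$-parameters is precisely what would have to be proved. Until both points are settled the statement remains, as in the paper, a conjecture; your proposal is a correct roadmap, consistent with the paper's partial derivation, but not a proof.
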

The corresponding monodromy matrices are
upper triangular and can be calculated with the help of Section~\ref{sec:deg0}.

Because of the restriction $0\leq\Re\sigma\leq1$, see Sections~\ref{sec:zero} and Subsection~\ref{subsec:Re-sigma-1},
we cannot directly apply our results to Solution~\eqref{eq:general-l-specsol-y-at0} for general $l\in\mathbb{Z}_+$.
However, as we show below, the case $l=1$ is tractable with the help of Sections~\ref{sec:zero}, \ref{sec:deg0}
and Appendix~\ref{sec:allterms}.
The general case of odd $l$ can be further treated with the help of the B\"acklund transformations,
see Appendix~\ref{sec:trans}. As for the even values of $l$ they also can be studied within the framework presented
in this paper but with the help of a different "initial solution". The case of $l>1$ is interesting in view of
applications, however, it goes far beyond our goals in this paper and will be considered elsewhere.

In fact, even in the case $l=1$ solutions show different behavior as $t\to\infty$ depending on the value of
$a_1^1$. For the illustrative purposes we choose the case $a_1^1=-2$. The reader can find numeric illustration of the
theoretical results obtained in this section in Subsection~\ref{subsec:numeric-extra-terms}.
With the help of Maple Code we can be more specific about the asymptotic expansion at $t=0$ of the main object of
our study in this section:
\begin{equation}\label{eq:y-spec-meromorphic-a-2}
y(t)=-1-2t-2t^2-\frac{31}{16}t^3-\frac{15}8t^4-\frac{2833}{1536}t^5-\frac{2789}{1536}t^6+O(t^7).
\end{equation}
One can notice that the first terms in this expansion are negative. So it rises a natural question: whether all
coefficients of this expansion are negative? The following Statement can be confirmed by mathematical induction
applied to the recurrence relation for coefficients $a_1^1$ obtained via substitution of
Expansion~\eqref{eq:general-l-specsol-y-at0} into Equation~\eqref{eq:P5}.
\begin{statement}\label{stm:asympt-ak}
All polynomials $a_k^1(a_1^1)$ have real coefficients. For large values of $a_1^1$ these polynomials have the
following asymptotics
as $a_1^1\to\infty$:
$$
a_k^1(a_1^1)=\left(-\frac12\right)^{k-1}(a_1^1)^k+\mathcal{O}\left((a_1^1)^{k-2}\right).
$$
\end{statement}
Consider
$$
a_5^1(a_1^1)=\frac{a_1^1}{16}\left((a_1^1)^4-\frac5{16}(a_1^1)^2+\frac1{192}\right).
$$
The largest root of this polynomial is
$$
R=\frac1{24}\sqrt{90+6\sqrt{177}}=0.5429868659\ldots.
$$
Numerical studies suggest the following
\begin{conjecture}
All zeroes of all polynomials $a_k^1(a_1^1)$ lie inside the circle of radius $R$ centered at the origin of the complex plain.
\end{conjecture}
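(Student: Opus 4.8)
The plan is to work directly with the recurrence for the polynomials $a^1_k\equiv a_k(x)$ in the variable $x:=a^1_1$ obtained by substituting the Taylor Expansion~\eqref{eq:general-l-specsol-y-at0} (with $l=1$) into $P_5$, see Equations~\eqref{eq:P5}--\eqref{eq:coeffP5} with $\hat\gamma=0$. First I would record two exact structural facts. By Statement~\ref{stm:asympt-ak} we have $a_k(x)=(-1/2)^{k-1}x^k+\mathcal O(x^{k-2})$, so $\deg a_k=k$ with nonvanishing leading coefficient $(-1/2)^{k-1}$. Moreover, because $\hat\gamma=0$ the substitution $t\mapsto-t$ carries a solution of $P_5$ into a solution; comparing initial data gives $y(-t;x)=y(t;-x)$ and hence the parity $a_k(-x)=(-1)^k a_k(x)$. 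Thus every $a_k$ is even or odd according to the parity of $k$, its zero set is symmetric under $x\mapsto-x$, and the closed disk $|x|\le R$ (whose boundary is already attained by the largest-modulus roots $\pm R$ of $a_5$) is the natural candidate region.

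To confine the zeros I would use a Pellet/Rouch\'e-type estimate. Writing $a_k(x)=\sum_{j}c_{k,j}x^j$ and normalizing $b_{k,j}:=2^{k-1}|c_{k,j}|$ so that $b_{k,k}=1$, it suffices to establish, for every $k$, dominance of the top term on the boundary,
\[
\sum_{0\le j<k} b_{k,j}\,R^{\,j}<R^{\,k},
\]
which by Rouch\'e forces all zeros of $a_k$ into $|x|\le R$. The core task is then to extract, from the nonlinear recurrence, a majorant recurrence for the normalized tail strong enough to sustain this single inequality for all $k$ simultaneously. Concretely, after clearing denominators $P_5$ is polynomial and quadratic in $(y,y',y'')$, so $c_{k,\cdot}$ is a finite combination of convolutions of the $c_{m,\cdot}$, $m<k$, with explicit rational-in-$k$ weights; introducing the one-variable majorant $A_k(\rho):=\sum_j b_{k,j}\rho^j$ I would seek a bound of the shape $A_k(\rho)\le\Phi\big(A_1(\rho),\dots,A_{k-1}(\rho);\rho\big)$ and prove $A_k(R)<R^k$ by strong induction. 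The normalization by the known leading coefficient is precisely what makes the geometric scale $R^k$ appear, so Statement~\ref{stm:asympt-ak} enters the argument decisively.

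The main obstacle is sharpness. Since $R$ equals the largest modulus of a zero of $a_5$, any majorant carrying even a little slack fails exactly at $k=5$, so the induction cannot be closed with the true constant $R$ in a single sweep. I therefore expect the realistic route to be a hybrid argument: dispatch the finitely many base cases $k\le k_0$ (in particular $k=5$, where equality occurs) by direct computation of the explicit polynomials, and run the majorant induction for $k>k_0$ with a strictly smaller constant $R'<R$ for which the tail estimate has genuine slack. The delicate remaining point---and the true heart of the conjecture---is to show that no $a_k$ with $k>5$ has a zero of modulus in the annulus $R'<|x|\le R$, so that the induction constant may in fact be promoted to $R$. For this last step I would either refine the majorant so that its loss is $o(1)$ as $k\to\infty$, or, alternatively, relate the zeros of $a_k$ to the pole distance $\rho(x)$ of the meromorphic transcendent $y(\cdot\,;x)$ via a uniform Cauchy-type lower bound $|a_k(x)|\gtrsim\rho(x)^{-k}$ valid off a thin exceptional set, and then show that $\rho(x)$ stays bounded away from the relevant scale for $|x|>R'$. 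This analytic alternative seems to me the more promising, though also the more technical, path to the sharp constant.
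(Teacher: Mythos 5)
The statement you are addressing is not proved in the paper at all: it is explicitly labelled a \emph{conjecture}, introduced by the sentence ``Numerical studies suggest the following,'' and the authors offer no argument beyond the computation of $a_5^1$ and its largest root $R$. So there is no proof in the paper to compare against, and the relevant question is whether your proposal actually closes the problem. It does not, and you say so yourself: you identify the ``delicate remaining point --- and the true heart of the conjecture'' and leave it open, offering only two possible directions. A plan whose central step is acknowledged as unresolved is not a proof.

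Beyond that honest admission, there is a concrete structural obstruction to the scheme as written. Your Pellet/Rouch\'e dominance inequality $\sum_{j<k}b_{k,j}\rho^{j}<\rho^{k}$ cannot hold at $\rho=R$ for $k=5$, since $R$ is the modulus of a root of $a_5^1$ (at such a root the top term is exactly cancelled by the tail, forcing $R^5\le\sum_{j<5}b_{5,j}R^{j}$); worse, a direct computation with $a_5^1(x)=\tfrac{x}{16}\bigl(x^4-\tfrac{5}{16}x^2+\tfrac1{192}\bigr)$ shows that $\tfrac{5}{16}\rho^{3}+\tfrac1{192}\rho\ge\rho^{5}$ for \emph{every} $\rho\le R$, so the inductive hypothesis $A_5(\rho)<\rho^{5}$ is false at any radius $\rho\le R$, not merely at the sharp one. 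Hence the strong induction cannot even be launched with a smaller constant $R'<R$: the base case $k=5$ already violates the hypothesis you would need to propagate, and you give no mechanism for absorbing that loss in the majorant recurrence $\Phi$ (which is itself never written down or estimated). The alternative route via a Cauchy-type lower bound $|a_k^1(x)|\gtrsim\rho(x)^{-k}$ in terms of the pole distance of the meromorphic transcendent is only gestured at, with no control of the exceptional set or of $\rho(x)$ on $|x|>R'$. The two exact facts you do establish --- $\deg a_k^1=k$ with leading coefficient $(-1/2)^{k-1}$ (from Statement~\ref{stm:asympt-ak}) and the parity $a_k^1(-x)=(-1)^k a_k^1(x)$ (from the $t\mapsto-t$ symmetry valid because $\hat\gamma=0$) --- are correct and potentially useful, but they leave the conjecture exactly as open as the paper does.
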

This conjecture together with Statement~\ref{stm:asympt-ak} implies that for real $a_1^1<-R$ all coefficients
of Expansion~\eqref{eq:general-l-specsol-y-at0} are negative, while for $a_1^1>R$ their signs alternate.
\begin{remark}
Define a mapping of the sequence $\{{\rm sign}(a_k^1)\}$ into the set $\{0,1\}$, say, the minus sign goes to $0$
and the plus sign to $1$. Denote this sequence of zeroes and ones as $\epsilon_k$. Then our result can be formulated
as follows: for $a_1^1>R$ $\epsilon_k=0$, for $a_1^1>R$ $\epsilon_{2n+1}=1$ and $\epsilon_{2n}=0$, $n\in\mathbb{Z}_+$.
It is an interesting problem to study the sequences $\{\epsilon_k\}$ for $a_1^1\in[-R,R]$. For example, in the
"integrable" case, $a_1^1=-1/2$, the corresponding sequence $\{\epsilon_k\}$ is periodic with the minimal period $54$.
\end{remark}
\begin{subsection}
{Monodromy data}
\end{subsection}\label{subsec:monodromy data}
Here we calculate the monodromy data for $y(t)$ defined by Equation~\eqref{eq:y-spec-meromorphic-a-2}.
Comparing the general asymptotic behavior of $y(t)$ as $t\to0$ \eqref{eq:y->0} with
Expansion~\eqref{eq:general-l-specsol-y-at0} we see that the only possibility to get it is to consider a limit
$\sigma\to1$. The functions defining Asymptotics~\eqref{eq:y->0} for our choice
of $\Theta$-parameters \eqref{eqs:theta-special} read:
\begin{equation}\label{eqs:bcda}
b(\sigma)=\frac{1+\sigma}2,\quad
c(\sigma)=\frac{\sigma}2,\quad
d(\sigma)=-\frac{1-\sigma}2,\quad
a(\sigma)=\frac{\sigma(1+\sigma)}4.
\end{equation}
Therefore, we see that $b(\pm\sigma)\in \Bbb Z$, $d(\pm\sigma)\in \Bbb Z$. These conditions show that here we meet
much "deeper" degeneration procedure rather than that considered in Section~\ref{sec:deg0}. It suggests that it is
more reliable to do the direct degeneration of the results presented in the main Theorems~\ref{th1} and \ref{th2},
rather than further degeneration of the results of Section~\ref{sec:deg0}. In the intermediate degenerations
considered in this section it was assumed that the conditions  $b(\pm\sigma)\in \Bbb Z$, $d(\pm\sigma)\in \Bbb Z$,
and $\sigma=1$ do not hold simultaneously.

Before making the limit procedure we recall that first we have to:
1) freeze $t$ in Equations~\eqref{eq:y->0}--\eqref{eq:z->0};
2) choose $\Theta$-parameters according to Equations~\eqref{eqs:theta-special}; and
3) make the limit transition $\sigma\to1$.

Since $y(0)=-1$ we see that the only way we can achieve it is to send parameter $s^2\to\infty$. More carefully
examining the limit, we find that in fact we have to assume that
$$
s^2=\frac{s_0^2}{(1-\sigma)^2},\qquad
s_0^2\in\mathbb{C}\setminus\{0\},\quad
\sigma\to1,
$$
where the parameter $s_0^2\neq0$ is a complex number. After this assumption Equations~\eqref{eq:y->0}--\eqref{eq:z->0}
imply:
\begin{equation}\label{eqs:yzu-spec0-l-1}
y(t)=-1+\mathcal{O}(t),\quad
z(t)=o\left(t^{-1}\right),\quad
u(t)=\frac{\hat{r}s_0^2}{2}\left(1+\mathcal{O}(t)\right),\quad
\hat{r}=\frac{r}{1-\sigma},\quad
\hat{r}\in\mathbb{C}\setminus\{0\}.
\end{equation}
We put $o\left(t^{-1}\right)$ as the estimate for $z$ because
we know small-$t$ expansion of the $\tau$-function obtained by Jimbo \cite{J}. The estimate
$\mathcal{O}\left(t^{3-2\Re\sigma}\right)$, can be traced from the corresponding term
$\mathcal{O}\left(t^{3-3\sigma}\right)$ in the expansion of the $\tau$-function, see Equations~\eqref{eq:tau}
and \eqref{eq:zeta-derivativ}. This term contains the factor $s^2(1-\sigma)^2$ in the denominator.
After two differentiations, this term gains an additional factor $(1-\sigma)^2$, so that the factor $1/s^2$
kills this term in the limit. This mechanism is analogous to disappearance of the explicitly written term
(see Equation~\eqref{eq:z->0}) of the order $\mathcal{O}\left(t^{1-2\sigma}\right)$.
Since we know that Solution~\eqref{eq:general-l-specsol-y-at0} is the only
solution with the property $y(0)=-1+\mathcal{O}(t)$, then in fact $z(t)$ is given by \eqref{eq:general-l-specsol-z-at0}
with $l=1$.

Now we perform the limit in the formulas for the monodromy data given in Theorem~\ref{th2}. Performing the
limit $\sigma\to1$ in Equations~\eqref{eqs:s1-s2at0} with the help of relations
for $d(\sigma)$ given in \eqref{eqs:bcda} and $\hat{r}$ from \eqref{eqs:yzu-spec0-l-1}, we see that the
Stokes multipliers vanish,
\begin{equation}\label{eq:s1=s2=0}
s_1=s_2=0.
\end{equation}
Further we calculate the limits for the monodromy matrices. For the intermediate matrices $\hat M^p$ we find
$\hat M^0=\hat M^1$. This, together with Conditions~\eqref{eq:s1=s2=0} imply:
\begin{equation}\label{eqs:monodromy-special}
M^0=M^1,\qquad
\left(M^0\right)^2=-I,\qquad
M^\infty=-I.
\end{equation}
The matrix elements
\begin{equation}\label{eq:matrix-elements}
m_{11}^p=-m_{22}^p=-\frac{\imath}4\left(4s_0^2+\frac{1}{s_0^2}\right),\quad
m_{12}^p=\frac{\imath\hat{r}}2\left(4s_0^2-\frac1{s_0^2}\right),\quad
m_{21}^p=-\frac{\imath}{8\hat{r}}\left(4s_0^2-\frac1{s_0^2}\right),
\end{equation}
where $p=0,1$.
This completes calculation of the monodromy data.

Now we have to find how the monodromy data relate to the Taylor expansion of $y(t)$ at $t=0$.
Equations~\eqref{eqs:yzu-spec0-l-1} do not allow us to find it. We can perform the limit more
carefully and find that:
$$
y\underset{t\to0}{=} -1+\frac{s_0^2}{2}t^\sigma+\mathcal{O}(\sigma-1)
+\mathcal{O}\left(t^{1+\sigma}\right)+\mathcal O\left(t^{2-\Re\si}\right).
$$
All the estimates above depend on $\sigma$ and $t$. Our purpose is to put $\sigma=1$.
Why would the last two estimates remain finite at this limit? If they (one of them) would blow up, then it means
that the monodromy data \eqref{eq:matrix-elements} does not correspond to any solution of P5. On the other
hand, from the derivation of asymptotics at the point of infinity we know that there is a solution of P5
corresponding to these monodromy data.

The last estimate looks "dangerous", since in the limit it may contribute to the second term of asymptotics.
So, in fact, the $\sigma$-dependence of the last estimate requires further investigations. This
estimate can be obtained explicitly in two ways, either directly from Equation~\eqref{eq:P5}
or via Equation~\eqref{eq:ids2} and with the help of asymptotics for the function $z$. Another
way is the direct calculation of the monodromy data for this solution via Equation~\eqref{mainl}.

Since all these calculations looks cumbersome, we probably present them somewhere, here we announce
the correct result and suggest an indirect proof.
\begin{proposition}
\begin{equation}\label{eq:a11-s0}
a_1^1=\frac{s_0^2}2+\frac1{8s_0^2}=\frac{i}2m_{11}^p,
\end{equation}
\end{proposition}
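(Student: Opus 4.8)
The plan is to separate the two asserted equalities in \eqref{eq:a11-s0}. The second, $\frac{s_0^2}2+\frac1{8s_0^2}=\frac\imath2 m^p_{11}$, is pure algebra: substituting $m^p_{11}=-\frac\imath4\big(4s_0^2+\frac1{s_0^2}\big)$ from \eqref{eq:matrix-elements} gives $\frac\imath2 m^p_{11}=\frac18\big(4s_0^2+\frac1{s_0^2}\big)=\frac{s_0^2}2+\frac1{8s_0^2}$. Everything therefore reduces to the transcendental identity $a_1^1=\frac{s_0^2}2+\frac1{8s_0^2}$, which I would establish indirectly by regarding $a_1^1=y'(0)$ as a function of the single surviving monodromy parameter $s_0^2$ and fixing it through its analyticity, a symmetry, and a normalization.

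The first concrete step I would carry out is a short resummation of the leading fraction in \eqref{eq:y->0} for the data \eqref{eqs:theta-special}. Using \eqref{eqs:bcda} and the scaling $s^2=s_0^2/(1-\si)^2$, all four factors organize through the finite combination $\nu=\tfrac{s_0^2}2t^\si\to\tfrac{s_0^2}2t$, and the fraction collapses in the limit $\si\to1$ to $-\frac{2-\nu}{2+\nu}=-1+\nu-\tfrac{\nu^2}2+\cdots$, which contributes exactly $\frac{s_0^2}2$ to $a_1^1$ and, crucially, \emph{no} $s_0^2$-independent term. Moreover the only genuine linear term displayed in \eqref{eq:y->0}, namely $\frac{1-\0-\1}{(1-\si)^2}\,t$, vanishes identically because $\0+\1=1$ for the choice \eqref{eqs:theta-special}; this cancellation is precisely the condition guaranteeing regularity at $t=0$. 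Thus the coefficient of $t$ in $y+1$ receives, in the limit $\si\to1$, contributions only from $t^\si$ (giving $\frac{s_0^2}2$) and from the conjugate power $t^{2-\si}$ hidden in the estimate $\mathcal O(t^{2-\Re\si})$ of \eqref{eq:y->0}.

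To evaluate the remaining contribution without a brute-force expansion I would use analyticity together with the reflection symmetry. By the isomonodromy correspondence the data \eqref{eq:matrix-elements} determine the solution \eqref{eq:general-l-specsol-y-at0} uniquely and analytically; since $y$ and $z$ are independent of the normalization $\hat r$ and $m^p_{11}$ depends on $s_0^2$ alone, $a_1^1$ is a single-valued analytic function of $s_0^2$ on $\Bbb C\backslash\{0\}$. The scalar $m^p_{11}=-\frac\imath4\big(4s_0^2+\frac1{s_0^2}\big)$ is invariant under $s_0^2\mapsto\frac1{4s_0^2}$, while $m^p_{12},m^p_{21}$ only change sign, which is undone by the constant gauge conjugation $\diag\{1,-1\}$ leaving $y,z$ fixed; hence $a_1^1(s_0^2)=a_1^1\big(\tfrac1{4s_0^2}\big)$, the residue in this limit of the symmetry \eqref{tr1}. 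Combining the leading behavior $a_1^1\sim\frac{s_0^2}2$ (from the previous paragraph, with no additive constant) with this involution shows $a_1^1$ has at most simple poles at $0$ and $\infty$ with matching residues, so $h(s_0^2)=a_1^1-\frac{s_0^2}2-\frac1{8s_0^2}$ extends to a bounded function on the Riemann sphere and is constant by Liouville; the absence of an $s_0^2$-independent term in the $t$-coefficient then forces $h\equiv0$. As a cross-check I would evaluate at the Bessel solution \eqref{eq:specBessel}, whose monodromy is upper triangular so that $m^p_{21}=0$ gives $s_0^2=\pm\tfrac12$; the branch $s_0^2=-\tfrac12$ (a fixed point of the involution, as befits the distinguished solution) reproduces $a_1^1=-\tfrac12$.

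The hard part will be making the Liouville step rigorous, i.e. controlling the limit $\si\to1$ so that $a_1^1$ genuinely has \emph{only} simple poles at $0,\infty$ and no spurious singularities. Concretely this means proving that the "dangerous" estimate $\mathcal O(t^{2-\Re\si})$ of \eqref{eq:y->0}, which collapses onto the power $t$ as $\si\to1$, stays bounded and contributes precisely the simple pole $\frac1{8s_0^2}$ (with no constant). As the authors note, this coefficient can alternatively be computed directly from \eqref{eq:P5}, or from \eqref{eq:ids2} together with the known expansion of $z$ in \eqref{eq:general-l-specsol-z-at0}; I would include that estimate of the $t^{2-\si}$ term as the one technical lemma that completes the indirect argument and simultaneously settles the value of the Liouville constant.
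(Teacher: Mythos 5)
Your reduction of the second equality in \eqref{eq:a11-s0} to algebra on \eqref{eq:matrix-elements} is correct, and your opening observation --- that $y$ and $z$ are independent of $\hat r$, so $a_1^1$ can depend only on $m^p_{12}m^p_{21}=-1+\big(m^p_{11}\big)^2$ and hence on $m^p_{11}$ alone --- is exactly the paper's first step. From there you take a genuinely different route. The paper stays in the variable $m^p_{11}$: it argues that $a_1^1(m^p_{11})$ and its inverse are both entire, so by the classical fact that a biholomorphism of $\mathbb C$ is affine (Theorem 4.3 of \cite{M2}) one has $a_1^1=C_1m^p_{11}+C_2$; the constants are then fixed by two exact evaluations, namely $a_1^1=0\Leftrightarrow m^p_{11}=0$ (quoted from \cite{AK97MRL}) and the Riccati/Bessel solution \eqref{eq:specBessel} with $a_1^1=-1/2$, $m^0_{11}=\imath$ computed from the triangular form of \eqref{mainl}. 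You instead work on the $s_0^2$-sphere, using the involution $s_0^2\mapsto 1/(4s_0^2)$ and the behavior of $a_1^1(s_0^2)$ at the two punctures to run a Liouville argument. Your involution, your $\sigma\to1$ resummation of the leading fraction in \eqref{eq:y->0} (including the cancellation of the term $\frac{1-\Theta_0-\Theta_1}{(1-\sigma)^2}\,t$ for the data \eqref{eqs:theta-special}), and your Bessel cross-check (which is the same computation as the paper's second normalization point) all check out.

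The gap is the step you yourself flag, and it is not a removable technicality in your scheme: to conclude that $a_1^1$ has at most a simple pole at $s_0^2=\infty$ with principal part $s_0^2/2$, you must show that $a_1^1(s_0^2)-s_0^2/2$ stays bounded as $s_0^2\to\infty$, which requires the error term $\mathcal O\big(t^{2-\Re\sigma}\big)$ of \eqref{eq:y->0} to be controlled \emph{uniformly} in the double limit $\sigma\to1$, $s^2=s_0^2/(1-\sigma)^2$ with $s_0^2$ large. Nothing in Theorem~\ref{th1} provides this: its error constants depend on $\sigma$ and $s^2$ in an unspecified way, and the paper explicitly singles out this very estimate as the ``dangerous'' one whose $\sigma$-dependence ``requires further investigations''. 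The point of the paper's indirect proof is precisely to avoid any asymptotics in the parameter: an entire bijection of $\mathbb C$ is affine, and two exact evaluations finish the job. Note also that your closing step is both stronger than needed and somewhat circular: if you could really prove that the $t^{2-\sigma}$ contribution equals $\tfrac1{8s_0^2}$ with no constant, you would have derived \eqref{eq:a11-s0} directly and the Liouville scaffolding would be superfluous; conversely, the weakest version that closes your argument is boundedness at $\infty$ together with the Bessel evaluation (which already forces the Liouville constant to vanish, since $h(-1/2)=0$) --- but that boundedness is exactly the uniform estimate you do not have. Either supply that lemma or switch to the paper's affine-map argument.
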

\begin{proof}
Note that $m_{12}^p$ and $m_{21}^p$ depend on the parameter $\hat r$ whilst functions $y(t)$ and $z(t)$ do not.
So, $a_1^1$ may depend only on the quadratic combination, $m_{21}^pm_{12}^p=-1+\left(m_{11}^p\right)^2$.
Thus $a_1^1$ should be an entire function of $m_{11}^p$. On the other hand, the inverse function $m_{11}^p(a_1^1)$
also should be an entire function. It means (see Theorem 4.3 of \cite{M2}), that $a_1^1=C_1m_{11}^p+C_2$, where
$C_1$ and $C_2$ are some constants. For $a_1^1=0$ we have $m_{11}^p=0$, see Corollary 3 and Proposition 3 (item (1))
of \cite{AK97MRL}. Thus $C_2=0$.  In the case $a_1^1=-1/2$, the monodromy matrices are diagonal which implies
$s_0^2=\pm1/2$, see the last two Equations~\eqref{eq:matrix-elements}. Our case corresponds to the lower triangular
case: $z=0$, $\Theta_0+\Theta_1+\Theta_\infty=0$, of Equation~\eqref{mainl}, the other one corresponds to the upper
triangular: $z=-\Theta_0$, $-\Theta_0-\Theta_1+\Theta_\infty=0$. The monodromy element, say, $m_{11}^0$ in our case
is defined by the monodromy of the function $\lambda^{\Theta_0/2}$ and equals $e^{2\pi\imath/4}=\imath$, in the
upper triangular case $m_{11}^0$ is defined by the monodromy of the function $\lambda^{-\Theta_0/2}$, which is
$e^{-2\pi\imath/4}=-\imath$. Using this fact we arrive at Equation~\eqref{eq:a11-s0}.
\end{proof}
In Subsection~\ref{subsec:numeric-extra-terms} we provide a numeric evidence of Equation~\eqref{eq:a11-s0}. For
for Solution~\eqref{eq:y-spec-meromorphic-a-2} with $a_1^1=-2$. The key monodromy data for this solution are as follows:
$$
m^1_{11}=m^0_{11}=-m^1_{22}=-m^0_{22}=4\imath.
$$
\subsection{Asymptotics as $t\to+\infty$}\label{subsec:asympt-spec-sol+infinity}

Since we have explicit formulae for solution for parameter $a_1^1=-1/2$ (see Equation~\eqref{eq:specBessel}),
we begin with the asymptotics for this case:
\begin{equation}\label{eq:y1-asymptotics+infty}
y_1(t)\underset{t\to+\infty}=-2t+2+\sum\limits_{n=1}^\infty\frac{a_n}{(2t)^n}+\mathcal{O}\left(t^{-\infty}\right),
\end{equation}
where
$a_1=3$, $a_2=18$, $a_3=153$, $a_4=1638$. In general $a_n$ is a sequence of positive integers counting the number of
Feynman diagrams in a problem in quantum electrodynamics (see sequence A005412 of \cite{OEIS}). The series is divergent,
$$
\frac{a_n}{2^n}=\frac{4}{\pi}\cdot\frac{n!}{n-1/2}\left(1+\mathcal{O}\left(\frac1{n^2}\right)\right).
$$
The complete asymptotic expansion can be written as a transseries with exponentially small terms. The estimate
above can be obtained with the help of this expansion.

Here, to give an instructive example we consider only the case of $a_1^1<0$. Obviously, for the real values of $a_1^1$
our solution is real for real $t$.
In this subsection we apply the results
obtained in paper \cite{AK1}. In that paper asymptotics as $t\to+\infty$ of solutions were parameterized via auxiliary
parameters $\hat v$ and $\beta_0$. We use general formulae, Equations (3.5) of \cite{AK1}, and substitute there
our particular monodromy data obtained in Subsection~\ref{subsec:monodromy data}:
\begin{align}
\beta_0&=\frac1{2\pi\imath}\ln\left(1+(m_{11}^p)^2\right)
=\frac1{2\pi i}\ln\left(1-(4a_1^1)^2\right),\label{eq:beta0}\\
\hat v&=\frac{\sqrt{2\pi}}{2a_1^1}\,\frac{2^\beta_0\exp\left(\pi\beta_0/2\right)}{\Gamma(\beta_0)}
=\frac{\sqrt{\pi\imath}\,2^\beta_0}{\Gamma(\beta_0)\sqrt{\sin(\pi\beta_0)}}.\label{eq:vh}
\end{align}
We see that if $-1/2<a_1^1<0$ parameters $\beta_0$ and $\hat v$ are defined uniquely in terms of the
initial data $a_1^1$, where  the branch of $\ln$ is fixed in a natural way: $\Re\beta_0=0$. In terms
of $\beta_0$, parameter $\hat v$ is fixed up to a sign which causes no problem because asymptotics are
invariant under this change.
In this case we have to use the results reported in Theorems~3.1 and 3.2 of \cite{AK1}.
In particular, it means that our solution is regular on the real positive semiaxis and the asymptotics of
functions $y(t)$ and $z(t)$ the reader can find in Theorem~3.1. Theorem~3.2 gives us the asymptotics of
the corresponding function $\zeta(t)$.

Consider the case $a_1^1<-1/2$ in a more detail. First we have to choose the real part of $\beta_0$ so that
one of the Theorems~3.1 or 4.1 would be applicable. It fixes uniquely parameter $\beta_0$ as follows,
$$
\beta_0=\frac12-\frac{\imath}{2\pi}\ln(4(a_1^1)^2-1).
$$
This equation implies that we are in the situation described by Theorem~4.1 of \cite{AK1}. It means that
in this case our solution has infinite number of poles on the positive semiaxis.
According to Theorem 4.1 function $y(t)$ has the following asymptotics in the proper cheese-like domain,
$$
y=\frac{\cos^2\ti x}{\sin ^2\ti x}+\mathcal{O}\left(\frac 1t\right), \quad
\ti x= \frac t4 + \gamma \ln t+\psi.
$$
where
\begin{equation}\label{eqs:gamma-psi}
\gamma=\frac1{4\pi}\ln\left(4(a_1^1)^2-1\right),\quad
\psi=-\frac1{2\imath}\ln\left(-\frac{\hat v}{\sqrt{2}}\,e^{\pi\imath/4}\right)
= \frac{\pi}4 +\gamma \log 2 +\frac 12 \op{arg} \Ga(\beta_0).
\end{equation}
The last equality holds because the quantity under the logarithm is unimodular. The reader can find the
corresponding asymptotics of function $z(t)$ and $\zeta(t)$ in Theorem~4.1 and Corollary~4.1 of \cite{AK1},
respectively.

The numeric values for the parameters $\gamma$ and $\psi$ corresponding to Solution~\eqref{eq:y-spec-meromorphic-a-2}
(with $a_1^1=-2$) are as follows:
\begin{equation}\label{eqs:gamma-beta-numeric}
\gamma=\frac{\ln(15)}{4\pi}=0.21549978\ldots,\qquad
\psi=1.27729163\ldots.
\end{equation}
\subsection{Transform $t\to -t$}
\label{subsec:t-to-minus-t}
Since general solutions of System~\eqref{eq:ids1}--\eqref{eq:ids3} are not singlevalued, transformation $t\to-t$
should be considered for analytic continuation of functions $y(t)$, $z(t)$, and $u(t)$ for
$\arg t\to\arg t+\pi(2k-1)$, for all $k\in\mathbb{Z}$. However, the solution we are interested in is a very special
one, and is in fact singlevalued, we consider only the case $k=1$. Here we present formulae which are valid for
the general solution of System~\eqref{eq:ids1}--\eqref{eq:ids3}. The purpose of this transformation is that the
analytic continuation of any solution at the point $-t$ can be presented in terms of another solution of this
system at the original point $t$. To uniquely define the latter solution we consider action of this transformation
not only on the space of the solutions but also on the manifold of monodromy data.

We refer to the definition of the canonical solutions $Y_k(\lambda)$ given in Section~\ref{sec:def-monodromy}, but
here we use an extended notation to reflect their dependence on the coefficients of Equation~\eqref{mainl}.
As one can easily see, transform $t\to -t$ induces the following transformation of the canonical solutions:
$$
Y_k(\lambda;t,z,y,u,\0,\1,\tin)=
\sigma_1 \tilde{Y}_{k+1}(\lambda;\ti t,\ti z,\ti y,\ti u,\ti \0,\ti \1,\ti \tin)\sigma_1\qquad
k\in\mathbb{Z},
$$
where $\tilde y=\tilde y(\tilde t)$, $\tilde z=\tilde z(\tilde t)$, $\tilde u=\tilde u(\tilde t)$ and
\begin{eqnarray*}
&\ti t=-t, \quad \arg \ti t =\pi +\arg t, \quad \ti \Theta_p=\Theta_p, \quad \ti\Theta_\infty =-\tin, &\\
& \ti z(\ti t)=-z (t)-\0, \quad \ti u(\ti t) =\frac 1{u(t)},\quad
\ti y(\ti t) = \frac1{y(t)}, &\\
& \ti \zeta(\ti t) = \zeta (t) -\0 t - \0 \tin.&
\end{eqnarray*}
The monodromy data are transformed as follows:
\begin{equation}\label{eq:monodromy-data-t-to-minus-t}
\begin{gathered}
\ti s_1 = e^{-2\pi \imath \tin}s_2,\quad \ti s_2 = s_1,\\
\ti M^p = \sigma_1 S_1 M^p S_1^{-1} \sigma_1,\qquad
p=0,1,\infty
\end{gathered}
\end{equation}
There is another transformation $t\to-t$ for IDS~\eqref{eq:ids1} -\eqref{eq:ids3}:
\begin{equation}\label{eqs:second-trans-t-to-minus-t}
\begin{gathered}
\ti t=-t, \quad \arg \ti t =\pi +\arg t, \quad \ti \Theta_p=\Theta_{1-p}, \quad \ti\Theta_\infty =\tin, \\
\ti z(\ti t)=-z (t)-\frac12(\Theta_0+\Theta_1+\Theta_\infty), \quad
\ti u(\ti t) =y(t)u(t)e^{-t+\pi\imath\Theta_\infty},\quad
\ti y(\ti t) = \frac1{y(t)}, \\
\ti \zeta(\ti t) = \zeta (t) -\frac12(\Theta_0+\Theta_1+\Theta_\infty)t
+\frac12(\Theta_1-\Theta_0)(\Theta_0+\Theta_1+\Theta_\infty).
\end{gathered}
\end{equation}
The corresponding transformation of the monodromy data can be found with the help of the following transformation
for the canonical solutions:
\begin{equation}\label{eq:second-trans-t-to-minus-t-Y}
\tilde{Y}_k(\tilde\lambda;\tilde t,\ti z,\ti y,\ti u,\ti{\Theta}_0,\ti{\Theta}_1,\ti{\Theta}_\infty)=
\exp\left(-\frac{t}2\sigma_3+\frac{\pi\imath\Theta_\infty}2\sigma_3\right)Y_k(\lambda; t,z,y,u,\0,\1,\tin).
\end{equation}
Here we assumed that
$$
\tilde\lambda=1-\lambda,\qquad
\arg\tilde\lambda\underset{\lambda\to\infty}{\to}\arg\lambda-\pi.
$$
The other tilde-variables are defined in Equations~\eqref{eqs:second-trans-t-to-minus-t}.
Equation~\eqref{eq:second-trans-t-to-minus-t-Y} imply the following relation for the monodromy data:
$$
\tilde{M}^\infty=M^\infty,\quad
\tilde{M}^0=M^1,\quad
\tilde{M}^1=M^1M^0(M^1)^{-1}.
$$
For our solution (see Equations~\eqref{eqs:monodromy-special}) we have $M^0=M^1$ so that $\tilde{M}^p=M^p$ for
$p=0,1,\infty$. This is consistent with the fact that Solution~\eqref{eq:general-l-specsol-y-at0}
$$
\tilde{y}(\tilde{t})=y(\tilde{t}),\quad
\tilde{z}(\tilde{t})=z(\tilde{t}).
$$
The last equality holds because of the relation $\Theta_0+\Theta_1+\Theta_\infty=0$. Thus functions $\tilde{y}$ and
$\tilde{z}$ are just the analytic continuation of $y(t)$ and $z(t)$ and, therefore have the same monodromy data.
\subsection{Asymptotics as $t\to-\infty$}\label{asympt-spec-sol--infinity}
Here we apply the first transformation considered in the previous subsection to find asymptotics of solution
\eqref{eq:general-l-specsol-y-at0} as $t\to-\infty$.
The monodromy data for our solution are given in Equations~\eqref{eq:s1=s2=0} and \eqref{eq:matrix-elements}.
Using Equations~\eqref{eq:monodromy-data-t-to-minus-t} we find that
$$
\tilde{s}_1=\tilde{s}_2=0,\quad
\ti M^0=\ti M^1=\sigma_1M^0\sigma_1.
$$
In particular we have $\hat{m}_{11}^p=-m_{11}^0$ for $p=0,1$.
On the other hand
$$
\tilde{y}(\tilde{t})=\frac1{y(t)}=y(-t)=y(\tilde{t})
$$
So, the functions $y$ and $\tilde{y}$ coincide for all arguments, but have different monodromy data! It is
explained by the fact that $\tilde{\Theta}_\infty=-\Theta_\infty=1$, therefore the corresponding functions $\tilde{z}$
and $z$ are different. It is solutions of IDS \eqref{eq:ids1}--\eqref{eq:ids3}, rather than solutions of
Equation~\eqref{eq:P5}, which are characterized uniquely by the monodromy data.

Now consider $y_1(t)$. Using Asymptotics~\eqref{eq:y1-asymptotics+infty} we find,
\begin{gather*}
y_1(t)\underset{t\to-\infty}=\frac1{y_1(-t)}=-\sum\limits_{n=1}^\infty\frac{b_n}{(-2t)^n}
+\mathcal{O}\left(t^{-\infty}\right),\\
b_1=1,\; b_2=2,\; b_3=7,\; b_4=38,\; b_5=286,\; b_6=2756,\;\ldots
\end{gather*}
where the asymptotic expansion is of the same type as \eqref{eq:y1-asymptotics+infty}. It is given by a
divergent series, all numbers $b_n$, are positive integers representing sequence A094664 of \cite{OEIS} with
the following asymptotics,
$$
\frac{b_n}{2^n}=\frac1{\pi}\cdot\frac{n!}{n-1/2}\left(1+\mathcal{O}\left(\frac1n\right)^2\right).
$$
The complete expansion can be presented as transeries with exponentially small terms. In fact, our
calculation represent an explicit relation between sequences A005412 A094664 of \cite{OEIS} which possibly
was not observed earlier.

Turning to the general value of the initial data $a_1^1$, we note that
$$
\tilde\beta_0=\beta_0,\qquad
\tilde{\hat v}=-\hat v,
$$
where $\tilde\beta_0$ and $\tilde{\hat v}$ are the parameters defined for the solution $\tilde y$, $\tilde z$, and
$\tilde u$ via Equations~\eqref{eq:beta0} and \eqref{eq:vh} with the corresponding monodromy data. Since these
parameters define asymptotic behaviour of our solution we conclude that qualitatively on the negative semiaxis
the solution behaves similar to on the positive one.

Explicit asymptotics for $-1/2<a_1^1<0$ can be found with the help of  Theorems~3.1 and 3.2 of \cite{AK1}.
In the case $a_1^1<-1/2$ the asymptotics is given in Theorem~4.1 of \cite{AK1}. For example, in the last case,
$$
y(t)\underset{t\to\infty}=\frac{\sin^2(\frac t4 -\gamma \ln (-t)-\psi)}{\cos ^2(\frac t4 -\gamma \ln (-t)-\psi)}
+\mathcal{O}\left(\frac 1t\right),
$$
where $\gamma$ and $\psi$ are given by Equations~\eqref{eqs:gamma-psi}. For the numerical values of $\beta_0$
and $\gamma$ for $a_1^1=-2$ see \eqref{eqs:gamma-beta-numeric}.
\subsection{Asymptotics for pure imaginary $t$}
\label{subsec:asympt-special-imaginary}
With the help of explicit formula \eqref{eq:specBessel} one finds:
\begin{align}
y_1(t)&\underset{t\to+\imath\infty}=\imath e^{t}-\frac{\imath e^{t}}{2t}\left(1-2\imath\sinh t\right)
+\mathcal{O}\left(\frac1{t^2}\right),\label{eq:y1-imaginary+}\\
y_1(t)&\underset{t\to-\imath\infty}=-\imath e^{t}+\frac{\imath e^{t}}{2t}\left(1+2\imath\sinh t\right)
+\mathcal{O}\left(\frac1{t^2}\right)\label{eq:y1-imaginary-}
\end{align}

Consider now the asymptotics as $t\to+\imath\infty$ for the general initial data $a_1^1<0$.
We see from \eqref{eq:a11-s0} that $\arg m_{11}^p=\frac{\pi}2$.
Theorem~\ref{th:allmon} implies:
\begin{align}
\varphi&=\frac1{2\pi\imath}\ln\left(-m_{11}^1\right)=-\frac14+\imath\kappa,\quad
\kappa=-\frac1{2\pi}\ln(-2a_1^1),\label{eq:phi-kappa}\\
\delta&=-\frac1{4\pi^2}\left(1-e^{\pi\imath(4\varphi+1)}\right)\Gamma\left(\varphi+\frac54\right)
\Gamma^2\left(\varphi+\frac34\right)\Gamma\left(\varphi+\frac14\right).\label{eq:delta}
\end{align}
With the help of the duplication formula for the gamma function we can rewrite $\delta$ as follows,
$$
\delta=\imath2^{-4\imath\kappa}e^{-2\pi\kappa}\frac{\sinh(2\pi\kappa)}{2\pi\kappa}\Gamma^2(1+2\imath\kappa).
$$

Both parameters $\nu_1$ and $\nu_2$ introduced in Theorems~\ref{th:new1} and \ref{th:new2} satisfy the
condition $\Re\nu_1=\Re\nu_2=1$, which is the boundary value for their applicability. It was mentioned
in Remark~\ref{rem:interlace-th3.1-th3.2} that in fact these theorems are valid beyond these boundary values.
In this case we can apply either Theorem for construction of the asymptotics. Suppose we consider
Theorem~\ref{th:new1}. Numerics presented in Subsection~\ref{subsec:numeric-extra-terms} show that the
leading term $\delta t^{\nu_1-1}e^t$ (see Theorem~\ref{th:new1}) delivers quite satisfactory approximation of the
function $y(t)$. So the terms $\mathcal{O}\left(1/t\right)$ are not needed, at least, for description of
the qualitative behavior of $y(t)$. At the same time the leading term of $z(t)$ in this case is just a constant,
therefore even for understanding of the qualitative behavior of $z(t)$ we need the the terms $\mathcal{O}\left(1/t\right)$.

In our case ($\Re\nu_1=1$) asymptotics presented in Theorem~\ref{th:new1} does not allow one to get
$\mathcal{O}\left(1/t\right)$-terms correctly for both functions $y(t)$ and $z(t)$. The result
presented there contains some $\mathcal{O}\left(1/t\right)$-terms but not all the terms of this order. To get these
terms we included Appendix~\ref{sec:allterms}, where we presented the complete asymptotic expansions for
$y(t)$, $z(t)$, and $\zeta(t)$ for large pure imaginary $t$ corresponding to both Theorems~~\ref{th:new1} and
~\ref{th:new2}. Following a notation introduced in Appendix~\ref{sec:allterms} we denote,
\begin{equation}\label{eq:alpha-in-delta-phi}
\alpha=\delta t^{\nu_1-1}e^t\equiv \delta t^{-(4\varphi+1)}e^t.
\end{equation}
For $\arg t=\pi/2$ we have $|\alpha|=|\delta|e^{2\pi\kappa}=1$. Putting $t=\imath|t|$ we rewrite $\alpha$ in terms
of $\kappa$,
$$
\alpha=\imath e^{\imath\omega},\qquad
\omega=|t|-4\kappa\ln(2|t|)+2\arg\Gamma(1+2\imath\kappa).
$$
Substituting the values for $\Theta$-parameters into
Equations~\eqref{eqs:asymptseries-y-1} and \eqref{eqs:asymptseries-z-1} we arrive at the following asymptotics:
\begin{align}
&y(t)\underset{t\to+\imath\infty}=\alpha\left(1+\frac{1}{t}\left((2\varphi+1)\alpha+
\left(\frac34(4\varphi+1)^2-\frac12\right)-\frac{2\varphi}{\alpha}\right)+\mathcal{O}\left(\frac1{t^2}\right)\right),
\label{eq:y-spec-imaginary+phi}\\
&z(t)\underset{t\to+\imath\infty}=\left(\varphi+\frac14\right)\left(-1+
\frac1t\left(\left(\varphi+\frac34\right)\alpha+\frac{\left(\varphi-\frac14\right)}\alpha\right)+
\mathcal{O}\left(\frac1{t^2}\right)\right).\label{eq:z-spec-imaginary+phi}
\end{align}
We also present these asymptotics in terms of $\{\kappa,\omega\}$-variables:
\begin{align}
&y(t)\underset{t\to+\imath\infty}=\imath e^{\imath\omega}
\left(1+\frac{\imath}{|t|}\left(\frac12+12\kappa^2+\sin\omega+4\kappa\cos\omega)\right)+
\mathcal{O}\left(\frac1{|t|^2}\right)\right),\label{eq:y-spec-imaginary+kappa}\\
&z(t)\underset{t\to+\imath\infty}=-\imath\kappa\left(1+\frac{2\kappa\sin\omega-\cos\omega}{|t|}+
\mathcal{O}\left(\frac1{|t|^2}\right)\right).\label{eq:z-spec-imaginary+kappa}
\end{align}
The last asymptotics reflects the fact that for pure imaginary $t$: $|y(t)|=1$ and $\Re z(t)=0$.

Asymptotics of $y(t)$ for $\kappa=0$ \eqref{eq:y-spec-imaginary+kappa} coincides with \eqref{eq:y1-imaginary+}.
We recall that in this case $z_1(t)\equiv0$, which is consistent with \eqref{eq:z-spec-imaginary+kappa}.

Numerical values for $a_1^1=-2$:
\begin{equation}
\varphi+\frac14=\imath\kappa=\imath0.22063560\ldots, \qquad
\de =-3.48631745\ldots+\imath1.96101774\ldots.
\label{predicted}
\end{equation}

Now consider asymptotics for the negative imaginary axis ($\arg t=-\frac \pi 2$). Asymptotics for $y(t)$
in this case can be calculated via the symmetry considered in Subsection~\ref{subsec:t-to-minus-t}, since our special
solution $y(t)$ does not depend on the sign of $\Theta_\infty=\pm1$. However, the corresponding function $z(t)$
depends on this sign.
Therefore, to get asymptotics in this case we again address Theorems~\ref{th:new1} and \ref{th:allmon}
and Appendix~\ref{sec:allterms}. To distinguish from the previous case we denote basic parameters
as $\varphi_-$ and $\delta_-$. Then Theorem~\ref{th:allmon} implies:
$$
\varphi_-=-\frac1{2\pi\imath}\ln (m^0_{11})=-\frac1{2\pi\imath}\ln(-\imath2a_1^1)=-\frac14-\imath\kappa=\bar\varphi.
$$
Here $\bar{(\cdot)}$ denotes complex conjugation of the parameter $(\cdot)$.
Parameter $\delta_-$ is given by Equation~\eqref{eq:delta} but with $\varphi\to\bar\varphi$, therefore
$\delta_-=\bar\delta$.

The conditions on parameters $\nu_1$ and $\nu_2$ in Theorems~\ref{th:new1} and \ref{th:new2}, respectively,
do not depend on the imaginary part of $\varphi$, so we conclude that asymptotics of functions $y(t)$ and
z(t) as $t\to-\imath\infty$ are given by r.-h.s. of Equations~\eqref{eq:y-spec-imaginary+phi} and
\eqref{eq:y-spec-imaginary+phi} with $\varphi\to\bar\varphi$ and $\alpha\to\alpha_-$, where $\alpha_-$ is defined by
Equation~\eqref{eq:alpha-in-delta-phi} with $\varphi\to\bar\varphi$ and $\delta\to\delta_-$. Turning to
$\{\kappa,\omega\}$-variables we have to put $t=-\imath|t|$, then $\alpha_-=\bar\alpha$ and asymptotics of
functions $y(t)$ and $z(t)$ as $t\to-\imath\infty$ are given by the complex conjugation of
Equations~\eqref{eq:y-spec-imaginary+kappa} and \eqref{eq:z-spec-imaginary+kappa}, respectively.
\section{Numerical Verification}\label{sec:numerics}

The purpose of this section is twofold: 1) To check the absence of any occasional mistakes in the formulae
presented in this paper, and 2) To visualize solutions of the fifth Painlev\'e equation and related functions.

The results of our numerical verifications are presented on figures. In the online version of the paper,
graphs of numerical solutions are given in red colors while graphs of their large-$t$ asymptotics are plotted
in green. In the early version of the paper the numerical calculations were done with the help of MATHEMATICA code.
In this version we redid these calculations with MAPLE code for the purpose of presenting additionally
the connection results for function $u$, which were absent before. Inclusion of $u$ makes verification more complete.
This function appears also in some applications and helps to calculate asymptotics of some interesting integrals
with functions $y$ and $z$. On the newly produced figures we presented the plots on intervals closer to the
origin, so that the reader can see when the functions achieve their asymptotic behavior.

In this section we use the following notation:
$$
t\equiv\pm\imath x
\quad
\mathrm{with}\quad
x>0\quad
\mathrm{iff}\quad
\arg t=\pm\frac\pi2.
$$
Assuming that argument of $t$ is fixed as above we denote
$$
\tilde y(x)\equiv y(\pm\imath x),\quad
\tilde z(x)\equiv z(\pm\imath x),\quad
\tilde u(x)\equiv u(\pm\imath x).
$$
In numerical calculations we choose the point $x_0$ in the neighborhood of $x=0$, where we take initial data,
for the numeric calculation of the solution under investigation. The solution is defined by taking some particular
values for parameters $\sigma$, $s^2$, and $r$,  defining asymptotic behavior of solutions as $t\to0$,
see Section~\ref{sec:zero}. Using asymptotics presented in Theorem~\ref{th1} we calculate the
corresponding initial data. Thus it is clear that the closer $x_0$ to the origin the better initial data correspond
to the chosen parameters. On the other hand the closer $x_0$ to the origin the more precision is required in the
calculations, which increases the time of calculations.
The reader can notice that choice of $x_0$ varies between
computations, which is done intentionally.  For some solutions to get a reliable result we have to choose the initial
point $x_0$ by a factor $10^{-6}$ closer to the origin, than for some others. So the choice of $x_0$ is an important
issue in the calculations. In all examples presented in this section parameter $x_0$ is chosen with some margin for error.
It means that making it 5--10 times larger wouldn't result in a visible change of the graphs. Similarly, making it smaller
also does not have a noticeable visual effect on the plots (while certainly changing slightly the numerical values).

Note that for the solution considered in Subsection~\ref{subsec:numeric-extra-terms} we used a different scheme of
calculations which allows for $x_0$ to be zero.

We conclude the introductory part of this section by giving some details on settings for MAPLE and Mathematica codes
we used.

In MAPLE code for most of the calculations the value of parameter {\it Digits} 10--14 is enough.  The word `enough'
in the previous sentence means that by making $x_0$ smaller and increasing the accuracy of calculations we cannot
notice any visual change in the plots.
For producing of the pictures we set this
parameter to 16. We used the standard {\it dsolve} procedure with parameters $abserr=relerr=e^{-12}$ and
$maxfun=-1$. For plotting we used procedure $plots[odeplot]$ with $numpoints=600$.
The increase of accuracy with the help of these parameters does not effect any how on visual quality
of the plots though may results in substantial increase of the time of calculations.

The original plots were produced with MATHEMATICA code.
In the process of numerically solving the ODE we have to use arbitrary-precision arithmetic. We solve
System~\eqref{eq:ids1}, \eqref{eq:ids2} via NDSolve command with parameters WorkingPrecision at 40 and MaxSteps set
to Infinity. As explained in documentation on NDSolve, the parameters PrecisionGoal and AccuracyGoal are set to half
of WorkingPrecision by default. Thus, in our case they are set to 20. We have to specify the terminal, or final, point
$x_f=1000$. For calculations with such accuracy in most cases after $x=50$ we were not able to distinguish the plots
of the numerical solutions and their large-$t$ asymptotics. So that we used markers on the curves to show that it is
actually two curves rather than one.

In presentation of numerical values we give only the first 9 digits after the decimal point. In fact, as follows from
above we have done our calculations with the better accuracy.
In case a numerical value of some parameter is given with a less amounts of digits it means that it is its exact value.
\subsection{Generic Case: Theorem~\ref{th:new1}, $\Im t>0$}

In this subsection we present two examples of solutions whose asymptotics on positive imaginary semi-axis can be
constructed with the help of Theorem~\ref{th:new1}.

For our first numerical run we fix the formal monodromies as follows:
$$
\0 = 0.7,\qquad
\1 =0.3,\qquad
\Theta_{\infty}= 0.4
$$
and take the parameters defining asymptotics as $t\to0$ as follows
$$
\sigma = 0.32,\qquad
s=0.3.
$$

We use the leading terms of asymptotics given in Theorem~\ref{th1} to calculate the initial data for the numerical
solutions at $x_0=10^{-4}$:
\begin{gather*}
\tilde y(x_0) = 0.994568108\ldots-\imath0.002959252\ldots,\quad
\tilde z(x_0)=-47.584547341\ldots+\imath26.986260774\ldots,\\
u(x_0)=r(-0.020313259\ldots - \imath0.014746818\ldots).
\end{gather*}

To calculate the parameters $\varphi$ and $\delta$ defining, according Theorems~\ref{th:new1} and \ref{th:new2},
the large-$t$ asymptotics of the functions $\tilde y(x)$ and $\tilde z(x)$ we have first to find the monodromy data
with the help of Theorem~\ref{th2}. According to Theorem~\ref{th:allmon} we need only the following data:
\begin{gather*}
m^1_{11}=12.437589470\ldots+\imath6.650948445\ldots,\quad
m^1_{12}=r\imath9.125827299\ldots,\\
m^0_{21}m^1_{12}=-61.163346517\ldots+\imath188.241424656\ldots.
\end{gather*}
Now, Theorem~\ref{th:allmon} implies:
\begin{gather*}
\varphi=0.278154042\ldots-\imath0.421199315\ldots,\quad
\delta=-4.554462477\ldots+\imath6.135670701\ldots,\\
\hat u=r(-0.319233835\ldots-\imath2.211728624\ldots).
\end{gather*}
Note that by Theorem~\ref{th:allmon} $\varphi$ (see Equation~\eqref{mr1}) is defined via logarithms.
Namely, $\Re\varphi$ is defined $\mod\mathbb Z$.
To fix it uniquely we have to calculate parameters $\nu_1$ and $\nu_2$ given in Theorems~\ref{th:new1} and \ref{th:new2}:
$$
\nu_1=0.287383828\ldots+\imath1.684797263\ldots,\quad
\nu_2=1.712616171\ldots-\imath1.684797263\ldots
$$
and check whether one of the numbers $\Re\nu_k$, $k=1,2$, fits the interval $(-1/2,1)$. In our case $\Re\nu_1$ fits
the interval, while $\Re\nu_2$ does not. If neither number $\Re\nu_k$ is within the interval, we have, using
the ambiguity $\mod\mathbb Z$ in $\varphi$, to adjust one of them to fit the interval $(-1/2,1)$. If such adjustment
is not possible for a given value of $\varphi$, then the reader is addressed to part II of this work.

In our case we are in conditions of Theorem~\ref{th:new1}. So that the large-$t$ asymptotics have to be
calculated with the help of this Theorem.

The results of the calculations (with $r=1$) are presented on Figs.~\ref{C1ReY}-\ref{C1ImU}. The range of calculations
on all figures was from $x=5$ to $300$. Since qualitative  large-$t$ behavior of functions $y$ and $z$ is obvious from
Figures~\ref{C1ReY}-\ref{C1ImZ}, we bounded the range of the plot by $x=200$.
Function $u$ shows a more interesting behavior so we plotted it on the whole range it was calculated.
\begin{figure}
\begin{center}
\includegraphics[height=3.0in,width=6in]{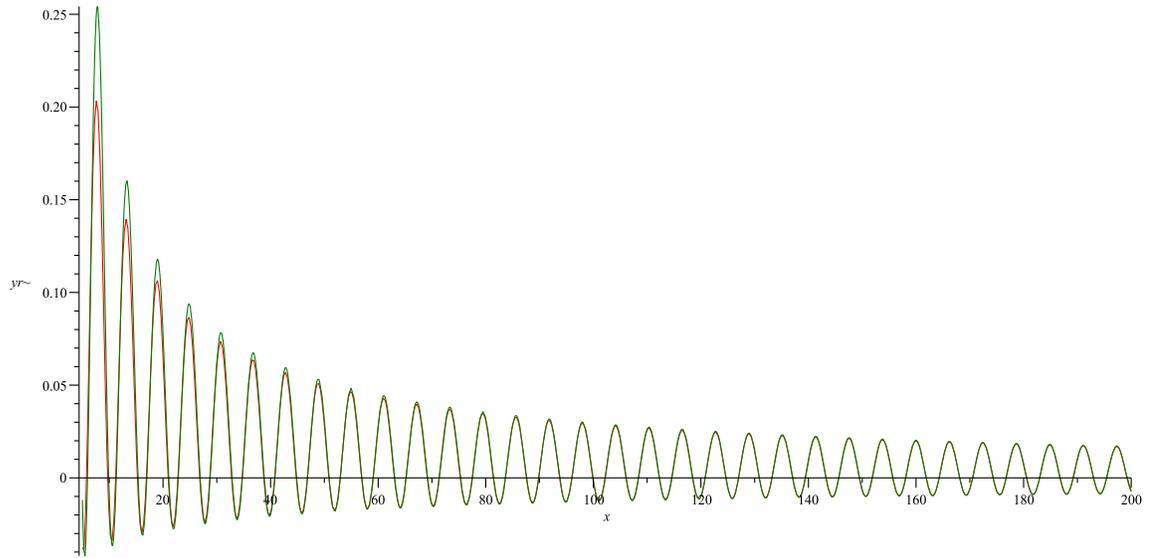}
\caption{Real part of $y$: large-$t$ asymptotics and numerical solution}\label{C1ReY}
\end{center}
\end{figure}
\begin{figure}
\begin{center}
\includegraphics[height=3.0in,width=5in]{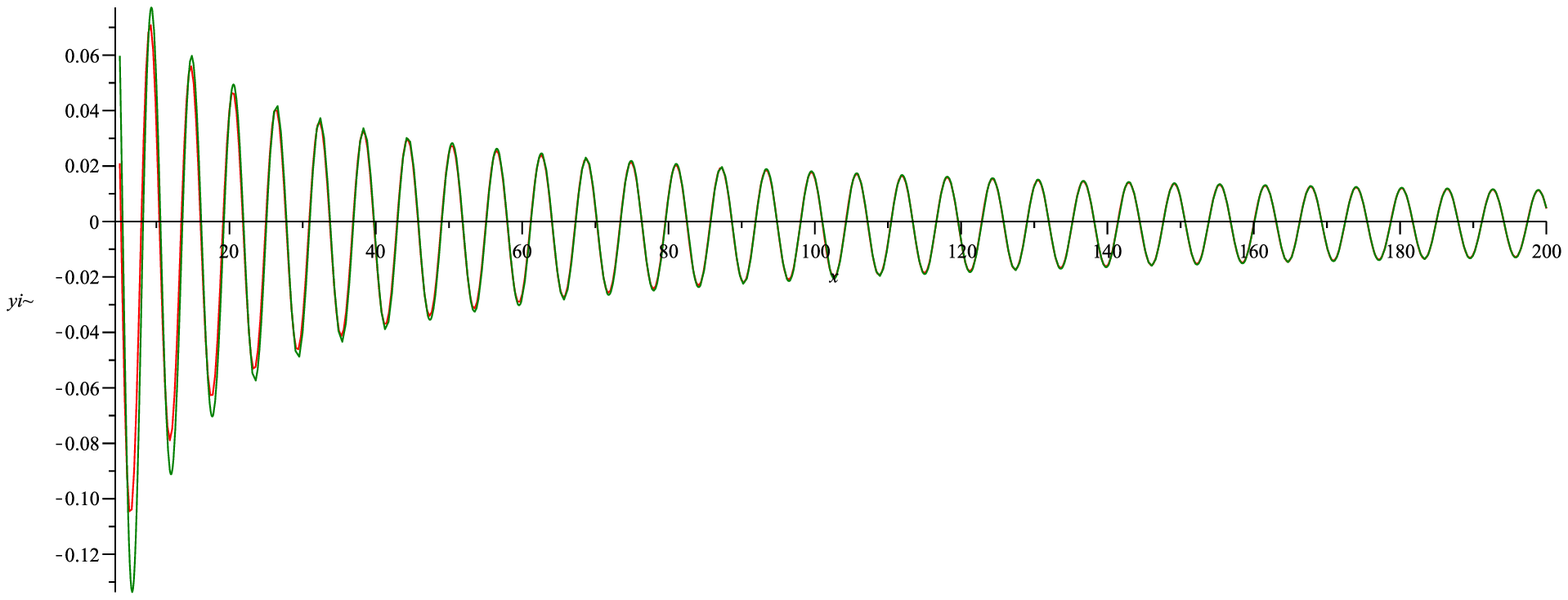}
\caption{Imaginary part of $y$: large-$t$ asymptotics and numerical solution}\label{C1ImY}
\end{center}
\end{figure}
\begin{figure}
\begin{center}
\includegraphics[height=3.0in,width=5in]{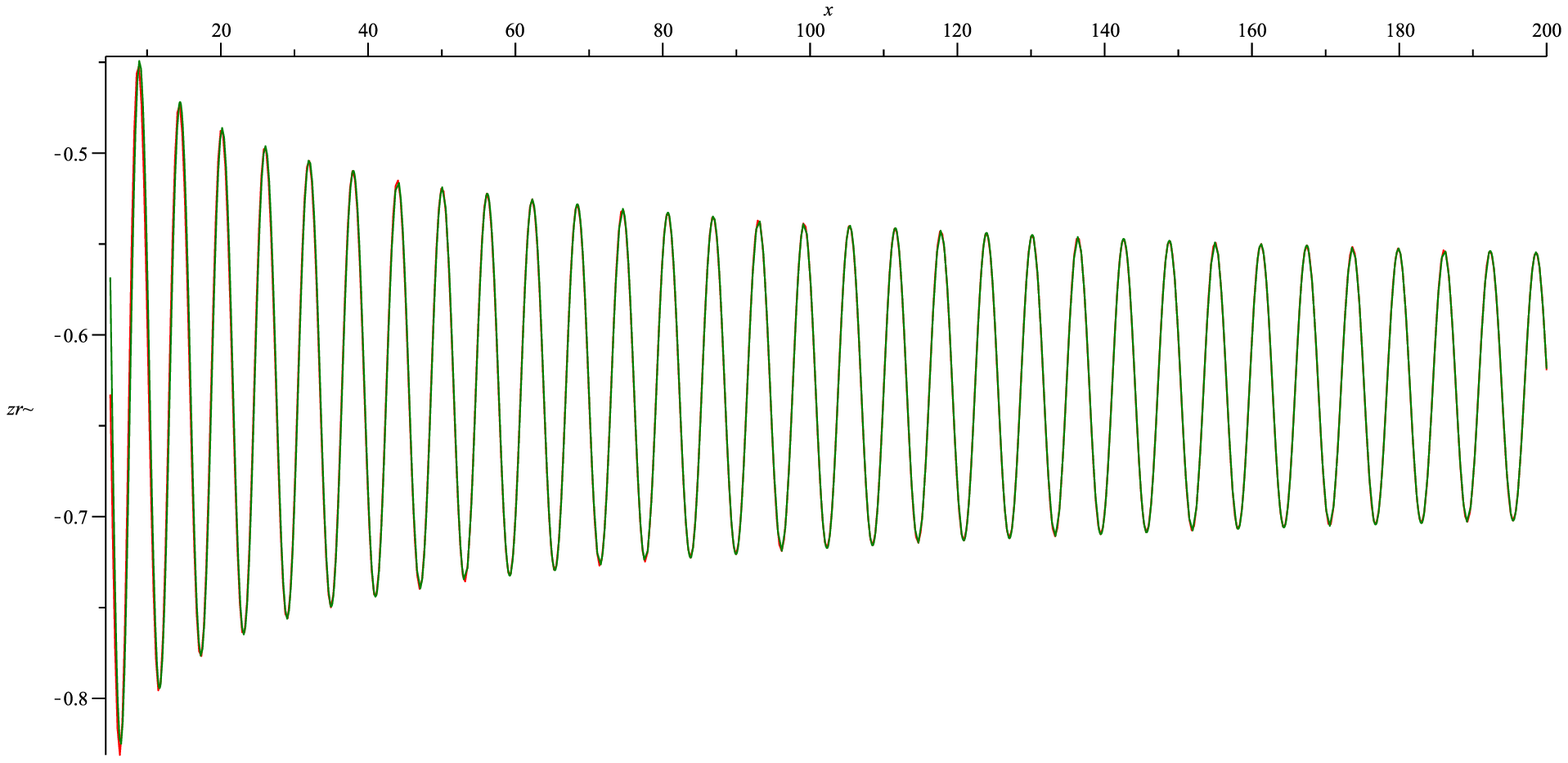}
\caption{Real part of $z$: large-$t$ asymptotics and numerical solution}\label{C1ReZ}
\end{center}
\end{figure}
\begin{figure}
\begin{center}
\includegraphics[height=3.0in,width=5in]{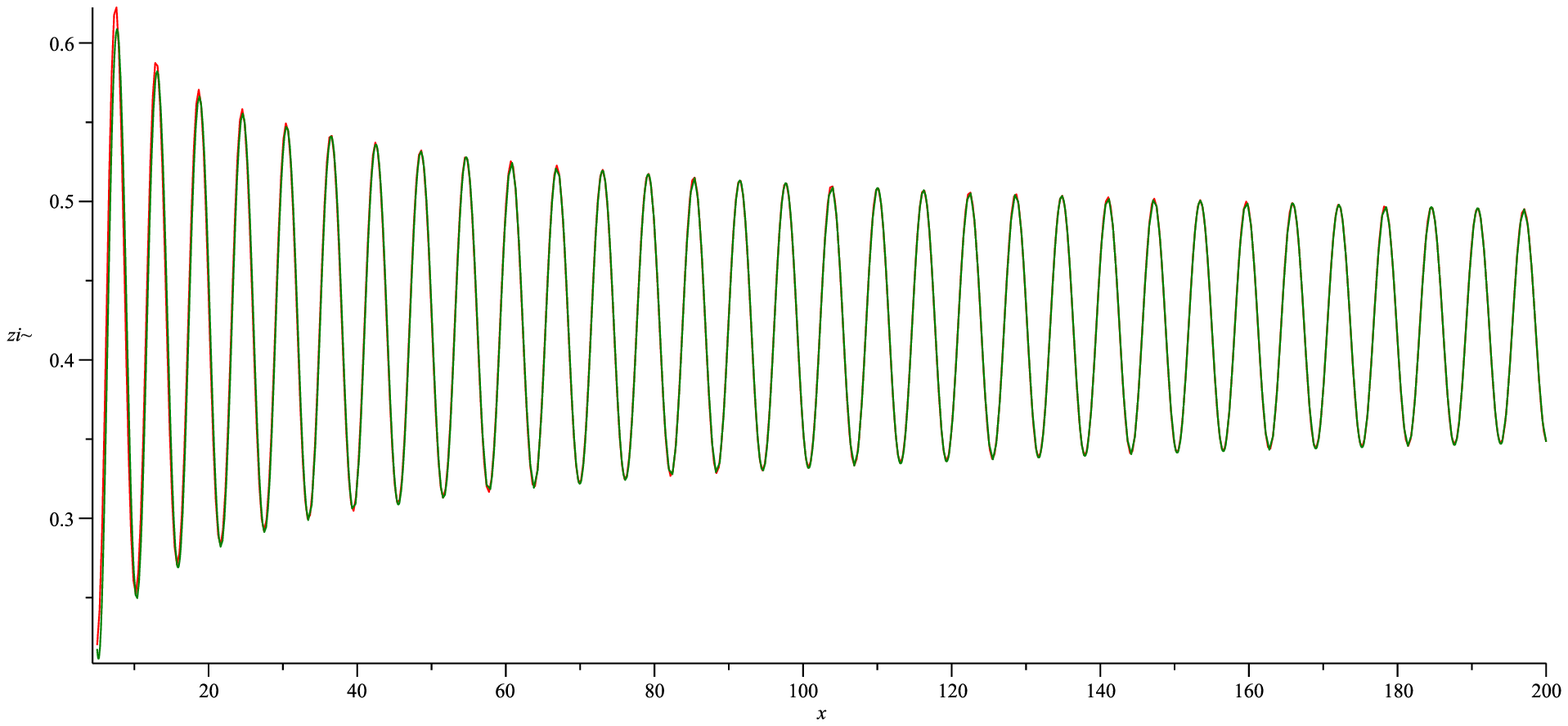}
\caption{Imaginary part of $z$: large-$t$ asymptotics and numerical solution}\label{C1ImZ}
\end{center}
\end{figure}
\begin{figure}
\begin{center}
\includegraphics[height=3.0in,width=5in]{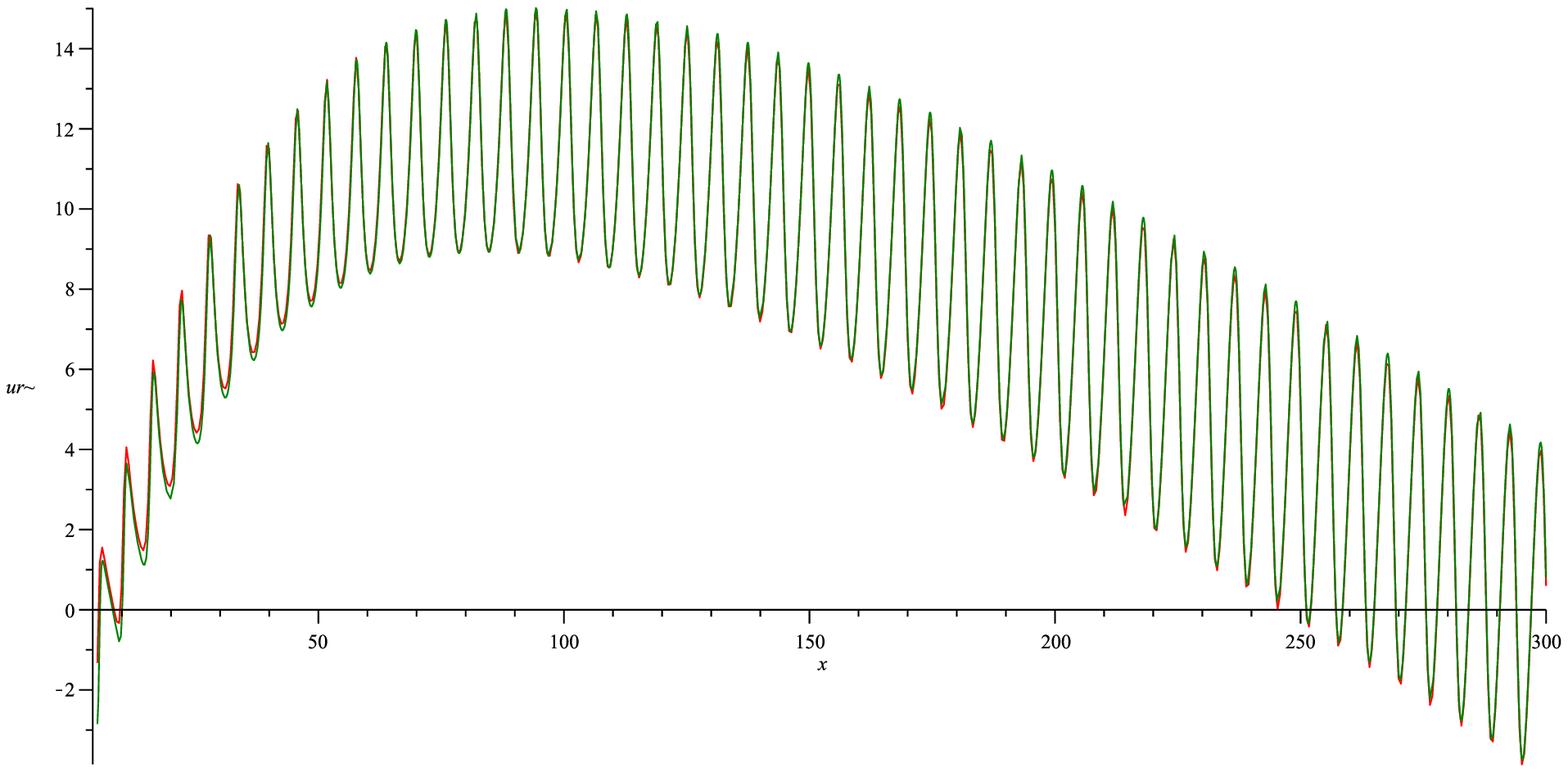}
\caption{Real part of $u$: large-$t$ asymptotics and numerical solution}\label{C1ReU}
\end{center}
\end{figure}
\begin{figure}
\begin{center}
\includegraphics[height=3.0in,width=5in]{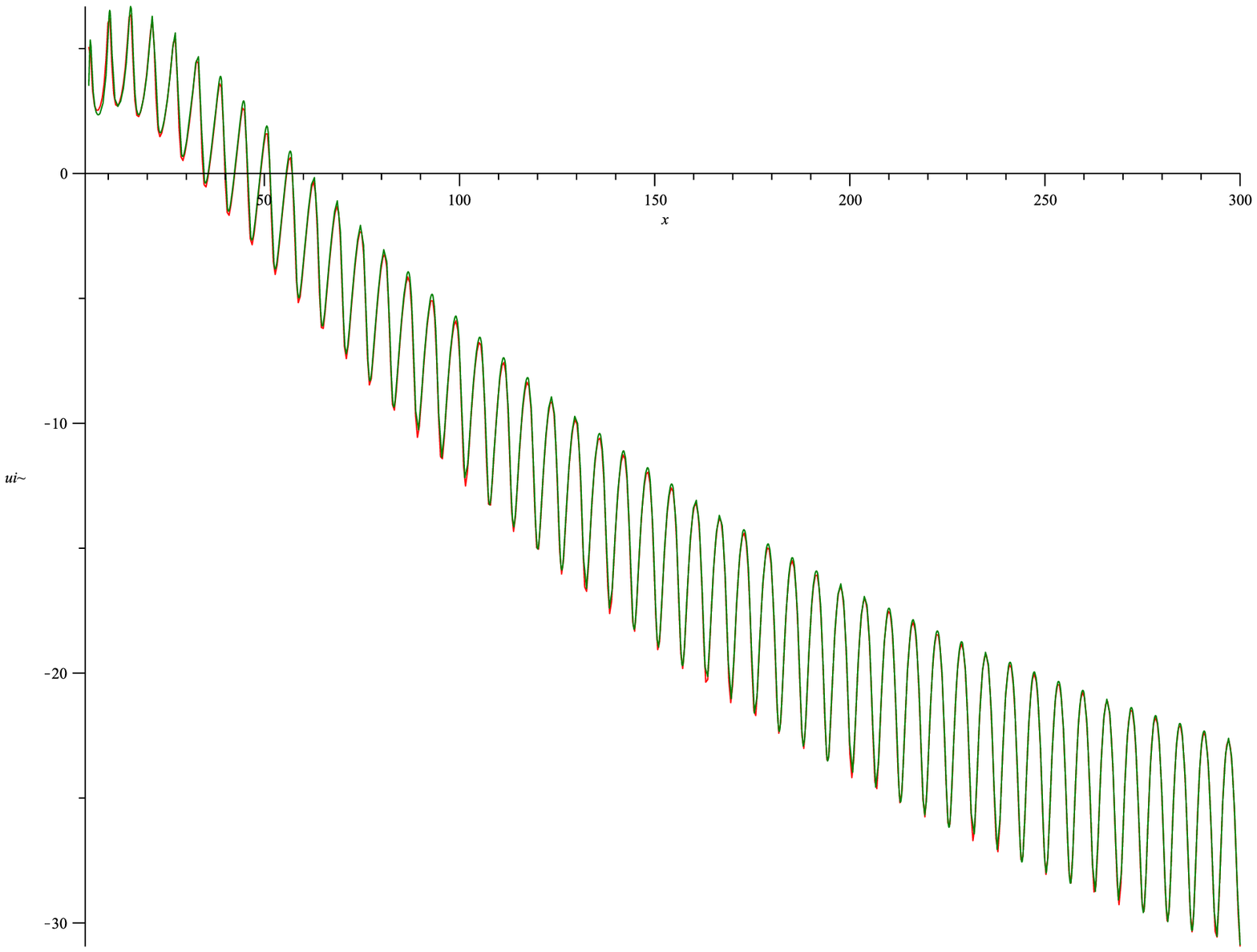}
\caption{Imaginary part of $u$: large-$t$ asymptotics and numerical solution}\label{C1ImU}
\end{center}
\end{figure}

Now we consider the second case. Since the scheme of calculations is exactly the same we present only numerical values
of the parameters and the resulting graphs.
\begin{gather*}
\Theta_0=0.3,\qquad
\Theta_1=0.4,\qquad
\Theta_\infty=-0.8\\
\sigma=0.1,\qquad
s=3.5,\qquad
x_0=10^{-5}.
\end{gather*}
Initial values for the numerical solution:
\begin{gather*}
y(x_0)=0.266805303\ldots+\imath0.358746453\ldots,\quad
z(x_0)=-0.330124183\ldots-\imath0.161506133\ldots\\
u(x_0)=r(12944.541090242\ldots+\imath18240.855333418\ldots).
\end{gather*}
The monodromy data:
\begin{gather*}
m^1_{11}= -3.948973870\ldots+\imath1.218498193\ldots,\quad
m^1_{12}= r\imath8.666566605\ldots,\\
m^1_{12}m^0_{21}=13.008291429\ldots-\imath9.451076940\ldots.
\end{gather*}
The parameters of asymptotics as $t\to+\infty$:
\begin{gather*}
\varphi=0.052366192\ldots-\imath0.225829516\ldots,\quad
\delta=0.221777960\ldots+\imath1.344890970\ldots,\\
\hat u=r(-1.459177245\ldots-\imath0.519203287\ldots).
\end{gather*}
The parameters $\nu_k$ are as follows:
$$
\nu_1= -0.009464771\ldots+\imath0.903318064\ldots,\quad
\nu_2=2.009464771\ldots-\imath0.903318064\ldots,
$$
which implies that we have to use again Theorem~\ref{th:new1}.
The results of calculations ($r=1$) are presented on Figs.~\ref{C2ReY}-\ref{C2ImU}. Here we plotted the solutions on
a small segment $[2,30]$, to show how the large-$t$ asymptotics approximate the solution at finite interval.
The settings indicated in preamble to this section allows one to plot the solutions far beyond $x=300$.
On Figures~~\ref{C2ReU}-\ref{C2ImU} we see that function $u$ has poles approaching the positive imaginary
semi-axis as $t\to+\imath\infty$.
\begin{figure}
\begin{center}
\includegraphics[height=3.0in,width=6in]{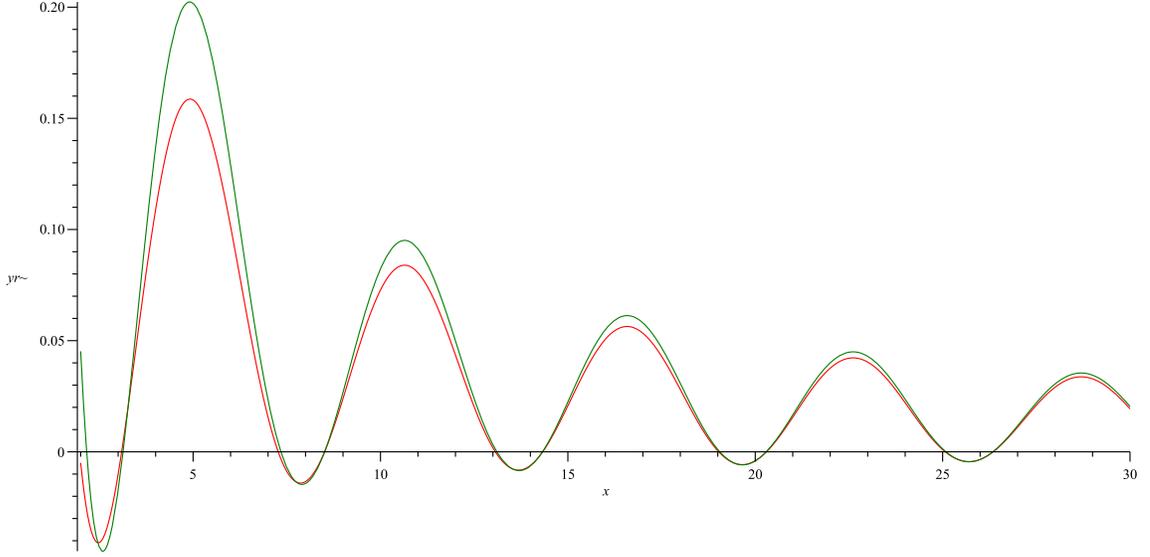}
\caption{Real part of $y$: large-$t$ asymptotics and numerical solution}\label{C2ReY}
\end{center}
\end{figure}
\begin{figure}
\begin{center}
\includegraphics[height=3.0in,width=5in]{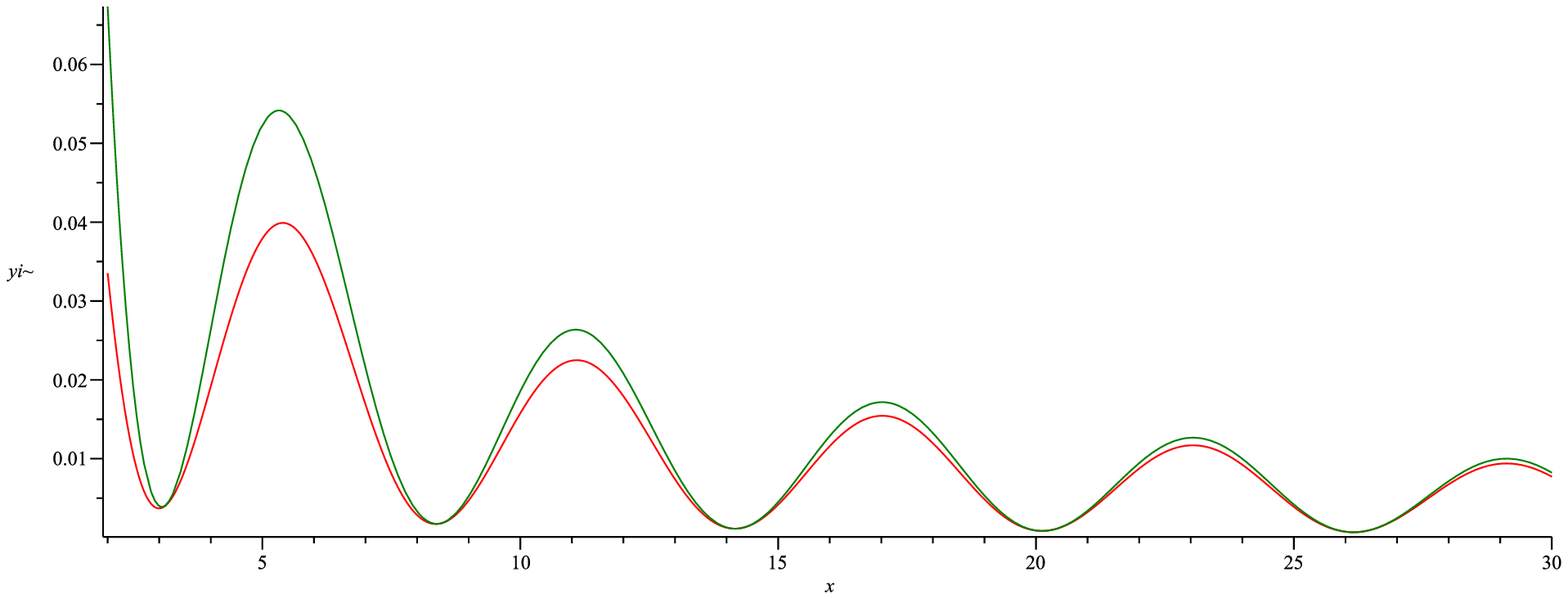}
\caption{Imaginary part of $y$: large-$t$ asymptotics and numerical solution}\label{C2ImY}
\end{center}
\end{figure}
\begin{figure}
\begin{center}
\includegraphics[height=3.0in,width=5in]{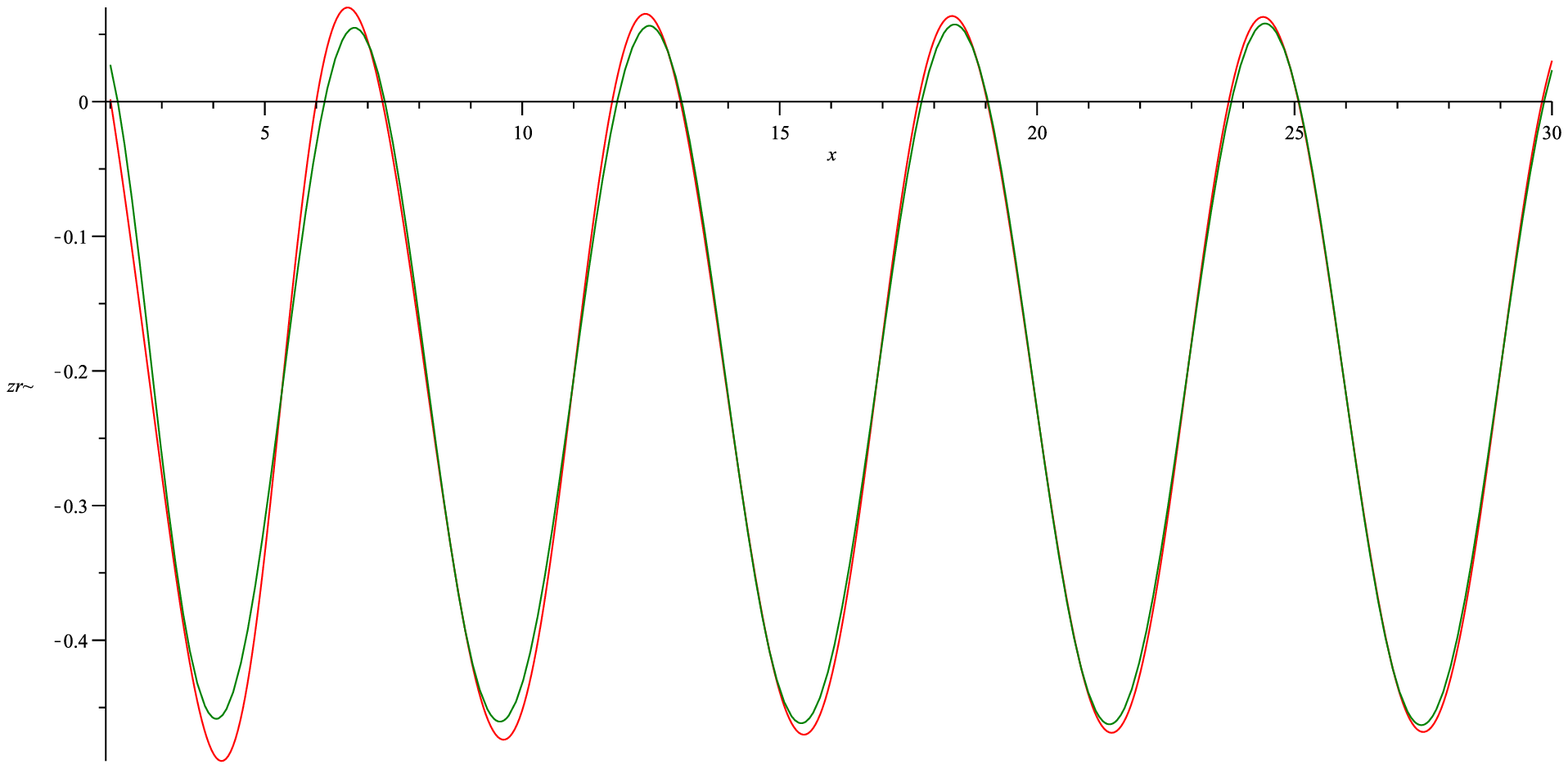}
\caption{Real part of $z$: large-$t$ asymptotics and numerical solution}\label{C2ReZ}
\end{center}
\end{figure}
\begin{figure}
\begin{center}
\includegraphics[height=3.0in,width=5in]{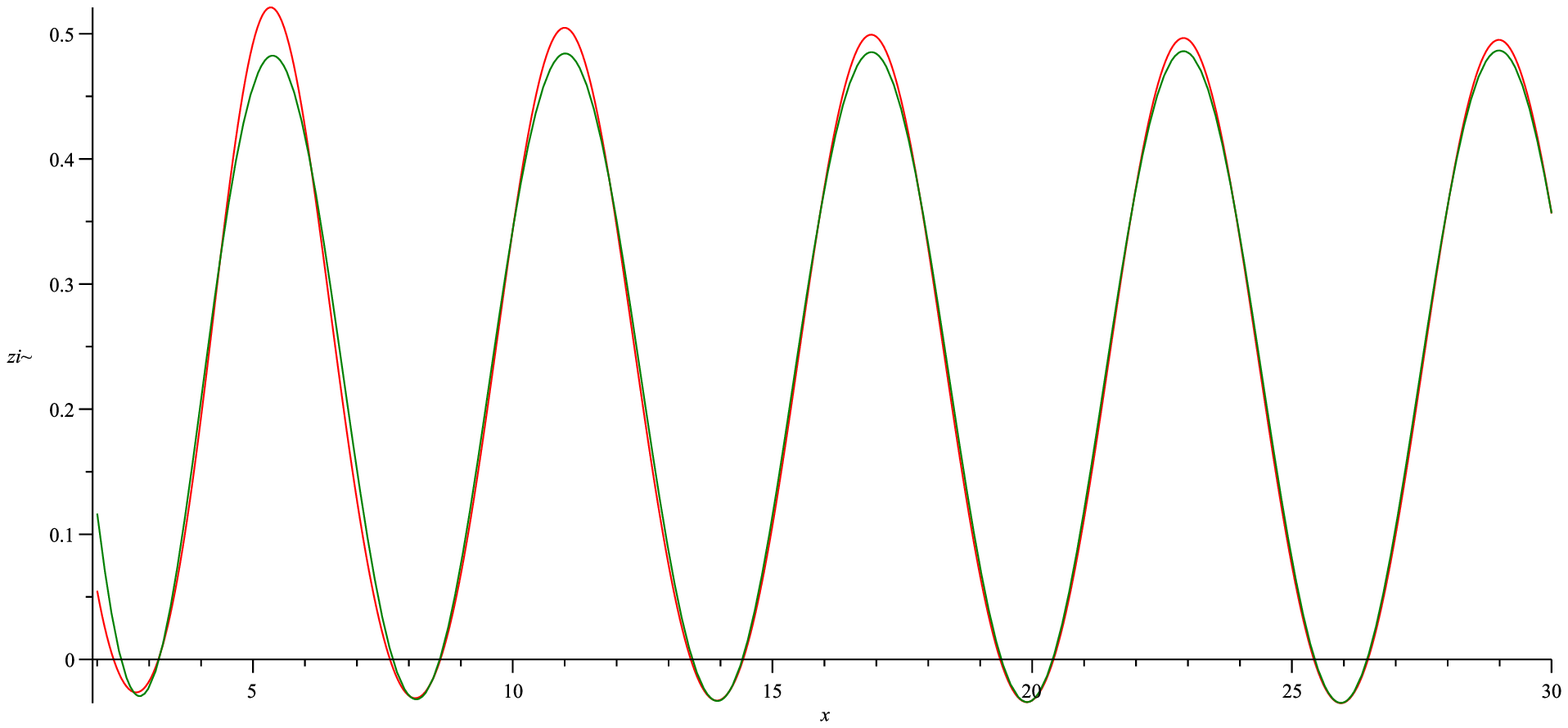}
\caption{Imaginary part of $z$: large-$t$ asymptotics and numerical solution}\label{C2ImZ}
\end{center}
\end{figure}
\begin{figure}
\begin{center}
\includegraphics[height=3.0in,width=5in]{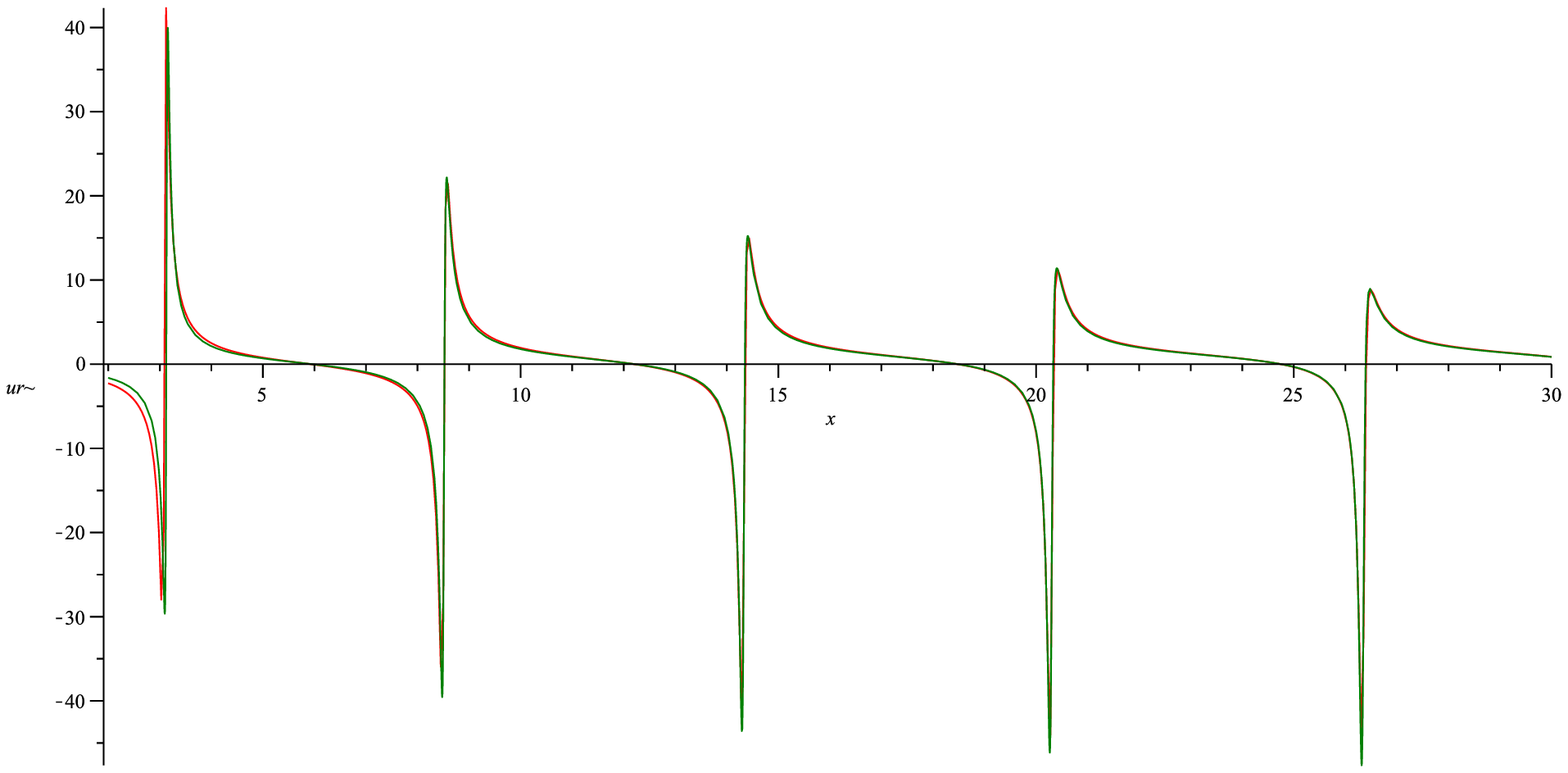}
\caption{Real part of $u$: large-$t$ asymptotics and numerical solution}\label{C2ReU}
\end{center}
\end{figure}
\begin{figure}
\begin{center}
\includegraphics[height=3.0in,width=5in]{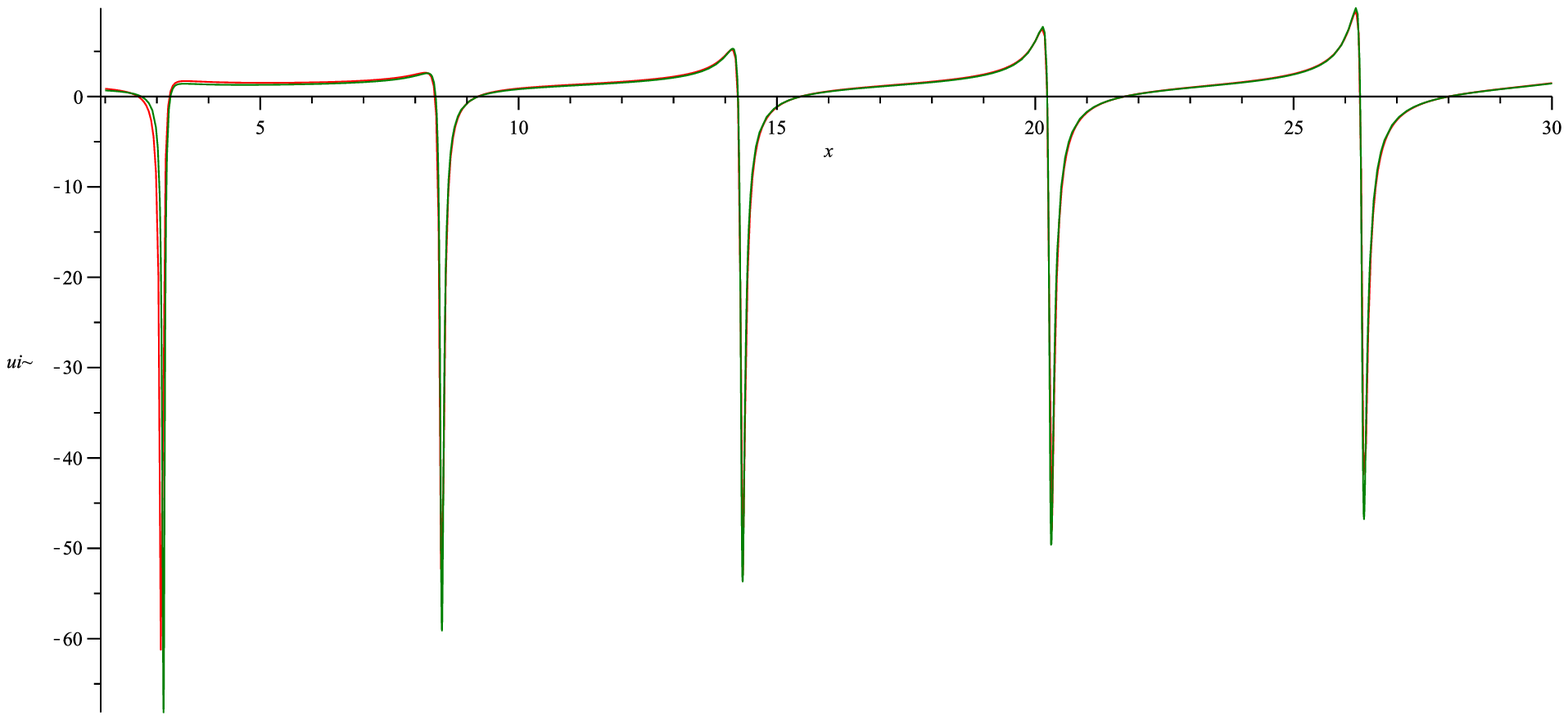}
\caption{Imaginary part of $u$: large-$t$ asymptotics and numerical solution}\label{C2ImU}
\end{center}
\end{figure}
\subsection{Generic case: $\Re\Theta_0>1$, $\Im t<0$}\label{subsec:ReTh0ge1}

In this subsection we check our asymptotic results for the case when the real part of one of the formal monodromies is
greater than $1$. In most cases this situation can be served (at least formally) with the help of the symmetry groups
acting in the space of solutions of IDS~\eqref{eq:ids1}--\eqref{eq:ids3}.
Therefore, it is enough to restrict real parts of formal monodromies within the segment $[0,1]$.
Such type of restrictions very often are imposed in studies of the Painlev\'e equations. However, application
of the symmetries to asymptotics is often related with cumbersome calculations.
Moreover, sometimes in these calculations it is not enough to deal only with the leading term of the asymptotics,
because the corresponding terms may cancel, so that one has to keep a few minor terms in asymptotic expansion to get
the correct result. At the same time none of our results includes this limitation on the real parts of the formal
monodromies. Therefore, it is reasonable to demonstrate the validity of our asymptotic results in the situation
beyond the limitations on real parts of the formal monodromies.

The set of formal monodromies for this subsection is as follows:
$$
\Theta_0 = 1.65,\qquad
\Theta_1 = 0.28,\;\;\mathrm{and}\;\;
\Theta_\infty= 0.37.
$$
The choice of the parameters defining asymptotics as $t\to-0\imath$
is the same as in the first example of the previous subsection:
$$
\sigma= 0.32,\qquad s = 0.3.
$$
It is an experimental fact that the construction of the numerical solution with the scheme explained in preamble
of this section requires to take the initial point closer to the origin, namely, $x_0=10^{-10}$. Note that in this
case variable $x>0$ is defined as $t=-\imath x$.

The initial values of the numerical solution:
\begin{gather*}
\tilde y(x_0) =0.999993054\ldots+\imath0.000003817\ldots,\
\tilde z(x_0)=-89006.808677592\ldots-\imath48934.551891324\ldots,\\
\tilde u(x_0)=r(-0.000166764279\ldots+\imath0.000109542161\ldots).
\end{gather*}
The monodromy data of the solution reads:
\begin{gather*}
m^0_{11}=2.060665876\ldots+\imath20.909643615\ldots,\quad
m^1_{12}=-r\imath15.422245661\ldots,\\
m^0_{21}m^1_{12}=-174.927577207\ldots+\imath404.233780229.
\end{gather*}
The parameters defining asymptotics as $t\to-\imath\infty$ are as follows:
\begin{gather}
\varphi=-0.234365609\ldots+\imath0.484633675\ldots,\
\delta=40.943218924\ldots+\imath12.639857745\ldots;\nonumber\\
\hat u=r(-4.935514833\ldots+\imath1.264358256\ldots),\\
\nu_1=2.307462436\ldots-\imath1.938534700\ldots,\quad
\nu_2=-0.307462436\ldots+\imath1.938534700\ldots.\label{eq:nu2theta0ge1}
\end{gather}
As follows from Equation~\eqref{eq:nu2theta0ge1} we are within the conditions of Theorem~\ref{th:new2}.
Note that parameter $\varphi$ here is calculated via Equation~\eqref{mr2}.
The corresponding asymptotic and numerical solutions are plotted  on the Figs.~\ref{C3ReY}-\ref{C3ImU}.

The accuracy of calculations (see the preamble to this section) allows one to build plot of the numerical
solution
which visually coincides with its large-$t$ asymptotics far beyond $x=300$. Specifically, we present
plots on segment $[1, 50]$, where the reader still can see difference between the numerical solution and the asymptotics.
\begin{figure}
\begin{center}
\includegraphics[height=3.0in,width=6in]{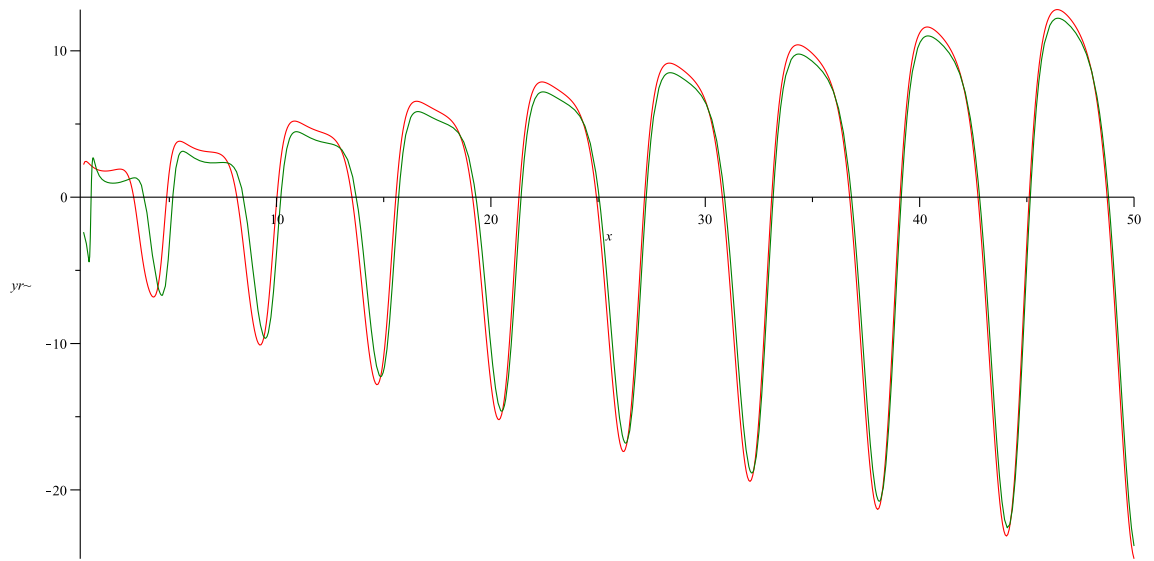}
\caption{Real part of $y$: large-$t$ asymptotics and numerical solution}\label{C3ReY}
\end{center}
\end{figure}
\begin{figure}
\begin{center}
\includegraphics[height=3.0in,width=5in]{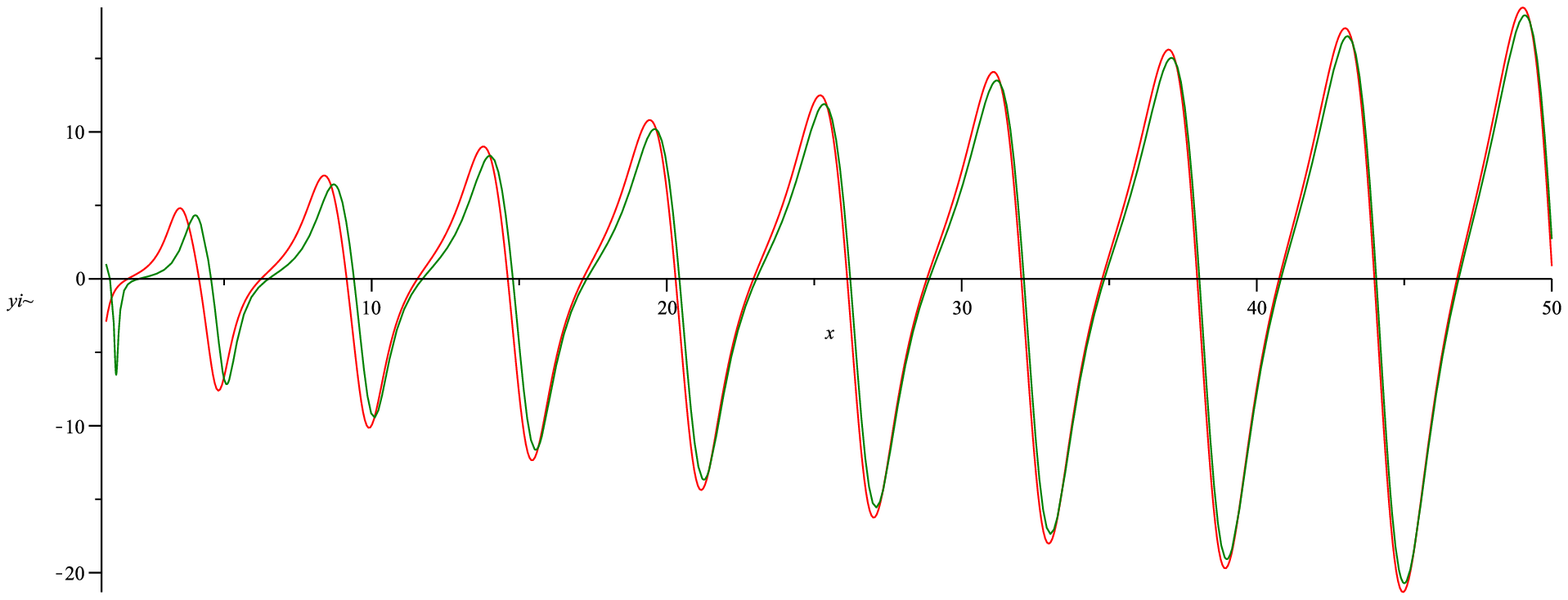}
\caption{Imaginary part of $y$: large-$t$ asymptotics and numerical solution}\label{C3ImY}
\end{center}
\end{figure}
\begin{figure}
\begin{center}
\includegraphics[height=3.0in,width=5in]{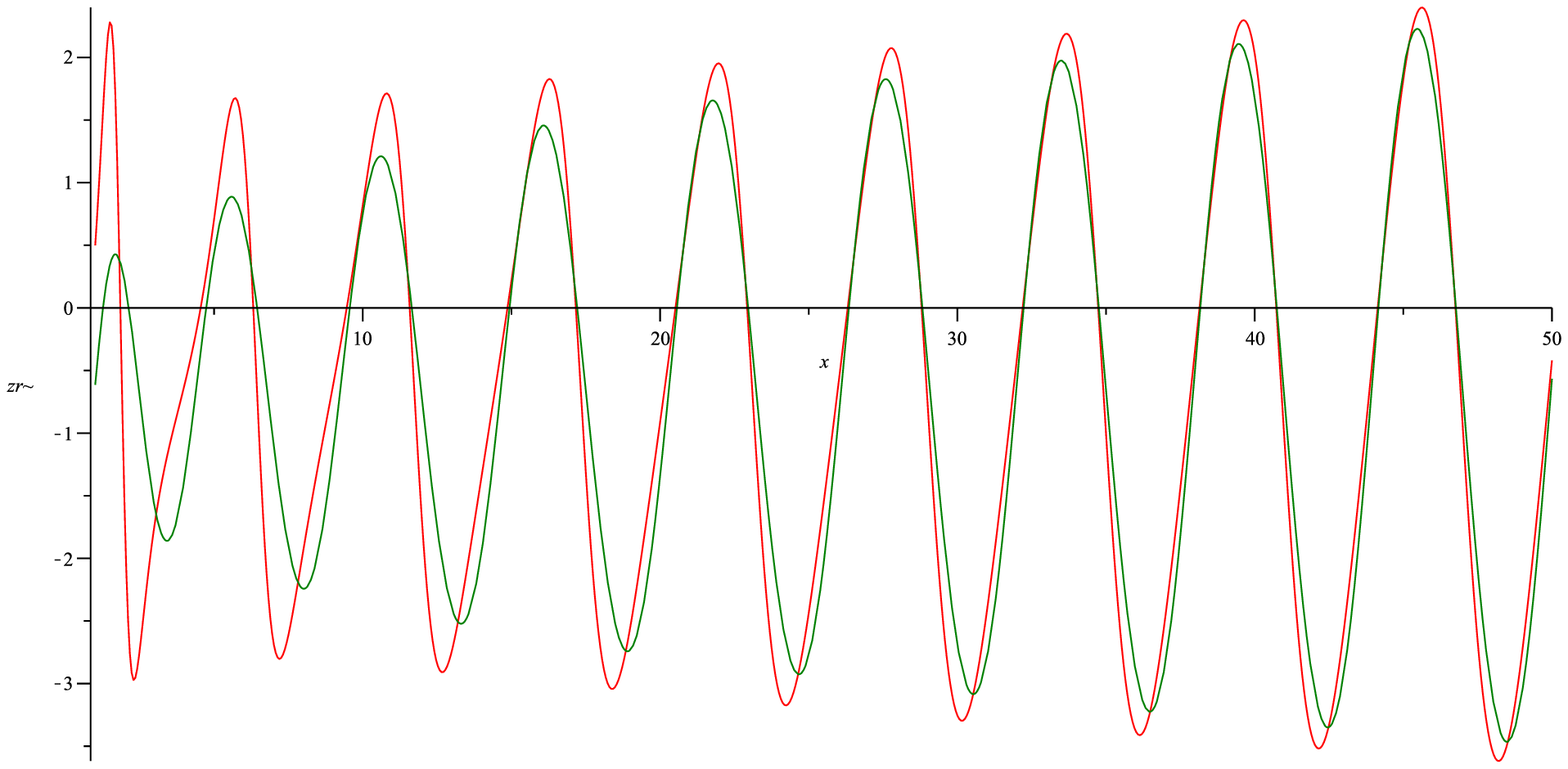}
\caption{Real part of $z$: large-$t$ asymptotics and numerical solution}\label{C3ReZ}
\end{center}
\end{figure}
\begin{figure}
\begin{center}
\includegraphics[height=3.0in,width=5in]{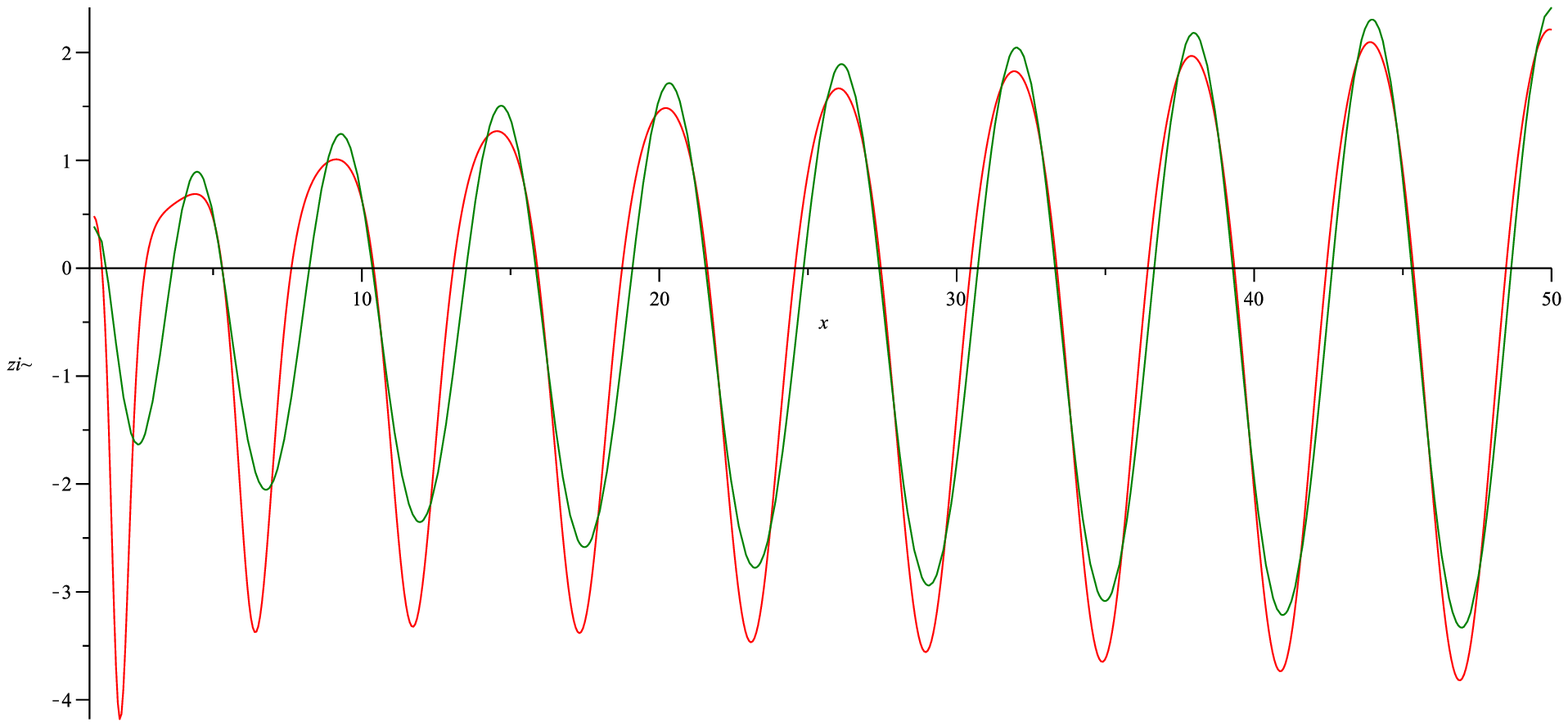}
\caption{Imaginary part of $z$: large-$t$ asymptotics and numerical solution}\label{C3ImZ}
\end{center}
\end{figure}
\begin{figure}
\begin{center}
\includegraphics[height=3.0in,width=5in]{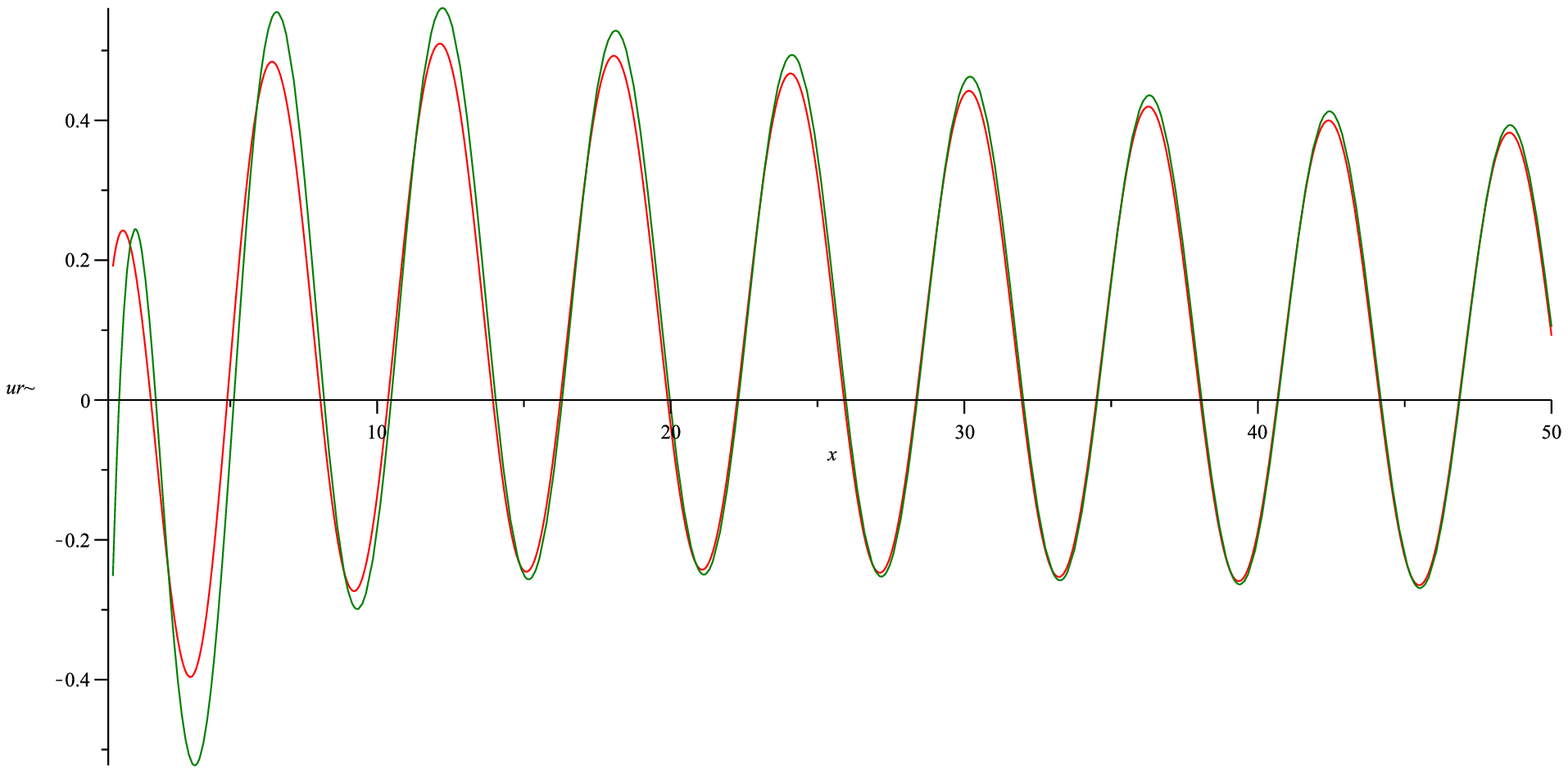}
\caption{Real part of $u$: large-$t$ asymptotics and numerical solution}\label{C3ReU}
\end{center}
\end{figure}
\begin{figure}
\begin{center}
\includegraphics[height=3.0in,width=5in]{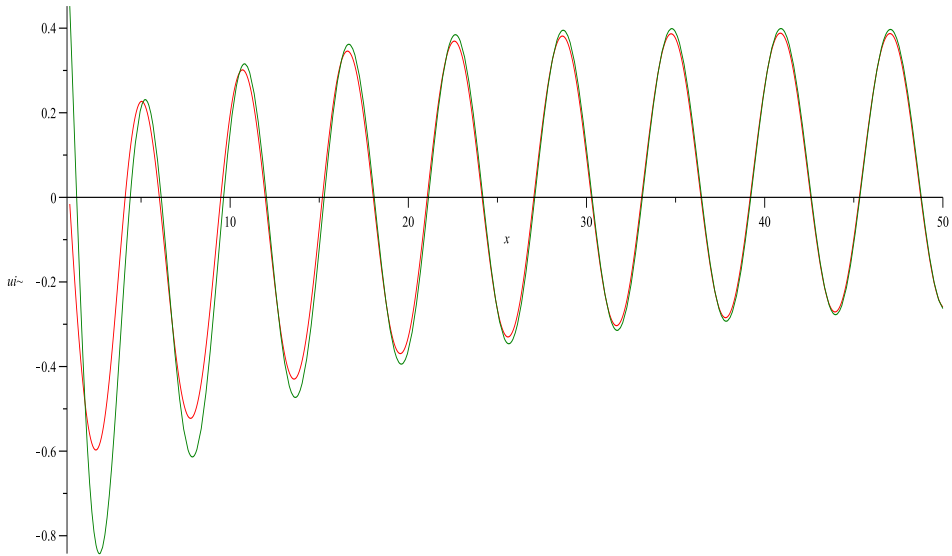}
\caption{Imaginary part of $u$: large-$t$ asymptotics and numerical solution}\label{C3ImU}
\end{center}
\end{figure}
\subsection{Numerical Illustration to Section~\ref{sec:mccoy}}\label{subsec:numeric-2-MT}
In Section~\ref{sec:mccoy} we compared our formulae with those obtained in \cite{MT1,MT2}. Here we consider a particular
example with both non vanishing Stokes multipliers.

The formal monodromies in this subsection are as follows:
$$
\Theta_0 = \Theta_1 = 0.73, \quad \Theta_\infty=0.
$$
Here we choose asymptotic parameters in a different way comparing to the previous subsections, namely,
\begin{equation}\label{eqs:sigma-phi}
\sigma=0.4,\qquad
\varphi=-0.15.
\end{equation}
The first parameter defines branching of the solution at $t=0$ the second one - branching of the solution at the point at
infinity. As in Subsection~\ref{subsec:ReTh0ge1} consider the negative imaginary semi-axis, $t=-x\imath$
with $x>0$. Theorem~\ref{th:allmon} implies that the parameter $\varphi$ determines
$$
m^0_{11}=0.587785252\ldots+\imath0.809016994\ldots.
$$
In the case $\Im t>0$ with the help of the same theorem one calculates monodromy parameter $m^1_{11}$.
Next, Theorem~\ref{th2} shows that the parameter $s^2$ defining the solution of IDS~\eqref{eq:ids1}--\eqref{eq:ids2}
as $t\to0$ can be determined as a solution of a quadratic equation, which means that in the generic situation
(including our case) there are two solutions with parameters~\eqref{eqs:sigma-phi}. We choose one of these solutions
$$
s^2=-0.164128459\ldots+\imath0.856228483\ldots,
s=0.594848223\ldots+\imath0.719703320\ldots.
$$

We started with $t_0=-10^{-8}\imath=-x_0 \imath$ resulting in initial conditions
\begin{gather*}
\tilde y(x_0)=1.000106557\ldots+\imath0.000227040\ldots,\;\;
\tilde z(x_0)=897.545278538\ldots-\imath1912.844754835\ldots,\\
\tilde u(x_0)=r(-0.999946706151\ldots+\imath0.000113501986\ldots).
\end{gather*}
Parameters $\sigma, s^2$, and $r$ with the help of Theorem~\ref{th2} allow one to find all the monodromy data:
\begin{gather}
m^1_{12}= r(-1.869666176\ldots-\imath0.261514312\ldots),\quad
m^0_{21}m^1_{12}=3.427261874\ldots+\imath0.977888928\ldots,\nonumber\\
s_1=-\frac{\imath}{r}1.175570504\ldots,\qquad
s_2=-r\imath1.175570504\ldots.\label{eqs:stokes103direct}
\end{gather}
Thus, actually, both Stokes multipliers do not vanish.

Now, using Theorem~\ref{th:allmon}, we find
$$
\delta=-5.237640067\ldots-\imath1.494437957\ldots,\qquad
\hat u= r(0.323288043\ldots-\imath2.311310283\ldots).
$$
Finally, we calculate parameters $\nu_1$ and $\nu_2$ (see Theorems~\ref{th:new1} and \ref{th:new2}):
$$
\nu_1=1-4\varphi=1.60,\qquad
\nu_2= 1+4\varphi=0.4.
$$
Thus we see that we are within the applicability of Theorem~\ref{th:new2}. The results of comparison of the
numerical and large-$t$ asymptotics for functions: $\tilde y(x)$, $\tilde z(x)$, and $\tilde u(x)$ with $r=1$
are presented on
Figs.~\ref{C4ReY}--\ref{C4ImU}. On these figures we compare numerical solution with its large-$t$ asymptotics
on the segment $[1,100]$. As usual we ensure that the accuracy in initial data for numerical solution
is such that its plot visually coincides with the plot of asymptotics on the distances far beyond $x=300$.
\begin{figure}
\begin{center}
\includegraphics[height=3.0in,width=6in]{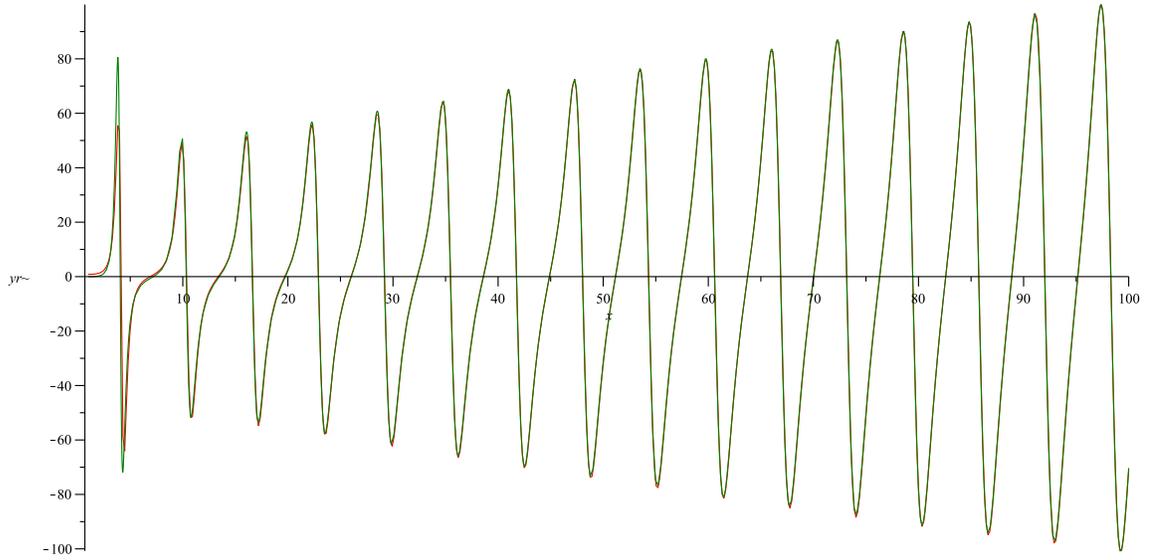}
\caption{Real part of $y$: large-$t$ asymptotics and numerical solution}\label{C4ReY}
\end{center}
\end{figure}
\begin{figure}
\begin{center}
\includegraphics[height=3.0in,width=5in]{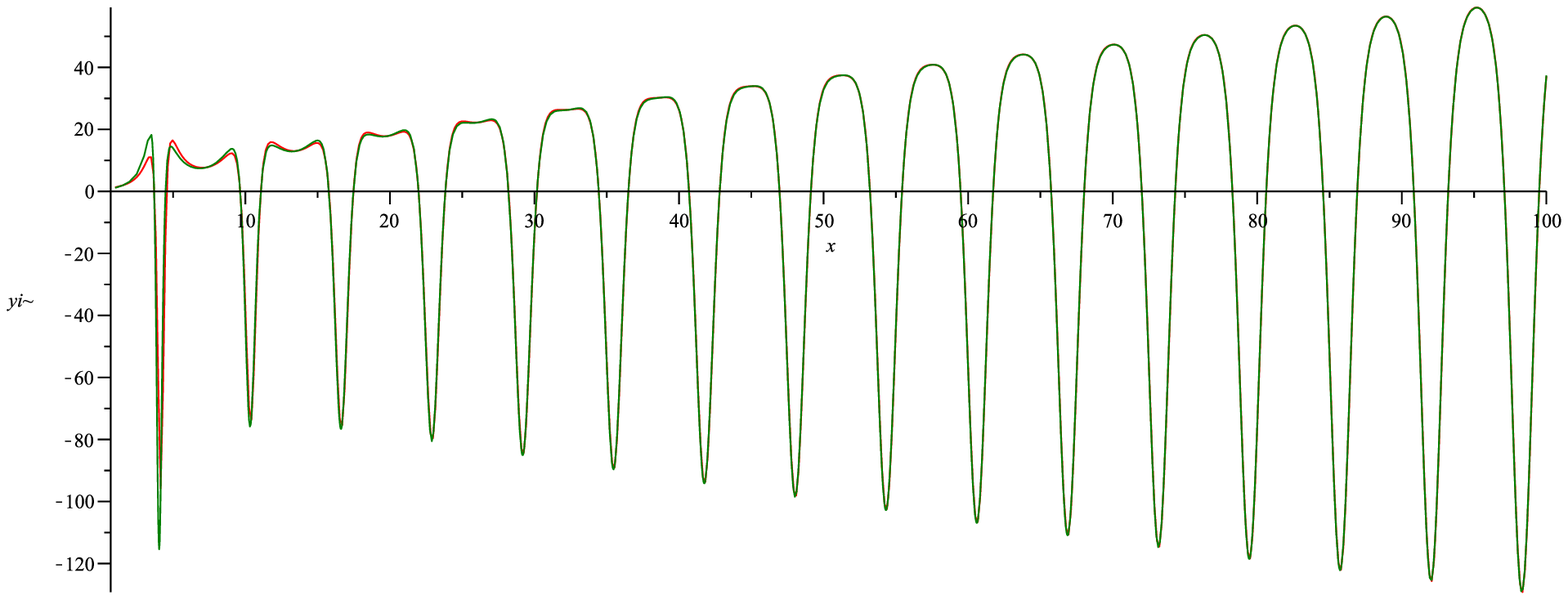}
\caption{Imaginary part of $y$: large-$t$ asymptotics and numerical solution}\label{C4ImY}
\end{center}
\end{figure}
\begin{figure}
\begin{center}
\includegraphics[height=3.0in,width=5in]{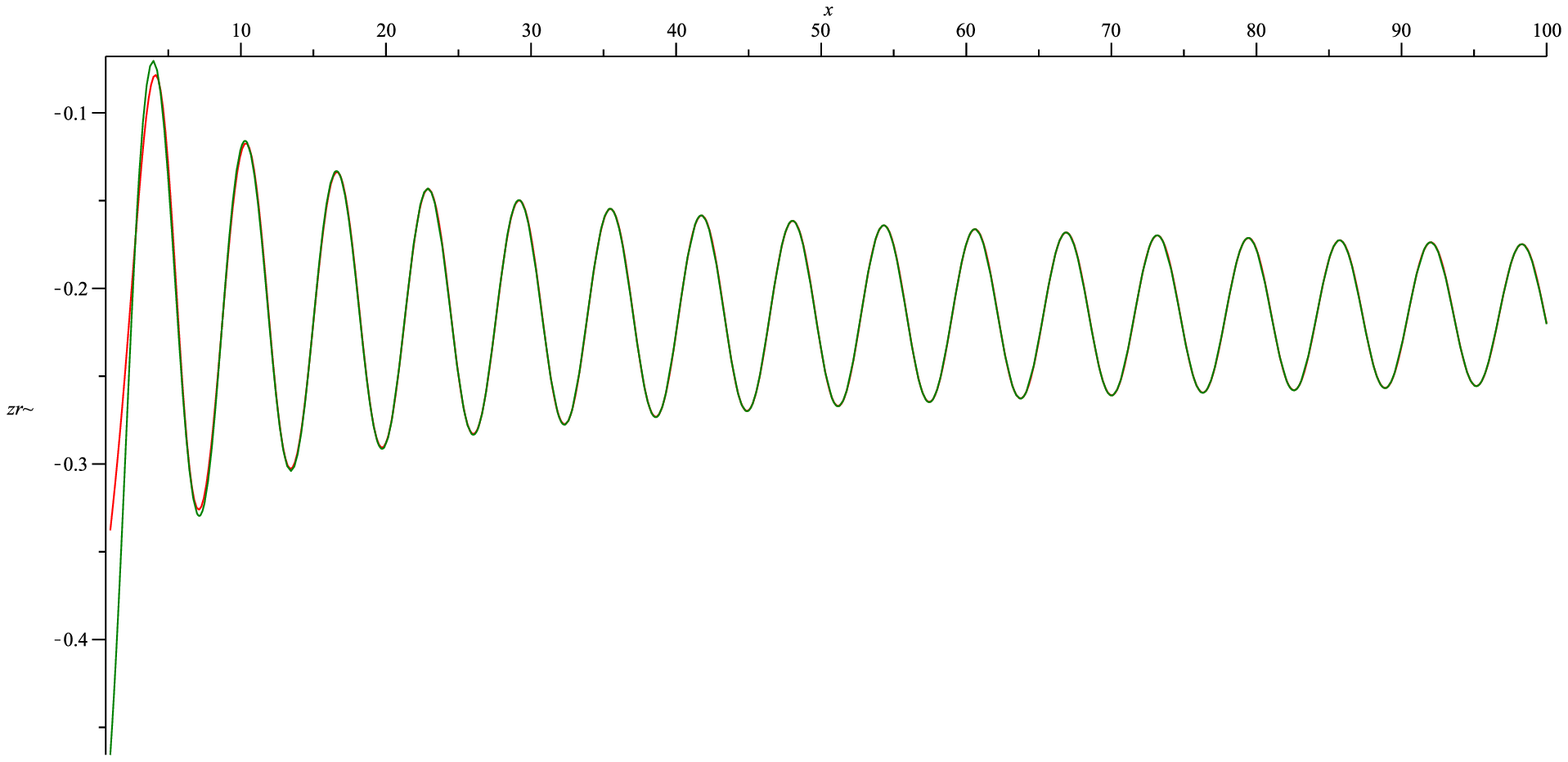}
\caption{Real part of $z$: large-$t$ asymptotics and numerical solution}\label{C4ReZ}
\end{center}
\end{figure}
\begin{figure}
\begin{center}
\includegraphics[height=3.0in,width=5in]{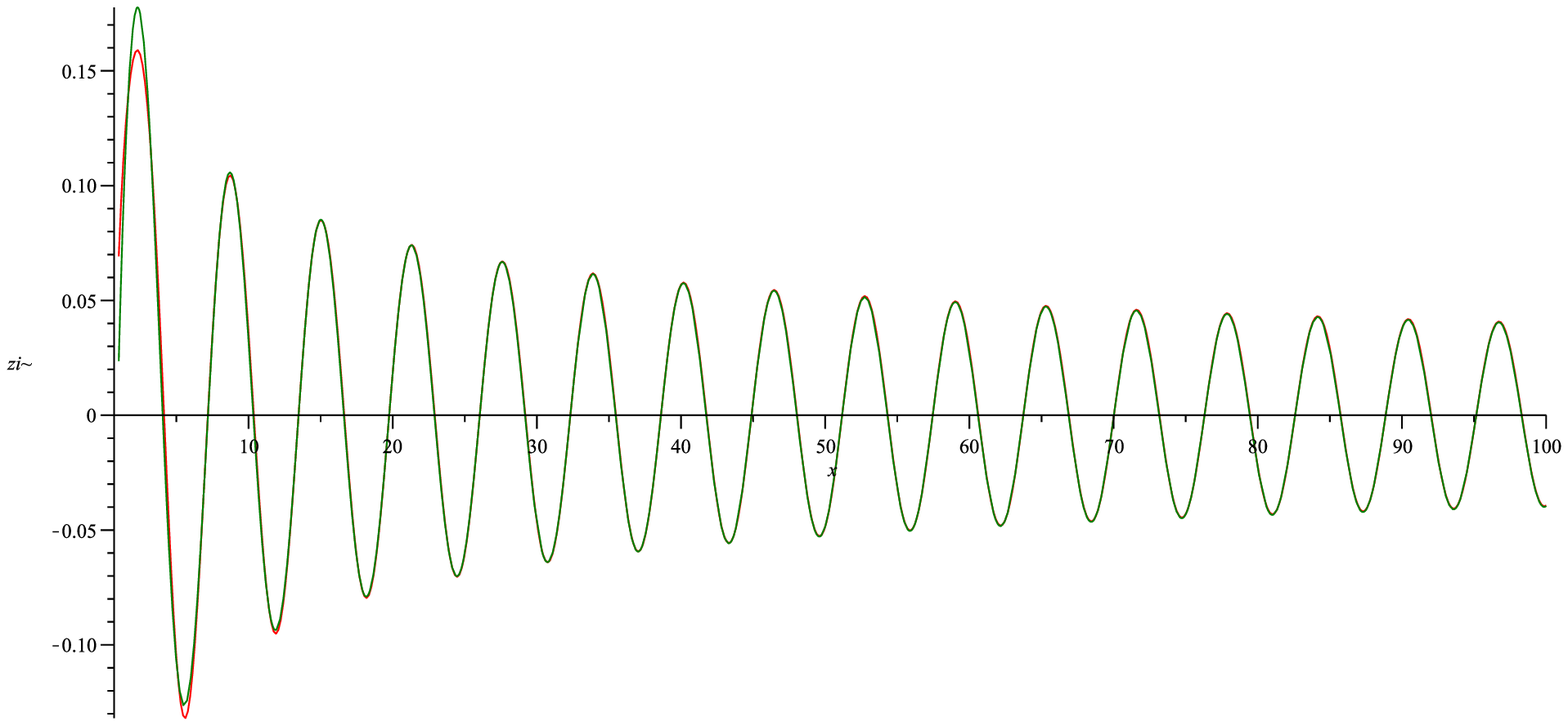}
\caption{Imaginary part of $z$: large-$t$ asymptotics and numerical solution}\label{C4ImZ}
\end{center}
\end{figure}
\begin{figure}
\begin{center}
\includegraphics[height=3.0in,width=5in]{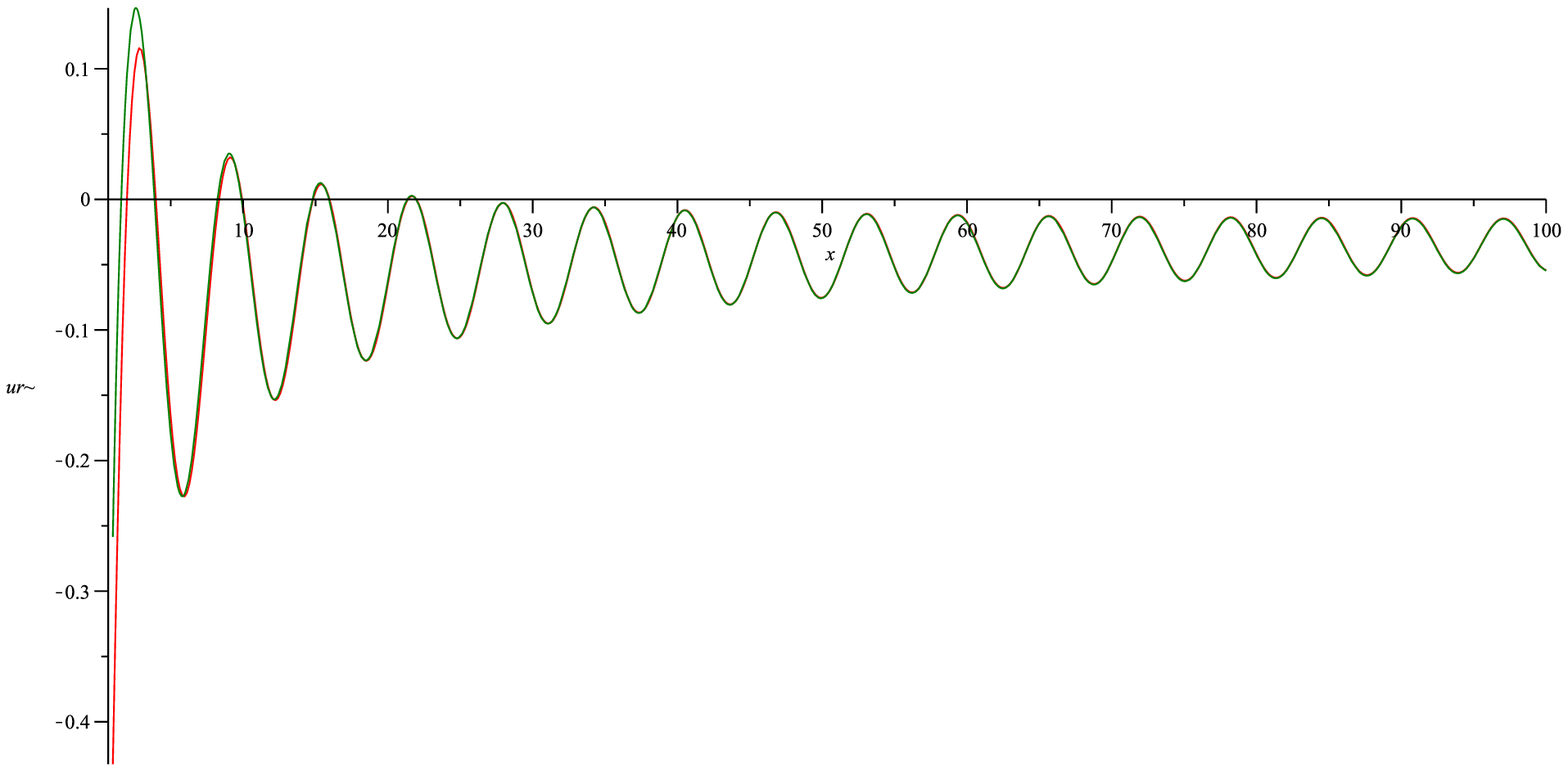}
\caption{Real part of $u$: large-$t$ asymptotics and numerical solution}\label{C4ReU}
\end{center}
\end{figure}
\begin{figure}
\begin{center}
\includegraphics[height=3.0in,width=5in]{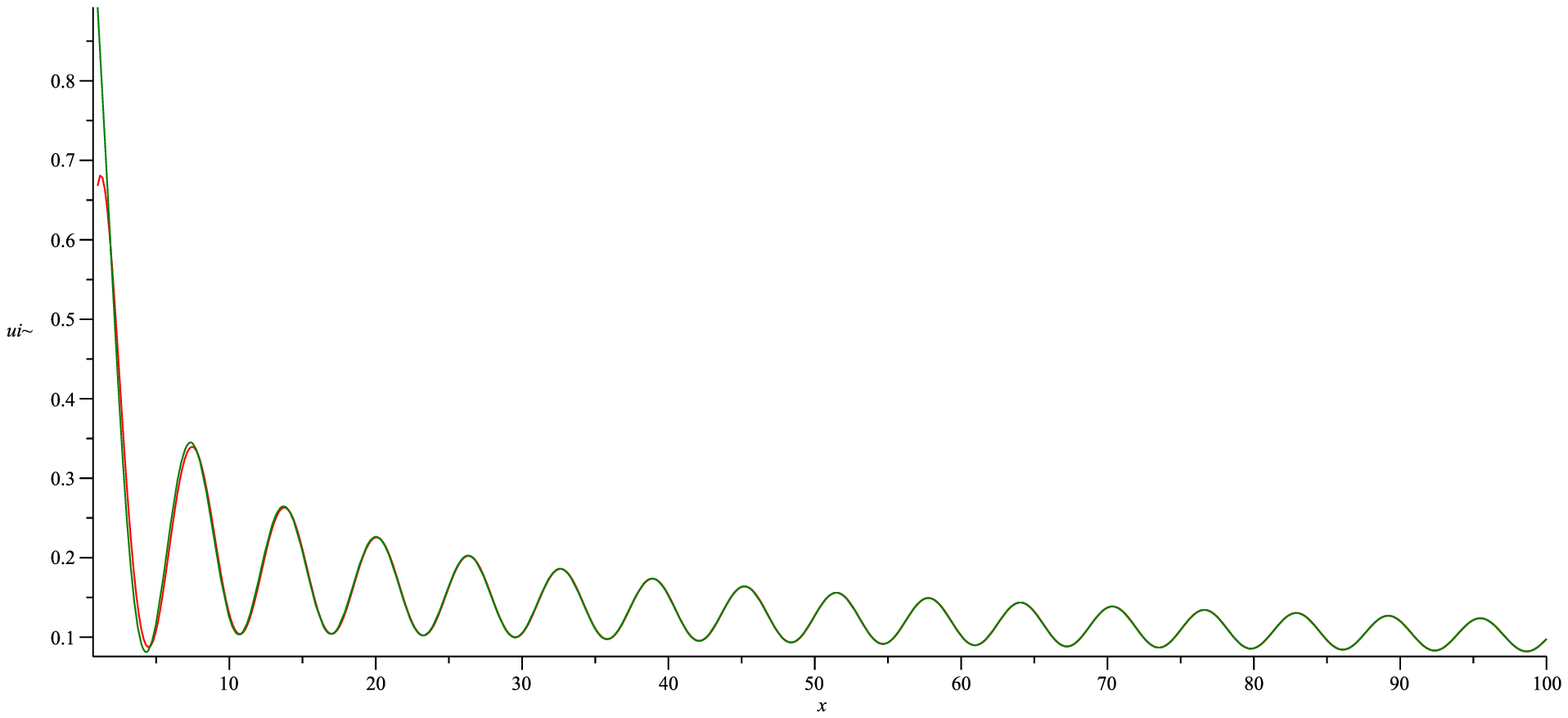}
\caption{Imaginary part of $u$: large-$t$ asymptotics and numerical solution}\label{C4ImU}
\end{center}
\end{figure}

Now, we consider the connection formulae for asymptotics which are obtained in \cite{MT1,MT2}.

As mentioned in Section \ref{sec:mccoy} our results may coincide in some important particular cases,
specifically when $\th_\infty\in2\mathbb{Z}$, $\Theta_0=\Theta_1=\Theta\in\mathbb{Z}$ and, at least, one of the
Stokes multipliers $s_1$ or $s_2$ vanishes. For the complete match of all connection results both Stokes multipliers
should vanish, $s_1=s_2=0$. In the last case there is no difference between solutions $Y_1$, $Y_2$, and $Y_3$, then
we observe an agreement between our results and those of \cite{MT1,MT2}. In particular, a solution of
Equation~\eqref{eq:P5}, describing the one-particle reduced density matrix of the one-dimensional impenetrable
Bose gas~\cite{JMMS}, belongs to this special case.

As follows from Equation~\eqref{eq:sigma-s1-s2} if one of the Stokes multipliers vanishes, then $\sigma=0$.
For the solution considered in this subsection  $\sigma\neq0$ (see Equation~\eqref{eqs:sigma-phi}), therefore
our asymptotic predictions will disagree with those following from papers~\cite{MT1,MT2}.

In paper \cite{MT2} the connection formulae are given in terms of the monodromy data, like in our work. In paper
\cite{MT1} the connection formulae are presented directly: the parameters of asymptotics at infinity are given
in terms of the parameters of asymptotics as $t\to0$. We tried both types of the connection formulae.

We begin with the connection formulae presented in \cite{MT2}. In Section~\ref{sec:mccoy} we explained that
the authors of \cite{MT2} parameterise the large-$t$ asymptotics ($\Im t<0$) in terms of the quantities $I^p$
which are related with the monodromy data via Equation~\eqref{eq:Ip-McCoy-original}. We use the definition of
the monodromy data given in Section~\ref{sec:def-monodromy} and obtained the following expressions for the
Stokes multipliers in terms of $I^p$:
$$
s_2=e^{-i \pi  \left(\0+\1\right)}\times
$$
$$
\frac{ -e^{i \pi (\0+\tin) } (I^0-1) +e^{\pi i  (2\0 +\1)}(I^1-1) -e^{\pi i \1} I^0(I^1 - 1)
+ e^{\pi i (\0 +2\1 +\tin)}(I^0-1) I^1}
{m^0_{21} (I^0-1) (I^1 - 1)}
$$
$$
s_1 = -m^0_{21} e^{\pi i \tin} \times\\
$$
$$
\frac{ e^{\pi i (\0 +2\1)} (I^0-1) -e^{\pi i (\1+\tin)} (I^1 - 1) + e^{\pi i (2\0 +\1 +\tin)}I^0(I^1-1) -
e^{\pi i \0} I^1 (I^0-1) }
{e^{\pi i \tin} -e^{\pi i (2\0 +\tin)} I^0 -e^{\pi i (\0 +\1) }(I^0-1) (I^1 - 1) - e^{\pi i (2\1+\tin)}I^1
+e^{\pi i (2\0 +2\1+\tin)}I^0 I^1}.
$$
For the case $\0=\1=\Theta$, $\tin = 0$, studied in \cite{MT1}-\cite{MT2}, they simplify to:
\begin{equation}\label{eqs:stokes103}
s_1 = m^0_{21} e^{\pi i \Theta} \frac{I^0 I^1 -1 }{1-	e^{2 \pi i \Theta}I^0 I^1} ,\quad
s_2 = e^{-\pi i \Theta} (1-e^{2 \pi i \Th} ) \frac{ 1-I^0 I^1}{(I^0 - 1)(I^1-1) m^0_{21}}.
\end{equation}
For the particular numerical case studied here Equation~\eqref{eq:Ip-McCoy-original} implies:
$$
I^0(MT)=1.056514170\ldots+\imath1.198380092\ldots,\quad
I^1(MT)=1.338535154\ldots-\imath0.824893015\ldots.
$$
Substituting these values into Equation~\eqref{eqs:stokes103} with $r=1$ we find
$$
s_1 = s_2 = -\imath1.175570504\ldots,
$$
which confirms the values of the Stokes multipliers \eqref{eqs:stokes103direct} calculated directly.

It is worth reminding that for connection formulae, if we want to remain within the results of \cite{MT1}-\cite{MT3},
the parameters $I^p$ should be (in our notation) adjusted as explained in Section~\ref{sec:mccoy}
(see Equation~\eqref{eq:Ip-McCoy-adjusted}). With parameters $I^p$ understood in that way we would find:
$$
I^0(adj)=0.541451276\ldots+\imath0.333677733\ldots,\quad
I^1(adj)=0.413939912\ldots-\imath0.469522666\ldots.
$$
Calculating the Stokes multipliers with the help of $I^0(adj)$ and $I^1(adj)$ via \eqref{eqs:stokes103} with $r=1$ we get
$$
s_1(adj)=-0.661221513\ldots+\imath0.718177045\ldots,\quad
s_2(adj)=\imath1.175570504\ldots
$$
As expected, we obtain different Stokes multipliers.

To get the large-$t$ asymptotics as suggested in \cite{MT2} we have to calculate parameters $k$ and $\tilde{x}_0$
(see Section~\ref{sec:mccoy}) Equations~\eqref{I1MT2}--\eqref{Cbar}.
First we do it with the help of $I^0(MT)$ and $I^1(MT)$:
\begin{align*}
k=k(MT)&=0.079933525\ldots-\imath0.158878848\ldots,\\
\tilde{x}_0=\tilde{x}_0(MT)&=0.745125463\ldots-\imath0.350495266\ldots.
\end{align*}
The plot of the large-$t$ asymptotics corresponding to these parameters calculated with the help
of Equation~\eqref{eq:mccoy3} is presented on Figure~\ref{MTFigORIG}.
Comparing this plot with the ones presented on Figure~\ref{C4ReY} we see the main discrepancy is that
the function on Figure~\ref{MTFigORIG} is decaying while it should grow.

We also tried in this case the direct connection formulae given in \cite{MT1} (see Equations (2.3) and (2.14)--(2.17)
of \cite{MT1}). We applied the scheme of calculations adopted in this paper: first with the help of the initial value
found above ($\tilde y(x_0)$ and $\sigma=0.4$) we calculated, with the help of Equation (2.3) of \cite{MT1}, the parameter
$\hat S$, then parameters $k$ and $\tilde{x}_0$ by means of Equations (2.14)--(2.17)of \cite{MT1}. Thus obtained $k$ and
$\tilde{x}_0$ differ with $k(MT)$ and $\tilde{x}_0(MT)$ obtained above. Nevertheless, the qualitative behavior
of asymptotics is the same as on Figure~\ref{MTFigORIG}. The difference in the values of the parameters could be
explained because our small-$t$ asymptotics, being formally equivalent to the one used in \cite{MT1}, in fact,
in most cases numerically is more precise. It is not an obstacle in usage of Equation (2.3) of \cite{MT1}, but
the point $x_0$ for calculation of initial value should be chosen closer to the origin, which may require a higher
precision of calculations.
\begin{figure}
\begin{center}
\includegraphics[height=2.0in,width=5in]{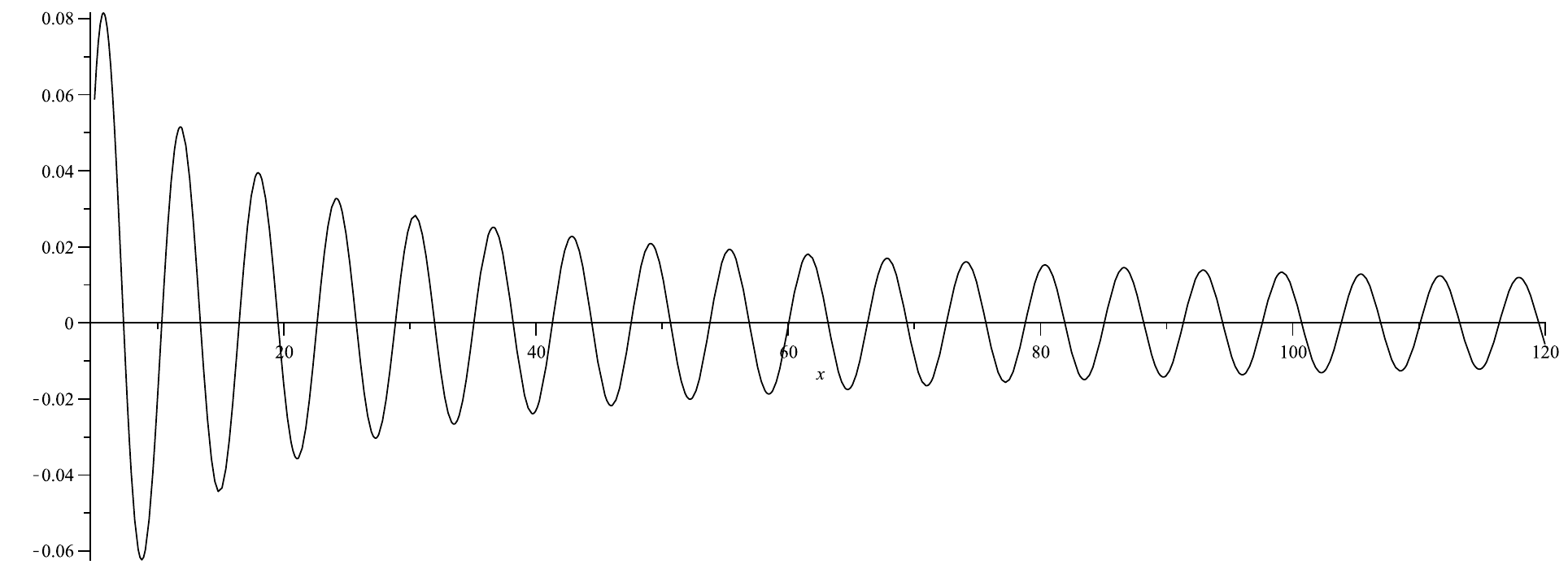}
\caption{$\Re y$: the large-$t$ asymptotics for pure imaginary negative $t$ obtained in \cite{MT2}}\label{MTFigORIG}
\end{center}
\end{figure}

We also calculated the plots of asymptotics of $y$ for parameters $I^0(adj)$ and $I^1(adj)$.
In this case asymptotic parameters are as follows:
$$
k=k(adj)= \imath0.15,\qquad
\tilde{x}_0=\tilde{x}_0(adj)=-0.619264010\ldots+\imath0.413313056\ldots.
$$
We have further (pure experimentally) adjusted parameter $\tilde{x}_0(adj)$, namely,
we changed $\tilde{x}_0(adj)\to \tilde{x}_0(adj)+\pi/4$.
On Figures~\ref{MTFigReYadjusted} and \ref{MTFigImYadjusted} we compared asymptotics of \cite{MT2}, adjusted as
explained above, with the ones presented in this work. In the online version our asymptotics colored in green
while those of \cite{MT1} are blue. At first glance, asymptotics for $\Re y$ looks quite satisfactory, while for
$\Im y$ there is some discrepancy which also does not look fatal. Because one may hope that the higher order terms
may further correct the situation.
However, as we see above our asymptotic formulae from Theorems~\ref{th:new1} and \ref{th:new2}, gives good
numeric approximations starting from quite small values of $t$; Asymptotics~\eqref{eq:mccoy3} works at these
values of $t$ not that good. We compared the solutions on the interval $x\in [2,30]$ and observed significant
disagreement between adjusted solutions from [7] and the numerical results. Of course, the interval $[2,30]$ may
not be asymptotically "large enough" yet. Therefore, we compared asymptotics in segment $x\in [800,820]$ where
they are supposed to coincide (visually) with the numerical solution. At least with the growth of $x$ both curves
should approach each other.
However, the difference between the curves looks "stable" with variation of $x$ within the range $x\in[400,1200]$.
This corresponds with what is written in Section~\ref{sec:mccoy}. As explained in Section~\ref{sec:mccoy},
the adjustment suggested there does not repair the situation, one have to put zero at least one of the Stokes
multiplier. So, Figures~\ref{MTFigReYadjusted} and \ref{MTFigImYadjusted} provide us an illustration to the
conclusions of Section~\ref{sec:mccoy}.
\begin{figure}
\begin{center}
\includegraphics[height=2.0in,width=5in]{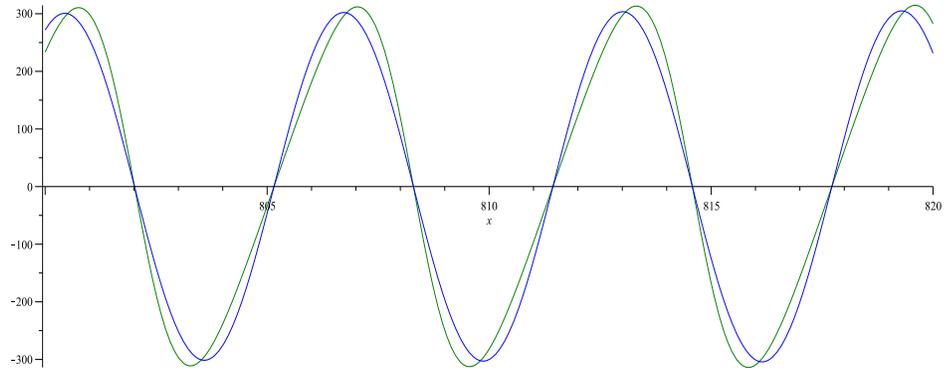}
\caption{$\Re y$: Large-$t$ asymptotics for pure imaginary negative $t$: Theorem~\ref{th:new2}
vs the adjusted one from \cite{MT2}}\label{MTFigReYadjusted}
\end{center}
\end{figure}
 \begin{figure}
\begin{center}
\includegraphics[height=2.0in,width=5in]{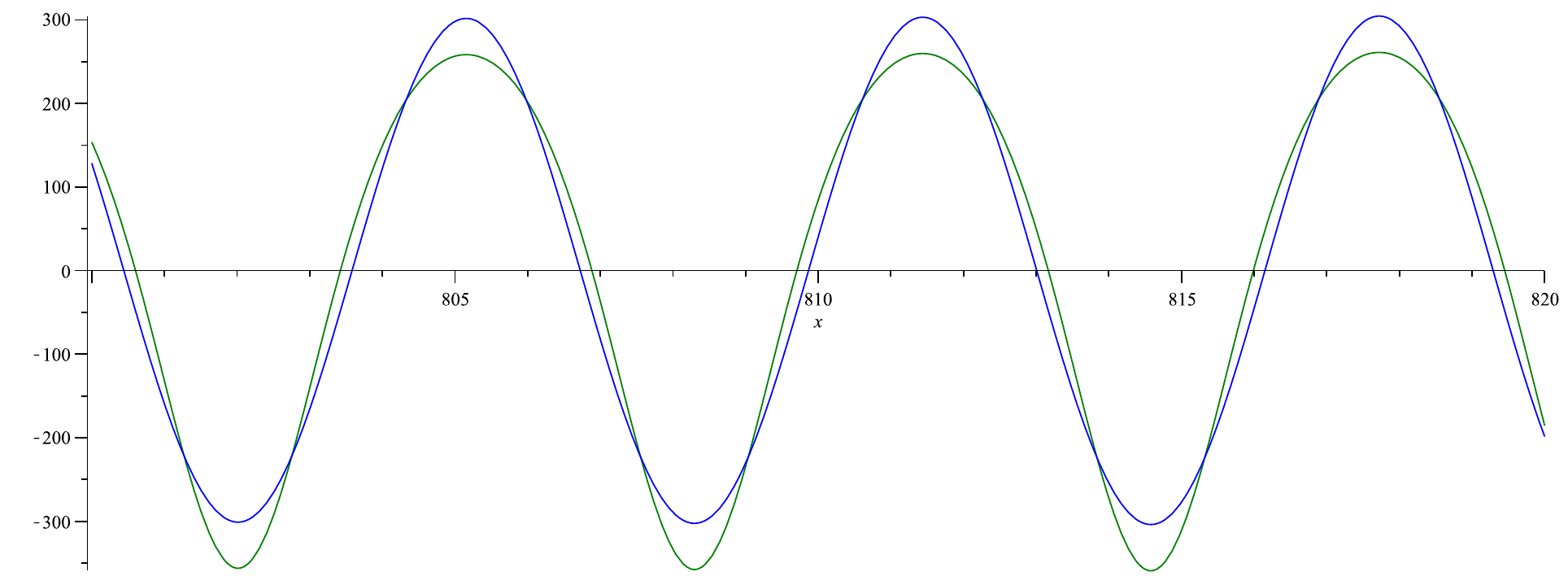}
\caption{$\Im y$: Large-$t$ asymptotics for pure imaginary negative $t$: Theorem~\ref{th:new2}
vs the adjusted one from \cite{MT2}}\label{MTFigImYadjusted}
\end{center}
\end{figure}
\subsection{The Special Meromorphic Solution}\label{subsec:numeric-extra-terms}

In this subsection we deal with the solution studied in Section~\ref{sec:spec-meromorphic-solution}. Since it is a very
special solution we have here an opportunity to to choose a different numeric scheme and not the one used in previous
subsections. We put $t=\imath x$, with $x\in\mathbb{R}$. We also assume $a_1^1\in\mathbb R$. Then the second
transformation in Subsection~\ref{subsec:t-to-minus-t} implies:
$$
\bar{y(t)}y(t)=1,\quad
\bar{z(t)}=-z(t),
$$
where the bar denotes complex conjugation. We recall that $y(0)=-1$ and $z(0)=0$, therefore we make the following
change of variables:
$$
y(t)=-e^{\imath\phi(x)},\qquad
z(t)=\imath xw(x),
$$
where $\phi(x)$ and $w(x)$ are realvalued functions of real variable $x$.

Taking into account that $\Theta_0=\Theta_1=1/2$ and $\Theta_\infty=-1$ we rewrite System~\eqref{eq:ids1}, \eqref{eq:ids2}
as follows:
\begin{equation*}
\begin{aligned}
  x\frac{d\phi}{dx}&=x+8xw\cos^2{\frac{\phi}2}-\sin\phi,\\
  x\frac{dw}{dx}&=w(\cos\phi-1+2xw\sin\phi).
\end{aligned}
\end{equation*}
The main difference in numeric calculation of this solution comparing with the previous examples is that we can
take the initial condition exactly at $x=0$:
$$
\phi(0)=0,\qquad
w(0)=-\frac{1+2a_1^1}8.
$$
On Figures~\ref{fig:SpecialCaseRe-y-0-60} and \ref{fig:SpecialCaseIm-y-0-60} we present the plots of real and
imaginary parts of solution $y(t)$ for $a_1^1=2$, respectively. Each figure contains two plots:  the numerical
plot and asymptotic one. To plot asymptotics of $y(t)$ we use only the leading term
denoted as $\alpha$, see Equation~\eqref{eq:alpha-in-delta-phi} in Subsection~\ref{subsec:asympt-special-imaginary}.
We see that the leading term of the large $x$-asymptotics gives already a very good approximation as early as $x=2.5$.
A rather remarkable property of that example is that there is no visible transition
interval from the behavior of $y(t)$ at $t\to 0$ to its large $t$-behavior.
\begin{figure}[htbp]
\begin{center}
\includegraphics[height=2.5in,width=5in]{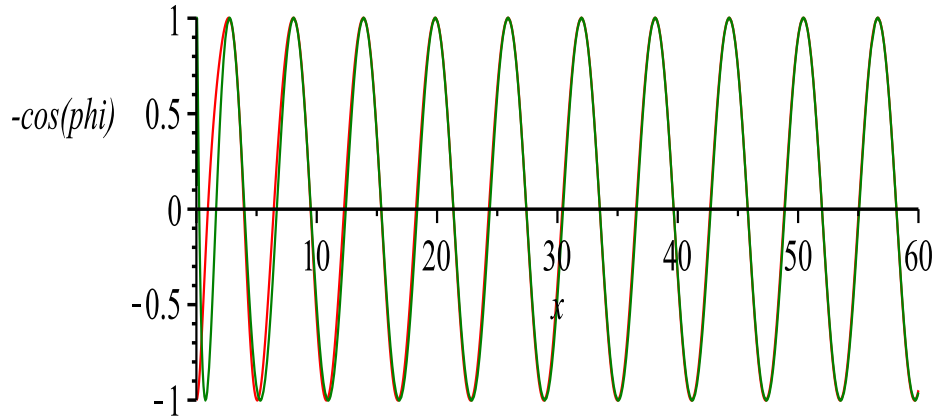}
\caption{$\Re y$; the numerical plot starts at the point with the coordinates $(0,-1)$.\label{fig:SpecialCaseRe-y-0-60}}
\end{center}
\end{figure}
\begin{figure}[htbp]
\begin{center}
\includegraphics[height=2.5in,width=5in]{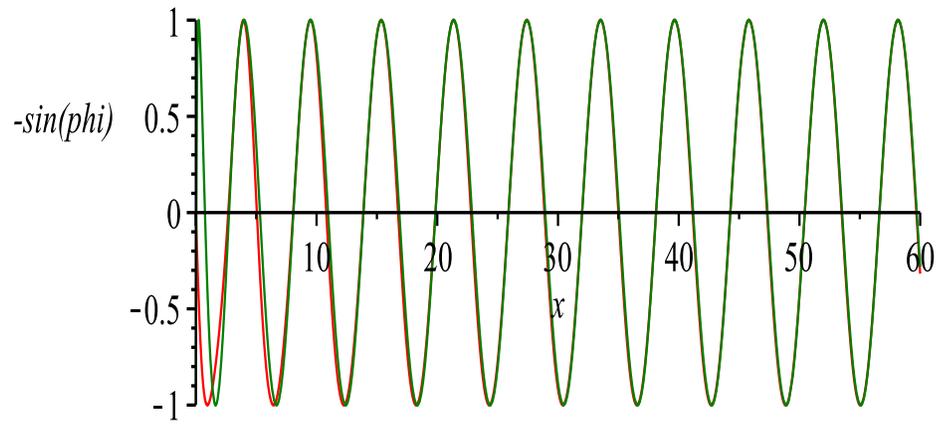}
\caption{$\Im y$; the numerical plot approach the point with the coordinates $(0,0)$ from below;
the asymptotical one --- from above.\label{fig:SpecialCaseIm-y-0-60}}
\end{center}
\end{figure}
Since the leading asymptotic term of $z(t)$ is a constant we used the explicit form of the oscillating correction
term ($\mathcal{O}(1/t)$), to compare it with the numerical solution on Figure~\ref{fig:SpecialCaseIm-z-0-60}.
\begin{figure}[htbp]
\begin{center}
\includegraphics[height=2.5in,width=5in]{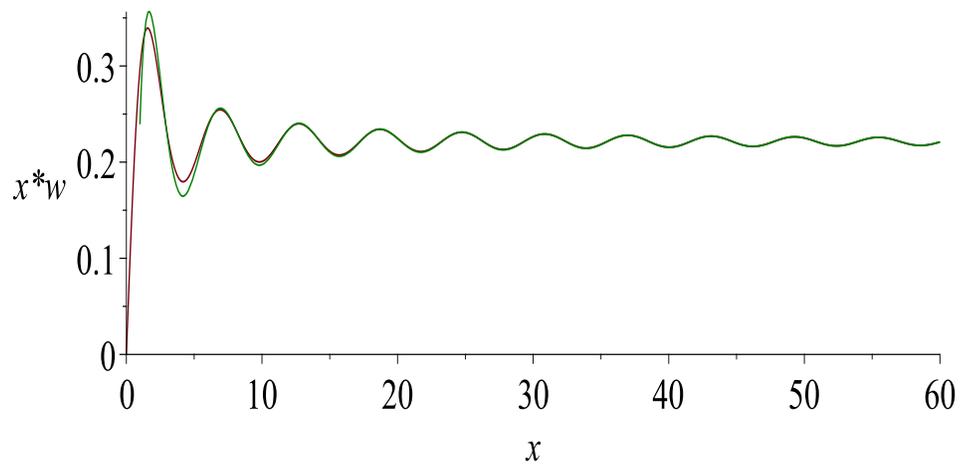}
\caption{$\Im z$; the numerical plot approach the point with the coordinates $(0,0)$. The plot of asymptotics
starts at $x=1$.\label{fig:SpecialCaseIm-z-0-60}}
\end{center}
\end{figure}
\appendix
\clearpage
\renewcommand{\thesection}{\Alph{section}}
\renewcommand{\theequation}{\Alph{section}.\arabic{equation}}
\renewcommand{\thetheorem}{\Alph{section}.\arabic{theorem}}
\section{Appendix: Schlesinger transformations}\label{sec:trans}
Here for completeness we present the Schlesinger transformations of Equation~\eqref{mainl} together with the
corresponding B\"acklund transformations of IDS~\eqref{eq:ids1}-\eqref{eq:ids3}, which we use in this paper.

We denote $Y_k$, $k\in\mathbb{Z}$, the canonical solution of
Equation~\eqref{mainl} corresponding to the set of variables $t,\lambda,y,z,u,\Theta_0,\Theta_1,\Theta_\infty$,
the notation $\tilde{Y}_k$ states for the canonical solution of Equation~\eqref{mainl} but with variables:
$t,\lambda,\tilde{y},\tilde{z},\tilde{u},\tilde{\Theta}_0,\tilde{\Theta}_1,\tilde{\Theta}_\infty$. The Schlesinger
transformation maps $Y_k$ to $\tilde{Y}_k$ such that both solutions have the same monodromy data, except the formal
monodromy at the point at infinity which is shifted by $\pm2$. Due to these properties the Schlesinger transformations
are called sometimes the discrete isomonodromy transformations.

In transformation presented below in Theorem~\ref{dress} the original non-tilde variables might be arbitrary.
In that case the tilde variables represent just a covariant transformation of Equation~\eqref{mainl}.
As long as we require dependence of the tilde variables on $t$, such that they satisfy IDS~\eqref{eq:ids1}-\eqref{eq:ids3}, the corresponding
monodromy data are independent of $t$ (continuous isomonodromy deformations). Obviously, in that case
the monodromy data for $\tilde{Y}_k$ are also independent of $t$, so that the Schlesinger transformations
generate the so-called B\"acklund transformations for solutions of IDS~\eqref{eq:ids1}-\eqref{eq:ids3}, i.e.,
the discrete and continuous isomonodromy flows commute.
\begin{theorem}\label{dress}
There exists the following Schlesinger transformation of
Equation~(\ref{mainl})
\begin{eqnarray*}
&Y_k = \bm \dfrac p\be +\la & -\be \\ \dfrac 1\be& 0 \em  \ti Y_k,&\\
&\be = \dfrac t{A^1_{21}+A^0_{21}}, \quad p = -\dfrac
\be t (\tin +1 +
\be A^1_{21}),&\\
&\ti \Th_\infty = \tin +2,\quad \ti \Th_0 = \Th_0,\quad \ti
\Th_1 = \Th_1,&\\
\end{eqnarray*}
where $A^0_{21}$ and $A^1_{21}$ are the corresponding elements of matrices $A_0$ and $A_1$, respectively,
defined for the non-tilde variables.
\begin{eqnarray*}
\ti z& =& -z - \Th_0 - \dfrac{(\tin+1) z } {z -\dfrac1y \left(z +\dfrac{\Th_0 +\Th_1 + \tin}2\right)}+ 
\dfrac{tz\left(z +\dfrac{\Th_0+\Th_1 +\tin}2\right)}{y \left(z - \dfrac1y
\left(z+\dfrac{\Th_0+\Th_1 + \tin}2\right)\!\right)^2},\\
\ti y& =& \dfrac 1y\cdot \dfrac{ \ti z+\Th_0} z \cdot
\dfrac{z+\dfrac{\Th_0+\Th_1+\tin}2}{\ti z+\dfrac{\Th_0-\Th_1+\ti \tin}2},\\
\ti u &=& u\cdot\dfrac{zt^2}{\ti z+\Th_0}\cdot
\dfrac1{\left(z-\dfrac1{y}\left(z+\dfrac{\Th_0+\Th_1+\tin}{2}\right)\!\right)^2}.
\end{eqnarray*}

There exists the following inverse Schlesinger transformation  of
Equation~(\ref{mainl})
\begin{eqnarray*}
&Y_k = \bm 0 & \dfrac 1\be \\ -\be& \dfrac p\be +\la \em \ti Y_k,&\\
&\be = -\dfrac t{A^1_{12}+A^0_{12}},
\quad p = -\dfrac \be t (\tin -1 -
\be A^1_{12}),&\\
&\ti \Th_\infty = \tin -2,\quad \ti \Th_0 = \Th_0,\quad \ti
\Th_1 = \Th_1,&
\end{eqnarray*}
\begin{eqnarray*}
\ti z& = &-z - \Th_0 - \dfrac{(\tin-1)( z+\Th_0 )}
{z+\Th_0 - y \left(z +\dfrac{\Th_0 -\Th_1
+ \tin}2\right)}- 
\dfrac{ty\left(z+\Th_0\right)\left(z +\dfrac{\Th_0-\Th_1 +\tin}2\right)}
{\left(z+\Th_0 -y \left(z
+\dfrac{\Th_0-\Th_1 + \tin}2\right)\!\right)^2},\\
\ti y &=& \dfrac 1y\; \dfrac{ z+\Th_0} {\ti z}\; \dfrac{
\ti z+\dfrac{\Th_0+\Th_1+\ti \Th_\infty}2}{z+\dfrac{\Th_0-
\Th_1+\tin}2},\\
\ti u& = &u\cdot \dfrac{z+\Th_0}{\ti zt^2}\cdot\left(\ti z-\dfrac 1{\ti
y}\left(\ti z+\dfrac{\Th_0+\Th_1+\ti \Th_\infty}2\right)\!\right)^2.
\end{eqnarray*}
\end{theorem}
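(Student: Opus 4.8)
The plan is to regard both maps as elementary (degree-one) Schlesinger transformations of~\eqref{mainl} and to verify directly that the stated gauge matrix produces a system of the same shape with the formal monodromy at infinity shifted by $\pm2$. First I would put $Y_k=R(\la)\ti Y_k$ with $R(\la)=\bm \frac p\be+\la & -\be \\ \frac1\be & 0\em$ and observe that $\det R(\la)\equiv1$; hence $R^{-1}(\la)$ is again linear in $\la$ and $R(\la)$ is invertible at every finite point. Substituting into~\eqref{mainl} gives $\cd_\la\ti Y_k=\ti A\,\ti Y_k$ with $\ti A=R^{-1}AR-R^{-1}\cd_\la R$ and $\cd_\la R=\diag(1,0)$. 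Because $R$ and $R^{-1}$ are polynomial and unimodular, $\ti A$ has simple poles exactly at $\la=0,1$, and at these points $R^{-1}\cd_\la R$ is holomorphic, so the residues are the conjugates $\ti A_p=R(p)^{-1}A_pR(p)$. Conjugacy gives $\det\ti A_p=\det A_p=-\Th_p^2/4$, which already yields $\ti\Th_0=\Th_0$ and $\ti\Th_1=\Th_1$.

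Next I would fix $\be$ and $p$ from the behaviour as $\la\to\infty$, writing $A=\frac t2\si_3+(A_0+A_1)\la^{-1}+\mathcal O(\la^{-2})$. The $\la^2$-coefficient of $\ti A$ vanishes automatically, since the leading parts $\diag(1,0)$ of $R$ and $\diag(0,1)$ of $R^{-1}$ are complementary $\si_3$-eigenprojectors and $\diag(0,1)\,\si_3\,\diag(1,0)=0$. Imposing that the $\la^1$-coefficient vanish, its only nonzero entry is the $(2,1)$ one, which reads $(A^1_{21}+A^0_{21})-t/\be$ and reproduces $\be=t/(A^1_{21}+A^0_{21})$. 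Demanding that the off-diagonal part of the $\la^0$-coefficient vanish (so that the leading term at infinity is exactly $\frac t2\si_3$) then fixes the remaining scalar as $p=-\frac\be t(\tin+1+\be A^1_{21})$, while the diagonal part of the $\la^{-1}$-coefficient becomes $-\frac{\tin+2}2\si_3$, i.e.\ $\ti\Th_\infty=\tin+2$. As a consistency check, the factor $\la\,\diag(1,0)$ in $R$ shifts the $\ln\la$-term in the expansion~\eqref{stexp2} of the first column of $\ti Y_k$ by exactly $+1$, which is the same shift.

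With $\be$ and $p$ so determined, I would compute the two conjugated residues $\ti A_0$ and $\ti A_1$ entrywise and match them to the Jimbo--Miwa parameterization of $A_0,A_1$ recalled in the Introduction, now written for the tilde-variables: the $(1,1)$-entries fix $\ti z$, the ratio of the off-diagonal entries fixes $\ti y$, and the remaining scale fixes $\ti u$, giving the displayed rational expressions. The inverse transformation is treated in the same way with the complementary projector $\diag(0,1)$ in the leading part of $R$: the roles of the $(1,2)$ and $(2,1)$ entries are exchanged, one obtains $\be=-t/(A^1_{12}+A^0_{12})$ and $p=-\frac\be t(\tin-1-\be A^1_{12})$, and the shift becomes $\ti\Th_\infty=\tin-2$. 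Finally the B\"acklund statement for IDS~\eqref{eq:ids1}--\eqref{eq:ids3} requires no further work: the transformed monodromy data equal the original ones up to the $\pm2$ shift of the formal exponent at infinity and are therefore $t$-independent, so $\ti Y_k$ is again an isomonodromic family and $(\ti y,\ti z,\ti u)$ solves~\eqref{eq:ids1}--\eqref{eq:ids3}.

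The conceptual content is exhausted by the conjugacy argument for $\Th_0,\Th_1$ and by the determination of $\be,p$ from the expansion at infinity. The main obstacle is purely computational: checking that, after substituting the explicit $\be$ and $p$, the conjugated residues $R(p)^{-1}A_pR(p)$ collapse back into the three-parameter normal form and produce precisely the compact rational formulas for $\ti z,\ti y,\ti u$ in the statement. This is a long but entirely mechanical $2\times2$ calculation, and the same remarks apply verbatim to the inverse transformation.
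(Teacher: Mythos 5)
Your plan is correct and is exactly the standard derivation the paper has in mind: the appendix states Theorem~\ref{dress} without proof, but the surrounding discussion (gauge by a unimodular, $\la$-linear matrix; residues conjugated at $\la=0,1$ so $\Th_0,\Th_1$ are preserved; the expansion at infinity fixing $\be$, $p$ and the shift $\tin\to\tin\pm2$; monodromy preservation yielding the B\"acklund action on IDS) is precisely what you carry out. The only remaining work is the mechanical matching of $R(p)^{-1}A_pR(p)$ to the Jimbo--Miwa normal form to read off $\ti z,\ti y,\ti u$, which you correctly identify as routine.
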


\section{Appendix: Higher order terms in asymptotic expansion}\label{sec:allterms}

We begin with the complete asymptotic expansions at large pure imaginary $t$ corresponding to solutions of
System~\eqref{eq:ids1}--\eqref{eq:ids2} described in Theorem~\ref{th:new1}. The first expansion reads:
\begin{eqnarray}
&z=\sum^\infty_{i=0}\sum^{i}_{j=-i}z_{ij}t^{-i}\al^{j},&\label{eq:complete-z-asympt-3.1}\\
&y =\al \sum^\infty_{i =0} \sum^i_{j=-i}y_{ij}t^{-i}\al^{j},&\label{eq:complete-y-asympt-3.1}\\
&\al = \de t^{-4\vp+\tin}e^{t}.\label{eq:alpha-def}&
\end{eqnarray}
These are the formal (divergent) series with coefficients uniquely determined by substitution into
System~\eqref{eq:ids1}--\eqref{eq:ids2} under a recurrence procedure. Below, the reader will find the first terms
corresponding to $i=0,1,2$ which we found with the help of MATHEMATICA code. This formal expansion becomes asymptotic
one provided $0<\Re\nu_1<2$, where $\nu_1=1-4\vp+\tin$  is introduced in Theorem~\ref{th:new1}.

The justification of these asymptotics
can be obtained with the help of the scheme described in Section~33 of \cite{W}. The major difference is that the
sector where our expansions are valid has the vanishing angle (parameter $q=\infty$ in \cite{W}). This fact, however,
does not destroy the proof because of Remark~\ref{rem:mu-modifications}, where we explain that, in fact, asymptotics
is working in domains $\mathcal D^1(\mu_1)$ rather than only on the imaginary axis. Therefore, in case $0<\Re\nu_1<2$
there exists a solution of System~\eqref{eq:ids1}--\eqref{eq:ids2} with
Asymptotics~\eqref{eq:complete-z-asympt-3.1}--\eqref{eq:complete-y-asympt-3.1}. Since according to Theorem~\ref{th:allmon}
parameter $\alpha$ uniquely defines the solution of System~\eqref{eq:ids1}--\eqref{eq:ids2}, the solution which exists
by the application of the Wasow scheme coincides with the one that described by the monodromy theory and there is no
any other solution with the same asymptotic expansion.

We find the coefficients of this asymptotic expansion to be:
\begin{equation}\label{eqs:asymptseries-y-1}
\begin{gathered}
y_{00}=1,\quad
y_{10}=12\vp^2-6\vp\tin+\frac 12(\tin^2-\Th_0^2-\Th_1^2),\\
y_{11}= 2\vp +\frac 12 (\Th_0+\Th_1-\tin),\quad
\quad y_{1,-1}= -2\vp+\frac
12(\Th_0+\Th_1+\tin),
\end{gathered}
\end{equation}
\begin{equation}\label{eqs:asymptseries-z-1}
\begin{gathered}
z_{00} = -\vp -\frac 12 \Th_0,\quad
z_{10}=0,\\
z_{11}=\vp^2+\frac\vp2(\Th_0+\Th_1-\tin)+\frac {\Th_0}4(\Th_1-\Th_\infty),\\
\quad z_{1,-1}=\vp^2-\frac\vp2(\Th_0+\Th_1+\tin)+\frac{\Th_0}4(\Th_1+\Th_\infty),\\
\end{gathered}
\end{equation}
\begin{align*}
y_{22}&=3\vp^2+\frac{3\vp}2(\Th_0+\Th_1-\tin)+
\frac14(\Th_0+\Th_1^2+\tin^2+\Th_0\Th_1-\Th_0\Th_2-2\Th_1\Th_2),\\
y_{2,-2}&=\vp^2-\frac\vp2(\Th_0+\Th_1+\tin)+\frac{\Th_0}4(\Th_1+\tin),\\
\end{align*}
\begin{align*}
y_{2,1}&=48\vp^3+12\vp^2(\Th_0+\Th_1-3\tin+1)+\\
&+2 \vp\big(4\tin^2-3\tin(\Th_0+\Th_1+1)-\Th_0^2-\Th_1^2+2\Th_0+2\Th_1+1\big)-\\
&-\frac{\tin^3}2+\frac{\tin^2}2(\Th_0+\Th_1+1)+\frac{\tin}2(-1-\Th_1-3\Th_0+\Th_0^2+\Th_1^2)+\\
&+\Th_0\Th_1-\frac{\Theta_0+\Theta_1}2(\Theta_0^2+\Theta_1^2-1),\\
y_{2,-1}&=-12\vp^2 +2\vp(1+2\Th_0+2\Th_1 +3\tin)-\\
&-\frac 12(\Th_1+\Th_0+\tin +\Th_1\tin+2 \Th_0\Th_1+3 \Th_0\tin),\\
y_{20}&=72\vp^4-4\vp^3(18\tin-10)-2\vp^2(15\tin+3\Th_0^2+3\Th_1^2-12\tin^2+2)+\\
&+\vp\big(-3\tin^3+6\tin^2+(3\Th_1^2+2+3\Th_0^2)\tin+\Th_1-3\Th_0^2-3\Th_1^2+\Th_0\big)+\\
&+\frac{\tin^4}8-\frac{\tin^3}4-(\Th_0^2+\Th_1^2+1)\frac{\tin^2}4+(\Th_1^2+5\Th_0^2-2\Th_0)\frac{\tin}4+\\
&+\frac{(\Th_0^2+\Th_1^2)^2}8+\frac{(\Th_0+\Th_1)^2}4,\\
\end{align*}
\begin{align*}
z_{22}&=z_{2,-2}=0,\\
z_{20}&= -4\vp^3+3\vp^2\tin+\frac{\vp}2(\Th_0^2-\tin^2+\Th_1^2)-\frac{\tin\Th_0^2}4,\\
z_{21}&= 12\vp^4+2\vp^3(2+3\Th_0+3\Th_1-6\tin)+\\
&+\frac{\vp^2}2\Big(7\tin^2-6(1+{\Th_1}+2\Th_0)\tin+4\Th_0+2+4\Th_1-\Th_1^2+6\Th_0\Th_1-\Th_0^2\Big)-\\
&-\frac{\vp}4\Big(\tin^3-(7\Th_0+\Th_1+2)\tin^2-(\Th_0^2+\Th_1^2-6\Th_0\Th_1-6\Th_0-2\Th_1-2)\tin-\\
&-2\Th_0-2{\Th_1}-4\Th_0\Th_1+\Th_1^3+\Th_0^2\Th_1+\Th_0\Th_1^2+\Th_0^3\Big)-\\
&-\frac{\Th_0}8\Big(\tin^3-(\Th_1+2)\tin^2-(\Th_0^2+\Th_1^2-2\Th_1-2)\tin+(\Th_0^2+\Th_1^2-2)\Th_1\Big),\\
z_{2,-1}&= -12\vp^4+2\vp^3(2+3\Th_0+3\Th_1+6\tin)+\\
&-\frac{\vp^2}2\Big(7\tin^2+6(1+\Th_1+2\Th_0)\tin+4\Th_0+2+4\Th_1-\Th_1^2+6\Th_0\Th_1-\Th_0^2\Big)+\\
&+\frac{\vp}4\Big(\tin^3+(7\Th_0+\Th_1+2)\tin^2-(\Th_0^2+\Th_1^2-6\Th_0\Th_1-6\Th_0-2\Th_1-2)\tin+\\
&+2\Th_0+2\Th_1+4\Th_0\Th_1-\Th_1^3-\Th_0^2\Th_1-\Th_0\Th_1^2-\Th_0^3\Big)-\\
&-\frac{\Th_0}8\Big(\tin^3+(\Th_1+2)\tin^2-(\Th_0^2+\Th_1^2-2\Th_1-2)\tin-(\Th_0^2+\Th_1^2-2)\Theta_1\Big),\\
z_{33}&=z_{3,-3}=0,\qquad y_{3,-3}=0,\qquad y_{3,3}\neq 0,\qquad\ldots.
\end{align*}
Substituting the above formulae into Equation~\eqref{eq:zeta-def} we find the following result,
\begin{eqnarray*}
&\ze=(\vp+\frac {\Th_0}2)t-2\vp^2+ \vp\tin +\frac{\Th_0}2(\Th_0+\tin) +&\\
&+\frac1t\bigg(\alpha(-\vp^2+\frac{\vp}2(\tin-\Th_1-\Th_0)+\frac{\Th_0}4(\tin-\Th_1))&-\\
&-4\vp^3+3\vp^2\tin+\frac{\vp}2(\Th_0^2-\tin^2+\Th_1^2) -\frac 14\tin\Th_0^2+&\\
&+\frac1\alpha(\vp^2-\frac{\vp}2(\tin+\Th_1+\Th_0) +\frac {\Th_0}4(\tin+\Th_1))
\bigg)+\mathcal{O}\left(t^{-2}\left(|\alpha|^2+|\alpha|^{-2}\right)\right),&
\end{eqnarray*}
which is formulated in Corollary~\ref{cor:results-zeta}.
Up to $\mathcal{O}\left(t^{-2}\left(|\alpha|^2+|\alpha|^{-2}\right)\right)$ the equality
$$
\frac{\cd \ze}{\cd t}=-z
$$
is satisfied.

The leading term of asymptotics in Theorem~\ref{th:new1} is valid also for negative values of $\Re\nu_1$. The corresponding
asymptotic expansion can be constructed as follows.
Define
\begin{equation}\label{eq:beta-def}
\beta=\delta t^{-4\vp +\tin+1}e^t\eq \delta t^{\nu_1}e^t.
\end{equation}
Consider the formal series:
\begin{align}
z&= \sum_{i=0}^\infty\sum_{j=-i-1}^{[\frac i2]} \hat{z}_{ij} t^{-i} \beta^j,\label{eq:beta-series-z}\\
ty&= \sum_{i=0}^\infty\sum_{j=-i-1}^{1+[\frac i2]} \hat{y}_{ij} t^{-i} \beta^j.\label{eq:beta-series-y}
\end{align}
where $[\cdot]$ is the integer part of a number (the integer floor). One proves that these series are asymptotic
as $t\to\infty$, iff $-1<\Re\nu_1<1$. Comparing Equations~\eqref{eq:alpha-def} and \eqref{eq:beta-def} we see that
$\beta=t\alpha$.
Substituting this relation into Series~\eqref{eq:beta-series-z} and \eqref{eq:beta-series-y}, it is easy to observe
that after a rearrangement of terms they coincide with
Series~\eqref{eq:complete-z-asympt-3.1} and \eqref{eq:complete-y-asympt-3.1}, respectively. Any partial sum of
Series~\eqref{eq:complete-z-asympt-3.1} or \eqref{eq:complete-y-asympt-3.1} can be presented as a partial sum of
the corresponding Series~\eqref{eq:beta-series-z} or \eqref{eq:beta-series-y} with some extra higher order terms
and vice versa so that both pairs of series solve IDS~\eqref{eq:ids1} and \eqref{eq:ids2}. So,
Series~\eqref{eq:beta-series-z} and \eqref{eq:beta-series-y} represent asymptotics of the same solution as
Series~\eqref{eq:complete-z-asympt-3.1} and \eqref{eq:complete-y-asympt-3.1} but in a shifted domain of the parameter
$\nu_1$: $-1<\Re\nu_1<1$. We discuss this expansion in a more detail in part II of this paper.

For the solutions defined in Theorem~\ref{th:new2} we have to write another asymptotic expansions.
Define
$$
\tilde{\beta}=\frac 1\delta t^{4\vp -\tin+1}e^{-t}\eq \frac1\delta t^{\nu_2}e^{-t},
$$
then for $\nu_2$ such that $-1<\Re\nu_2<1$, the following formal series are asymptotic:
\begin{align*}
z&= \sum_{i=0}^\infty\sum_{j=-i-1}^{[\frac i2]}\tilde{z}_{ij} t^{-i}\tilde{\beta}^j,\\
\frac ty&= \sum_{i=0}^\infty\sum_{j=-i-1}^{1+[\frac i2]}\tilde{y}_{ij} t^{-i}\tilde{\beta}^j.
\end{align*}
For $1<\Re\nu_2<2$ we have to use asymptotic expansion analogous to \eqref{eq:complete-z-asympt-3.1} and
\eqref{eq:complete-y-asympt-3.1}:
\begin{eqnarray}
z&=\sum^\infty_{i=0}\sum^{i}_{j=-i}\check{z}_{ij}t^{-i}\tilde{\al}^{j},\label{eq:complete-z-asympt-3.2}\\
\frac1y& =\tilde\al \sum^\infty_{i =0} \sum^i_{j=-i}\check{y}_{ij}t^{-i}\tilde{\al}^{j},\label{eq:complete-y-asympt-3.2}\\
\tilde\al &= \frac1{\de} t^{4\vp-\tin}e^{-t}=\tilde{\beta}/t.\label{eq:tilde-alpha-def}
\end{eqnarray}

\end{document}